\newcommand{\mylabel}[2]{#2\def\@currentlabel{#2}\label{#1}}
\theoremstyle{plain}
\newtheorem{theorem}{Theorem}[section]
\newtheorem{proposition}[theorem]{Proposition}
\newtheorem{lemma}[theorem]{Lemma}
\newtheorem{corollary}[theorem]{Corollary}
\newtheorem{conjecture}[theorem]{Conjecture}
\newtheorem{question}[theorem]{Question}
\theoremstyle{remark}
\newtheorem{remark}[theorem]{Remark}
\theoremstyle{definition}
\newtheorem{definition}[theorem]{Definition}
\newtheorem{example}[theorem]{Example}
\newtheorem{convention}[theorem]{Convention}
\DeclareMathOperator{\Gal}{Gal}
\DeclareMathOperator{\Hom}{Hom}
\DeclareMathOperator{\res}{res}
\newcommand{\bC}{\mathbb{C}}
\newcommand{\bG}{\mathbb{G}}
\newcommand{\bfG}{\mathbf{G}}
\newcommand{\bI}{\mathbb{I}}
\newcommand{\bQ}{\mathbb{Q}}
\newcommand{\bL}{\mathbb{L}}
\newcommand{\bR}{\mathbb{R}}
\newcommand{\bT}{\mathbb{T}}
\newcommand{\bZ}{\mathbb{Z}}
\newcommand{\QQ}{\mathbb{Q}}
\newcommand{\cH}{\mathcal{H}}
\newcommand{\cL}{\mathcal{L}}
\newcommand{\cO}{\mathcal{O}}
\newcommand{\cR}{\mathcal{R}}
\newcommand{\fa}{\mathfrak{a}}
\newcommand{\fq}{\mathfrak{q}}
\newcommand{\fp}{\mathfrak{p}}
\newcommand{\fm}{\mathfrak{m}}
\newcommand{\TT}{\mathbb{T}}
\newcommand{\ZZ}{\mathbb{Z}}
\newcommand{\Z}{\mathbb{Z}}
\newcommand{\g}{\mathbf{g}}
\newcommand{\f}{\mathbf{f}}
\newcommand{\sA}{\mathscr{A}}
\newcommand{\sC}{\mathscr{C}}
\newcommand{\Ann}{\mathrm{Ann}}
\newcommand{\LL}{\Lambda}
\newcommand{\ord}{\mathrm{ord}}
\newcommand{\cyc}{\mathrm{cyc}}
\newcommand{\lra}{\longrightarrow}
\newcommand{\xra}{\xrightarrow}
\newcommand{\be}{\begin{equation}}
\newcommand{\ee}{\end{equation}}
\begin{document}

\title[]{{$\cL$-}\lowercase{invariants, $p$-adic heights and factorization of $p$-adic}  $L$-\lowercase{functions}}

\author[K. B\"uy\"ukboduk]{K\^az\i m B\"uy\"ukboduk}
\address[B\"uy\"ukboduk]{UCD School of Mathematics and Statistics\\ University College Dublin\\ Ireland}
\author[R. Sakamoto]{Ryotaro Sakamoto}
\address[Sakamoto]{Center for Advanced Intelligence Project\\ RIKEN\\ Japan}

\email{kazim.buyukboduk@ucd.ie}
\email{ryotaro.sakamoto@riken.jp}

\keywords{$p$-adic heights, $p$-adic $L$-functions, exceptional zeros, Rankin--Selberg products, Hilbert Modular Forms}
\subjclass[2020]{11R23; 11F41}

\begin{abstract}
We continue with our study of the non-critical exceptional zeros of Katz' $p$-adic $L$-functions attached to a CM field $K$, following two threads. 
In the first thread, we redefine our (group-ring-valued) $\cL$-invariant associated to each $\ZZ_p$-extension $K_\Gamma$ of $K$ in terms of $p$-adic height pairings and interpolate them as $K_\Gamma$ varies to a \emph{universal} (multivariate)  group-ring-valued $\cL$-invariant. 
In the second thread, we use our results to study the exceptional zeros of the Rankin--Selberg $p$-adic $L$-functions at non-critical specializations of the self-products of nearly ordinary CM families, via the factorization statements we establish. The factorization theorems are extensions of the  results due to Greenberg and Palvannan.
\end{abstract}

\maketitle

\tableofcontents

\section{Introduction}
Fix forever a prime $p>2$, an isomorphism $\iota: \bC \xra{\sim} \bC_p$ as well embeddings $\iota_\infty:\overline{\QQ} \hookrightarrow \bC$ and $\iota_p:\overline{\QQ} \hookrightarrow \bC_p$ such that $\iota \circ\iota_\infty=\iota_p$. Let $K$ be a CM field and let $F$ denote its maximal totally real subfield. We denote by $c \in \Gal(K/F)$ the unique non-trivial $F$-automorphism of $K$. 
We assume throughout this article that the $p$-ordinary hypothesis that
\begin{enumerate}
    \item[\mylabel{item_ord}{{\bf (ord)}}] every prime of $F$ above $p$ splits in $K$
\end{enumerate} 
holds true.

One may then choose a $p$-ordinary CM type $\Sigma$ and with this data at hand, the work of Katz~\cite{Katz78} (see also \cite{HT93}, Theorem~II) equips us with the $p$-adic $L$-function $L_{p,\chi}^{\rm Katz}$ attached to a ray class character $\chi$ of prime-to-$p$ conductor and order. Let us set 
\[
e := \# \{v \in \Sigma^{c} \mid \chi(G_{K_{v}}) = \{1\}\}, 
\]
where $G_{K_{v}}$ denotes the absolute Galois group of the local field $K_{v}$. 
Take an arbitrary $\bZ_p$-extension of $K$ with Galois group $\Gamma$. By restricting the domain of $L_{p,\chi}^{\rm Katz}$ to $\Gamma$, one also has the one-variable ($p$-adic analytic) function $L_{p,\chi}^{\rm Katz}|_{\Gamma}$.

Since $1 - \chi(\mathrm{Frob}_v) = 0$ for any prime $v \in \Sigma^{c}$ with $\chi(G_{K_{v}}) = \{1\}$, one is lead to expect that the Katz $p$-adic $L$-function $L_{p,\chi}^{\rm Katz}|_{\Gamma}$ has an exceptional zero of order $e$ at the trivial character $\mathds{1}$  whenever $e>0$. This is not obvious a priori, since the defining interpolative property  for $L_{p,\chi}^{\rm Katz}$ (c.f. \eqref{Eqn_Katz_padic_interpolation}) has no bearing for its value at the trivial character.  
The 
arithmetic properties of $L_{p,\chi}^{\rm Katz}|_{\Gamma}$ at $\mathds{1}$ are expected to be governed by Perrin-Riou's $p$-adic Beilinson conjectures, and the exceptional zeros in this context are relative to her conjectural leading term formulae.

In our previous work~\cite[Theorem 1.1]{BS19}, we studied the exceptional zeros of 
$L_{p,\chi}^{\rm Katz}|_{\Gamma}$
at 
$\mathds{1}$ and gave a formula for its $e^{\rm th}$ derivative at $\mathds{1}$ (which, in the notation of op. cit., is the image of $L_{p,\chi}^{\rm Katz}|_{\Gamma}$ inside $\sA_\Gamma^e/\sA_{\Gamma}^{e+1}$)
in terms of the (Galois) group-ring-valued $\cL$-invariants  we have introduced therein. A consequence of the leading term formula \cite[Theorem 1.1]{BS19} is the calculation of the order of vanishing of $L_{p,\chi}^{\rm Katz}$ at $\mathds{1}$ (see Corollary 1.4 in op. cit.). Even though our results were unconditional in the case when $K$ is an imaginary quadratic field, the proof of \cite[Theorem 1.1]{BS19} rests upon a number of assumptions and the validity of a list of plausible but difficult conjectures in the general case. 
This list includes the Rubin--Stark conjectures for abelian extensions of $K$ and an Iwasawa theoretic variant of the Mazur--Rubin--Sano conjecture.

One of our objectives is to eliminate these hypotheses on \cite[Corollary 1.4]{BS19}. We summarize our main results in the present article in the paragraphs \S\ref{subsubsec_1_1_1}--\S\ref{subsubsec_intro_5}.

\subsection{Synopsis of results}
\label{subsec_summary}
In what follows, $\cO$ denotes a sufficiently large finite flat extension of $\ZZ_p$ (e.g., it contains the values of all ray class characters that make an appearance in \S\ref{subsec_summary}). We set $\Gamma_\infty:=\Gal(K(p^\infty)/K)$ where $K(p^\infty)$ is the compositum of all $\bZ_p$-extensions of $K$. We let  $\sA\subset \cO[[\Gamma_\infty]]$ denote the full augmentation ideal. Given $P\in  \cO[[\Gamma_\infty]]$,  we call the largest natural number $m$ so that $P\in \mathscr{A}^m$ the \emph{total order of vanishing} of $P$ at $\mathds{1}$ and denote it by ${\rm ord}_{\sA}P$. For a natural number $m \leq {\rm ord}_{\sA}P$, the image of $P$ inside the quotient $\sA^{m}/\sA^{m+1}$ is called the $m^{\rm th}$ \emph{total derivative} of $P$ at $\mathds{1}$.


\subsubsection{Support of the algebraic $p$-adic $L$-function at the full augmentation ideal}
\label{subsubsec_1_1_1}
We  give an alternative proof of \cite[Corollary 1.4]{BS19} under less stringent hypotheses, but still assuming the validity of the $\Sigma$-Leopoldt conjecture and the Iwasawa Main Conjecture for the CM field $K$. We remark that the $\Sigma$-Leopoldt conjecture is (strictly, in general) stronger than the Leopoldt conjecture. The proof of either of these conjectures in general appears to require a strong input from transcendental number theory; for example, the $\Sigma$-Leopoldt conjecture follows as a consequence of the $p$-adic Schanuel conjecture (c.f.  \cite{HT94}, Lemma 1.2.1). Our alternative proof proceeds through a study of the algebraic Katz $p$-adic $L$-function $L_{p,\chi}^{\rm alg}$, where we exploit homological properties of underlying Selmer complexes. This is carried out in \S \ref{sec_Selmer_Groups}--\S\ref{sec_Linvariants_heights_exceptionalzeros} and our main  results concerning the total order of vanishing of $L_{p,\chi}^{\rm alg}$ at $\mathds{1}$ are the following:

\begin{theorem}[Proposition \ref{proposition_vanishing_order_e}]\label{theorem:mult-exp-zeros_lower}
We 
have 
$
L_{p,\chi}^{\rm alg}  \in \sA^e\,.
$
\end{theorem}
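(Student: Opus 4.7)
The plan is to realize $L_{p,\chi}^{\rm alg}$ as a characteristic element of an Iwasawa-theoretic Selmer complex over $\Lambda := \cO[[\Gamma_\infty]]$ and to extract the $e$-fold vanishing from the local Euler factors at the exceptional primes $v \in \Sigma^{c}$. Under the $\Sigma$-Leopoldt conjecture together with the Iwasawa Main Conjecture for $K$, $L_{p,\chi}^{\rm alg}$ generates, up to units, the determinant of a Selmer complex $\mathrm{S}^\bullet$ over $\Lambda$ associated with the Galois module of $\chi$ and the $\Sigma$-ordinary local conditions.

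Having fixed $\mathrm{S}^\bullet$, I would compare it with the Selmer complex $\widetilde{\mathrm{S}}^\bullet$ in which the local conditions at the primes $v \in \Sigma^c$ are relaxed to full local cohomology. The natural distinguished triangle
\[
\mathrm{S}^\bullet \longrightarrow \widetilde{\mathrm{S}}^\bullet \longrightarrow \bigoplus_{v \in \Sigma^c} C_v^\bullet,
\]
with $C_v^\bullet$ the local mapping cone at $v$, combined with the multiplicativity of determinants, factors a generator of $\det_\Lambda \mathrm{S}^\bullet$ as a generator of $\det_\Lambda \widetilde{\mathrm{S}}^\bullet$ times a product over $v \in \Sigma^c$ of local factors. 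Each local factor is, up to units, the determinant of $1 - \mathrm{Frob}_v$ acting on a free rank-one unramified $\Lambda$-module, namely the element $1 - \chi(\mathrm{Frob}_v)\,\gamma_v \in \Lambda$, where $\gamma_v$ denotes the image of $\mathrm{Frob}_v$ in $\Gamma_\infty$. For each of the $e$ exceptional primes one has $\chi(\mathrm{Frob}_v) = 1$ while $\gamma_v \in 1 + \sA$, so the corresponding local factor lies in $\sA$; for the remaining primes of $\Sigma^c$, one has $\chi(\mathrm{Frob}_v)\neq 1$ and the local factor is a unit in $\Lambda$ after enlarging $\cO$. Multiplying over the $e$ exceptional primes produces a factor of $L_{p,\chi}^{\rm alg}$ lying in $\sA^e$, which yields the desired containment.

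The main obstacle is ensuring that this algebraic factorization is not contaminated by torsion or pseudo-null phenomena: concretely, one must verify that $\det_\Lambda \widetilde{\mathrm{S}}^\bullet$ is a genuine element of $\Lambda$ (rather than merely a fractional ideal with a torsion defect) and that the local factors multiply without being absorbed by zero-divisors in the global Selmer side. This is precisely the role of the $\Sigma$-Leopoldt conjecture, which guarantees the expected generic ranks of the Iwasawa cohomology groups appearing in $\widetilde{\mathrm{S}}^\bullet$ and thereby controls its homological structure. Once this input is secured, the Iwasawa Main Conjecture translates the algebraic factorization into the desired containment $L_{p,\chi}^{\rm alg} \in \sA^e$.
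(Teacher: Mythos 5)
There is a genuine gap, and it starts with what is being proved. The statement concerns the \emph{algebraic} $p$-adic $L$-function $L_{p,\chi}^{\rm alg}$, which by Definition~\ref{defn_algebraic_Katz_padic_L} \emph{is} a generator of $\Fitt_{\LL_{\cO}(\Gamma_\infty)}\big(\widetilde{H}^{2}_{\rm f}(G_{K,S},\bT_\infty,\Delta_{\Sigma})\big)$; no Iwasawa Main Conjecture is needed (or even meaningful) to ``identify'' it with a Selmer determinant, and the theorem is unconditional — the paper stresses this explicitly. By invoking the Main Conjecture and the $\Sigma$-Leopoldt conjecture you at best prove a conditional statement, and in effect you are arguing about $L_{p,\chi}^{\rm Katz}$ (that conditional transfer is Corollary~\ref{cor:katz-order}, not this theorem). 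The $\Sigma$-Leopoldt hypothesis you use to ``control the homological structure'' is likewise not needed for the lower bound; it only enters for the upper bound $L_{p,\chi}^{\rm alg}\notin\sA^{e+1}$.

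The local mechanism you propose is also not available here. The exceptional primes $v\in\Sigma^{c}$ lie \emph{above} $p$: the module $\bT_\infty=\cO(1)\otimes\chi^{-1}\otimes\cO[[\Gamma_\infty]]^{\iota}$ is ramified at such $v$, the primes of $\Sigma^c$ ramify in $K(p^\infty)/K$ so ${\rm Frob}_v$ is not well defined in $\Gamma_\infty$, and the Selmer complex computing $L_{p,\chi}^{\rm alg}$ already imposes the full (relaxed) condition at $\Sigma^{c}$, so there is no unramified-versus-relaxed cone producing a rank-one Euler factor $1-\chi({\rm Frob}_v)\gamma_v$. Moreover a determinant factorization along your triangle would require the local complexes and the auxiliary global complex to be torsion, which fails since $H^1(G_{K_v},\bT_\infty)$ has positive $\LL_{\cO}(\Gamma_\infty)$-rank. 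The paper's proof sidesteps all of this: from the triangle ${\bf R}\Gamma_{\rm c}(G_{K,S},\bT_\infty)\to\widetilde{{\bf R}\Gamma}_{\rm f}(G_{K,S},\bT_\infty,\Delta_\Sigma)\to\bigoplus_{v\in\Sigma^c}{\bf R}\Gamma(G_{K_v},\bT_\infty)$ and the vanishing $H^2_{\rm c}(G_{K,S},\bT_\infty)\cong H^0(G_{K,S},\bT_\infty^\vee(1))^\vee=0$ one gets a surjection of $\widetilde{H}^{2}_{\rm f}$ onto $\bigoplus_{v\in\Sigma^c}H^2(G_{K_v},\bT_\infty)$; local Tate duality identifies the summand at each exceptional $v$ with $\cO[[\Gamma_\infty/\Gamma_v]]$ (and shows the remaining summands vanish), whose Fitting ideal is the relative augmentation ideal, hence contained in $\sA$; monotonicity of Fitting ideals under surjections and their multiplicativity on direct sums then give $L_{p,\chi}^{\rm alg}\in\sA^{e}$ unconditionally. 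Your factor $(\gamma_v-1)$ is the right heuristic only in the special case where the relevant decomposition group is procyclic and unramified; the general argument must go through the coinvariant modules $\cO[[\Gamma_\infty/\Gamma_v]]$ as above.
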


\begin{theorem}[Corollary~\ref{cor_algebraic_multivariate_exceptional_zero}]\label{theorem:mult-exp-zeros}
If the $\Sigma$-Leopoldt Conjecture~\ref{conj_sigma_leopoldt} for $\overline{K}^{\ker(\chi)}/K$ holds true, then 
\[
 L_{p,\chi}^{\rm alg}  \in \sA^e \setminus \sA^{e+1}.
\]
\end{theorem}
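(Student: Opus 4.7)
The plan is to produce an explicit formula for the image of $L_{p,\chi}^{\rm alg}$ in the quotient $\sA^e/\sA^{e+1}$ and then argue that this image is nonzero whenever the $\Sigma$-Leopoldt conjecture for $\overline{K}^{\ker(\chi)}/K$ holds. Theorem~\ref{theorem:mult-exp-zeros_lower} already places $L_{p,\chi}^{\rm alg}$ in $\sA^e$, so the only remaining task is non-vanishing modulo $\sA^{e+1}$.

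First I would return to the construction of $L_{p,\chi}^{\rm alg}$ as a generator of the characteristic ideal of the relevant Selmer module for $\chi$ over $\cO[[\Gamma_\infty]]$, or more flexibly as the determinant of an appropriate Selmer complex. The vanishing order $\geq e$ originates locally: for each of the $e$ primes $v\in\Sigma^c$ with $\chi(G_{K_v})=\{1\}$, the local term in the Selmer complex contributes a factor that lies in $\sA$, because the Frobenius-type Euler factor at $v$ degenerates at $\mathds{1}$. A Bockstein/connecting-homomorphism computation then expresses the image in $\sA^e/\sA^{e+1}$ as (a unit multiple of) the determinant of an $e\times e$ matrix whose entries are local reciprocity-type maps valued in $\sA/\sA^2\cong\Gamma_\infty\otimes_{\ZZ_p}\cO$.

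This $e\times e$ determinant is to be matched with the universal multivariate group-ring-valued $\cL$-invariant, now reformulated through $p$-adic heights in the first thread of the paper. Under the $\Sigma$-Leopoldt hypothesis for $\overline{K}^{\ker(\chi)}/K$, the matrix of local reciprocity maps restricted to the $\Sigma$-unit group is non-degenerate, and hence this determinant is nonzero in $\mathrm{Sym}^e(\sA/\sA^2)$. This delivers $L_{p,\chi}^{\rm alg}\notin\sA^{e+1}$ and completes the argument.

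The main obstacle will be the Bockstein/leading-term calculation in the middle step: one must verify that the global-to-local restriction map built into the Selmer complex induces precisely the $\cL$-invariant matrix on the level of leading terms, and not a proper divisor of it. Concretely this requires checking that the local Tamagawa-type correction factors at each exceptional place $v\in\Sigma^c$ remain units at $\mathds{1}$, so that no spurious powers of augmentation elements are absorbed. Once this bookkeeping is in place, invoking the $\Sigma$-Leopoldt conjecture is essentially a matter of definitions, and the conclusion $L_{p,\chi}^{\rm alg}\in\sA^e\setminus\sA^{e+1}$ follows.
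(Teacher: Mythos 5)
Your outline for the lower bound is fine, but the route you propose for non-vanishing modulo $\sA^{e+1}$ has two genuine gaps. First, the step you call ``essentially a matter of definitions'' is not: the $\Sigma$-Leopoldt conjecture for $\overline{K}^{\ker(\chi)}/K$ only guarantees that $\widetilde{H}^1_{\rm f}(G_{K,S},T,\Delta_\Sigma)$ has rank exactly $e$ and that the maps ${\rm ord}_v$, $v\in E(\Sigma^c,\chi)$, form a basis of its dual (Lemma~\ref{lemma:rank of selmer}); this is what makes the group-ring-valued $\cL$-invariant \emph{well-defined}, but it says nothing about the non-degeneracy of the matrix of reciprocity maps ${\rm rec}_{\Gamma_\infty,v}$ (equivalently, of Nekov\'a\v{r}'s height pairing). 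The reciprocity maps live at the primes of $\Sigma^c$ and take values in $\Gamma_\infty$, while $\Sigma$-Leopoldt is an injectivity statement for global units into local units at $\Sigma$; there is no formal implication from one to the other. In the paper the non-vanishing is obtained by a genuine extra argument: one chooses, via the ad\`elic description of class field theory, a $\ZZ_p$-extension $K_\Gamma/K$ unramified outside $\Sigma$ in which no prime of $E(\Sigma^c,\chi)$ splits completely (Lemma~\ref{lemma:non-vanishing}); along such a direction ${\rm rec}_{\Gamma,v}={\rm ord}_v\cdot({\rm Frob}_v-1)$, so $\cL_{\Sigma,\Gamma}^{\rm Gal}=\prod_v({\rm Frob}_v-1)\neq 0$, and only then does one pass back to the full augmentation ideal.

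Second, your middle step --- identifying the image of $L_{p,\chi}^{\rm alg}$ in $\sA^e/\sA^{e+1}$ with the determinant of the Bockstein/height matrix over the \emph{multivariate} algebra $\Lambda_{\cO}(\Gamma_\infty)$ --- is substantially harder than a bookkeeping of local Tamagawa factors, because the localization of $\Lambda_{\cO}(\Gamma_\infty)$ at $\sA$ is a regular local ring of dimension $[F:\QQ]+\delta$, not a discrete valuation ring. The paper deliberately avoids proving any such multivariate leading-term formula: it restricts to a single $\ZZ_p$-extension, where $\Lambda_{\cO}(\Gamma)_{\sA_\Gamma}$ \emph{is} a DVR, and there combines Nekov\'a\v{r}'s Proposition~11.7.6(vii) (Lemma~\ref{lemma:equiv_non-degen_fitt}), the Nakayama argument showing $\widetilde{H}^2_{\rm f}$ is generated by exactly $e$ elements (Corollary~\ref{corollary:gen by e elements}), and ${\rm Fitt}={\rm char}$ to relate non-degeneracy of $\langle\,\,,\,\,\rangle_\Gamma$ to $L_{p,\chi}^{\rm alg}\vert_\Gamma\in\sA_\Gamma^e\setminus\sA_\Gamma^{e+1}$ (Theorem~\ref{thm_nonvanishingL_semisimple_nondegpadicheight}); the containment over $\sA$ then follows because $\sA^{e+1}$ surjects onto $\sA_\Gamma^{e+1}$. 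The paper even remarks that the DVR structure is used crucially and that the argument does not extend to semi-simplicity at $\sA$ itself. So to carry out your plan you would need to supply a proof of the multivariate leading-term identity, which is not available from the results at hand; alternatively, you should reduce to a well-chosen one-variable direction as the paper does.
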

It is worth mentioning that the lower bound on the total order of vanishing of $L_{p,\chi}^{\rm alg}$ (Theorem~\ref{theorem:mult-exp-zeros_lower}) is unconditional. In order to obtain the upper bound, one needs to prove that the $e^{\rm th}$ derivative of the algebraic Katz $p$-adic $L$-function $L^{\rm alg}_{p,\chi}$ along a certain $\ZZ_p$-extension of $K$ is non-zero. We do so by choosing a particular $\ZZ_p$-extension unramified outside $\Sigma$, along which we can compute our group-ring-valued $\cL$-invariant directly; c.f. Lemma~\ref{lemma:non-vanishing} and Theorem \ref{thm_nonvanishingL_semisimple_nondegpadicheight}. 

\begin{remark}\label{remark:totally_real_excep}
Suppose $\eta \colon G_F \to \overline{\bQ}^\times$ is a totally odd character of finite order other than the inverse $\omega_F^{-1}$ of the Teichm\"uller character $\omega_F$. We let $L^{\textrm{alg,DR}}_{\eta\omega_F}$ denote the algebraic Deligne--Ribet $p$-adic $L$-function in single (cyclotomic) variable attached to the totally even character $\eta\omega_F$.  Federer and Gross showed in \cite[Proposition 3.10]{FedererGross81} that 
the order of vanishing of $L^{\textrm{alg,DR}}_{\eta\omega_F}$ at 
$\mathds{1}$  is at least 
\[
\# \{v \in S_p(F) \mid \eta(G_{K_v}) = \{1\} \}. 
\]

\end{remark}

In view of the Iwasawa Main Conjectures, Theorem \ref{theorem:mult-exp-zeros_lower} can be thought of as an evidence towards the expectation that $L_{p,\chi}^{\rm Katz}|_{\Gamma}$ has an exceptional zero at $\mathds{1}$ of order at least $e$, for \emph{every} $\ZZ_p$-extension $K_\Gamma/K$ with Galois group $\Gamma$. In more precise terms, we have the following: 

\begin{corollary}\label{cor:katz-order}
Suppose that the Iwasawa Main Conjecture for $K$ holds true. 
\item[i)] $L_{p,\chi}^{\rm Katz}  \in \sA^e$. In particular, for any $\bZ_p$-extention of $K$ with Galois group $\Gamma$, the order of vanishing of $L_{p,\chi}^{\rm Katz}|_{\Gamma}$ at $\mathds{1}$ is at least $e$. 

\item[ii)] If, in addition, the $\Sigma$-Leopoldt Conjecture~\ref{conj_sigma_leopoldt} for $\overline{K}^{\ker(\chi)}/K$ holds true, then 
$L_{p,\chi}^{\rm Katz}  \in \sA^e \setminus \sA^{e+1}$. 
\end{corollary}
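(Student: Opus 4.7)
The plan is to deduce this corollary essentially formally from Theorems \ref{theorem:mult-exp-zeros_lower} and \ref{theorem:mult-exp-zeros}, using the hypothesized Iwasawa Main Conjecture to replace the analytic Katz $p$-adic $L$-function $L_{p,\chi}^{\rm Katz}$ by its algebraic counterpart $L_{p,\chi}^{\rm alg}$ and reading off the conclusion from what has already been proved. The main obstacle I anticipate is not in the combinatorics of the augmentation filtration, which is straightforward, but in extracting a unit-multiple relation between the two $L$-functions from IMC, which \emph{a priori} is only a statement about characteristic ideals.

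Concretely, the first step is to invoke IMC to obtain a relation $L_{p,\chi}^{\rm Katz} = u \cdot L_{p,\chi}^{\rm alg}$ for some unit $u \in \Lambda^\times$, where $\Lambda := \cO[[\Gamma_\infty]]$. In characteristic-ideal language IMC is only an equality of divisors, so to upgrade it to an equality of principal ideals one needs either the absence of pseudo-null submodules in the relevant Selmer module, or a Fitting-ideal reformulation in which the underlying Selmer complex has perfect amplitude $[0,1]$ (so that its zeroth Fitting ideal is automatically principal and coincides with the characteristic ideal). I expect the Selmer-complex machinery developed in \S\ref{sec_Selmer_Groups}--\S\ref{sec_Linvariants_heights_exceptionalzeros} to supply this refinement, as it is engineered precisely to give honest generators rather than just divisor classes. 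Once the relation is in hand, multiplication by $u \in \Lambda^\times$ preserves every $\sA^m$ and descends on the graded piece $\sA^m/\sA^{m+1}$ to multiplication by the augmentation value $u(\mathds{1}) \in \cO^\times$, which is an $\cO$-linear isomorphism.

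Part (i) is then immediate: Theorem \ref{theorem:mult-exp-zeros_lower} (which is unconditional) gives $L_{p,\chi}^{\rm alg} \in \sA^e$, and multiplication by $u$ yields $L_{p,\chi}^{\rm Katz} \in \sA^e$. For the statement concerning a single $\bZ_p$-extension $K_\Gamma/K$ with Galois group $\Gamma$, note that the restriction map $\Lambda \twoheadrightarrow \cO[[\Gamma]]$ is a surjective $\cO$-algebra homomorphism carrying $\sA$ into the augmentation ideal $\sA_\Gamma$ of $\cO[[\Gamma]]$, and therefore sends $\sA^e$ into $\sA_\Gamma^e$; this forces $L_{p,\chi}^{\rm Katz}|_{\Gamma} \in \sA_\Gamma^e$, which is the asserted lower bound $\geq e$ on its order of vanishing at $\mathds{1}$. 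Part (ii) follows by the identical argument, with Theorem \ref{theorem:mult-exp-zeros} replacing Theorem \ref{theorem:mult-exp-zeros_lower}: under the $\Sigma$-Leopoldt conjecture, $L_{p,\chi}^{\rm alg}$ has nonzero image in $\sA^e/\sA^{e+1}$, and since multiplication by $u$ is an isomorphism on that graded piece, the image of $L_{p,\chi}^{\rm Katz}$ is nonzero as well, giving $L_{p,\chi}^{\rm Katz} \in \sA^e \setminus \sA^{e+1}$.
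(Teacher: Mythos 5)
Your proposal is correct and follows essentially the same route as the paper: the Iwasawa Main Conjecture is stated there (Conjecture~\ref{conj:iwasawa main conj}) directly as an equality of the principal Fitting ideal of $\widetilde{H}^{2}_{\rm f}(G_{K,S},\bT_\infty,\Delta_\Sigma)$ with $(L_{p,\chi}^{\rm Katz})^{\iota}\cdot\bZ_p^{\rm ur}[[\Gamma_\infty]]$, so the unit-multiple relation you anticipate needing is built into the formulation (via Proposition~\ref{prop:parf[1,2]} and Corollary~\ref{corollary:H_f^1 vanishes iff H_f^2 is torsion and fitt=char}), and the corollary is then immediate from Proposition~\ref{proposition_vanishing_order_e} and Corollary~\ref{cor_algebraic_multivariate_exceptional_zero} exactly as you argue. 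The only details you leave implicit are harmless: the comparison involves the involution $\iota$ (which preserves each $\sA^m$) and a base change to $\bZ_p^{\rm ur}[[\Gamma_\infty]]$, which is faithfully flat and so does not affect membership in the augmentation filtration.
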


\begin{remark}
For a given $\ZZ_p$-extension $K_\Gamma/K$ with Galois group $\Gamma$, Theorem \ref{theorem:mult-exp-zeros} and Corollary \ref{cor:katz-order} do not give any information towards an upper bound on the order of vanishing of the $p$-adic $L$-function $L_{p,\chi}^{\rm Katz}|_{\Gamma}$  at $\mathds{1}$.
\end{remark}

\begin{remark}
We retain the notation of Remark~\ref{remark:totally_real_excep}. 
The discussion in Remark~\ref{remark:totally_real_excep}
combined with the Iwasawa Main Conjectures for the totally real field $F$ (proved by Wiles in \cite{Wiles90}) tell us that the order of vanishing of the Deligne--Ribet $p$-adic $L$-function $L^{\textrm{DR}}_{\eta\omega_F}$ (c.f. \cite{DeligneRibet} for its construction; see also \cite{Barsky, CassouNogues1, CassouNogues2} for an alternative construction of this $p$-adic $L$-function) at $\mathds{1}$  is at least 
\[
\# \{v \in S_p(F) \mid \eta(G_{K_v}) = \{1\} \}. 
\]
We further remark that Spiess in \cite{Spiess14} and Charollois--Dasgupta in \cite{CharolloisDasgupta14} gave a proof of this result without relying on the truth of the Iwasawa Main Conjectures for $F$. 
\end{remark}

\begin{remark}
\label{remark_intro_Hsieh_1}
Let us assume in this remark that $K=FM$ for an imaginary quadratic field $M$ where $p$ splits. Suppose also that the $p$-adic CM type $\Sigma$ is obtained by extending $\iota_p:M\hookrightarrow \bC_p$, and that $\overline{K}^{\ker(\chi)}$ is abelian over $M$. 

In this scenario, the $\Sigma$-Leopoldt Conjecture~\ref{conj_sigma_leopoldt} for ${\overline{K}^{\ker(\chi)}}/K$ follows from Brumer's $p$-adic analogue of Baker's theorem.  It therefore follows from Theorem~ \ref{theorem:mult-exp-zeros} that
\[
L_{p,\chi}^{\rm alg}  \in \sA^e \setminus \sA^{e+1}
\]
\emph{unconditionally}. Furthermore, if we in addition assume that $p\nmid 3h_K^- [F : \bQ]$, then Corollary \ref{cor:katz-order} combined with \cite[Theorem C.(iii)]{HsiehJAMS_IMC}  shows that 
\begin{align*}
     L_{p,\chi}^{\rm Katz}  \in \sA^e \setminus \sA^{e+1}. 
\end{align*}
\end{remark}

\subsubsection{Group-ring-valued $\cL$-invariants and $p$-adic heights} 
\label{subsubsec_group_ring_valued_L_invariants}  
For any $\ZZ_p$-extension $K_\Gamma$ of $K$ with Galois group $\Gamma$, we denote by $\sA_\Gamma \subset \cO[[\Gamma]]$ the augmentation ideal. We recall that in our previous work \cite{BS19}, we have defined (Galois) group-ring-valued $\cL$-invariants $\cL_{\Sigma,\Gamma}^{\rm Gal}  \in \sA_\Gamma^e/\sA_\Gamma^{e+1}$  associated to each $\ZZ_p$-extension $K_\Gamma/K$. We showed in op. cit. that our group-ring-valued $\cL$-invariant $\cL_{\Sigma,\Gamma}^{\rm Gal}$ naturally appears in the formulae for the $e^{\rm th}$ derivative of $L_{p,\chi}^{\rm Katz}|_{\Gamma}$  evaluated at $\mathds{1}$ (see  \cite{BS19}, Theorem 1.1) in the presence of exceptional zeros, hence entitling them to their name. 
 We also note that, when $K$ is an imaginary quadratic field, $K_\Gamma/K$ is the cyclotomic $\ZZ_p$-extension, and $e=1$, Greenberg has defined what we will refer to as the classical $\cL$-invariant. In this particular scenario, the value of our group-ring-valued $\cL$-invariant 
\begin{align*}
 \cL_{\Sigma,\Gamma}^{\rm Gal}\quad\in\quad &\quad \mathscr{A}_\Gamma/\mathscr{A}_\Gamma^2\quad\,\,\,\,\stackrel{\sim}{\longleftrightarrow}\,\, \Gamma\\
& \left(\gamma-1+ \mathscr{A}_\Gamma^2\right) \longleftrightarrow \gamma
\end{align*}
under the cyclotomic character coincides with Greenberg's classical $\cL$-invariant, 
 c.f.  \cite[Remark 1.5]{BS19}. 

As one of our results in this article, we explain in Proposition~\ref{proposition:equiv_non-vanish_non-degen} that the ``directional''   group-ring-valued $\cL$-invariant $\cL_{\Sigma,\Gamma}^{\rm Gal}$ 
can be defined in terms of a $\Gamma$-valued $p$-adic height pairing.  This allows us to relate the problem to determine the exact directional order of vanishing of $L_{p,\chi}^{\rm Katz}$ to the non-degeneracy of this $p$-adic height pairing. Similar results in the context of cyclotomic $\cL$-invariants are due to Benois~\cite[Theorem 1]{BenoisIwasawa2012}; see also \cite{kbbCMH} for results with similar flavour.

Moreover, relying on Nekov\'a\v{r}'s formalism, we interpolate $\cL_{\Sigma,\Gamma}^{\rm Gal}$ (as $\Gamma$ varies) to what we call the universal (multivariate) group-ring-valued $\cL$-invariant  $\cL_\Sigma^{\rm Gal} \in \sA^e/\sA^{e+1}$; this is the content of Definition~\ref{defn_multivariate_L_invariant}. We hope that this construction is of independent interest. We remark that in a closely related setting, Hida in~\cite{Hida2004} has studied similar $\cL$-invariants. It would be interesting to compare our construction with his.

\subsubsection{Rankin--Selberg $p$-adic $L$-functions}
\label{subsubsec_intro_3} One may recast our results concerning the exceptional zeros of the algebraic Katz $p$-adic $L$-function  $L_{p,\chi}^{\rm alg}$ in terms of the non-critical specialization $\widetilde{\mathscr{D}}^{\rm alg}=\widetilde{\mathscr{D}}^{\rm alg}(\Theta_{\Sigma,\psi}\otimes \Theta_{\Sigma,\psi})$ of the algebraic Rankin--Selberg $p$-adic $L$-function  attached to the self-Rankin--Selberg product $\Theta_{\Sigma,\psi}\otimes \Theta_{\Sigma,\psi}$ of the nearly ordinary CM family of Hilbert modular forms $\Theta_{\Sigma,\psi}$ with branch character $\psi$ (see \S\ref{sec_Hida_RS} for precise definitions); see the final five paragraphs of \S\ref{subsubsec_intro_5} for the motivation to pursue this line of thought. Before illustrating our results in this direction, let us comment on the significance of the adjective ``non-critical'': The Selmer groups that give rise to $\widetilde{\mathscr{D}}^{\rm alg}$ do not admit any specializations that are given via local conditions at $p$ which verify the Panchishkin condition. However, $\widetilde{\mathscr{D}}^{\rm alg}$ can be recovered, relying on the descriptions of the relevant Selmer complexes in the derived category (which, among other things, allow us to bound the projective dimensions of the appropriate Selmer groups), as a specialization of an algebraic $p$-adic $L$-function over a higher-dimensional weight space. The details pertaining to this discussion have been relegated to Appendix~\ref{sec_appendix}.

The relation between the exceptional zeros of $L_{p,\chi}^{\rm alg}$ and that of $\widetilde{\mathscr{D}}^{\rm alg}$ is established through the following factorization of $\widetilde{\mathscr{D}}^{\rm alg}$ into a product of algebraic Katz $p$-adic $L$-functions, which is the algebraic counterpart of one of the main results of \cite{HT93}. 
\begin{theorem}\label{intro_thm:iwasama main conj rankin-selberg}
The algebraic Rankin--Selberg $p$-adic $L$-function factors as
\[
 \widetilde{\mathscr{D}}^{\rm alg}
= {\rm ver}_{\rm +}(L_{p, \mathds{1}}^{\rm alg}\vert_{\Gamma_\infty^+})
\cdot 
 L_{p, \psi^{\rm ad}}^{\rm alg}
\]
where the equality takes place in ${\rm Frac}({\cO}[[\Gamma_\infty]])^\times \big{/}{\cO}[[\Gamma_\infty]]^\times$ and 
\begin{itemize}
\item ${\Gamma_\infty^+}$ denotes  the Galois group of the maximal $\bZ_p$-power extension of $F$ (note that ${\Gamma_\infty^+} = \Gamma_{\rm cyc}$ if the Leopoldt conjecture for $F$ is valid), 
\item $L_{p, \mathds{1}}^{\rm alg}\vert_{\Gamma_\infty^+}$ is the image of $L_{p, \mathds{1}}^{\rm alg}$ under the canonical map 
$${\rm Frac}({\cO}[[\Gamma_\infty]])^\times \big{/}{\cO}[[\Gamma_\infty]]^\times \longrightarrow {\rm Frac}({\cO}[[\Gamma_\infty^+]])^\times \big{/}{\cO}[[\Gamma_\infty^+]]^\times \cup \{0\},$$ 
\item $ {\rm ver}_{+} \colon {\rm Frac}({\cO}[[ \Gamma_\infty^+]]) \hookrightarrow {\rm Frac}({\cO}[[\Gamma_\infty]])$ is the transfer map given explicitly as in \eqref{eqn_cyc_transfer_map}. 
\end{itemize}
\end{theorem}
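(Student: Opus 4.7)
The plan is to prove this factorization by first establishing a corresponding decomposition at the level of the Selmer complexes that compute the two sides of the claimed identity, and then taking determinants (equivalently, characteristic ideals). This is the algebraic counterpart of the analytic factorization of \cite{HT93}, and it will follow the strategy used by Greenberg--Palvannan in related cyclotomic settings, adapted to the present CM setup. At the Galois-theoretic level, Mackey's formula yields
\[
\mathrm{Ind}_{G_K}^{G_F}(\psi)\otimes \mathrm{Ind}_{G_K}^{G_F}(\psi) \;\simeq\; \mathrm{Ind}_{G_K}^{G_F}(\psi\cdot\psi) \;\oplus\; \mathrm{Ind}_{G_K}^{G_F}(\psi\cdot\psi^c),
\]
and after accounting for the twist by the central character built into the nearly ordinary family $\Theta_{\Sigma,\psi}$, the first summand contributes a ``trivial character'' component whose underlying $p$-adic representation factors through $G_F$, while the second summand is the anticyclotomic adjoint character $\psi^{\rm ad}=\psi/\psi^c$. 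This decomposition lifts to the big Galois representation carried by the Hida family, and is preserved by the local ordinary filtrations of the family.

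Relying on the derived-category description of the Selmer complex $\widetilde{R\Gamma}_f$ that computes $\widetilde{\mathscr{D}}^{\rm alg}$ (furnished in Appendix \ref{sec_appendix}), I would verify that the local conditions at primes above $p$ split compatibly with the Galois decomposition above. This yields a direct-sum decomposition
\[
\widetilde{R\Gamma}_f(\Theta_{\Sigma,\psi}\otimes\Theta_{\Sigma,\psi}) \;\simeq\; \widetilde{R\Gamma}_f(\mathds{1}\text{-part}) \;\oplus\; \widetilde{R\Gamma}_f(\psi^{\rm ad}),
\]
whose determinant/characteristic ideal is multiplicative; this yields the claimed product factorization of $\widetilde{\mathscr{D}}^{\rm alg}$. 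The $\psi^{\rm ad}$-summand is directly identified with $L_{p,\psi^{\rm ad}}^{\rm alg}$ from the very construction of the algebraic Katz $p$-adic $L$-function.

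The $\mathds{1}$-summand is more delicate. Since its Galois representation factors through $G_F$, its natural algebraic $p$-adic $L$-function lives over $\cO[[\Gamma_\infty^+]]$ rather than $\cO[[\Gamma_\infty]]$. The transfer map ${\rm ver}_+$ encodes precisely the base change from $F$ to $K$ that is required in order to realize this factor inside $\mathrm{Frac}(\cO[[\Gamma_\infty]])$. A direct comparison of Selmer complexes for the trivial character over $K$ and over $F$ then identifies the transferred object with ${\rm ver}_+(L_{p,\mathds{1}}^{\rm alg}|_{\Gamma_\infty^+})$: restricting $L_{p,\mathds{1}}^{\rm alg}$ to $\Gamma_\infty^+$ kills the anticyclotomic variables on which the $\mathds{1}$-component acts trivially, and ${\rm ver}_+$ reintroduces them as dummy variables at the appropriate level.

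The main obstacle is the compatibility check in the decomposition of Selmer complexes. The ``non-critical'' nature of the specialization defining $\widetilde{\mathscr{D}}^{\rm alg}$ precludes a Panchishkin-type filtration on the local Galois representation at $p$, so the splitting of local conditions cannot be read off directly at the non-critical point. To circumvent this, I would perform the decomposition at the higher-dimensional weight-space level constructed in the Appendix, where the Panchishkin condition does hold and the splitting of local conditions is manifest, and then specialize. The projective-dimension control of the relevant Selmer groups (again provided in the Appendix) ensures that the direct-sum decomposition survives specialization, so that the resulting product formula remains valid modulo units in $\cO[[\Gamma_\infty]]$.
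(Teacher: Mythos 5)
Your proposal is, in essence, the paper's own proof of Theorem~\ref{thm:iwasama main conj rankin-selberg}(i): the explicit splitting $\TT=\TT_1\oplus\TT_2$ of $M({\rm ad}\Theta_{\Sigma,\psi})^{\iota}(1)$ into the pieces induced from the trivial character and from $\Psi^{\rm ad}$, the compatibility of the local conditions at $p$ with this splitting, the resulting direct-sum decomposition of Nekov\'a\v{r} Selmer complexes (Theorem~\ref{thm:decomposition selmer complexes}), multiplicativity of the Fitting ideals of $\widetilde{H}^2_{\rm f}$ (the complexes being perfect of amplitude $[1,2]$ with torsion $\widetilde{H}^2_{\rm f}$), and the identification of the trivial-character factor with ${\rm ver}_+(L^{\rm alg}_{p,\mathds{1}}\vert_{\Gamma_\infty^+})$ via Shapiro's lemma and the derived base change along ${\rm ver}_+$ (Lemma~\ref{lemma:scalar-extension} and \eqref{eq:selmer}). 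Two comments. First, the step you single out as the main obstacle is not one, and the detour through Appendix~\ref{sec_appendix} is unnecessary: $\widetilde{\mathscr D}^{\rm alg}$ is defined using the explicit Greenberg submodule $F^+\TT=({\Psi}\otimes_{\Lambda_{\cO}(W_K)}M_{\Psi^{-1}})\otimes_{\Lambda_{\cO}({\bf G}')}\Lambda_{\cO}({\bf G}')^{\iota}(1)$ of Definition~\ref{defn_Greenberg_Data}, and its restriction to $G_{F_v}$ visibly decomposes as the $\TT_{K,1}$-piece at $\tilde v$ plus the $\TT_{K,2}$-piece at $\tilde v^{c}$ (Lemma~\ref{lem:decomposition local condition at p}); no Panchishkin property enters this purely formal computation, so the splitting of local conditions can be read off directly at the non-critical specialization. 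Your longer route would still work, since the analogous splitting holds for $\bT_{\rm Ad}$ over $\cR$ and one can descend through the control theorem \eqref{eq:base-change-ad2} and Corollary~\ref{cor_appendix_main}, but the appendix plays no role in the paper's argument for this statement. Second, a bookkeeping slip in your Mackey decomposition: after untwisting by the central character, it is the summand ${\rm Ind}_{K/F}(\psi\cdot\psi^{c})$ that produces the component factoring through $G_F$ (namely $\mathds{1}\oplus\epsilon_{K/F}$, up to the Iwasawa-theoretic twists), while ${\rm Ind}_{K/F}(\psi\cdot\psi)$ produces the $\psi^{\rm ad}$-component; you have the two interchanged, though this does not affect the structure of the argument. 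Finally, note that the precise content of your ``direct comparison of Selmer complexes for the trivial character over $K$ and over $F$'' is \eqref{eq:selmer}: one first lands on the local conditions $\Delta_{\Sigma^c}$ and then uses the action of $c\in\Gal(K/F)$ to pass to $\Delta_{\Sigma}$, after which the Fitting ideal of the base-changed complex is generated by ${\rm ver}_+(L^{\rm alg}_{p,\mathds{1}}\vert_{\Gamma_\infty^+})$ because Fitting ideals commute with the scalar extension along ${\rm ver}_+$.
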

This statement corresponds to Theorem~\ref{thm:iwasama main conj rankin-selberg}(i) in the main text. 
\subsubsection{Factorization of algebraic Rankin--Selberg $p$-adic $L$-function}
\label{subsubsec_intro_4} Along the way, we also prove the following extension of Greenberg's factorization theorem in~\cite{Greenberg1981_factorization} (whose analytic counterpart is due to Gross) for CM fields:
\begin{theorem}[Theorem~\ref{thm_algebraic_gross_factorization} in the main text]
\label{intro_thm_algebraic_gross_factorization}
Let $\epsilon_{K/F}$ denote the quadratic character associated to $K/F$ and $\omega_F$ denote the Teichm\"uller character of $G_F$. Let $L_{\epsilon_{K/F}\omega_F}^{\rm alg, DR}$ denote the algebraic Deligne--Ribet $p$-adic $L$-function associated to the totally even character $\epsilon_{K/F}\omega_F$ and $\zeta_{\rm alg}$ the algebraic Dedekind $p$-adic zeta function, given as in Definition~\ref{defn_alg_DR_p-adicL}. We have the following equality taking place in ${\rm Frac}({\ZZ_p}[[\Gamma_{\rm cyc}]])^\times/{\ZZ_p}[[\Gamma_{\rm cyc}]]^\times$: 
$$  L_{p, \mathds{1}}^{\rm alg}\vert_{\Gamma_{\rm cyc}} = L_{\epsilon_{K/F}\omega_F}^{\rm alg, DR}\cdot
  \zeta_{\rm alg}\,.$$
\end{theorem}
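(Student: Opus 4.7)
The plan is to derive this factorization from the character decomposition
$$\mathrm{Ind}_{G_K}^{G_F}\ZZ_p \cong \ZZ_p \oplus \ZZ_p(\epsilon_{K/F}),$$
combined with Shapiro's lemma at the level of Iwasawa-theoretic Selmer complexes. By construction, $L_{p,\mathds{1}}^{\rm alg}|_{\Gamma_{\rm cyc}}$ is (up to a unit) the characteristic ideal of the Iwasawa-theoretic Selmer module attached to $\ZZ_p(1)$ over $G_K$ with Greenberg-ordinary local conditions at $p$ prescribed by the CM type $\Sigma$, restricted from the full weight space to the cyclotomic line. Since $K$ is totally imaginary, $\mathrm{Gal}(K^{\rm cyc}/K) = \mathrm{Gal}(F^{\rm cyc}/F) = \Gamma_{\rm cyc}$, so both sides of the asserted identity live naturally in $\mathrm{Frac}(\ZZ_p[[\Gamma_{\rm cyc}]])^\times/\ZZ_p[[\Gamma_{\rm cyc}]]^\times$.

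First I would invoke Shapiro's lemma for Nekov\'a\v{r}'s Selmer complexes to identify the Selmer complex attached to $(G_K, \ZZ_p(1))$ with the one attached to $(G_F, \mathrm{Ind}_{G_K}^{G_F}\ZZ_p(1))$, computed over the cyclotomic tower of $F$. The crucial local-at-$p$ compatibility uses hypothesis \eqref{item_ord}: every $\mathfrak{p}\in S_p(F)$ splits as $\mathfrak{p}=v\bar{v}$ in $K$ with $v\in\Sigma$, and the induced local representation $\mathrm{Ind}_{G_{K_v}}^{G_{F_\mathfrak{p}}}\ZZ_p(1)$ inherits an ordinary filtration whose graded pieces are precisely the Greenberg-ordinary filtrations on $\ZZ_p(1)$ and $\ZZ_p(1)(\epsilon_{K/F})$ over $F_\mathfrak{p}$ used in the algebraic constructions of $\zeta_{\rm alg}$ and $L^{\rm alg, DR}_{\epsilon_{K/F}\omega_F}$ respectively.

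Tensoring the displayed decomposition with $\ZZ_p(1)$ and transferring it through the Selmer complex then produces a direct-sum decomposition, hence a multiplicative factorization of characteristic ideals. The $\ZZ_p(1)$-summand is identified with $\zeta_{\rm alg}$ and the $\ZZ_p(1)(\epsilon_{K/F})$-summand with $L^{\rm alg, DR}_{\epsilon_{K/F}\omega_F}$ by direct comparison with Definition~\ref{defn_alg_DR_p-adicL}; the Teichm\"uller twist $\omega_F$ arises from the standard duality convention for the algebraic Deligne--Ribet $p$-adic $L$-function, which attaches to a totally even character $\eta$ the Iwasawa cohomology of $\ZZ_p(1)\otimes\eta^{-1}\omega_F$, here evaluated at $\eta = \epsilon_{K/F}\omega_F$.

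The main obstacle will be the careful bookkeeping of local conditions and bad primes: one needs Shapiro's isomorphism to hold at the level of Selmer complexes (not merely at the level of Galois cohomology), including compatibility at the auxiliary set of bad primes used to define the Selmer structure, so that no spurious Euler factors are introduced. A secondary technical point is to track the precise normalization of $L_{p,\mathds{1}}^{\rm alg}$ at primes ramifying in $K/F$ and at primes of $F$ inert in $K$ (where only $\zeta_{\rm alg}$ contributes on the right-hand side, while $L^{\rm alg, DR}_{\epsilon_{K/F}\omega_F}$ acquires the corresponding Euler factor from $\epsilon_{K/F}$), to ensure that the equality truly holds modulo units rather than up to a rational fudge factor.
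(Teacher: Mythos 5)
Your starting point---inducting from $K$ to $F$ and using $\mathrm{Ind}_{K/F}\,\ZZ_p(1)=\ZZ_p(1)\oplus(\ZZ_p(1)\otimes\epsilon_{K/F})$---is the same as the paper's, but the pivotal step of your argument, that this decomposition ``transfers through the Selmer complex'' to a \emph{direct-sum} decomposition, is precisely where the argument breaks. At a prime $\mathfrak p\in S_p(F)$, which by \ref{item_ord} splits as $\tilde v\,\tilde v^{c}$ in $K$ with $\tilde v\in\Sigma$, the $\Sigma$-local condition defining $L^{\rm alg}_{p,\mathds 1}\vert_{\Gamma_{\rm cyc}}$ is $U^{+}_{\mathfrak p}=C^{\bullet}(G_{K_{\tilde v}},\bT_{\mathds 1}^{\cyc})$ sitting inside $C^{\bullet}(G_{F_{\mathfrak p}},\bT_F^{\cyc})=C^{\bullet}(G_{K_{\tilde v}},\bT_{\mathds 1}^{\cyc})\oplus C^{\bullet}(G_{K_{\tilde v^{c}}},\bT_{\mathds 1}^{\cyc})$. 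This place-wise splitting is not stable under $c\in\Gal(K/F)$, and since $\epsilon_{K/F}$ is trivial on $G_{F_{\mathfrak p}}$ (the prime splits), $U^{+}_{\mathfrak p}$ projects \emph{isomorphically} onto both eigencomponents $C^{\bullet}(G_{F_{\mathfrak p}},\bT_{\mathds 1}^{\cyc})$ and $C^{\bullet}(G_{F_{\mathfrak p}},\bT_{\epsilon}^{\cyc})$: it is a diagonal subobject, not the sum of a condition on the $\mathds 1$-part and a condition on the $\epsilon$-part. Hence the Selmer complex does not split along $\bT_F^{\cyc}=\bT_{\mathds 1}^{\cyc}\oplus\bT_{\epsilon}^{\cyc}$, and there are no ``Greenberg-ordinary filtrations'' underlying $\zeta_{\rm alg}$ and $L^{\rm alg,DR}_{\epsilon_{K/F}\omega_F}$: by Definition~\ref{defn_alg_DR_p-adicL} these are generators of $\det\big({\bf R}\Gamma_{\rm c}(G_{F,S},\bT_{\mathds 1}^{\cyc})\big)$ (strict at $p$) and of $\det\big({\bf R}\Gamma(G_{F,S},\bT_{\epsilon}^{\cyc})\big)$ (relaxed at $p$), respectively---this specialization is exactly the non-Panchishkin situation emphasized in the introduction, so your identification of the two factors via local ordinary conditions has no referent.

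What repairs the argument, and what the paper actually does (Proposition~\ref{prop:decomp-gross}), is to replace the direct sum by the short exact sequence $0\to\bT_{\mathds 1}^{\cyc}\to\bT_F^{\cyc}\to\bT_{\epsilon}^{\cyc}\to 0$ at the level of Selmer complexes: because the projection $\bT_F^{\cyc}\to\bT_{\epsilon}^{\cyc}$ restricts to an isomorphism $C^{\bullet}(G_{K_{\tilde v}},\bT_{\mathds 1}^{\cyc})\xrightarrow{\sim}C^{\bullet}(G_{F_{\mathfrak p}},\bT_{\epsilon}^{\cyc})$, one obtains an exact sequence of complexes with sub $\widetilde{C}^{\bullet}_{\rm c}(G_{F,S},\bT_{\mathds 1}^{\cyc})$ (strict, i.e.\ compactly supported, at $p$) and quotient $\widetilde{C}^{\bullet}(G_{F,S},\bT_{\epsilon}^{\cyc})$ (relaxed at $p$); note the asymmetry between the two pieces, which your symmetric ``graded pieces'' picture misses even though your assignment of $\zeta_{\rm alg}$ to the $\mathds 1$-part and of $L^{\rm alg,DR}$ to the $\epsilon$-part is the right one. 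Multiplicativity of determinants along this exact sequence, together with $\widetilde H^1_{\rm f}(G_{K,S_K},\bT_{\mathds 1}^{\cyc},\Delta_{\Sigma})=0$ (Corollary~\ref{cor:H^1_f=0 and H^2_f is torsion}) and Proposition~\ref{prop:parf[1,2]}, which let one replace $\det$ of the Selmer complex by ${\rm Fitt}_{\Lambda}(\widetilde H^2_{\rm f})$, then yields the stated identity. Your secondary worries are the easy part: inert primes never enter $S$, and at $v\in S_{\rm ram}(K/F)\setminus S_p(F)$ one has $(\bT_{\epsilon}^{\cyc})^{I_v}=0$ and $C^{\bullet}(G_{F_v},\bT_{\epsilon}^{\cyc})$ acyclic, so no spurious Euler factors appear once all three objects are defined with the same set $S$.
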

Paralleling the discussion in Appendix~\ref{sec_appendix}, we remark that $L_{p, \mathds{1}}^{\rm alg}\vert_{\Gamma_{\rm cyc}}$ is a non-critical specialization of the algebraic Katz $p$-adic $L$-function $L_{p, \mathds{1}}^{\rm alg}$. The adjective ``non-critical'' refers to the fact that the local conditions that determine the Selmer group giving rise to $L_{p, \mathds{1}}^{\rm alg}\vert_{\Gamma_{\rm cyc}}$ fail the Panchishkin condition for any specialization. This reflects the fact that no cyclotomic character falls within the interpolation range of the Katz $p$-adic $L$-function $L_{p, \mathds{1}}^{\rm Katz}$.

Theorem~\ref{intro_thm_algebraic_gross_factorization} combined with Theorem~\ref{intro_thm:iwasama main conj rankin-selberg} and Theorem~\ref{theorem_adjoint} yields the following extension of the work of Palvannan~\cite{palvannan_factorization} (whose analytic counterpart was conjectured by Citro~\cite{Citro2008} and proved by Dasgupta in~\cite{Dasgupta2016Factorization}, Theorem 2) in the context of the symmetric-products of nearly ordinary families of Hilbert modular forms with CM: 

\begin{theorem}
\label{intro_thm_algebraic_dasgupta_factorization}
Let us put $\Gamma_{\infty}^{\circ} := \Gal(K_{\rm cyc}K_{\rm ac}/K)$, where $K_{\rm cyc}$ is the cyclotomic $\bZ_p$-extension of $K$ and $K_{\rm ac}$ is the maximal $\bZ_p$-power anticyclotomic extension of $K$. 
Let $\widetilde{\mathscr D}^{\rm alg}_{{\rm ad}^0\Theta_{\Sigma,\psi}} \in \cO[[\Gamma_\infty^\circ]]$ denote the algebraic adjoint $p$-adic $L$-function, given as in \S\ref{subsec_selmer_traceless_adj}. 
We then have the factorization
\[
\widetilde{\mathscr{D}}^{\rm alg} = {\rm ver}_{\rm cyc}(\zeta_{\rm alg})\cdot \widetilde{\mathscr D}^{\rm alg}_{{\rm ad}^0\Theta_{\Sigma,\psi}}, 
\]
where the equality takes place in ${\rm Frac}({\cO}[[\Gamma_\infty^\circ]])^\times \big{/}{\cO}[[\Gamma_\infty^\circ]]^\times$ and 
\[
 {\rm ver}_{\rm cyc} \colon \cO[[\Gamma_{\rm cyc}]]  \longrightarrow \cO[[\Gamma_\infty^\circ]]
\]
denotes the canonical homomorphism induced from the transfer map; c.f. Equation \eqref{defn_cyclo_verchiubung}.
\end{theorem}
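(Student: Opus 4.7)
The plan is to chain together the three inputs cited just before the statement. Starting from the multivariate factorization
\[
\widetilde{\mathscr{D}}^{\rm alg}= {\rm ver}_{+}\!\left(L_{p, \mathds{1}}^{\rm alg}\vert_{\Gamma_\infty^+}\right)\cdot L_{p, \psi^{\rm ad}}^{\rm alg}
\]
of Theorem~\ref{intro_thm:iwasama main conj rankin-selberg} inside ${\rm Frac}(\cO[[\Gamma_\infty]])^\times/\cO[[\Gamma_\infty]]^\times$, the first step is to project this identity along the canonical surjection $\Gamma_\infty\twoheadrightarrow \Gamma_\infty^\circ$ into ${\rm Frac}(\cO[[\Gamma_\infty^\circ]])^\times/\cO[[\Gamma_\infty^\circ]]^\times$. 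A preliminary compatibility I would record is that the composite
\[
\Gamma_\infty^+\xrightarrow{\,{\rm ver}_+\,} \Gamma_\infty \twoheadrightarrow \Gamma_\infty^\circ
\]
factors through the cyclotomic $\bZ_p$-extension of $F$: the anticyclotomic directions of $\Gamma_\infty^\circ$ lie outside the image of transfer from $F$, while any Leopoldt-defect directions of $F$ sitting inside $\Gamma_\infty^+$ project trivially to $\Gamma_\infty^\circ$. Granting this, the first factor on the right-hand side pushes forward to ${\rm ver}_{\rm cyc}\!\left(L_{p,\mathds{1}}^{\rm alg}\vert_{\Gamma_{\rm cyc}}\right)$.

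Next I would invoke Theorem~\ref{intro_thm_algebraic_gross_factorization}, which combined with the multiplicativity of ${\rm ver}_{\rm cyc}$ (modulo units) rewrites this first factor as ${\rm ver}_{\rm cyc}\!\left(L_{\epsilon_{K/F}\omega_F}^{\rm alg, DR}\right)\cdot {\rm ver}_{\rm cyc}\!\left(\zeta_{\rm alg}\right)$. For the remaining factor $L_{p, \psi^{\rm ad}}^{\rm alg}\vert_{\Gamma_\infty^\circ}$ I would apply Theorem~\ref{theorem_adjoint}, which at the level of Galois representations is the algebraic incarnation of the virtual decomposition ${\rm ad}\,\rho_{\Theta_{\Sigma,\psi}}\cong \eins \oplus \epsilon_{K/F}\oplus {\rm Ind}_K^F(\psi^{1-c})$. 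Concretely the identity I would use reads
\[
L_{p, \psi^{\rm ad}}^{\rm alg}\vert_{\Gamma_\infty^\circ}={\rm ver}_{\rm cyc}\!\left(L_{\epsilon_{K/F}\omega_F}^{\rm alg, DR}\right)^{-1}\cdot \widetilde{\mathscr D}^{\rm alg}_{{\rm ad}^0\Theta_{\Sigma,\psi}}
\]
in ${\rm Frac}(\cO[[\Gamma_\infty^\circ]])^\times/\cO[[\Gamma_\infty^\circ]]^\times$. Substituting back, the two occurrences of ${\rm ver}_{\rm cyc}\!\left(L_{\epsilon_{K/F}\omega_F}^{\rm alg, DR}\right)$ cancel, leaving precisely the desired equality
\[
\widetilde{\mathscr{D}}^{\rm alg}={\rm ver}_{\rm cyc}(\zeta_{\rm alg})\cdot \widetilde{\mathscr D}^{\rm alg}_{{\rm ad}^0\Theta_{\Sigma,\psi}}.
\]

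The hard part of this route is not the arithmetic of the chain itself but rather the content of Theorem~\ref{theorem_adjoint}: one must show, through a Selmer-complex analysis in the derived category of the type used in Appendix~\ref{sec_appendix}, that the piece of $L_{p, \psi^{\rm ad}}^{\rm alg}$ responsible for the $\epsilon_{K/F}$-summand of ${\rm ad}^0$ matches, on the nose and not merely up to some opaque fudge factor, the same ${\rm ver}_{\rm cyc}\!\left(L_{\epsilon_{K/F}\omega_F}^{\rm alg, DR}\right)$ produced by the Gross--Greenberg step---so that the cancellation is clean. A secondary but nontrivial layer is the bookkeeping across the four group rings $\cO[[\Gamma_\infty]]$, $\cO[[\Gamma_\infty^+]]$, $\cO[[\Gamma_{\rm cyc}]]$, and $\cO[[\Gamma_\infty^\circ]]$: one must verify that the three input identities, each a priori living in its own quotient ${\rm Frac}(\cO[[\,?\,]])^\times/\cO[[\,?\,]]^\times$, descend compatibly under the transfer maps of \eqref{defn_cyclo_verchiubung} and \eqref{eqn_cyc_transfer_map}, and that none of the algebraic $p$-adic $L$-functions involved vanish identically after the relevant restriction (so that the inverse appearing above is a well-defined element of the fraction field).
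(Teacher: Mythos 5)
Your proposal is correct and is exactly the paper's derivation: the paper obtains this statement by pushing the factorization of Theorem~\ref{intro_thm:iwasama main conj rankin-selberg} down to $\Gamma_\infty^\circ$, rewriting the first factor via Theorem~\ref{intro_thm_algebraic_gross_factorization}, and substituting Theorem~\ref{theorem_adjoint}(i) (which is a proved input, not something left to establish) so that the two copies of ${\rm ver}_{\rm cyc}\bigl(L_{\epsilon_{K/F}\omega_F}^{\rm alg, DR}\bigr)$ cancel. The compatibility ${\rm ver}_{+}$ versus ${\rm ver}_{\rm cyc}$ under $\Gamma_\infty\twoheadrightarrow\Gamma_\infty^\circ$ and the nonvanishing of the restricted algebraic $p$-adic $L$-functions that you flag are indeed the only bookkeeping points, and they follow from the control theorems (Proposition~\ref{prop:parf[1,2]}, \eqref{isom_base_change_H2}) and Corollary~\ref{cor:H^1_f=0 and H^2_f is torsion} already in the paper.
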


Note that we have $\Gamma_\infty^\circ=\Gamma_\infty$ under the validity of the Leopoldt conjecture for $F$.  We also note that the $p$-adic analytic versions of these two factorization results  (recorded in \eqref{eqn_Gross_CM} and \eqref{eqn_Dasgupta_CM} in the main text)  seem currently out of reach when $F\neq \QQ$. 

\begin{remark} 
We have stated our factorization results for the non-critical specializations of Rankin--Selberg $p$-adic $L$-functions only in the context of CM families (respectively, of the cyclotomic restriction of the Katz $p$-adic $L$-function attached to the trivial character). The formalism in the present paper can be extended to treat the general case. Here the general case refers, in the context of  Rankin--Selberg $p$-adic $L$-functions, to the scenario with non-CM families; whereas in the context of Katz $p$-adic $L$-functions, to the scenario concerning the Katz $p$-adic $L$-functions attached to ray class characters $\chi$ for which we have $\chi=\chi\circ \,c$.  However, the factorization results for the non-critical specializations of Rankin--Selberg $p$-adic $L$-functions in the non-CM cases would require a suitable $p$-distinguished condition (paralleling Palvannan's work). Since our sights are set on applications towards the exceptional zeros of Katz $p$-adic $L$-functions, we have chosen not to include a discussion of the general case, so as not to digress from our main objective.
\end{remark}

\subsubsection{Exceptional zeros of Rankin--Selberg $p$-adic $L$-functions}
\label{subsubsec_intro_5} The factorization results Theorem~\ref{intro_thm:iwasama main conj rankin-selberg} and Theorem~\ref{intro_thm_algebraic_dasgupta_factorization} have the following consequence on the exceptional zeroes of the non-critical specializations of algebraic Rankin--Selberg $p$-adic $L$-functions: 

\begin{corollary}[Corollary~\ref{cor_exceptional_zeros_RankinSelberg_Hida} in the main text]
\label{intro_cor_exceptional_zeros_RankinSelberg_Hida}
Let $S_p(F)$ denote the set of $p$-adic places of $F$. If the $\Sigma$-Leopoldt Conjecture~\ref{conj_sigma_leopoldt} for $\overline{K}^{\ker(\psi)}$ holds true and Gross' $p$-adic regulator for $K$ given as in \cite[(3.8)]{FedererGross81} does not vanish, then 
\[
{ \widetilde{\mathscr D}^{\rm alg}_{{\rm ad}^0\Theta_{\Sigma,\psi}} } \in  (\mathscr{A}^\circ)^{e+ \#S_p(F)} \setminus (\mathscr{A}^\circ)^{e + \#S_p(F) + 1}. 
\]
Here $\mathscr{A}^\circ := \ker(\cO[[\Gamma_\infty^\circ]] \to \cO)$ denotes the argumentation ideal. If in addition Leopoldt's conjecture for $F$ holds true, then we have 
\[
\widetilde{\mathscr{D}}^{\rm alg}\in \mathscr{A}^{e+ \#S_p(F)-1} \setminus \mathscr{A}^{e+ \#S_p(F)}. 
\]
\end{corollary}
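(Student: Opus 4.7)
The strategy is to combine the three factorizations of Theorems~\ref{intro_thm:iwasama main conj rankin-selberg}, \ref{intro_thm_algebraic_gross_factorization}, and~\ref{intro_thm_algebraic_dasgupta_factorization} and to carefully track the total order of vanishing at the trivial character of each factor. Theorem~\ref{theorem:mult-exp-zeros} will determine the CM-type contribution, the Federer--Gross analysis recorded in Remark~\ref{remark:totally_real_excep} together with the hypothesized non-vanishing of Gross' $p$-adic regulator will pin down the exact order of the Deligne--Ribet factor, and the algebraic Dedekind $p$-adic zeta function $\zeta_{\rm alg}$ will contribute a simple pole at $\mathds{1}$, reflecting the classical pole of $\zeta_F(s)$ at $s=1$.

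I would first establish the second (stronger) assertion, granted the additional hypothesis of Leopoldt for $F$. Theorem~\ref{intro_thm:iwasama main conj rankin-selberg} factors $\widetilde{\mathscr{D}}^{\rm alg}={\rm ver}_+(L_{p,\mathds{1}}^{\rm alg}|_{\Gamma_\infty^+})\cdot L_{p,\psi^{\rm ad}}^{\rm alg}$. Theorem~\ref{theorem:mult-exp-zeros} applied with $\chi=\psi^{\rm ad}$ yields $L_{p,\psi^{\rm ad}}^{\rm alg}\in \sA^e\setminus\sA^{e+1}$ (note that $\overline{K}^{\ker(\psi^{\rm ad})}\subset\overline{K}^{\ker(\psi)}$, so $\Sigma$-Leopoldt propagates). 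Under Leopoldt for $F$ we have $\Gamma_\infty^+=\Gamma_{\rm cyc}$, so Theorem~\ref{intro_thm_algebraic_gross_factorization} further factors $L_{p,\mathds{1}}^{\rm alg}|_{\Gamma_{\rm cyc}}=L_{\epsilon_{K/F}\omega_F}^{\rm alg,DR}\cdot\zeta_{\rm alg}$. Hypothesis~\ref{item_ord} ensures $\epsilon_{K/F}(G_{F_v})=\{1\}$ for every $v\in S_p(F)$, so Remark~\ref{remark:totally_real_excep} gives a lower bound of $\#S_p(F)$ on the order of vanishing of $L_{\epsilon_{K/F}\omega_F}^{\rm alg,DR}$, and the non-vanishing of Gross' regulator promotes this to an equality. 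Combined with the simple pole of $\zeta_{\rm alg}$ and the preservation of orders of vanishing by ${\rm ver}_+$, this yields
\[
{\rm ord}_{\sA}\,\widetilde{\mathscr{D}}^{\rm alg}\;=\;e+\bigl(\#S_p(F)-1\bigr)\;=\;e+\#S_p(F)-1,
\]
which is precisely the second assertion of the Corollary.

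To derive the first assertion, I would insert this into Theorem~\ref{intro_thm_algebraic_dasgupta_factorization}, which factors $\widetilde{\mathscr{D}}^{\rm alg}={\rm ver}_{\rm cyc}(\zeta_{\rm alg})\cdot \widetilde{\mathscr D}^{\rm alg}_{{\rm ad}^0\Theta_{\Sigma,\psi}}$ in $\mathrm{Frac}(\cO[[\Gamma_\infty^\circ]])^\times/\cO[[\Gamma_\infty^\circ]]^\times$. The factor ${\rm ver}_{\rm cyc}(\zeta_{\rm alg})$ inherits the simple pole of $\zeta_{\rm alg}$, contributing order $-1$ in $\sA^\circ$. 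Running the computation of the previous paragraph directly in $\cO[[\Gamma_\infty^\circ]]$ (obtained by applying the canonical quotient $\cO[[\Gamma_\infty]]\twoheadrightarrow \cO[[\Gamma_\infty^\circ]]$ to Theorem~\ref{intro_thm:iwasama main conj rankin-selberg}) shows that ${\rm ord}_{\sA^\circ}\widetilde{\mathscr{D}}^{\rm alg}=e+\#S_p(F)-1$ holds without invoking Leopoldt for $F$, since Gross' $p$-adic regulator for $K$ as formulated in~\cite[(3.8)]{FedererGross81} is intrinsically tied to the compositum $K_{\rm cyc}K_{\rm ac}$, and its non-vanishing controls the leading term directly on the $\Gamma_\infty^\circ$-side. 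Solving for $\widetilde{\mathscr D}^{\rm alg}_{{\rm ad}^0\Theta_{\Sigma,\psi}}$ then gives
\[
{\rm ord}_{\sA^\circ}\widetilde{\mathscr D}^{\rm alg}_{{\rm ad}^0\Theta_{\Sigma,\psi}}\;=\;(e+\#S_p(F)-1)-(-1)\;=\;e+\#S_p(F).
\]

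The main obstacle in both parts is to verify that the transfer maps ${\rm ver}_+$ and ${\rm ver}_{\rm cyc}$ strictly preserve the augmentation filtration, i.e.\ send elements of exact order $m$ to elements of exact order $m$; this should follow from the explicit description in~\eqref{eqn_cyc_transfer_map}. A second obstacle is to confirm that $\zeta_{\rm alg}$ genuinely has a simple pole at $\mathds{1}$ (rather than some higher-order pole), which is essentially a consequence of the Iwasawa main conjecture for $F$ combined with the residue of $\zeta_F(s)$ at $s=1$. Finally, for the first assertion, the subtlety of working without Leopoldt for $F$ requires verifying that the Federer--Gross equality (upgraded from an inequality under Gross' regulator hypothesis) propagates cleanly through the projection $\cO[[\Gamma_\infty]]\twoheadrightarrow \cO[[\Gamma_\infty^\circ]]$.
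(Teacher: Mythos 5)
Your sketch of the \emph{second} assertion follows the same route as the paper: combine the factorization of Theorem~\ref{thm:iwasama main conj rankin-selberg}(i) with the algebraic Gross factorization (Theorem~\ref{thm_algebraic_gross_factorization}), use Proposition~\ref{proposition:order-H2} to pin down the orders of $L^{\rm alg,DR}_{\epsilon_{K/F}\omega_F}$ and $\zeta_{\rm alg}$, and use Corollary~\ref{cor_algebraic_multivariate_exceptional_zero} (under $\Sigma$-Leopoldt) to get exact order $e$ for $L^{\rm alg}_{p,\psi^{\rm ad}}$; this is essentially Corollary~\ref{corollary:order-trivial-character} plus Corollary~\ref{corollary_}. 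Two caveats: your justification of the simple pole of $\zeta_{\rm alg}$ is off — by Definition~\ref{defn_alg_DR_p-adicL} the pole order at $\sA_{\rm cyc}$ is at most one by construction, and whether the pole is actually there (exact order $-1$) is \emph{equivalent} to Leopoldt's conjecture for $F$ (Proposition~\ref{proposition:order-H2}(i)), not a consequence of the Iwasawa main conjecture for $F$ together with the archimedean residue of $\zeta_F$ at $s=1$. Also, the containment $\overline{K}^{\ker(\psi^{\rm ad})}\subset\overline{K}^{\ker(\psi)}$ you invoke to "propagate" $\Sigma$-Leopoldt need not hold unless $\ker(\psi)$ is stable under $c$, so this transfer deserves a genuine argument (or the hypothesis should be phrased for the field cut out by $\psi^{\rm ad}$, which is the character actually entering the Selmer group).

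The genuine gap is in your derivation of the \emph{first} assertion. You obtain the order of $\widetilde{\mathscr D}^{\rm alg}_{{\rm ad}^0\Theta_{\Sigma,\psi}}$ by dividing $\widetilde{\mathscr{D}}^{\rm alg}$ by ${\rm ver}_{\rm cyc}(\zeta_{\rm alg})$ through Theorem~\ref{intro_thm_algebraic_dasgupta_factorization}, and you need both that $\zeta_{\rm alg}$ contributes exactly $-1$ and that ${\rm ord}_{\sA^\circ}\widetilde{\mathscr{D}}^{\rm alg}=e+\#S_p(F)-1$ ``without invoking Leopoldt.'' But these two inputs are precisely the Leopoldt-sensitive ones: the exact order $-1$ of $\zeta_{\rm alg}$ \emph{is} Leopoldt for $F$, and without Leopoldt the factor ${\rm ver}_{+}(L^{\rm alg}_{p,\mathds{1}}\vert_{\Gamma_\infty^+})$, equivalently the $\bT^{\rm cyc}_{\mathds{1}}$-component $H^{2}_{\rm c}$, acquires extra augmentation divisibility, so the claimed order of $\widetilde{\mathscr{D}}^{\rm alg}$ is not available. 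Your appeal to Gross' regulator being ``tied to $K_{\rm cyc}K_{\rm ac}$'' does not help, because the regulator controls only the $\epsilon_{K/F}$-part (Proposition~\ref{proposition:order-H2}(ii)), not the trivial-character part where Leopoldt enters. The paper avoids this circularity by proving the adjoint statement \emph{directly}: the traceless adjoint contains no $\bT_{\mathds{1}}$-component, and Theorem~\ref{theorem_adjoint}(i) gives $\widetilde{\mathscr D}^{\rm alg}_{{\rm ad}^0\Theta_{\Sigma,\psi}}={\rm ver}_{\rm cyc}\bigl(L^{\rm alg,DR}_{\epsilon_{K/F}\omega_F}\bigr)\cdot L^{\rm alg}_{p,\psi^{\rm ad}}$, whence the order is $\#S_p(F)+e$ by Proposition~\ref{proposition:order-H2}(ii) (Gross' regulator) and Corollary~\ref{cor_algebraic_multivariate_exceptional_zero} ($\Sigma$-Leopoldt), with no appeal to Leopoldt for $F$ and no division by $\zeta_{\rm alg}$. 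You should prove the first assertion along these lines (i.e.\ via Theorem~\ref{theorem_adjoint}) rather than by solving for it inside the factorization of $\widetilde{\mathscr{D}}^{\rm alg}$.
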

Note that we have $\Gamma_\infty^\circ=\Gamma_\infty$ and $\mathscr{A}^\circ = \mathscr{A}$ under the validity of the Leopoldt  conjecture for $F$. 
\begin{remark}
\label{rem_Hsieh_2_Intro}
Granted the truth of Iwasawa Main Conjectures for the CM field $K$, Corollary~\ref{intro_cor_exceptional_zeros_RankinSelberg_Hida} can be stated in terms of the non-critical specialization of Hida's ``analytic'' Rankin--Selberg $p$-adic $L$-functions $\widetilde{\mathscr D}^{\rm Hida}$ and  $\widetilde{\mathscr D}_{{\rm ad}^0\Theta_{\Sigma,\psi}}$, which are given in Definition~\ref{defn_regularized_Hida_RS}. The adjective ``non-critical'' in this context refers to the fact that the $p$-adic $L$-function $\widetilde{\mathscr D}^{\rm Hida}$ comes associated to a $p$-adic family of motives without any critical values, and it is defined as the specialization of a  Rankin--Selberg $p$-adic $L$-function over a higher-dimensional weight space.

To apply the main results of \cite{HsiehJAMS_IMC} towards the Iwasawa Main Conjectures, let us suppose $K=FM$ for an imaginary quadratic field $M$ where $p$ splits and $p\nmid 3h_K^-$. Suppose also that $\Sigma$ is obtained by extending $\iota_p:M\hookrightarrow \bC_p$,  that $\overline{K}^{\ker(\psi)}$ is abelian over $M$ and finally that $p\nmid [\overline{K}^{\ker(\psi)}:M]$. As we have noted in Remark~\ref{remark_intro_Hsieh_1}, both the $\Sigma$-Leopoldt and the Leopoldt conjecture holds true for $\overline{K}^{\ker(\psi)}$.  Corollary~\ref{intro_cor_exceptional_zeros_RankinSelberg_Hida} combined with \cite[Theorem C.(iii)]{HsiehJAMS_IMC}  shows that
$$\widetilde{\mathscr D}_{{\rm ad}^0\Theta_{\Sigma,\psi}} \in  \mathscr{A}^{e+ \#S_p(F)} \setminus \mathscr{A}^{e + \#S_p(F) + 1}\,\hbox{\,\,\,\,\,\,\,\,and\,\,\,\,\,\,\,\,\,}{ \widetilde{\mathscr{D}}^{\rm Hida}}\in \mathscr{A}^{e+ \#S_p(F)-1} \setminus \mathscr{A}^{e+ \#S_p(F)}$$
assuming the non-vanishing of the Gross' $p$-adic regulator. 
\end{remark}

We would like to conclude this introduction by indicating how the two seemingly independent threads (first is the portion summarized in \S\ref{subsubsec_1_1_1}--\S\ref{subsubsec_group_ring_valued_L_invariants}, whereas the second in \S\ref{subsubsec_intro_3}--\S\ref{subsubsec_intro_5}) are expected to tie together.

Since the work of Greenberg--Stevens~\cite{GreenbergStevens1993} on the Mazur--Tate--Teitelbaum conjecture, deformation theoretic arguments have been successfully employed in the study of exceptional zeros, c.f. \cite{BetinaDimitrovKatz} where the authors study the anticyclotomic exceptional zeros of the Katz' $p$-adic $L$-function (in the special case $F=\QQ$) in terms of the geometric properties of the cuspidal eigencurve about irregular weight-one points. See also \cite{kbbMTT,benoisbuyukboduk,Venerucci} for works where $p$-adic analytic families were put to use to study exceptional zeros.

The impetus to explore the exceptional zero phenomena from the perspective of Rankin--Selberg $p$-adic $L$-functions (which is the content of the portions of our work summarized in \S\ref{subsubsec_intro_3}--\S\ref{subsubsec_intro_5}) grew out of our attempt to understand the connections between the treatment of Rivero--Rotger \cite{RiveroRotgerJEMS} of the exceptional zero problem for Rankin--Selberg $p$-adic $L$-functions at weight-one points with the aid of Beilinson--Flach elements, the work of Betina--Dimitrov~\cite{BetinaDimitrovKatz} on the local geometry of the eigencurve and its relation to the  \emph{anticyclotomic} exceptional zeros of Katz' $p$-adic $L$-function, and our previous work~\cite{BS19}. We note that the restriction in \cite{BetinaDimitrovKatz} (as compared to the more general results in \cite{BS19} that concern any $\ZZ_p$-extension of $K$) is due to the nature of the approach in~\cite{BetinaDimitrovKatz}: The local geometry of the eigencurve is controlled by the adjoint $L$-functions, which admit (at points corresponding to eigenforms with CM) the $L$-functions of anticyclotomic Hecke characters as factors. 

We will amalgamate in a sequel~\cite{BS3} the ideas in \cite{RiveroRotgerJEMS,BetinaDimitrovKatz, BS19} and the results of the present article, to utilize Beilinson--Flach elements to further study the cyclotomic group-ring-valued $\cL$-invariant $\cL_{\Sigma,\Gamma_{\cyc}}^{\rm Gal}$, to give a non-vanishing criterion for it in terms of Beilinson--Flach elements. Before closing this section we expand on this final point. Until the end of \S\ref{subsubsec_intro_5}, we take $F=\QQ$.


The exceptional zeros that we study here concern outside the range of interpolation (equivalently, non-critical values of the $L$-function of the underlying motive). As a reflection of this fact, our group-ring-valued $\mathcal{L}$-invariants do not appear to be entirely local entities (e.g. unlike the ones introduced by Mazur--Tate--Teitelbaum) but rather carry a global flavor. 

In light of the factorization results for non-critical Rankin--Selberg $p$-adic $L$-functions (c.f. \cite[Theorem 8.1]{HT93} and Theorem~\ref{intro_thm:iwasama main conj rankin-selberg} above), we propose to use Beilinson--Flach elements as the required global input in our work \cite{BS3} in progress. To be more precise let, only in this paragraph, $\theta(\chi)$ (resp. $\theta(\chi^{-1})$) denote the $p$-stabilized theta-series of $\chi$ (resp. of $\chi^{-1}$), which is a weight-one CM form. We note that the factorization of non-critical Rankin--Selberg $p$-adic $L$-functions allows us to recast the problem to determine the order of vanishing of $L_{p,\chi}^{\rm Katz}|_{\Gamma_\cyc}$ at $\mathds{1}$  to the same for the non-critical Rankin--Selberg $p$-adic  $L$-function attached to the Rankin--Selberg convolution $\theta(\chi)\times \theta(\chi^{-1})$. Our goal in \cite{BS3} is to use the deformations of Beilinson--Flach elements to weight-one (more precisely, to the point in the ${\rm GL}_2\times{\rm GL}_2$-eigenvariety that corresponds to $\theta(\chi)\times \theta(\chi^{-1})$) to obtain a non-vanishing criterion for the cyclotomic group-ring-valued $\mathcal L$-invariant in terms of these. This draws from the work of Rivero--Rotger~\cite{RiveroRotgerJEMS}, where they study a similar problem in a scenario when the relevant local Galois representation at $p$ is regular. In the setting we place ourselves, the Galois representation at hand is no longer $p$-regular and therefore, the results of \cite{RiveroRotgerJEMS} are not immediately available.
\subsection{Notation and set up}
\label{subsec_notation_set_up_1_2}
For a number field $E$, we write $S_{\infty}(E)$ for the archimedean places of $E$ and $S_p(E)$ for the primes of $E$ above $p$. When $E'/E$ is an extension of number fields, we denote by $D_{E'/E}$  the relative discriminant and $S_{\rm ram}(E'/E)$ the set of places of $E$ which ramifies in the extension $E'/E$.

We fix a CM field $K$ and denote by $F$ its maximal totally real subfield. We let $c \in \Gal(K/F)$ be the unique non-trivial automorphism of $K$. For any Hecke character $\lambda$ of $K$, we set $\lambda^c:=\lambda\circ c$ and $\lambda^{\rm ad}:=\lambda^c/\lambda$. We assume throughout this article that $K$ verifies the hypothesis \ref{item_ord}. We fix a set $\Sigma\subset S_p(K)$ such that 
\[
\Sigma \cup \Sigma^c = S_p(K) \ \text{ and } \ \Sigma \cap \Sigma^c = \emptyset. 
\]
Such a choice is called a $p$-adic CM type. 

Let $F_\cyc/F$ denote the cyclotomic $\ZZ_p$-extension and put $\Gamma_\cyc:=\Gal(F_\cyc/F)$. We let $K(p^\infty)/K$ denote the compositum of all $\ZZ_p$-extensions of $K$ and set $\Gamma_\infty:=\Gal(K(p^\infty)/K)$. We denote by $K_{\rm ac}/K$ the anticyclotomic tower over $K$ and define $\Gamma_{\rm ac}:=\Gal(K_{\rm ac}/K)$. We let $K_{\rm cyc}$ denote the cyclotomic $\ZZ_p$-extension of $K$. Throughout this paper, we always identify the Galois group of the cyclotomic $\bZ_{p}$-extension $K_{\rm cyc}/K$ with $\Gamma_{\rm cyc}$ via the canonical isomorphism $\Gal(K_{\rm cyc}/K) \stackrel{\sim}{\longrightarrow} \Gamma_{\rm cyc}$. We denote by $\Gamma_\infty^+$ the Galois group of the maximal $\ZZ_p$-power extension $F_\infty$ of $F$; note that $F_\infty=F_\cyc$ and $\Gamma_\infty^+=\Gamma_\cyc$ if Leopoldt's conjecture for $F$ holds true. We identify the Galois group of the extension $KF_\infty/K$ with $\Gamma_\infty^+$ via the canonical isomorphism $\Gal(KF_{\infty}/K) \stackrel{\sim}{\longrightarrow} \Gamma_{\infty}^+$. Finally, we put $\Gamma_\infty^\circ:=\Gal(K_{\rm ac}K_{\rm cyc}/K)$. Note that $\Gamma_\infty^\circ\cong \ZZ_p^{[F:\QQ]+1}$ and $\Gamma_\infty\cong \ZZ_p^{[F:\QQ]+1+\delta}$ where $\delta$ is the Leopoldt defect. If Leopoldt's conjecture for $F$ holds true, then $K_{\rm ac}K_\cyc=K(p^\infty)$ and $\Gamma_\infty^\circ=\Gamma_\infty$.

Via the fixed embedding $\iota$, we treat all ray class characters also as $p$-adic characters. In \S\S\ref{sec_Selmer_Groups}--\ref{sec_Linvariants_heights_exceptionalzeros} where our discussion is related to \cite{BS19}, we denote the ray class character we work with by $\chi$ (so as to parallel to notation in op. cit.). In \S\S\ref{sec_Hida_RS}--\ref{sec:Factorisation of Selmer groups for symmetric Rankin--Selberg products} where we extensively dwell on the constructions of Hida and Hida--Tilouine, we shall denote the ray class character we have fixed by $\psi$ (so as to parallel to notation of Hida--Tilouine). We denote the conductor of both $\chi$ and $\psi$ by $\frak{c}$ and assume that $(\frak{c},p)=1$. With a slight abuse, we shall denote the field cut out either by $\chi$ or $\psi$ by $L$ and assume that $[L:K]$ is coprime to $p$. 

We let $\bZ_p^{\rm ur}:= W(\overline{\mathbb{F}}_p)$ denote the ring of Witt vectors of the field $\overline{\mathbb{F}}_p$. {We choose a discrete valuation $\bZ_{p}$-algebra $\cO$ which is  finite flat over $\bZ_{p}$ and contains all values of $\chi$ and $\psi$. We also assume $\cO\subset \bZ_p^{\rm ur}$. We note that such a ring  exists since  the order of $\chi$ and $\psi$ is prime to $p$}.

For a pro-finite group $G$, we denote by $\widehat G$ the group of $\overline{\QQ}_p$-valued continuous characters of $G$. Given a topological $G$-module $M$, we let $C^{\bullet}(G,M)$ denote the complex of continuous cochains. For any torsion-free quotient $\Gamma$ of $\Gamma_\infty$, we shall put $\LL(\Gamma):=\ZZ_p[[\Gamma]]$ and for any {linearly topologized} $\ZZ_p$-algebra $A$, we set 
$\LL_{A}(\Gamma):=A \widehat{\otimes} \LL(\Gamma)$. We let $\LL_{A}(\Gamma)^\sharp$ denote free $\LL_{A}(\Gamma)$-module of rank one on which $G_K$ acts via the tautological character $G_K\twoheadrightarrow \Gamma \hookrightarrow \LL_{A}(\Gamma)^\times$ and $\LL_{A}(\Gamma)^\iota$ denote its $\LL_{A}(\Gamma)$-linear contragredient, so that $\LL_{A}(\Gamma)^\iota$ is the free $\LL_{A}(\Gamma)$-module of rank one on which $G_K$ acts via the character $G_K\twoheadrightarrow \Gamma \xrightarrow{\gamma\mapsto \gamma^{-1}} \Gamma\hookrightarrow \LL_{A}(\Gamma)^\times$. We note that the $G_K$-representations $\LL_{A}(\Gamma_{\rm cyc})^\sharp$ and $\LL_{A}(\Gamma_{\rm cyc})^\iota$ both extend to a representation of $G_F$.

\subsubsection{} We collect here the notation for all the $p$-adic $L$-functions (algebraic or analytic) that we work with in this article. Whenever applicable, we indicate where in our article they are introduced.
\begin{itemize}
    \item $L_{p,\chi}^{\rm Katz}$: Katz $p$-adic $L$-function in $[F:\QQ]+1+\delta$-variables. Its defining properties are presented in \S\ref{subsec_Katz_padic_L_analytic_non_critical_exceptional_zeros}.
    \begin{itemize}
        \item Its algebraic counterpart $L_{p,\chi}^{\rm alg}$ is introduced in Definition~\ref{defn_algebraic_Katz_padic_L}.
        \item Given a $\ZZ_p$-extension $K_\Gamma/K$ with Galois group $\Gamma$, we write $L_{p,\chi}^{\rm Katz}\vert_\Gamma$ for its restriction to $\widehat{\Gamma}$, which is a one-variable $p$-adic $L$-function.
        \item We note that in our factorization formulae, the $p$-adic $L$-functions $L_{p, \mathds{1}}^{\rm Katz}$ and $L_{p, \mathds{1}}^{\rm alg}$ corresponding to the particular case when $\chi=\mathds{1}$ makes an appearance.
    \end{itemize}
    \item $\widetilde{\mathscr{D}}^{\rm Hida}=\widetilde{\mathscr{D}}^{\rm Hida}(\Theta_{\Sigma,\psi}\otimes \Theta_{\Sigma,\psi})$:  regularized non-critical specialization of the Rankin--Selberg $p$-adic $L$-function in $[F:\QQ]+1$-variables, introduced in Definition~\ref{defn_regularized_Hida_RS}, is attached to the self-Rankin--Selberg product $\Theta_{\Sigma,\psi}\otimes \Theta_{\Sigma,\psi}$ of the nearly ordinary CM family of Hilbert modular forms $\Theta_{\Sigma,\psi}$ with branch character $\psi$.
    \begin{itemize}
        \item Its algebraic counterpart $\widetilde{\mathscr{D}}^{\rm alg}=\widetilde{\mathscr{D}}^{\rm alg}(\Theta_{\Sigma,\psi}\otimes \Theta_{\Sigma,\psi})$  is introduced in Definition~\ref{defn_algebraic_padic_Rankin_Selberg} and Theorem~\ref{thm:iwasama main conj rankin-selberg}(ii).
\end{itemize}
\item $\widetilde{\mathscr D}_{{\rm ad}^0\Theta_{\Sigma,\psi}}$: adjoint $p$-adic $L$-function in $[F:\QQ]+1$-variables, introduced in Definition~\ref{defn_regularized_Hida_RS}.
\begin{itemize}
        \item Its algebraic counterpart $\widetilde{\mathscr D}^{\rm alg}_{{\rm ad}^0\Theta_{\Sigma,\psi}} $ is introduced in \S\ref{defn_algebraic_adjoint_padic_L_function}.
\end{itemize}
\item $L_p^{\rm DR}(s,\epsilon_{K/F}\omega)$ and $\zeta_p(s)$: Deligne--Ribet $p$-adic $L$-function associated to the totally even character $\epsilon_{K/F}\omega_F$ and the Dedekind $p$-adic zeta function respectively, both on the single (cyclotomic) variable $s$.   
\begin{itemize}
    \item Their algebraic counterparts (that our main factorization results concern) $L_{\epsilon_{K/F}\omega_F}^{\rm alg, DR}$ and $\zeta_{\rm alg}$ are introduced in Definition~\ref{defn_alg_DR_p-adicL}. 
\end{itemize}\end{itemize}

\acknowledgements{We thank the anonymous referee for many useful suggestions and helpful comments, which guided us towards many technical and stylistic improvements to the earlier versions of our article.}

\section{Selmer complexes and main conjectures}
\label{sec_Selmer_Groups}
In this section, we review Nekov\'a\v{r}'s definition of Selmer complexes and study their basic properties in the particular case that is relevant to the study of Katz' $p$-adic $L$-functions attached to CM fields. Our main goal in this section is to reformulate (see Conjecture~\ref{conj:iwasawa main conj}) the Iwasawa Main Conjectures for CM fields in terms of Nekov\'a\v{r}'s extended Selmer groups.


\subsection{Definition of Selmer complexes}
In this subsection, we shall work in great generality and let $(R,\fm)$ be a complete Noetherian local ring with residue characteristic $p>2$. We let $K$ be a any number field and fix a finite set $S$ of primes of $K$ satisfying $S_p(K) \subset S$, where $S_p(K)$ is the set of primes of $\cO_K$ that lie above $p$.  We write $K_S$ for the maximal algebraic extension of $K$ which is unramified outside $S \cup S_{\infty}(K)$. 
We put $G_{K,S} := \Gal(K_S/K)$.  
Let $T$ be a free $R$-module of finite rank which is equipped with a continuous $G_{K,S}$-action. 

We next recall basic constructions of Nekov\'a\v{r} \cite{Nek} in this general set up and collect useful facts for our study.
\begin{definition}
\item[i)] For $G=G_{K,S}$ or $G_{K_v}$ with $v\in S$, we let $C^{\bullet}(G,T)$ denote the complex of continuous cochains with values in $T$. 

\item[ii)] When $G = G_{K_{v}}$ with $v \nmid p$, we define the complex $C^{\bullet}_{\rm ur}(G, T)$ of unramifed cochains on setting 
\[
C^{\bullet}_{\rm ur}(G, T):=[T^{I_{v}} \xrightarrow{{\rm Frob}_{v}-1} T^{I_{v}}]
\]
which is concentrated in degrees $0$ and $1$. 
Here, $I_{v}$ denotes the inertia subgroup of $G_{K_{v}}$ and ${\rm Frob}_{v} \in G_{K_{v}}/I_{v}$ denotes the geometric Frobenius. As explained in \cite[\S7.2]{Nek}, there is a natural morphism 
\[
C^{\bullet}_{\rm ur}(G, T) \longrightarrow C^{\bullet}(G, T)\,. 
\]

\item[iii)] For each prime $v\in S$, a local condition $\Delta_v$ on $C^{\bullet}(G_{K,S},T)$ is the datum given by a morphism of complexes of $R$-modules 
\[
i_v^+ \colon U_v^+ \longrightarrow C^{\bullet}(G_{K_{v}},T) \,.
\]
Given a local condition $\Delta_v$ as above, we define 
\[
U_{v}^{-}(T) := {\rm Cone}\left(U_{v}^{+} \xrightarrow{-i_{v}^{+}} C^{\bullet}(G_{K_{v}},T)\right) 
\]
and set $i_{S}^{+} := \{i_{v}^{+}\}_{v \in S}$ and $U_{S}^{\pm} := \bigoplus_{v \in S }U^{\pm}_{v}$. 
We also put 
\[
{\rm res}_{S} \colon C^{\bullet}(G_{K,S},T) \xrightarrow{\oplus_{v\in S}{\rm res}_v} \bigoplus_{v \in S }C^{\bullet}(G_{K_{v}},T)
\]
for the sum of the restriction maps. 

\item[iv)] Given a local condition $\Delta_v$ for each $v\in S$, let us put $\Delta := (\Delta_v)_{v \in S}$. The Selmer complex associated to the pair $(T,\Delta)$ is the complex
\[
\widetilde{C}_{\rm f}^\bullet(G_{K,S},T,\Delta) := {\rm Cone}\left(C^{\bullet}(G_{K,S},T) \oplus U^{+}_{S} \xrightarrow{{\rm res}_{S}-i_{S}^{+}} \bigoplus_{v \in S}C^{\bullet}(G_{K_{v}},T)\right)[-1], 
\]
where $[n]$ is translation of a complex by $n \in \ZZ$ (in the sense of \cite[\S1.1.1]{Nek}). 
\item[iv)] We denote by $\widetilde{{\bf R}\Gamma}_{\rm f}(G_{K,S},T,\Delta) \in D({}_R{\rm Mod})$ the object in the derived category corresponding to $\widetilde{C}_{\rm f}^\bullet(G_{K,S},T,\Delta)$ and we denote its cohomology by $\widetilde{H}^{i}_{\rm f}(G_{K,S},T,\Delta)$. 
\end{definition}

\begin{lemma}
There is a natural exact triangle 
\begin{align}\label{fundamental exact tri}
\widetilde{{\bf R}\Gamma}_{\rm f}(G_{K,S},T,\Delta) \longrightarrow {\bf R}\Gamma(G_{K,S},T) \longrightarrow U^{-}_{S}(T) 
\end{align}
in $D({}_R{\rm Mod})$.
\end{lemma}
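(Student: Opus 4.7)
The strategy is to derive the triangle formally from the mapping-cone definition of the Selmer complex by a single application of the octahedral axiom, using nothing beyond the fact that ${\rm res}_S$ and $i_S^+$ are morphisms of complexes of $R$-modules. To set things up, put $A:=C^\bullet(G_{K,S},T)$ and $X:=\bigoplus_{v\in S}C^\bullet(G_{K_v},T)$, and let $\alpha:={\rm res}_S - i_S^+\colon A\oplus U_S^+ \to X$ denote the comparison map. By definition $\widetilde{C}_{\rm f}^\bullet(G_{K,S},T,\Delta) = {\rm Cone}(\alpha)[-1]$, so the mapping-cone formalism immediately yields the distinguished triangle
\[
\widetilde{C}_{\rm f}^\bullet(G_{K,S},T,\Delta) \longrightarrow A\oplus U_S^+ \xrightarrow{\alpha} X \longrightarrow \widetilde{C}_{\rm f}^\bullet(G_{K,S},T,\Delta)[1]
\]
in $D({}_R{\rm Mod})$, which serves as the starting point.

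I would then apply the octahedral axiom to the composition
\[
\widetilde{C}_{\rm f}^\bullet \longrightarrow A\oplus U_S^+ \xrightarrow{\pi_1} A,
\]
where $\pi_1$ is the split projection onto the first summand. The cone of the first arrow is $X$, by the triangle above, while the cone of $\pi_1$ is canonically $U_S^+[1]$, arising from the split triangle $U_S^+\to A\oplus U_S^+\xrightarrow{\pi_1} A\to U_S^+[1]$ whose connecting morphism vanishes. The octahedron then produces a distinguished triangle $X\to Y\to U_S^+[1]\to X[1]$, where $Y$ is by definition the cone of the composite $\widetilde{C}_{\rm f}^\bullet\to A$; a diagram chase inside the octahedron identifies the connecting morphism $U_S^+[1]\to X[1]$ with the shift of $-i_S^+$, that is, with the restriction of $-\alpha$ to the $U_S^+$-summand. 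Rotating the resulting triangle exhibits $Y$ as ${\rm Cone}\bigl(U_S^+\xrightarrow{-i_S^+} X\bigr)=U_S^-(T)$, which is the content of the statement.

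The only real obstacle is bookkeeping around signs and shift conventions, which must be aligned with those of \cite{Nek}. As a sanity check, and as an alternative to the octahedral argument, one can compute $\widetilde{C}_{\rm f}^n = A^n\oplus U_S^{+,n}\oplus X^{n-1}$ directly, take the obvious projection onto $A^n$ as the first map of the desired triangle, and exhibit an explicit contracting homotopy showing that the mapping cone of this projection is chain-homotopy equivalent to the standard cochain model of $U_S^-(T)$. This concrete description also makes the naturality of the triangle in $T$, $\Delta$, and $S$ transparent, which will be useful in subsequent applications.
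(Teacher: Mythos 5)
Your argument is correct and is exactly what the paper means when it dismisses the lemma as ``immediate from the definition'' of the Selmer complex: the triangle falls out of the mapping-cone description of $\widetilde{C}_{\rm f}^\bullet$, whether one phrases it via the octahedral axiom or via the explicit termwise computation with a contracting homotopy, as you do. Your write-up just makes the routine cone/sign bookkeeping explicit, so there is nothing to add.
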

\begin{proof}
This is immediate from the definition of $\widetilde{{\bf R}\Gamma}_{\rm f}(G_{K,S},T,\Delta)$. 
\end{proof}

\begin{lemma}
\label{lem:perf[0,3]}
Suppose for every $v \in S$ that $U_{v}^{+} \in D_{\rm parf}^{[0,2]}({}_R{\rm Mod})$ is represented by a perfect complex of $R$-modules with degrees concentrated in $0,1,2$. Then, 
\[
\widetilde{{\bf R}\Gamma}_{\rm f}(G_{K,S},T,\Delta) \in D_{\rm parf}^{[0,3]}({}_R{\rm Mod}). 
\]
\end{lemma}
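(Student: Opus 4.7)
The plan is to verify the claim by showing that each of the three complexes appearing in the cone that defines $\widetilde{C}_{\rm f}^\bullet(G_{K,S},T,\Delta)$ is representable by a perfect complex of $R$-modules concentrated in degrees $[0,2]$, and then to track the degree shift induced by forming the mapping cone and applying the translation $[-1]$.

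First I would invoke the standard perfectness results from Nekov\'a\v{r}'s framework: since $p>2$ and $T$ is a free $R$-module of finite rank equipped with a continuous $G_{K,S}$-action, the continuous cochain complex $C^\bullet(G_{K,S},T)$ represents an object of $D^{[0,2]}_{\rm parf}({}_R{\rm Mod})$ (see \cite[\S5]{Nek}). The restriction $p>2$ is essential here: it eliminates the potential degree-$3$ contributions coming from the real archimedean places of $K$, so that the relevant strict $p$-cohomological dimension drops to $2$. Analogously, for each $v\in S$ the local cochain complex $C^\bullet(G_{K_v},T)$ belongs to $D^{[0,2]}_{\rm parf}({}_R{\rm Mod})$, by Tate's local duality combined with the fact that the $p$-cohomological dimension of $G_{K_v}$ equals $2$.

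Next I would use the hypothesis $U_v^+\in D_{\rm parf}^{[0,2]}({}_R{\rm Mod})$ for each $v\in S$; since $S$ is finite, the direct sum $U_S^+=\bigoplus_{v\in S}U_v^+$ also lies in $D_{\rm parf}^{[0,2]}$. In particular, both the source $C^\bullet(G_{K,S},T)\oplus U_S^+$ and the target $\bigoplus_{v\in S}C^\bullet(G_{K_v},T)$ of the morphism ${\rm res}_S - i_S^+$ are represented by perfect complexes of $R$-modules concentrated in degrees $[0,2]$.

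Finally, the mapping cone of a morphism between two objects of $D^{[0,2]}_{\rm parf}$ lies in $D^{[-1,2]}_{\rm parf}$, as follows from the degreewise formula ${\rm Cone}(A\to B)^n = A^{n+1}\oplus B^n$ together with the fact that finite direct sums of finitely generated projective $R$-modules remain finitely generated projective. Translating by $[-1]$ then places the Selmer complex in $D^{[0,3]}_{\rm parf}({}_R{\rm Mod})$, which is precisely the claim. The only conceivable technical obstacle is the invocation of perfectness of the continuous cochain complexes in a possibly pro-finite or topological setting, but this is well-documented in Nekov\'a\v{r}'s monograph and requires no additional argument here.
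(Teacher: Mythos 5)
Your argument is correct and is essentially the paper's proof: the paper simply cites Nekov\'a\v{r}'s Proposition 9.7.2(ii), whose proof is exactly this — for $p>2$ the global and local continuous cochain complexes are representable by perfect complexes in degrees $[0,2]$, so the cone defining the Selmer complex lies in degrees $[-1,2]$ and the shift $[-1]$ places it in $D^{[0,3]}_{\rm parf}({}_R{\rm Mod})$. Nothing further is needed.
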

\begin{proof}
The argument in the proof of \cite[Proposition 9.7.2(ii)]{Nek} applies verbatim. 
\end{proof}


\subsection{Selmer complexes attached to Artin motives}
\label{subsection:example-cm}
In this subsection, we concentrate in the particular case of Artin motives and explicitly determine the Selmer complex associated to the universal deformations of Artin motives over a CM field $K$. 

We recall that $\cO \subset \bZ_p^{\rm ur}$ is a fixed discrete valuation $\bZ_{p}$-algebra which is  finite flat over $\bZ_{p}$ and contains all values of $\chi$ and $\psi$. We fix a subextension $\frak{K}\subset K(p^\infty)$ of $K$ and set $R:= \cO[[\Gal(\frak{K}/K)]]$. We let $\cO[[\Gal(\frak{K}/K)]]^{\iota}$ denote the free $R$-module of rank one on which $G_{K}$ acts via the character
$$G_{K} \twoheadrightarrow \Gal(\frak{K}/K) \xrightarrow{g\mapsto g^{-1}} \Gal(\frak{K}/K) \hookrightarrow R^\times$$
and set $\bT_{\frak K}:= \cO(1) \otimes \chi^{-1} \otimes_{\cO} \cO[[\Gal(\frak{K}/K)]]^{\iota}$, on which $G_K$ acts diagonally. 
We take a finite set $S$ of primes of $K$ containing $S_{\rm ram}(\chi) \cup S_{p}(K)$, where $S_{\rm ram}(\chi)$ consists of the set of primes of $K$ where $\chi$ is ramified. 

\begin{definition}\label{def:extended selmer}
For any ideal $I$ of $R$, the local conditions $\Delta_{\Sigma}$ on $C^{\bullet}(G_{K,S}, \bT_{\frak K}/I \bT_{\frak K})$ are given with the choices
\[
U_{v}^{+} := 
\begin{cases}
C^{\bullet}(G_{K_{v}}, \bT_{\frak K}/I \bT_{\frak K}) & \text{if} \ v \in \Sigma^c, 
\\
0  & \text{if} \ v \in \Sigma,   
\\
C_{\rm ur}^{\bullet}(G_{K_{v}}, \bT_{\frak K}/I \bT_{\frak K}) & \text{if} \ v \in S \setminus S_{p}(K)
\end{cases}
\]
\end{definition}

\begin{remark}
\label{remark_local_cohom_bad_primes_acyclic}
For each prime $v \in S_{\rm ram}(\chi) \setminus S_p(K)$, the complex $C^\bullet(G_{K_v}, \bT_{\frak K})$ is acyclic. Indeed, let us write $\overline{T} := \bT_{\frak K}/\fm \bT_{\frak K}$ for the residual representation of $\bT_{\frak K}$. 
Since we have a canonical isomorphism 
\[
{\bf R}\Gamma(G_{K_v}, \bT_{\frak K}) \otimes^{\bL}_R R/\fm \stackrel{\sim}{\longrightarrow} {\bf R}\Gamma(G_{K_v}, \overline{T}), 
\] 
it suffices to show that the complex $C^\bullet(G_{K_v}, \overline{T})$ is acyclic. 
Note that we have $\chi_{\vert_{I_v}} \neq \mathds{1}$, since $v \in S_{\rm ram}(\chi)$. 
Since $I_v$ acts trivially on $\mu_p$, we have $H^0(G_{K_v}, \overline{T}) = 0$. Moreover, it follows from  the local Tate duality that $H^2(G_{K_v}, \overline{T})=0$. Since $v \nmid p$, the local Euler characteristic formula reads
\[
\# H^1(G_{K_v}, \overline{T}) = \# H^0(G_{K_v}, \overline{T}) \cdot \# H^2(G_{K_v}, \overline{T}), 
\]
and concludes the vanishing of $H^1(G_{K_v}, \overline{T})$. In particular, when $S = S_{\mathrm{ram}}(\chi) \cup S_p(K)$, we obtain an exact triangle 
\begin{align}\label{fundamental_exact_tri2}
\widetilde{{\bf R}\Gamma}_{\rm f}(G_{K,S}, \bT_{\frak K}/I\bT_{\frak K}, \Delta_{\Sigma}) \longrightarrow {\bf R}\Gamma(G_{K,S}, \bT_{\frak K}/I\bT_{\frak K}) 
\longrightarrow \bigoplus_{v \in \Sigma}{\bf R}\Gamma(G_{K_{v}}, \bT_{\frak K}/I\bT_{\frak K})  
\end{align}
thanks to \eqref{fundamental exact tri}, for any ideal $I$ of $R$. 
\end{remark}

\begin{lemma}\label{lemma:indep}
The object $\widetilde{{\bf R}\Gamma}_{\rm f}(G_{K,S}, \bT_{\frak K}/I\bT_{\frak K}, \Delta_{\Sigma})$ is independent of the choice of the finite set $S$. In more precise terms, for any finite set $S'$ of primes of $K$ containing $S$, the inflation map 
\[
C^\bullet(G_{K,S}, \bT_{\frak K}/I\bT_{\frak K}) \longrightarrow C^\bullet(G_{K,S'}, \bT_{\frak K}/I\bT_{\frak K})
\] 
induces an isomorphism 
\[
\widetilde{{\bf R}\Gamma}_{\rm f}(G_{K,S}, \bT_{\frak K}/I\bT_{\frak K}, \Delta_{\Sigma}) 
\stackrel{\sim}{\longrightarrow} 
\widetilde{{\bf R}\Gamma}_{\rm f}(G_{K,S'}, \bT_{\frak K}/I\bT_{\frak K}, \Delta_{\Sigma}). 
\]
\end{lemma}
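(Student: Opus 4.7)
The plan is to reduce by induction on $\#(S'\setminus S)$ to the case $S'=S\cup\{v_0\}$ for a single prime $v_0\notin S$, and then to carry out a short chase on fundamental triangles~\eqref{fundamental exact tri}. First I would observe that $T:=\bT_{\frak K}/I\bT_{\frak K}$ is unramified at $v_0$: since $v_0\notin S\supset S_{\rm ram}(\chi)\cup S_p(K)$, both $\cO(1)$ and $\chi^{-1}$ are unramified at $v_0$, and the tautological character $G_K\twoheadrightarrow\Gal(\frak K/K)$ is unramified at $v_0$ because $\frak K\subset K(p^\infty)$ and every $\bZ_p$-extension of a number field is unramified outside $p$. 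In particular $T^{I_{v_0}}=T$, so the local condition $U^+_{v_0}=C^\bullet_{\rm ur}(G_{K_{v_0}},T)$ prescribed by Definition~\ref{def:extended selmer} is well-defined (note that $v_0\notin S_p(K)$, so the third case of that definition applies).

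The key input I would invoke is the standard excision triangle
\[
{\bf R}\Gamma(G_{K,S},T)\longrightarrow {\bf R}\Gamma(G_{K,S'},T)\longrightarrow U^{-}_{v_0}(T)
\]
in $D({}_R{\rm Mod})$, valid whenever $T$ is unramified at a prime $v_0\notin S$ with $v_0\nmid p$. This can be extracted from Hochschild--Serre applied to $\Gal(K_{S'}/K_S)$, whose contribution to cohomology is computed by the tame local cohomology at the primes of $K_S$ above $v_0$ modulo the unramified subcomplex; it is also the content of the change-of-$S$ formalism in \cite{Nek}.

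Given this input, the inflation map on global cochains and the obvious inclusion of local direct summands induce a morphism of fundamental triangles \eqref{fundamental exact tri} for $(S,\Delta_\Sigma)$ and $(S',\Delta_\Sigma)$. The cone of the middle vertical arrow ${\bf R}\Gamma(G_{K,S},T)\to{\bf R}\Gamma(G_{K,S'},T)$ is $U^-_{v_0}(T)$ by the excision triangle, and the cone of the right-hand vertical arrow is also $U^-_{v_0}(T)$, via the natural splitting $U^-_{S'}(T)=U^-_S(T)\oplus U^-_{v_0}(T)$ coming from the direct-sum decomposition indexed by $S'=S\sqcup\{v_0\}$. A direct check of the definitions shows that the induced map between these two cones is the identity on $U^-_{v_0}(T)$. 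By the octahedral axiom, the cone of the left-hand vertical arrow $\widetilde{{\bf R}\Gamma}_{\rm f}(G_{K,S},T,\Delta_\Sigma)\to\widetilde{{\bf R}\Gamma}_{\rm f}(G_{K,S'},T,\Delta_\Sigma)$ is then the cone of the identity, hence zero, which gives the desired quasi-isomorphism.

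The main obstacle is the compatibility assertion in the previous paragraph, namely that the excision isomorphism on the middle cone and the evident splitting on the right-hand cone are genuinely compatible under the local restriction map ${\rm res}_{v_0}$. Once this compatibility is verified by chasing through the definitions of the relevant mapping cones, the rest is formal; the unramified-at-$v_0$ claim, the induction step, and the octahedral argument are all entirely mechanical.
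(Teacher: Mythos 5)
Your argument is correct, but note that the paper's own proof is a one-line citation: the lemma is exactly \cite[Proposition 7.8.8(ii)]{Nek} applied to this Selmer complex. So where the paper delegates, you essentially reprove Nekov\'a\v{r}'s change-of-$S$ statement in the case at hand: induction down to adjoining a single prime $v_0\nmid p$ at which $T:=\bT_{\frak K}/I\bT_{\frak K}$ is unramified (your verification of unramifiedness is the right one --- the cyclotomic twist, $\chi^{-1}$ and the $\Gal(\frak K/K)$-part are all unramified at $v_0$, the last because $\frak{K}\subset K(p^\infty)$ is unramified outside $p$), the excision triangle identifying the cone of inflation with $U^-_{v_0}(T)$, and an octahedron cancelling that cone against the unramified local condition $U^+_{v_0}=C^\bullet_{\rm ur}(G_{K_{v_0}},T)$ imposed in the $S'$-Selmer complex. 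The only place your sketch is thinner than a genuinely independent proof is the excision triangle itself: the Hochschild--Serre heuristic over $\Gal(K_{S'}/K_S)$ is delicate (there are infinitely many primes of $K_S$ above $v_0$, and one must argue why only one local term per prime of $K$ survives), and the clean derivation is via the \'etale localization triangle for the open immersion $\Spec\cO_{K,S'}\subset\Spec\cO_{K,S}$ or via Poitou--Tate --- which is precisely what Nekov\'a\v{r}'s change-of-$S$ formalism packages, so if you invoke that, you are morally quoting the same result the paper quotes. The cone-compatibility you flag is indeed a routine diagram chase. In sum, your route is a correct, self-contained unpacking of the citation: it costs the excision input plus that compatibility check, and buys transparency about why imposing unramified conditions at the primes of $S'\setminus S$ makes the enlargement of $S$ harmless.
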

\begin{proof}
This is a consequence of \cite[Proposition 7.8.8(ii)]{Nek}. 
\end{proof}
By definition, one clearly has for each $v \in S$
\[
U_v^+ \in D_{\rm parf}^{[0,2]}({}_{R}{\rm Mod})\,.
\]
In particular, the complex $\widetilde{C}_{\rm f}^\bullet(G_{K,S}, \bT_{\frak K}, \Delta_{\Sigma})$ is perfect. Furthermore, Pottharst proved in \cite[Theorem~1.12]{Jonathan13} (see also \cite{Nek}, Proposition 9.7.3(i)) that 
we have the following natural base-change isomorphisms in the derived category:
\begin{align}\label{eq:base-change}
\widetilde{{\bf R}\Gamma}_{\rm f}(G_{K,S}, \bT_{\frak K}/I\bT_{\frak K}, \Delta_{\Sigma}) \otimes^{\bL}_{R} R/J \stackrel{\sim}{\longrightarrow} \widetilde{{\bf R}\Gamma}_{\rm f}(G_{K,S}, \bT_{\frak K}/J\bT_{\frak K}, \Delta_{\Sigma})
\end{align}
for any ideal $I \subseteq J$ of $R$, 
\begin{align}\label{eq:scalar-extension}
\widetilde{{\bf R}\Gamma}_{\rm f}(G_{K,S}, \bT_{\frak K}, \Delta_{\Sigma}) \otimes^{\bL}_{R} \bZ_p^{\rm ur}[[\Gal(\frak{K}/K)]] \stackrel{\sim}{\longrightarrow} \widetilde{{\bf R}\Gamma}_{\rm f}(G_{K,S}, \bT_{\frak K} \otimes_R \bZ_p^{\rm ur}[[\Gal(\frak{K}/K)]], \Delta_{\Sigma}), 
\end{align}
where $\bZ_p^{\rm ur}[[\Gal(\frak{K}/K)]]$ is endowed with the trivial $G_K$-action.

\begin{proposition}\label{prop:parf[1,2]}
We have
\[
\widetilde{{\bf R}\Gamma}_{\rm f}(G_{K,S}, \bT_{\frak K}, \Delta_{\Sigma}) \in D_{\rm parf}^{[1,2]}({}_R{\rm Mod}). 
\]
In particular, combined with the isomorphism \eqref{eq:base-change}, we have a canonical isomorphism 
\begin{align}\label{isom_base_change_H2}
\widetilde{H}^2_{\rm f}(G_{K,S}, \bT_{\frak K}, \Delta_{\Sigma}) \otimes_{R} R/I \stackrel{\sim}{\longrightarrow} \widetilde{H}^2_{\rm f}(G_{K,S}, \bT_{\frak K}/I\bT_{\frak K}, \Delta_{\Sigma}) 
\end{align}
for any ideal $I$ of $R$.
\end{proposition}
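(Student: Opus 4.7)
The plan hinges on the fact that the extremal cohomologies $\widetilde{H}^0_{\rm f}$ and $\widetilde{H}^3_{\rm f}$ of the Selmer complex both vanish---one for purely formal reasons and the other via Nekov\'a\v{r}'s global duality for Selmer complexes. Applying Lemma~\ref{lemma:indep} I may assume $S = S_{\rm ram}(\chi) \cup S_p(K)$, in which case Remark~\ref{remark_local_cohom_bad_primes_acyclic} provides the clean exact triangle \eqref{fundamental_exact_tri2}. Lemma~\ref{lem:perf[0,3]} has already placed $\widetilde{{\bf R}\Gamma}_{\rm f}(G_{K,S}, \bT_{\frak K}, \Delta_\Sigma)$ in $D_{\rm parf}^{[0,3]}({}_R{\rm Mod})$; once $\widetilde{H}^0_{\rm f}$ and $\widetilde{H}^3_{\rm f}$ are shown to vanish, a standard minimal-complex argument replaces it by a quasi-isomorphic perfect complex strictly concentrated in degrees $[1,2]$.

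The vanishing of $\widetilde{H}^0_{\rm f}$ is formal: the long exact sequence attached to \eqref{fundamental_exact_tri2} yields
\[
0 \;\longrightarrow\; \widetilde{H}^0_{\rm f}(G_{K,S}, \bT_{\frak K}, \Delta_\Sigma) \;\longrightarrow\; \bT_{\frak K}^{G_{K,S}} \;\xrightarrow{{\rm diag}}\; \bigoplus_{v \in \Sigma} \bT_{\frak K}^{G_{K_v}},
\]
and the diagonal restriction is injective because, for each individual $v \in \Sigma$, the map $\bT_{\frak K}^{G_{K,S}} \hookrightarrow \bT_{\frak K}^{G_{K_v}}$ is just the inclusion of invariant submodules.

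For $\widetilde{H}^3_{\rm f}$: since $K$ is CM and $p > 2$, one has ${\rm cd}_p(G_{K,S}) = 2$, whence $H^3(G_{K,S}, \bT_{\frak K}) = 0$; the long exact sequence then identifies $\widetilde{H}^3_{\rm f}$ with ${\rm coker}\bigl(H^2(G_{K,S}, \bT_{\frak K}) \to \bigoplus_{v \in \Sigma} H^2(G_{K_v}, \bT_{\frak K})\bigr)$. I intend to invoke Nekov\'a\v{r}'s global duality for Selmer complexes (see \cite{Nek}, \S 5--6): the local conditions $\Delta_\Sigma$ on $\bT_{\frak K}$ and $\Delta_{\Sigma^c}$ on the Kummer dual $\bT_{\frak K}^*(1)$ are mutually orthogonal under local Tate duality (the zero and full conditions at $\Sigma/\Sigma^c$ swap, while the unramified condition is self-orthogonal), producing a canonical isomorphism
\[
\widetilde{H}^3_{\rm f}(G_{K,S}, \bT_{\frak K}, \Delta_\Sigma) \;\cong\; \widetilde{H}^0_{\rm f}(G_{K,S}, \bT_{\frak K}^*(1), \Delta_{\Sigma^c})^{\vee}.
\]
The right-hand side vanishes by exactly the same diagonal-injectivity argument as in the preceding paragraph, now applied to $\bT_{\frak K}^*(1)$ and the non-empty set $\Sigma^c$. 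The main obstacle I expect here is to set up Nekov\'a\v{r}'s duality correctly in the present Iwasawa-theoretic context (with careful bookkeeping of twist conventions and of the Matlis/Pontryagin dual), but this should be routine since $R$ is a regular local ring.

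Finally, \eqref{isom_base_change_H2} is immediate from the $[1,2]$-concentration: representing $\widetilde{{\bf R}\Gamma}_{\rm f}(G_{K,S}, \bT_{\frak K}, \Delta_\Sigma)$ by a two-term complex $P^1 \to P^2$ of finitely generated projective $R$-modules in degrees $1,2$, one has $\widetilde{H}^2_{\rm f} = {\rm coker}(P^1 \to P^2)$. Since cokernels commute with the right-exact functor $-\otimes_R R/I$, the derived base-change isomorphism \eqref{eq:base-change} descends to \eqref{isom_base_change_H2} at the level of $\widetilde{H}^2_{\rm f}$.
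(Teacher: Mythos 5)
Your reduction to $S=S_{\rm ram}(\chi)\cup S_p(K)$, your degree-$0$ vanishing, and your degree-$3$ vanishing are all fine (for degree $3$ you route through Nekov\'a\v{r}'s duality for Selmer complexes, where the paper instead uses the Poitou--Tate sequence for ${\bf R}\Gamma(G_{K,S},\bT_{\frak K})$ and a direct surjectivity argument; both work, and your route is comparable in length once the duality bookkeeping is done). The genuine gap is in the sentence ``once $\widetilde{H}^0_{\rm f}$ and $\widetilde{H}^3_{\rm f}$ are shown to vanish, a standard minimal-complex argument replaces it by a perfect complex concentrated in $[1,2]$.'' Vanishing of $H^0$ and $H^3$ of the complex \emph{over $R$} does not imply membership in $D_{\rm parf}^{[1,2]}({}_R{\rm Mod})$: over a discrete valuation ring $R$ with uniformizer $x$, the complex $[\,R\xrightarrow{\,x\,}R\,]$ placed in degrees $0,1$ has vanishing $H^0$ and $H^3$ but cannot be represented in degrees $[1,2]$ (any such representative with $H^2=0$ would force $H^1$ to be projective, while here $H^1=R/x$). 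The minimal-complex (or truncation) argument detects $H^i$ of the \emph{derived reduction} $\widetilde{{\bf R}\Gamma}_{\rm f}\otimes^{\bL}_R R/\fm$, not of the complex itself; equivalently, after choosing a representative $P^0\to P^1\to P^2$ with $P^0\hookrightarrow P^1$, one still has to show ${\rm coker}(P^0\to P^1)$ is projective, and this is exactly ${\rm Tor}_1^R({\rm coker},R/\fm)=H^0(\widetilde{{\bf R}\Gamma}_{\rm f}\otimes^{\bL}_R R/\fm)=\widetilde{H}^0_{\rm f}(G_{K,S},\bT_{\frak K}/\fm\bT_{\frak K},\Delta_\Sigma)$, which your argument never addresses. (Truncating at the top is harmless: $H^3=0$ over $R$ plus projectivity of the top term lets you split off an acyclic summand, so only the bottom cut is delicate.)

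The repair is short and is precisely what the paper does: prove the vanishing of $\widetilde{H}^0_{\rm f}$ (and $\widetilde{H}^3_{\rm f}$) for the quotients $\bT_{\frak K}/I\bT_{\frak K}$ for \emph{every} ideal $I$ of $R$ --- your injectivity-of-restriction argument for $H^0$ applies verbatim to any such quotient, in particular to $I=\fm$ --- and then combine the base-change isomorphism \eqref{eq:base-change} with the local criterion for flatness to conclude that ${\rm coker}(P^0\to P^1)$ is projective, whence the complex lies in $D_{\rm parf}^{[1,2]}({}_R{\rm Mod})$. Your final paragraph deducing \eqref{isom_base_change_H2} from the $[1,2]$-representation and right-exactness of $-\otimes_R R/I$ is correct and matches the paper.
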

\begin{proof}
Let  $I$ be any ideal of $R$. We shall verify that
\[
\widetilde{H}_{\rm f}^0(G_{K,S}, \bT_{\frak K}/I\bT_{\frak K}, \Delta_{\Sigma}) = 0=
\widetilde{H}_{\rm f}^3(G_{K,S}, \bT_{\frak K}/I\bT_{\frak K},\Delta_{\Sigma}) 
\]
and use this to prove that $\widetilde{{\bf R}\Gamma}_{\rm f}(G_{K,S}, \bT_{\frak K}, \Delta_{\Sigma}) \in D_{\rm parf}^{[1,2]}({}_R{\rm Mod})$.

We first explain the vanishing of $\widetilde{H}_{\rm f}^3(G_{K,S}, \bT_{\frak K}/I \bT_{\frak K},\Delta_{\Sigma})$. By the definition of the Selmer complex $\widetilde{C}_{\rm f}^\bullet(G_{K,S}, \bT_{\frak K}/I \bT_{\frak K},\Delta_{\Sigma})$, we have an exact sequence
\[
H^2(G_{K,S}, \bT_{\frak K}/I \bT_{\frak K}) \longrightarrow \bigoplus_{v \in S \setminus  \Sigma^c} H^2(G_{K_v}, \bT_{\frak K}/I \bT_{\frak K})\longrightarrow 
\widetilde{H}_{\rm f}^3(G_{K,S}, \bT_{\frak K}/I \bT_{\frak K},\Delta_{\Sigma}) \longrightarrow 0\,.  
\]
By the Poitou-Tate global duality, we also have an exact seqence 
\[
H^2(G_{K,S}, \bT_{\frak K}/I \bT_{\frak K}) \longrightarrow \bigoplus_{v \in S} H^2(G_{K_v}, \bT_{\frak K}/I \bT_{\frak K})\longrightarrow 
H^0(G_{K,S}, (\bT_{\frak K}/I \bT_{\frak K})^\vee(1))^\vee \longrightarrow 0\,.  
\]
Here, $(-)^\vee := \Hom_{\rm cont}(-, \bQ_p/\bZ_p)$ denotes the Pontryagin duality functor. For any prime $v \in S_p(K)$, the homomorphism 
\[
H^2(G_{K_v}, \bT_{\frak K}/I \bT_{\frak K}) \longrightarrow H^0(G_{K,S}, (\bT_{\frak K}/I \bT_{\frak K})^\vee(1))^\vee
\]
is surjective since the Pontryagin dual of this homomorphism is the injection 
\[
H^0(G_{K,S}, (\bT_{\frak K}/I \bT_{\frak K})^\vee(1)) \longrightarrow  H^0(G_{K_v}, (\bT_{\frak K}/I \bT_{\frak K})^\vee(1)) \cong H^2(G_{K_v}, \bT_{\frak K}/I \bT_{\frak K})^\vee. 
\]
Since $\Sigma^c$ is non-empty set, we infer that the homomorphism 
\[
H^2(G_{K,S}, \bT_{\frak K}/I \bT_{\frak K}) \longrightarrow \bigoplus_{v \in S \setminus  \Sigma^c} H^2(G_{K_v}, \bT_{\frak K}/I \bT_{\frak K})
\]
is surjective, which concludes the proof that $\widetilde{H}_{\rm f}^3(G_{K,S}, \bT_{\frak K}/I \bT_{\frak K},\Delta_{\Sigma})=0$. 

We next explain the vanishing of $\widetilde{H}_{\rm f}^0(G_{K,S}, \bT_{\frak K}/I \bT_{\frak K},\Delta_{\Sigma})$. By the definition of the Selmer complex $\widetilde{C}_{\rm f}^\bullet(G_{K,S}, \bT_{\frak K}/I \bT_{\frak K},\Delta_{\Sigma})$, we have an isomorphism  
\[
\widetilde{H}_{\rm f}^0(G_{K,S}, \bT_{\frak K}/I \bT_{\frak K}, \Delta_{\Sigma}) 
\stackrel{\sim}{\longrightarrow} \ker\left( H^0(G_{K,S}, \bT_{\frak K}/I \bT_{\frak K}) \longrightarrow \bigoplus_{v \in S \setminus \Sigma^c} H^0(G_{K_v}, \bT_{\frak K}/I \bT_{\frak K})\right)\,.
\]
This shows that $\widetilde{H}_{\rm f}^0(G_{K,S}, \bT_{\frak K}/I \bT_{\frak K},\Delta_{\Sigma}) = 0$.

It follows from the vanishing of $\widetilde{H}_{\rm f}^3(G_{K,S}, \bT_{\frak K}, \Delta_{\Sigma})$ together with Lemma~\ref{lem:perf[0,3]} that  
\[
\widetilde{{\bf R}\Gamma}_{\rm f}(G_{K,S},\bT_{\frak K},\Delta) \in D_{\rm parf}^{[0,2]}(R), 
\]
so that $\widetilde{{\bf R}\Gamma}_{\rm f}(G_{K,S},\bT_{\frak K}, \Delta_{\Sigma})$ can be represented by a complex 
\[
P^{\bullet} = [ \ \cdots \longrightarrow 0 \longrightarrow P^{0} \longrightarrow P^{1} \longrightarrow P^{2} \longrightarrow 0 \longrightarrow \cdots \ ]
\] 
of finitely generated free $R$-modules. Since  $\widetilde{H}^{0}_{\rm f}(G_{K,S},\bT_{\frak K},\Delta_{\Sigma})=0$, the homomorphism $P^{0} \longrightarrow P^{1}$ is injective. 

To conclude with the proof that $\widetilde{{\bf R}\Gamma}_{\rm f}(G_{K,S}, \bT_{\frak K}, \Delta_{\Sigma}) \in D_{\rm parf}^{[1,2]}({}_R{\rm Mod})$, it suffices to show that the $R$-module $X := {\rm coker}\left(P^{0} \longrightarrow P^{1}\right)$ is projective. Since $R$ is a Noetherian local ring, it suffices to verify that ${\rm Tor}_{1}^{R}(X,R/\fm) = 0$ by the local criterion for flatness. 
Since $P^{1}$ is flat, one has 
\begin{align*}
{\rm Tor}_{1}^{R}(X,R/\fm) 
&= \ker \left(P^{0}\otimes_{R} R/\fm \longrightarrow P^{1}\otimes_{R} R/\fm\right) 
\\
&=  H^{0}(\widetilde{{\bf R}\Gamma}_{\rm f}(G_{K,S},\bT_{\frak K},\Delta_{\Sigma}) \otimes^{\bL}_{R}R/\fm)
\\
&\cong \widetilde{H}_{\rm f}^0(G_{K,S},\bT_{\frak K}/\fm \bT_{\frak K},\Delta_{\Sigma}) =0, 
\end{align*}
where the isomorphism on the third line follows from \eqref{eq:base-change}. This shows that $X$ is indeed projective and we have proved $\widetilde{{\bf R}\Gamma}_{\rm f}(G_{K,S}, \bT_{\frak K}, \Delta_{\Sigma}) \in D_{\rm parf}^{[1,2]}({}_R{\rm Mod})$.

The isomorphism \eqref{isom_base_change_H2} is an immediate consequence of this assertion.
\end{proof}

\begin{definition}
Suppose $C \in D_{\rm parf}({}_R{\rm Mod})$ is represented by a complex  
\[
F^{\bullet} = [ \ \cdots \longrightarrow F^0 \longrightarrow F^1 \longrightarrow F^2 \longrightarrow \cdots \ ]
\]
of finitely generated free $R$-modules such that $F^{N} = F^{-N} = 0$ for all sufficiently large integers $N$.  We define the Euler--Poincar\'e characteristic $\chi(C)$ as the alternating sum
\[
\chi(C):= \sum_{n \in \bZ} (-1)^n  {\rm rank}_R(F^n). 
\]
\end{definition}

\begin{lemma}
\label{lemma:Euler characteristic = 0}
$\chi(\widetilde{{\bf R}\Gamma}_{\rm f}(G_{K,S},\bT_{\frak K},\Delta_{\Sigma})) = 0. $
\end{lemma}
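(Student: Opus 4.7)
The plan is to compute $\chi(\widetilde{{\bf R}\Gamma}_{\rm f}(G_{K,S},\bT_{\frak K},\Delta_{\Sigma}))$ via the additivity of Euler--Poincar\'e characteristics applied to the fundamental exact triangle \eqref{fundamental exact tri}, and then to invoke the standard global and local Euler--Poincar\'e formulas. The CM hypothesis on $K$, together with the defining balance $\Sigma \sqcup \Sigma^c = S_p(K)$, will produce the required cancellation. Directly from \eqref{fundamental exact tri} one reads off
$$\chi(\widetilde{{\bf R}\Gamma}_{\rm f}(G_{K,S},\bT_{\frak K},\Delta_{\Sigma})) = \chi({\bf R}\Gamma(G_{K,S},\bT_{\frak K})) + \sum_{v \in S}\chi(U_v^+) - \sum_{v \in S}\chi({\bf R}\Gamma(G_{K_v},\bT_{\frak K})),$$
where each object appearing is a perfect complex over $R$ (for $U_v^+$, by its very definition; for the global and local cochain complexes, by standard perfectness results). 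The individual characteristics can therefore be read as generic $R$-ranks of cohomology, and one may specialize along any character of $\Gal(\frak{K}/K)$ to reduce their computation to the familiar setting of $G_{K,S}$- and $G_{K_v}$-representations on a free $\cO$-module of rank one.

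I would next compute each of the three pieces. Since $\bT_{\frak K}$ is free of $R$-rank one and the CM field $K$ has $r_2(K)=[F:\bQ]$ complex places and no real places, the global Euler--Poincar\'e formula delivers $\chi({\bf R}\Gamma(G_{K,S},\bT_{\frak K})) = -[F:\bQ]$. For each $v \in S \setminus S_p(K)$, both $\chi({\bf R}\Gamma(G_{K_v},\bT_{\frak K})) = 0$ (the local Euler--Poincar\'e characteristic at a prime away from $p$ is trivial) and $\chi(U_v^+) = \chi(C^{\bullet}_{\rm ur}(G_{K_v},\bT_{\frak K})) = 0$ (the two terms of the unramified complex have equal $R$-rank). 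For $v \in S_p(K)$, the local Euler--Poincar\'e formula yields $\chi({\bf R}\Gamma(G_{K_v},\bT_{\frak K})) = -[K_v:\bQ_p]$; the same value is $\chi(U_v^+)$ when $v \in \Sigma^c$ (where $U_v^+ = C^\bullet(G_{K_v},\bT_{\frak K})$), whereas $\chi(U_v^+) = 0$ when $v \in \Sigma$ (where $U_v^+ = 0$).

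The last step is to sum up. The $p$-ordinary hypothesis \ref{item_ord} ensures that every $p$-adic place of $F$ splits into two places of $K$, exactly one of which lies in $\Sigma$; consequently, $\sum_{v \in \Sigma}[K_v:\bQ_p] = \sum_{v \in \Sigma^c}[K_v:\bQ_p] = [F:\bQ]$. Assembling the computations gives
$$\chi(\widetilde{{\bf R}\Gamma}_{\rm f}(G_{K,S},\bT_{\frak K},\Delta_{\Sigma})) \;=\; -[F:\bQ] \;-\; [F:\bQ] \;+\; 2[F:\bQ] \;=\; 0,$$
as required. I do not anticipate any substantive obstacle; the only mild point of care is ensuring that the classical Euler--Poincar\'e formulas transfer to the Iwasawa-algebra setting $R = \cO[[\Gal(\frak{K}/K)]]$, which the specialization/perfect-complex argument sketched in the first paragraph handles cleanly.
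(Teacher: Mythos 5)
Your proof is correct, and the final cancellation is exactly the one the paper uses; the difference is in how the Euler-characteristic formula is obtained. The paper disposes of the computation in one line by citing Nekov\'a\v{r}'s general formula \cite[Theorem~7.8.6]{Nek} for Selmer complexes, which directly gives $\chi(\widetilde{{\bf R}\Gamma}_{\rm f}(G_{K,S},\bT_{\frak K},\Delta_{\Sigma})) = \sum_{v\in S_\infty(K)}1-\sum_{v\in\Sigma^c}[K_v:\bQ_p]$, and then concludes with the same ordinarity identity $\sum_{v\in\Sigma^c}[K_v:\bQ_p]=[F:\bQ]$ that you use. You instead re-derive the needed special case of that formula from scratch: additivity of $\chi$ over the defining cone (equivalently the triangle \eqref{fundamental exact tri}), Tate's global formula giving $-[F:\bQ]$ for the rank-one module over the totally complex field $K$, the vanishing of the local characteristics away from $p$ and of $\chi(C^\bullet_{\rm ur})$, and the local formula $-[K_v:\bQ_p]$ at $p$, with a specialization/perfect-complex argument to transport the classical formulas to coefficients in $R=\cO[[\Gal(\frak K/K)]]$. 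This is a legitimate and essentially self-contained alternative; what the citation buys the paper is brevity and uniformity (Nekov\'a\v{r}'s theorem is proved by precisely the bookkeeping you carry out), while your version makes the mechanism of the cancellation — the split between $\Sigma$ and $\Sigma^c$ forced by \ref{item_ord} — explicit at the level of each local term.
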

\begin{proof}
Since $K$ is a CM field, it follows from \cite[Theorem~7.8.6]{Nek} that
\begin{align*}
\chi(\widetilde{{\bf R}\Gamma}_{\rm f}(G_{K,S},\bT_{\frak K},\Delta_{\Sigma}))&= 
\sum_{v \in S_\infty(k)}1 - \sum_{v \in \Sigma^c}[K_v \colon \bQ_p] \\
& = [F \colon \bQ] -  \sum_{v \in \Sigma^c}[K_v \colon \bQ_p]
\end{align*}
It follows from our running ordinarity hypothesis \ref{item_ord} that
\[
\sum_{v \in \Sigma^c}[K_v \colon \bQ_p] = [F \colon \bQ], 
\]
which shows that $\chi(\widetilde{{\bf R}\Gamma}_{\rm f}(G_{K,S},\bT_{\frak K},\Delta_{\Sigma}))=0$, as required. 
\end{proof}

\begin{corollary}\label{corollary:H_f^1 vanishes iff H_f^2 is torsion and fitt=char}
The following assertions are valid. 
    \item[i)] The $R$-module $\widetilde{H}^{2}_{\rm f}(G_{K,S}, \bT_{\frak K},\Delta_{\Sigma})$ is torsion  if and only if  $\widetilde{H}^{1}_{\rm f}(G_{K,S}, \bT_{\frak K},\Delta_{\Sigma})=0$.
    \item[ii)] If $R$ is regular, we have 
    \[
    {\rm Fitt}_R(\widetilde{H}^2_{\rm f}(G_{K,S}, \bT_{\frak K}, \Delta_{\Sigma}))={\rm char}_R(\widetilde{H}^2_{\rm f}(G_{K,S}, \bT_{\frak K}, \Delta_{\Sigma})). 
    \]
    \end{corollary}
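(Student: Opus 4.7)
The plan is to extract both statements from a concrete two-term free representative of the Selmer complex. By Proposition~\ref{prop:parf[1,2]} the object $\widetilde{{\bf R}\Gamma}_{\rm f}(G_{K,S}, \bT_{\frak K}, \Delta_{\Sigma})$ lies in $D_{\rm parf}^{[1,2]}({}_R{\rm Mod})$, so I can represent it by a complex $P^{\bullet}=[P^{1}\xrightarrow{d}P^{2}]$ with $P^{i}$ finitely generated free of rank $n_{i}$, concentrated in degrees $1,2$. Lemma~\ref{lemma:Euler characteristic = 0} forces $n_{1}=n_{2}=:n$. Then $\widetilde{H}^{1}_{\rm f}=\ker(d)$ sits inside the free $R$-module $P^{1}$ and is therefore torsion-free, whereas $\widetilde{H}^{2}_{\rm f}=\mathrm{coker}(d)$.

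For part (i), I will pass to the fraction field: tensoring the complex $P^{\bullet}$ with $\mathrm{Frac}(R)$ yields a complex of finite-dimensional $\mathrm{Frac}(R)$-vector spaces, and the alternating-sum formula for Euler characteristics gives
$$\mathrm{rank}_{R}(\widetilde{H}^{2}_{\rm f})-\mathrm{rank}_{R}(\widetilde{H}^{1}_{\rm f})=n-n=0\,.$$
Thus $\widetilde{H}^{2}_{\rm f}$ is $R$-torsion if and only if $\mathrm{rank}_{R}(\widetilde{H}^{1}_{\rm f})=0$. Since $\widetilde{H}^{1}_{\rm f}$ is torsion-free, its rank vanishes if and only if $\widetilde{H}^{1}_{\rm f}=0$, proving (i).

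For part (ii), suppose $R$ is regular. I may assume $\widetilde{H}^{2}_{\rm f}$ is $R$-torsion, since otherwise both the Fitting and the characteristic ideals vanish and there is nothing to check. By (i) we then have $\widetilde{H}^{1}_{\rm f}=0$, so $d$ is injective and we obtain a short exact sequence
$$0\longrightarrow P^{1}\xrightarrow{d} P^{2}\longrightarrow \widetilde{H}^{2}_{\rm f}(G_{K,S},\bT_{\frak K},\Delta_{\Sigma})\longrightarrow 0$$
with $P^{1},P^{2}$ free of common rank $n$. Choosing bases, $d$ is represented by an $n\times n$ matrix $A$, and the definition of the zeroth Fitting ideal gives ${\rm Fitt}_{R}(\widetilde{H}^{2}_{\rm f})=(\det A)$. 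To match this with the characteristic ideal, I localize at each height-one prime $\fp\subset R$: regularity of $R$ ensures that $R_{\fp}$ is a discrete valuation ring, so the structure theorem for finitely generated modules over a DVR identifies $\mathrm{length}_{R_{\fp}}(\widetilde{H}^{2}_{{\rm f},\fp})$ with $v_{\fp}(\det A)$. Since a regular local ring is a UFD by Auslander--Buchsbaum, the principal ideal $(\det A)$ factors as $\prod_{\mathrm{ht}(\fp)=1}\fp^{\,v_{\fp}(\det A)}$, which by definition equals ${\rm char}_{R}(\widetilde{H}^{2}_{\rm f})$.

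The only place requiring genuine care is upgrading the derived-category representative to an honest length-one free resolution, which is exactly what (i) buys us via the injectivity of $d$; everything else is standard bookkeeping with ranks, Fitting ideals, and divisors over the regular ring $R$, so I do not expect any further obstacle.
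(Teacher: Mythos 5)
Your proof is correct and is essentially the paper's own argument: Proposition \ref{prop:parf[1,2]} together with Lemma \ref{lemma:Euler characteristic = 0} yields the presentation $0\to\widetilde{H}^1_{\rm f}\to P\xrightarrow{f}P\to\widetilde{H}^2_{\rm f}\to 0$ with $P$ finitely generated free, part (i) is the comparison over ${\rm Frac}(R)$, and part (ii) reads off ${\rm Fitt}_R(\widetilde{H}^2_{\rm f})=(\det f)$ and matches it with the characteristic ideal by localizing at height-one primes of the regular (hence UFD) local ring $R$. The one point to phrase more carefully is that $\Gal(\frak{K}/K)$ may have $p$-torsion, so $R$ need not be a domain and ${\rm Frac}(R)$ is only a product of fields; your rank count in (i) remains valid when read componentwise at the minimal primes of $R$ (the paper expresses exactly this as ``$f\otimes{\rm id}$ is injective if and only if it is surjective'' over a product of fields), and (ii) is unaffected since the regular local ring $R$ is automatically a domain.
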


\begin{proof}
\item[i)] Lemma~\ref{lemma:Euler characteristic = 0} combined with Proposition~\ref{prop:parf[1,2]} shows that there exists an exact sequence of $R$-modules 
\begin{equation}
\label{eqn_H2_free_resolution}
 0 \longrightarrow \widetilde{H}^1_{\rm f}(G_{K,S}, \bT_{\frak K}, \Delta_{\Sigma}) \longrightarrow P \stackrel{f}{\longrightarrow} 
P \longrightarrow \widetilde{H}^2_{\rm f}(G_{K,S}, \bT_{\frak K}, \Delta_{\Sigma}) \longrightarrow 0,  
\end{equation}
where $P$ is a free $R$-module of finite rank. 
Let ${\rm Frac}(R)$ denote the total ring of fractions of $R$. The vanishing of $\widetilde{H}^1_{\rm f}(G_{K,S}, \bT_{\frak K}, \Delta_{\Sigma})$ is equivalent to the requirement that the homomorphism 
$$f\otimes {\rm id}: P \otimes_R {\rm Frac}(R) \longrightarrow P \otimes_R {\rm Frac}(R)$$ 
be injective. By the definition of the ring $R$, we see that ${\rm Frac}(R)$ is a product of fields. Thence,  the homomorphism $f\otimes {\rm id}$ is injective if and only if it is surjective, which is equivalent to the requirement that $\widetilde{H}^2_{\rm f}(G_{K,S}, \bT_{\frak K}, \Delta_{\Sigma})$ is a torsion $R$-module. 

\item[ii)] If the $R$-module $\widetilde{H}^2_{\rm f}(G_{K,S}, \bT_{\frak K}, \Delta_{\Sigma})$ is not torsion, then we have 
\[
{\rm Fitt}_R(\widetilde{H}^2_{\rm f}(G_{K,S}, \bT_{\frak K}, \Delta_{\Sigma}))= 0 = {\rm char}_R(\widetilde{H}^2_{\rm f}(G_{K,S}, \bT_{\frak K}, \Delta_{\Sigma})). 
\]
We may therefore assume without loss that the $R$-module $\widetilde{H}^2_{\rm f}(G_{K,S}, \bT_{\frak K}, \Delta_{\Sigma})$ is torsion. It then follows from (i) that $\widetilde{H}^{1}_{\rm f}(G_{K,S}, \bT_{\frak K},\Delta_{\Sigma})=0$. The proof of~(ii) follows from the projective resolution \eqref{eqn_H2_free_resolution} of the $R$-module $\widetilde{H}^2_{\rm f}(G_{K,S}, \bT_{\frak K}, \Delta_{\Sigma})$. 
\end{proof}

\begin{corollary}\label{cor:H^1_f=0 and H^2_f is torsion}
If $\frak{K}$ contains the cyclotomic $\bZ_p$-extension of $K$, then the following assertions hold. 
    \item[i)] $\widetilde{H}^{1}_{\rm f}(G_{K,S}, \bT_{\frak K},\Delta_{\Sigma}) = 0$. 
    \item[ii)] The $R$-module $\widetilde{H}^{2}_{\rm f}(G_{K,S},\bT_{\frak K},\Delta_{\Sigma})$ is torsion.
\end{corollary}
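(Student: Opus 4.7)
By Corollary~\ref{corollary:H_f^1 vanishes iff H_f^2 is torsion and fitt=char}(i), assertions~(i) and (ii) are equivalent, so I plan to focus on~(i). Proposition~\ref{prop:parf[1,2]} combined with Lemma~\ref{lemma:Euler characteristic = 0} shows that $\widetilde{{\bf R}\Gamma}_{\rm f}(G_{K,S}, \bT_{\frak K}, \Delta_\Sigma)$ is quasi-isomorphic to a two-term complex $[P \xrightarrow{f} P]$ concentrated in degrees $1$ and $2$, with $P$ free of some finite rank $r$ over $R$; in particular $\widetilde{H}^1_{\rm f}(G_{K,S}, \bT_{\frak K}, \Delta_\Sigma) \cong \ker f$, and since $R$ is an integral domain this vanishes if and only if $\det f \neq 0$ in $R$.

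I would verify $\det f \neq 0$ by a determinantal descent from the cyclotomic tower. The hypothesis $\frak{K} \supseteq K_{\rm cyc}$ yields a surjection $R \twoheadrightarrow R_{\rm cyc} := \cO[[\Gamma_{\rm cyc}]]$ of integral domains with kernel $I$. Applying the base-change isomorphism~\eqref{eq:base-change} and using the flatness of $P$ over $R$, I obtain
\[
\widetilde{H}^1_{\rm f}(G_{K,S}, \bT_{K_{\rm cyc}}, \Delta_\Sigma) \;\cong\; \ker\bigl(f \otimes 1 : P \otimes_R R_{\rm cyc} \longrightarrow P \otimes_R R_{\rm cyc}\bigr).
\]
If the cyclotomic strict Selmer group on the left-hand side vanishes, then $f \otimes 1$ is an injective endomorphism of a free rank-$r$ module over the domain $R_{\rm cyc}$; thus $\det(f \otimes 1) \neq 0$ in $R_{\rm cyc}$, and since this is the reduction modulo $I$ of $\det f \in R$, we conclude $\det f \notin I$, so in particular $\det f \neq 0$ in $R$. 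This reduces the entire statement to the special case $\frak K = K_{\rm cyc}$.

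The remaining step is to establish the cyclotomic vanishing $\widetilde{H}^1_{\rm f}(G_{K,S}, \bT_{K_{\rm cyc}}, \Delta_\Sigma) = 0$. My plan here is to use Shapiro's lemma to identify both the global and local cohomologies that appear in the defining triangle \eqref{fundamental_exact_tri2} with Iwasawa cohomologies of $\cO(1)\otimes\chi^{-1}$ along $K_{\rm cyc}/K$; to invoke the weak Leopoldt conjecture (which is unconditional for the cyclotomic $\bZ_p$-extension of a number field with these coefficients) to pin down the $R_{\rm cyc}$-rank of the relevant $H^1_{\rm Iw}$; and to use the ordinary hypothesis~\ref{item_ord} to observe that the total local rank $\sum_{v \in \Sigma}[K_v:\bQ_p]$ matches the global rank $[F:\bQ]$ dictated by Lemma~\ref{lemma:Euler characteristic = 0}. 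A Poitou--Tate duality argument should then identify the cokernel of the localization map at $\Sigma$ with a dual Selmer group controlled by $\bT_{K_{\rm cyc}}^\vee(1)$ with strict condition at $\Sigma^c$, whose generic vanishing forces the torsion-free strict Selmer group on the left to have generic rank $0$ and hence to vanish by Corollary~\ref{corollary:H_f^1 vanishes iff H_f^2 is torsion and fitt=char}(i) applied over $R_{\rm cyc}$.

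The main obstacle I anticipate is precisely this last step: packaging the rank-matching within classical (pre-IMC) Iwasawa theory, since Section~\ref{sec_Selmer_Groups} is designed to \emph{reformulate}, rather than to \emph{use}, the Iwasawa Main Conjecture. I expect the argument to rely only on the weak Leopoldt conjecture together with Poitou--Tate global duality, and to avoid any appeal to the non-vanishing of $p$-adic $L$-functions.
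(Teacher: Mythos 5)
There is a genuine gap, and it sits exactly at the step you yourself flag as the "remaining step": the vanishing of the cyclotomic strict Selmer group $\widetilde{H}^1_{\rm f}(G_{K,S},\bT_{K_{\rm cyc}},\Delta_\Sigma)$. Your plan for it is circular. By \ref{item_ord} the generic global rank $[F:\QQ]$ (weak Leopoldt plus the Euler characteristic of Lemma~\ref{lemma:Euler characteristic = 0}) coincides exactly with the generic local rank $\sum_{v\in\Sigma}[K_v:\QQ_p]$ at the $\Sigma$-places, so Poitou--Tate duality together with rank counting yields no constraint at all: a kernel of positive rank is perfectly compatible with these numbers. The "generic vanishing" of the dual Selmer group (relaxed at $\Sigma$, strict at $\Sigma^c$) that you then invoke is not a formal consequence of weak Leopoldt and duality --- by the very rank balance you note, it is \emph{equivalent} to the statement you are trying to prove, namely the weak $\Sigma$-Leopoldt conjecture along the cyclotomic tower. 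The paper's proof is a one-line appeal to precisely this non-formal input: since $\frak{K}\supseteq K_{\rm cyc}$, the identification $\widetilde{H}^1_{\rm f}(G_{K,S},\bT_{\frak K},\Delta_\Sigma)=\ker\bigl(H^1(G_{K,S},\bT_{\frak K})\to\bigoplus_{v\in\Sigma}H^1(G_{K_v},\bT_{\frak K})\bigr)$ plus the theorem of Hida--Tilouine (\cite{HT94}, Theorem~1.2.2 and Lemma~1.2.5). Their argument uses CM-specific arithmetic that your sketch never touches: up to finite index the relevant (global unit) classes are fixed by complex conjugation $c$, so a class that dies at all places of $\Sigma$ also dies at $\Sigma^c=c(\Sigma)$, i.e.\ at \emph{all} of $S_p(K)$, and only then does Iwasawa's classical weak Leopoldt theorem finish the job. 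Without some such input (or an appeal to Conjecture~\ref{conj_sigma_leopoldt}, which the corollary is explicitly not assuming), your argument does not close.

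Two secondary points about your reduction scaffolding. First, the descent via determinants assumes $R$ is a domain, but $\Gal(\frak K/K)$ is only a quotient of $\Gamma_\infty\cong\ZZ_p^{[F:\QQ]+1+\delta}$ and may have $p$-torsion, in which case $R$ has several minimal primes; then $\det f\notin\ker(R\to\cO[[\Gamma_{\rm cyc}]])$ only controls one component and does not imply $\ker f=0$ (note the paper's own proof of Corollary~\ref{corollary:H_f^1 vanishes iff H_f^2 is torsion and fitt=char} is careful to use only that ${\rm Frac}(R)$ is a product of fields). Second, the reduction to $\frak K=K_{\rm cyc}$ is unnecessary overhead: the Hida--Tilouine theorem applies directly to any tower containing the cyclotomic $\ZZ_p$-extension, which is why the paper proves (i) for general $\frak K$ in one step and then deduces (ii) from Corollary~\ref{corollary:H_f^1 vanishes iff H_f^2 is torsion and fitt=char}(i), exactly as you do.
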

\begin{proof}
\item[i)] It follows from the definition of the Selmer complex $\widetilde{{\bf R}\Gamma}_{\rm f}(G_{K,S},\bT_{\frak K},\Delta_{\Sigma})$ that
\begin{align*}
\widetilde{H}^{1}_{\rm f}(G_{K,S}, \bT_{\frak K} ,\Delta_{\Sigma}) = \ker\left(H^1(G_{K,S},\bT_{\frak K}) \longrightarrow \bigoplus_{v \in \Sigma} H^1(G_{K_v}, \bT_{\frak K}) 
\right). 
\end{align*}
Since $K$ contains the cyclotomic $\bZ_p$-extension of $K$, the required vanishing follows from the weak $\Sigma$-Leopoldt Conjecture proved by Hida and Tilouine in this set up (see \cite{HT94}, Theorem~1.2.2 and Lemma~1.2.5). 

\item[ii)] This assertion is an immediate consequence of (i) and Corollary~\ref{corollary:H_f^1 vanishes iff H_f^2 is torsion and fitt=char}(i). 
\end{proof}

\subsubsection{Iwasawa Main Conjectures for CM fields}
Our goal in this short subsection is to reformulate the Iwasawa main conjectures for CM fields in terms of the Fitting ideals of the Selmer complexes we have introduced in Section~\ref{subsection:example-cm}.

We let $L := \overline{K}^{\ker(\chi)}$ denote the field cut by the ray class character $\chi$ and let $M_{L, \Sigma}$ denote the maximal abelian pro-$p$-extension of $LK(p^{\infty})$ which is unramified outside the primes which lie above $\Sigma$. We set $X_{L, \Sigma} := \Gal(M_{L, \Sigma}/K(p^{\infty}))$ and define the Iwasawa module $X_{\Sigma}^{\chi}$ as be the maximal $\chi$-isotypic direct summand of $X_{L, \Sigma}$. We set (with the notation of \S\ref{subsection:example-cm}) $\bT_\infty:=\TT_{K(p^{\infty})}$.

\begin{proposition}\label{prop:quasi-isom}
There is a natural pseudo-isomorphism 
\[
X_\Sigma^{\chi} \longrightarrow \widetilde{H}^{2}_{\rm f}(G_{K,S}, \bT_\infty, \Delta_{\Sigma}). 
\]
\end{proposition}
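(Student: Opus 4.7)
My strategy is to realize the desired map as the composition of a Poitou--Tate global duality isomorphism for the Selmer complex $\widetilde{\bf R\Gamma}_{\rm f}(G_{K,S}, \bT_\infty, \Delta_\Sigma)$ with a class-field-theoretic identification of the resulting strict Selmer group (for the Kummer dual module) as the Pontryagin dual of $X_\Sigma^\chi$. First, I would invoke Lemma~\ref{lemma:indep} to reduce to the case $S = S_{\rm ram}(\chi) \cup S_p(K)$. The exact triangle~\eqref{fundamental_exact_tri2} together with the vanishing $\widetilde{H}^3_{\rm f}(G_{K,S}, \bT_\infty, \Delta_\Sigma)=0$ from Proposition~\ref{prop:parf[1,2]} will yield the exact sequence
\[
\bigoplus_{v \in \Sigma} H^1(G_{K_v}, \bT_\infty) \longrightarrow \widetilde{H}^2_{\rm f}(G_{K,S}, \bT_\infty, \Delta_\Sigma) \longrightarrow H^2(G_{K,S}, \bT_\infty) \longrightarrow \bigoplus_{v \in \Sigma} H^2(G_{K_v}, \bT_\infty) \longrightarrow 0\,.
\]

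Second, I will apply Nekov\'{a}\v{r}'s Poitou--Tate global duality for Selmer complexes (c.f.~\cite[\S8.9]{Nek}). With $A_\infty := \Hom_{\cO}(\bT_\infty, \bQ_p/\bZ_p)(1)$ --- a discrete, $R$-cofinitely generated $G_K$-module on which $G_K$ acts via $\chi$ tensored with the tautological character $G_K \twoheadrightarrow \Gamma_\infty \hookrightarrow R^\times$ --- global duality produces a natural isomorphism
\[
\widetilde{H}^{2}_{\rm f}(G_{K,S}, \bT_\infty, \Delta_\Sigma) \stackrel{\sim}{\longrightarrow} \widetilde{H}^{1}_{\rm f}(G_{K,S}, A_\infty, \Delta_\Sigma^\vee)^\vee,
\]
where $\Delta_\Sigma^\vee$ denotes the dual local conditions: the zero condition at primes $v \in \Sigma^c$, the full condition at primes $v \in \Sigma$, and the unramified condition at the remaining primes of $S$.

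Third, I will identify the strict Selmer group $\widetilde{H}^{1}_{\rm f}(G_{K,S}, A_\infty, \Delta_\Sigma^\vee)$ with the Pontryagin dual $(X_\Sigma^\chi)^\vee$ via class field theory: the strict conditions at $\Sigma^c$ together with the unramified conditions elsewhere cut out, upon restriction to $G_{LK(p^\infty)}$, precisely the Kummer classes attached to abelian pro-$p$ extensions of $LK(p^\infty)$ unramified outside the primes above $\Sigma$, and the $\chi$-isotypic projection (exact because $[L:K]$ is coprime to $p$) then singles out $X_\Sigma^\chi$ up to pseudo-null discrepancies. These discrepancies are expected to come from $H^0(G_{K,S}, A_\infty)$, from local $H^0$-terms at $v \in \Sigma^c$, and from finite contributions controlled by the conductor $\mathfrak{c}$ of $\chi$. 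Pontryagin dualizing the resulting identification will produce the sought-for pseudo-isomorphism $X_\Sigma^\chi \to \widetilde{H}^2_{\rm f}(G_{K,S}, \bT_\infty, \Delta_\Sigma)$.

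The principal hurdle I expect lies in this final step: one must verify that the strict condition at primes $v \in \Sigma^c$ imposed by $\Delta_\Sigma^\vee$ (which kills all of the local cohomology, not merely the inertia-unramified quotient) differs from the ``unramified outside $\Sigma$'' condition underlying the definition of $M_{L,\Sigma}$ only by pseudo-null modules. This reduces to controlling the Galois cohomology of the decomposition-modulo-inertia parts at the primes in $\Sigma^c$ and confirming, under the $p$-ordinarity hypothesis~\ref{item_ord}, that these contributions are of finite cardinality and therefore pseudo-null over $R = \cO[[\Gamma_\infty]]$.
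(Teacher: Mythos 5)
Your overall route (Nekov\'a\v{r} duality to pass from $\widetilde{H}^2_{\rm f}(G_{K,S},\bT_\infty,\Delta_\Sigma)$ to the Pontryagin dual of a discrete Selmer group for the Kummer dual, then class field theory to compare that Selmer group with $\Hom(X_\Sigma^\chi,\QQ_p/\ZZ_p)$) is a sensible reconstruction of an argument the paper itself only cites (it defers to \cite[Lemma C.6 and sequence (29)]{BS19}), and you have correctly located where the comparison is delicate: the dual local condition at $v\in\Sigma^c$ is the \emph{strict} one, whereas $X_{L,\Sigma}$ only imposes \emph{unramifiedness} above $\Sigma^c$, so the two Selmer groups differ by local terms at $\Sigma^c$ (the contributions at $S_{\rm ram}(\chi)\setminus S_p(K)$ are genuinely harmless by the acyclicity in Remark~\ref{remark_local_cohom_bad_primes_acyclic}, and $H^0(G_{K,S},-)$ is negligible since $\chi\neq\mathds{1}$).

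The genuine gap is in how you dispose of those local discrepancies. They are not ``of finite cardinality'' in general: for $v\in\Sigma^c$ with $\chi(G_{K_v})=\{1\}$ the relevant terms are (co)modules over $\cO[[\Gamma_\infty/\Gamma_v]]$, where $\Gamma_v\subset\Gamma_\infty$ is the decomposition subgroup, and these are finite only when $\Gamma_v$ is open in $\Gamma_\infty$ — true for an imaginary quadratic field, but false for a general CM field (e.g.\ $[F:\QQ]\geq 3$ with a $p$-adic place of small local degree, so that $\mathrm{rank}_{\ZZ_p}\Gamma_v < [F:\QQ]+1+\delta$). What the pseudo-isomorphism actually needs is that each such module has codimension at least $2$, i.e.\ that $\mathrm{rank}_{\ZZ_p}\Gamma_v\geq 2$ for every $v\in E(\Sigma^c,\chi)$, and this is precisely the nontrivial content you would have to supply: one gets $\mathrm{rank}\geq 1$ from inertia (the cyclotomic direction is ramified at $v$), and a second, independent direction comes from a Frobenius of infinite order in a $\ZZ_p$-extension unramified outside $\Sigma$, which is exactly the kind of construction carried out in the proof of Lemma~\ref{lemma:non-vanishing} (using that the units of the CM field $K$ are, up to torsion, units of $F$, so no nontrivial part of their closure is supported at a single place of $\Sigma^c$). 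As written, your plan replaces this essential step by an incorrect finiteness assertion, so the proof is incomplete precisely at the point that the cited computation in \cite{BS19} is designed to handle; with the rank-$\geq 2$ argument inserted, the duality skeleton you propose does yield the statement.
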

\begin{proof}
This is \cite[Lemma C.6]{BS19} combined with the exact sequence (29) in op. cit.
\end{proof}

The Iwasawa main conjecture for the CM field $K$ asserts that the characteristic ideal of $X_\Sigma^{\chi}$ is generated by the $p$-adic $L$-function $(L_{p,\chi}^{\rm Katz})^{\iota} $. Using the isomorphism~\eqref{eq:scalar-extension} together with Corollary~\ref{corollary:H_f^1 vanishes iff H_f^2 is torsion and fitt=char}(ii) and  Proposition~\ref{prop:quasi-isom}, Iwasawa Main Conjecture for a CM field admits the following reformulation.

\begin{conjecture}[Iwasawa Main Conjecture for $K$]
\label{conj:iwasawa main conj}
$$
{\rm Fitt}_{\LL_{\cO}(\Gamma_\infty)}\left(\widetilde{H}^{2}_{\rm f}(G_{K,S}, \bT_\infty, \Delta_{\Sigma})\right)\bZ_p^{\rm ur}[[\Gamma_\infty]] = (L_{p,\chi}^{\rm Katz})^{\iota}  \cdot \bZ_p^{\rm ur}[[\Gamma_\infty]]. 
$$
\end{conjecture}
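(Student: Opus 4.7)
The plan is to recognize that the displayed identity is an immediate reformulation of the classical Iwasawa Main Conjecture for the CM field $K$ (which asserts that ${\rm char}_{\bZ_p^{\rm ur}[[\Gamma_\infty]]}(X_\Sigma^\chi \otimes_R \bZ_p^{\rm ur}[[\Gamma_\infty]]) = (L_{p,\chi}^{\rm Katz})^\iota \cdot \bZ_p^{\rm ur}[[\Gamma_\infty]]$), by assembling in order the pseudo-isomorphism of Proposition~\ref{prop:quasi-isom}, the torsion property together with the coincidence of Fitting and characteristic ideals from Corollary~\ref{cor:H^1_f=0 and H^2_f is torsion}(ii) and Corollary~\ref{corollary:H_f^1 vanishes iff H_f^2 is torsion and fitt=char}(ii), and finally the scalar-extension isomorphism~\eqref{eq:scalar-extension}.

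Applying Proposition~\ref{prop:quasi-isom} with $\frak{K} = K(p^\infty)$ exhibits a pseudo-isomorphism of finitely generated $R := \cO[[\Gamma_\infty]]$-modules from $X_\Sigma^\chi$ to $\widetilde{H}^2_{\rm f}(G_{K,S}, \bT_\infty, \Delta_\Sigma)$. Since $R$ is a regular (in particular, Krull) local ring, pseudo-isomorphism preserves characteristic ideals, so
$${\rm char}_R(X_\Sigma^\chi) = {\rm char}_R\bigl(\widetilde{H}^2_{\rm f}(G_{K,S}, \bT_\infty, \Delta_\Sigma)\bigr).$$
By Corollary~\ref{cor:H^1_f=0 and H^2_f is torsion}(ii) the right-hand module is torsion, and Corollary~\ref{corollary:H_f^1 vanishes iff H_f^2 is torsion and fitt=char}(ii) upgrades the preceding equality to
$${\rm char}_R(X_\Sigma^\chi) = {\rm Fitt}_R\bigl(\widetilde{H}^2_{\rm f}(G_{K,S}, \bT_\infty, \Delta_\Sigma)\bigr).$$
Since Fitting ideals are stable under arbitrary base change, extending scalars along $R \hookrightarrow \bZ_p^{\rm ur}[[\Gamma_\infty]]$ (whose compatibility with the Selmer complex is precisely the content of~\eqref{eq:scalar-extension}) identifies the left-hand side of the stated identity with ${\rm char}_{\bZ_p^{\rm ur}[[\Gamma_\infty]]}(X_\Sigma^\chi \otimes_R \bZ_p^{\rm ur}[[\Gamma_\infty]])$, and the classical Iwasawa Main Conjecture then yields the displayed equality. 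Reversing the chain establishes the converse implication.

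The only genuinely non-formal step is the upgrade from characteristic to Fitting ideals, since pseudo-isomorphisms do not preserve Fitting ideals in general. What makes it work here is the existence of a square free resolution of the shape~\eqref{eqn_H2_free_resolution}, namely
$$0 \longrightarrow P \xrightarrow{f} P \longrightarrow \widetilde{H}^2_{\rm f}(G_{K,S}, \bT_\infty, \Delta_\Sigma) \longrightarrow 0.$$
This resolution is the joint output of the vanishing $\widetilde{H}^1_{\rm f}(G_{K,S}, \bT_\infty, \Delta_\Sigma)=0$ (Corollary~\ref{cor:H^1_f=0 and H^2_f is torsion}(i), resting on the weak $\Sigma$-Leopoldt theorem of Hida--Tilouine) and the Euler--Poincar\'e characteristic computation of Lemma~\ref{lemma:Euler characteristic = 0} (which crucially invokes the $p$-ordinary hypothesis~\ref{item_ord}). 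Without these two inputs, one would recover only an inclusion of ideals rather than the required equality.
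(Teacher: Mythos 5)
Since the statement is a conjecture, what can be (and is) argued is only that the displayed identity is equivalent to the classical Iwasawa Main Conjecture for $K$, and your proposal does exactly this along the paper's own route: the pseudo-isomorphism of Proposition~\ref{prop:quasi-isom}, torsionness from Corollary~\ref{cor:H^1_f=0 and H^2_f is torsion}, the identification of Fitting and characteristic ideals in Corollary~\ref{corollary:H_f^1 vanishes iff H_f^2 is torsion and fitt=char}(ii) via the two-term presentation \eqref{eqn_H2_free_resolution}, and the scalar extension \eqref{eq:scalar-extension}. This matches the paper's justification of the reformulation essentially step for step, so there is nothing to add.
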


\begin{definition}
\label{defn_algebraic_Katz_padic_L}
We define the algebraic Katz $p$-adic $L$-function 
$$L_{p,\chi}^{\rm alg}\in {\rm Frac}(\LL_{\cO}(\Gamma_\infty))^\times\big{/}\LL_{\cO}(\Gamma_\infty)^\times$$
as the image of a generator of the $\LL_{\cO}(\Gamma_\infty)$-module
${\rm Fitt}_{\LL_{\cO}(\Gamma_\infty)}\left(\widetilde{H}^{2}_{\rm f}(G_{K,S}, \bT_\infty, \Delta_{\Sigma})\right)\,.$
\end{definition}

\begin{proposition}\label{proposition_vanishing_order_e}
Let us denote by $\sA := \ker(\LL_{\cO}(\Gamma_\infty) \to \cO)$ the full augmentation ideal and put $e := \# \{v \in \Sigma^c \mid \chi(G_{K_v}) = \{1\}\}$. We then have
\[
L_{p,\chi}^{\rm alg} \in \sA^e\,. 
\]
\end{proposition}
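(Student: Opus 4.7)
By Lemma~\ref{lemma:indep}, one may assume $S = S_{\mathrm{ram}}(\chi) \cup S_p(K)$. Since $K(p^{\infty})$ contains the cyclotomic $\bZ_p$-extension of $K$, Corollary~\ref{cor:H^1_f=0 and H^2_f is torsion} yields $\widetilde{H}^{1}_{\mathrm{f}}(G_{K,S}, \bT_\infty, \Delta_\Sigma) = 0$ together with the torsionness of $\widetilde{H}^{2}_{\mathrm{f}}(G_{K,S}, \bT_\infty, \Delta_\Sigma)$. Combined with Proposition~\ref{prop:parf[1,2]} and the Euler--Poincar\'e vanishing (Lemma~\ref{lemma:Euler characteristic = 0}), the Selmer complex $\widetilde{\mathbf{R}\Gamma}_{\mathrm{f}}(G_{K,S}, \bT_\infty, \Delta_\Sigma)$ is quasi-isomorphic to a two-term complex $[P \xrightarrow{f} P]$ in degrees $1$ and $2$, with $P$ a free $R := \LL_\cO(\Gamma_\infty)$-module of some finite rank $n$. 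Thus the Fitting ideal in Definition~\ref{defn_algebraic_Katz_padic_L} equals the principal ideal $(\det f)$, and the problem reduces to showing $\det f \in \sA^e$.

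The core idea is to show that the reduction $\overline{f} := f \bmod \sA \in \mathrm{End}_\cO(\cO^n)$ has rank at most $n-e$. By the base-change isomorphism \eqref{eq:base-change} applied with $I = \sA$, together with the identification $\bT_\infty/\sA \bT_\infty \cong T := \cO(1) \otimes \chi^{-1}$, we obtain
\[
\ker(\overline{f}) = \widetilde{H}^{1}_{\mathrm{f}}(G_{K,S}, T, \Delta_\Sigma), \qquad \mathrm{coker}(\overline{f}) = \widetilde{H}^{2}_{\mathrm{f}}(G_{K,S}, T, \Delta_\Sigma).
\]
Base-changing Lemma~\ref{lemma:Euler characteristic = 0} further gives $\mathrm{rank}_\cO \widetilde{H}^{1}_{\mathrm{f}}(T, \Delta_\Sigma) = \mathrm{rank}_\cO \widetilde{H}^{2}_{\mathrm{f}}(T, \Delta_\Sigma)$, so it suffices to establish $\mathrm{rank}_\cO \widetilde{H}^{2}_{\mathrm{f}}(T, \Delta_\Sigma) \geq e$.

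This rank bound is extracted from the Selmer long exact sequence attached to the triangle \eqref{fundamental_exact_tri2} combined with Poitou--Tate global duality. Since $T^*(1) \cong \chi$, local Tate duality gives $\mathrm{rank}_\cO H^2(G_{K_v}, T) = \mathrm{rank}_\cO H^0(G_{K_v}, \chi)$, which equals $1$ precisely when $\chi(G_{K_v}) = \{1\}$. Poitou--Tate global duality then produces a map (surjective modulo $\cO$-torsion)
\[
H^2(G_{K,S}, T) \twoheadrightarrow \bigoplus_{v \in S} H^2(G_{K_v}, T),
\]
so that $\mathrm{rank}_\cO H^2(G_{K,S}, T) \geq e + e_\Sigma$, where $e_\Sigma := \#\{v \in \Sigma \mid \chi(G_{K_v}) = \{1\}\}$. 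Projecting to the $\Sigma$-component, the kernel of the restriction $H^2(G_{K,S}, T) \to \bigoplus_{v \in \Sigma} H^2(G_{K_v}, T)$ has $\cO$-rank at least $e$. Since $\widetilde{H}^{3}_{\mathrm{f}}(T, \Delta_\Sigma) = 0$ by Proposition~\ref{prop:parf[1,2]} applied to $I = \sA$, the Selmer long exact sequence exhibits this kernel as a quotient of $\widetilde{H}^{2}_{\mathrm{f}}(T, \Delta_\Sigma)$, delivering the required inequality.

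Finally, with $\mathrm{rank}\, \overline{f} \leq n - e$ established, Smith normal form over the discrete valuation ring $\cO$ furnishes $P_0, Q_0 \in \mathrm{GL}_n(\cO)$ with $P_0 \overline{f} Q_0 = \mathrm{diag}(d_1, \dots, d_{n-e}, 0, \dots, 0)$; lifting them under the surjection $R \twoheadrightarrow \cO$ yields $\widetilde{P}, \widetilde{Q} \in \mathrm{GL}_n(R)$. Every entry of $\widetilde{P} f \widetilde{Q}$ lies in $\sA$ except possibly the first $n-e$ diagonal entries, so each summand in the Leibniz expansion of $\det(\widetilde{P} f \widetilde{Q})$ contains at least $e$ factors from $\sA$. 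Hence $\det(\widetilde{P} f \widetilde{Q}) \in \sA^e$, and since $\det \widetilde{P}, \det \widetilde{Q} \in R^\times$, the same conclusion holds for $\det f$. The principal technical burden lies in the Poitou--Tate rank count in the third paragraph; the reduction to a square resolution and the Smith-normal-form/Leibniz closing argument are then formal.
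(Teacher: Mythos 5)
Your proof is correct, but it follows a genuinely different route from the paper's. The paper never leaves the Iwasawa algebra: from the triangle ${\bf R}\Gamma_{\rm c}(G_{K,S},\bT_\infty)\to\widetilde{{\bf R}\Gamma}_{\rm f}(G_{K,S},\bT_\infty,\Delta_\Sigma)\to\bigoplus_{v\in\Sigma^c}{\bf R}\Gamma(G_{K_v},\bT_\infty)$ and the vanishing $H^2_{\rm c}(G_{K,S},\bT_\infty)\cong H^0(G_{K,S},\bT_\infty^\vee(1))^\vee=0$ it extracts a surjection $\widetilde{H}^2_{\rm f}(G_{K,S},\bT_\infty,\Delta_\Sigma)\twoheadrightarrow\bigoplus_{v\in E(\Sigma^c,\chi)}H^2(G_{K_v},\bT_\infty)$, identifies $H^2(G_{K_v},\bT_\infty)\cong\cO[[\Gamma_\infty/\Gamma_v]]$ by local duality, and concludes at once from the functoriality of Fitting ideals under surjections, since $\mathrm{Fitt}_{\LL_{\cO}(\Gamma_\infty)}(\cO[[\Gamma_\infty/\Gamma_v]])\subset\sA$. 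You instead work at the central fibre: represent the Selmer complex by a square matrix $f$ over $\LL_{\cO}(\Gamma_\infty)$, bound the rank of $f\bmod\sA$ by a Poitou--Tate rank count for $\widetilde{H}^2_{\rm f}(G_{K,S},T,\Delta_\Sigma)$, and finish with the Smith-normal-form/Leibniz manipulation. Both arguments rest on the same duality input (the local $H^2$'s at the places of $\Sigma^c$ where $\chi$ is locally trivial detect the exceptional zero), but yours requires the control theorem and the determinant bookkeeping, while the paper's is shorter and yields slightly finer information, namely that $L^{\rm alg}_{p,\chi}$ lies in the product $\prod_{v\in E(\Sigma^c,\chi)}\mathrm{Fitt}(\cO[[\Gamma_\infty/\Gamma_v]])$, which keeps track of the individual "directions" $\Gamma_v$ rather than only the total order of vanishing. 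One caveat worth flagging: your claim that $H^2(G_{K,S},T)\to\bigoplus_{v\in S}H^2(G_{K_v},T)$ is surjective modulo $\cO$-torsion uses that $H^0(G_{K,S},T^\vee(1))$ is $\cO$-torsion, which holds because $\chi$ is non-trivial of prime-to-$p$ order; for $\chi=\mathds{1}$ this cokernel has corank one and your count would only give $\sA^{e-1}$. Since the paper's own vanishing of $H^2_{\rm c}(G_{K,S},\bT_\infty)$ relies on the same non-triviality (a standing hypothesis from \S\ref{sec_Linvariants_heights_exceptionalzeros} onward), this is not a gap relative to the paper, but you should state the assumption explicitly if you intend your argument to be read in the generality of \S\ref{subsection:example-cm}.
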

\begin{proof}
This is \cite[Lemma C.8]{BS19}; we present its proof here for the sake of the completeness. By definition, we have an exact triangle 
\[
{\bf R}\Gamma_{\rm c}(G_{K,S}, \bT_{\infty})  \longrightarrow \widetilde{{\bf R}\Gamma}_{\rm f}(G_{K,S}, \bT_{\infty}, \Delta_{\Sigma})
\longrightarrow \bigoplus_{v \in \Sigma^c}{\bf R}\Gamma(G_{K_{v}}, \bT_{\infty}). 
\]
Since $H^2_{\rm c}(G_{K,S}, \bT_{\infty}) \cong H^0(G_{K,S}, \bT_{\infty}^\vee(1))^\vee = 0$ by global duality, we deduce a surjection 
\[
\widetilde{H}^{2}_{\rm f}(G_{K,S}, \bT_\infty, \Delta_{\Sigma}) \longrightarrow 
\bigoplus_{v \in \Sigma^c} H^2(G_{K_v}, \bT_\infty). 
\]
For each prime $v \in \Sigma^c$ with $\chi(G_{K_v}) = \{1\}$, local Tate duality shows that  
\[
H^2(G_{K_v}, \bT_\infty) \cong H^0(G_{K_v}, \bT_\infty^\vee(1))^\vee \cong \cO[[\Gamma_\infty/\Gamma_v]], 
\]
where $\Gamma_v \subset \Gamma_\infty$ denotes the decomposition subgroup at $v$. Notice also that $H^2(G_{K_v}, \bT_\infty) = 0$ whenever $\chi(G_{K_v}) \neq \{1\}$. 
Hence we conclude that 
\[
L_{p,\chi}^{\rm alg} \in \prod_{v \in \Sigma^c}\mathrm{Fitt}_{\LL_{\cO}(\Gamma_\infty)}^0\left(H^2(G_{K_v}, \bT_\infty) \right) \subset \sA^e\,,
\]
as required.
\end{proof}


\section{$\mathcal{L}$-invariants, $p$-adic heights and exceptional zeros of Katz' $p$-adic $L$-functions}\label{sec_Linvariants_heights_exceptionalzeros}

In this section, we retain the notation of \S\ref{subsection:example-cm} as well as the $p$-ordinarity hypothesis~\ref{item_ord}. Our primary objective is to prove Theorem~\ref{thm_exceptional_zero_conj_OK}, which improves \cite[Corollary 1.4]{BS19}. Along the way, 
\begin{itemize}
    \item[i)] we introduce a universal group-ring-valued $\cL$-invariant  $\cL_\Sigma^{\rm Gal}$ in \eqref{eqn_universal_L_invariant}, which interpolates in a natural way the group-ring-valued $\cL$-invariants  $\cL_{\Sigma,\Gamma}^{\rm Gal}$ associated to the $\ZZ_p$-extensions $K_\Gamma$ of $K$; 
    \item[ii)] we relate the non-vanishing of group-ring-valued $\cL$-invariants to the non-degeneracy of Nekov\'a\v{r}'s $p$-adic height pairings and the semi-simplicity\footnote{The genesis of the semi-simplicity problems of the sort we consider here is  \cite{CoatesLichtenbaum1973}\,.} of the relevant extended Selmer groups.
\end{itemize}
We note that (ii) extends the main results of  \cite[Theorem 1]{BenoisIwasawa2012} where Benois treats the cyclotomic $\cL$-invariants. We also remark that the definition of the universal group-ring-valued $\cL$-invariant  $\cL_\Sigma^{\rm Gal}$ involves an alternative definition of $\cL_{\Sigma,\Gamma}^{\rm Gal}$ in terms of $p$-adic height pairings.

Let $\chi \colon G_{K} \longrightarrow \overline{\bQ}^{\times}$ be a finite prime-to-$p$ order non-trivial  character. 

\begin{definition}
\item[i)] We put $r := [F \colon \bQ]$ and define
\[
T := \cO(1) \otimes \chi^{-1} \,, \textrm{ \,\,\,\,\,\, } \, T^*(1) := \Hom_\cO(T, \cO)(1) = \cO(\chi). 
\]
We define $L := \overline{K}^{\ker(\chi)}$, the field cut out by the character $\chi$.
\item[ii)] We set $E(\Sigma^c, \chi) := \{v \in \Sigma^{c} \mid \chi(G_{K_{v}}) = \{1\}\}$ and put
$e := \# E(\Sigma^c, \chi)$. 
\end{definition}


\subsection{A semisimplicity criterion for extended Selmer groups}
Our goal in this subsection is to prove Corollary~\ref{corollary:equiv_semi-simple_fitt}, where we relate the order of exceptional zeros of an algebraic $p$-adic $L$-function to the semisimplicity properties of the relevant extended Selmer group.


\begin{definition}
Only in this paragraph,  we let $M$ be a finite extension of $\QQ_\ell$ and let $\cO_M$ denote its ring of integers. We put $H^1_{{\rm f}}(G_{M}, \ZZ_p(1)):=\widehat{\cO_{M}^{\times}}$, the $p$-adic completion of the units of $M$. 
\end{definition}
Note that we have a natural identification $H^1(G_{M}, \ZZ_p(1))=\widehat{M^{\times}}$ by Kummer theory, via which we shall think of $H^1_{{\rm f}}(G_{M}, \ZZ_p(1))$ as a submodule of  $H^1(G_{M}, \ZZ_p(1))$.
\begin{definition}
\label{defn_Kummer_theory}
Let $M$ be a number field.  Suppose that $\eta \colon G_{M} \longrightarrow \overline{\bQ}^\times$ is any non-trivial character of finite prime-to-$p$ order. 
Let us define $M_\eta:= \overline{M}^{\ker(\eta)}$ and set $\cO_\eta := \bZ_p[ {\rm im}(\eta) ]$. 
\item[i)] For any $\ZZ[\Gal(M/L)]$-module (respectively, $\ZZ_p[\Gal(M/L)]$-module) $X$, 
let us denote by $X^\eta$ the $\eta$-isotypic component of $\widehat{X}\otimes_{\Z_p}\cO_\eta$ (respectively, of $X\otimes_{\ZZ_p}\cO_\eta$). 
\item[ii)] Suppose $\cO$ is a finite flat extension of $\cO_\eta$.
We define 
\[
H^1_{\rm f}(G_{M_v}, \cO(1) \otimes \eta^{-1}):=(\cO_{M_\eta}\otimes_{\cO_{M}} \cO_{M_v})^{\times, \eta} \otimes_{\cO_{\eta}} \cO
\]
for any prime $v$ of $M$. 
\end{definition}

\begin{remark}
We retain the notation of Definition~\ref{defn_Kummer_theory}. Kummer theory together with the inflation-restriction sequence and our assumption that $[M_\eta : M]$ be coprime to $p$ yields a natural isomorphism
\[
H^1(G_{M_v}, \cO(1) \otimes \eta^{-1}) \stackrel{\sim}{\longrightarrow} (M_\eta \otimes_{M} M_v)^{\times, \eta} \otimes_{\cO_{\eta}} \cO \,.
\]
Through this identification, we will treat $H^1_{\rm f}(G_{M_v}, \cO(1) \otimes \eta^{-1})$ as a submodule of $H^1(G_{M_v}, \cO(1) \otimes \eta^{-1})$ and define the singular quotient
\begin{equation}
    \label{eqn_ingular_quotient}
    H^1_{/{\rm f}}(G_{M_v}, \cO(1) \otimes  \eta^{-1}):=H^1(G_{M_v}, \cO(1) \otimes  \eta^{-1})/H^1_{\rm f}(G_{M_v}, \cO(1) \otimes  \eta^{-1})\,.
\end{equation}
We then have the following commutative diagram, where the vertical identifications are induced from Kummer theory:
\begin{align}
\begin{split}\label{diagram:kummer}
    \xymatrix{H^1(G_{M}, \cO_{\eta}(1) \otimes  \eta^{-1}) \ar@{=}[d]\ar[r]^(.4){\res_p} & 
\prod_{v\in S_p(M)}
H^1(G_{M_v},  \cO_{\eta}(1) \otimes  \eta^{-1}) \ar@{=}[d]\ar[r]^(.55){\rm ord} & 
H^1_{/{\rm f}}(G_{M_v},  \cO_{\eta}(1) \otimes  \eta^{-1})\ar@{=}[d]
\\
M_\eta^{\times,\eta} \ar[r] & 
\prod_{v\in S_p(M)} ({M}_\eta\otimes_{{M}} {M_v})^{\times,\eta}\ar[r]^(.55){\prod {\rm ord}_v} &
\left(\prod_{v\in S_p(M)} \cO_{\eta}\right)^{\eta}\,.
}
\end{split}
\end{align}
\end{remark}

\begin{conjecture}[$\Sigma$-Leopoldt conjecture for $M/K$]
\label{conj_sigma_leopoldt}
For any finite extension $M$ of $K$, the canonical map 
\begin{equation}
\label{eqn_sigma_leopoldt_map}
    \cO_M^\times \otimes_\bZ \bZ_p \longrightarrow  \prod_{w \in \Sigma_M} \widehat{\cO_{M_w}^\times}=:\prod_{w \in \Sigma_M} H^1_{\rm f}(G_{M_w}, \bZ_p(1))
\end{equation}
is injective. Here, $\Sigma_M$ is the set of primes of $M$ which lie above those in $\Sigma$.
\end{conjecture}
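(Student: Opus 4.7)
The statement is a well-known open conjecture, in general stronger than Leopoldt's conjecture, and the standard approach is to reduce it to a linear independence statement for $p$-adic logarithms of global units. Concretely, the pro-$p$ completion $\widehat{\cO_{M_w}^\times}$ decomposes up to finite torsion as $\bZ_p^{[M_w:\bQ_p]}$ via the $p$-adic logarithm $\log_p$, so a class $u\otimes 1\in \cO_M^\times \otimes_\bZ \bZ_p$ lies in the kernel of \eqref{eqn_sigma_leopoldt_map} if and only if $\log_p\iota_w(u)=0$ for every $w\in \Sigma_M$, where $\iota_w\colon M\hookrightarrow M_w$ denotes the canonical embedding. If $\varepsilon_1,\ldots,\varepsilon_{r_1+r_2-1}$ is any $\bZ$-basis of $\cO_M^\times$ modulo torsion, injectivity of \eqref{eqn_sigma_leopoldt_map} is then equivalent to the statement that the $\Sigma$-regulator matrix $\mathrm{Reg}_\Sigma(M)$ with entries $\log_p\iota_w(\varepsilon_i)$ has rank $r_1+r_2-1$ over $\bQ_p$.

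At this point the argument splits according to the arithmetic of $M$. In the setup of Remark~\ref{remark_intro_Hsieh_1}, where $K=FM_0$ with $M_0$ an imaginary quadratic field in which $p$ splits, $\Sigma$ is induced from the embedding $\iota_p\colon M_0\hookrightarrow \bC_p$, and $M$ is abelian over $M_0$, the units $\varepsilon_i$ lie in an abelian extension of $M_0$. Brumer's $p$-adic analogue of Baker's theorem, which asserts the $\overline{\bQ}$-linear independence of $p$-adic logarithms of multiplicatively independent algebraic numbers contained in an abelian extension of an imaginary quadratic field, then applies directly and forces the full rank of $\mathrm{Reg}_\Sigma(M)$. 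This is the route through which the conjecture is known unconditionally in the scenarios we invoke in Remark~\ref{remark_intro_Hsieh_1} and Remark~\ref{rem_Hsieh_2_Intro}.

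For general $M/K$, no such transcendence input is available, and the standard route is instead to deduce the $\Sigma$-Leopoldt conjecture from the $p$-adic Schanuel conjecture, as carried out by Hida and Tilouine in \cite{HT94}, Lemma~1.2.1. The main obstacle is therefore $p$-adic transcendence theory: a result capable of handling units in arbitrary CM extensions lies beyond current techniques. For this reason I do not expect to prove Conjecture~\ref{conj_sigma_leopoldt} in full generality, and the paper accordingly uses it as a hypothesis in its main results (e.g.\ Theorem~\ref{theorem:mult-exp-zeros} and Corollary~\ref{cor:katz-order}(ii)) rather than attempting to establish it outright.
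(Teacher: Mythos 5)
The statement you were asked about is a conjecture, not a theorem: the paper gives no proof of Conjecture~\ref{conj_sigma_leopoldt}, using it only as a hypothesis, and merely remarks (as you do) that it follows from the $p$-adic Schanuel conjecture by \cite[Lemma~1.2.1]{HT94} and that it is known via Brumer's $p$-adic Baker theorem when the relevant extension is abelian over an imaginary quadratic field. Your assessment — reduction to the full rank of the $\Sigma$-regulator, the Brumer case, and the honest conclusion that the general case is out of reach — matches the paper's treatment exactly, so there is nothing to correct.
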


We note that the $\Sigma$-Leopoldt conjecture for $M/K$ is (strictly, in general) stronger than the Leopoldt conjecture for $M/K$. As explained in \cite[Lemma 1.2.1]{HT94}, the $\Sigma$-Leopoldt conjecture is a consequence of the $p$-adic Schanuel conjecture.

\begin{remark}
When $K$ is an imaginary quadratic field and $M/K$ is abelian, the $\Sigma$-Leopoldt conjecture holds true. 
\end{remark}

\begin{remark} Suppose $M$ is a finite abelian extension of $K$. 
\label{remark_consequences_of_sigma_leopoldt}
\item[i)] Observe that 
\begin{align*}
    \mathrm{rank}_\bZ(\cO_M^\times) + 1 &= \frac{[M \colon \bQ]}{2} \\
    &= \sum_{w \in \Sigma_M} [M_w:\QQ_p]=\sum_{w \in \Sigma_M} \mathrm{rank}_{\bZ_p}\left(H^1_{\rm f}(G_{M_w}, \bZ_p(1))\right), 
\end{align*}
where the first equality follows from Dirichlet's unit theorem whereas the second as a consequene of our running assumption \ref{item_ord}.
\item[ii)] Suppose that 
${\eta} \colon \Gal(M/K) \longrightarrow \overline{\bQ}^\times$ 
is any non-trivial character of finite prime-to-$p$ order. 
Assuming the validity of $\Sigma$-Leopoldt conjecture for $M/K$, the $\QQ[\Gal(M/K)]$-module description of $\cO_M^\times\otimes_{\ZZ}\QQ$ shows that the cokernel of the homomorphism  
\begin{equation}
    \label{eqn_sigma_leopoldt_map_psi}
    \cO_M^{\times,{\eta} } \longrightarrow \prod_{v \in \Sigma} (\cO_{M}\otimes_{\cO_K} \cO_{K_v})^{\times,{\eta} }=:\prod_{v \in \Sigma} H^1_{\rm f}(G_{K_v}, \cO_{{\eta} }(1) \otimes {\eta} ^{-1})
\end{equation}
(which is induced from \eqref{eqn_sigma_leopoldt_map} on restricting to ${\eta}$-isotypic components) has finite cardinality. 
\end{remark}


\begin{definition}
Let us set $\Phi := {\rm Frac}(\cO)$ and $V := T \otimes_{\cO} \Phi$. We put $H^1_{\rm f}(G_{K_v}, V):=(\cO_{L}\otimes_{\cO_k} \cO_{K_v})^{\times,\chi}\otimes_{\cO_{\chi}}\Phi$, which we may naturally identify as a subspace of $H^1(G_{K_v}, V)\stackrel{\sim}{\longrightarrow}({L}\otimes_{k} {K_v})^{\times,\chi} \otimes_{\cO_{\chi}}\Phi$ by Kummer theory.  We finally set $H^1_{/{\rm f}}(G_{K_v}, V) := H^1(G_{K_v}, V)/H^1_{{\rm f}}(G_{K_v}, V)$ and observe that we have a natural isomorphism $H^1_{/{\rm f}}(G_{K_v}, V)\xrightarrow{\sim} (\prod_{v\in S_p(K)} \Phi)^{\chi}$ induced from the map $\ord$ in \eqref{diagram:kummer}. 
\end{definition}

Let us set $S := S_{p}(K) \cup S_{\rm ram}(L/K) = S_{p}(K) \cup S_{\rm ram}(\chi)$.

\begin{definition}
We define the local conditions $\Delta_{\Sigma}^*$ on $C^{\bullet}(G_{K,S}, T^*(1))$ by setting  
\[
U_{v}^{+} := 
\begin{cases}
0 & \text{if} \ v \in \Sigma^c, 
\\
C^{\bullet}(G_{K_{v}}, T^*(1))  & \text{if} \ v \in \Sigma, 
\\
0 & \text{if} \ v \in S_{\rm ram}(\chi) \setminus \Sigma. 
\end{cases}
\]
\end{definition}

We note that $T^{I_v}=0=(T^*(1))^{I_v}$ for any prime $v \in S_{\rm ram}(\chi)$ and as a result, the unramified local conditions at such primes are indeed the zero local conditions.

The following lemma tells us that the extended Selmer groups detect the exceptional zeros in this particular setting.
\begin{lemma}\label{lemma:rank of selmer}
Suppose that the $\Sigma$-Leopoldt conjecture for $L/K$ holds true. 
    \item[i)] The canonical map 
    \[
    \widetilde{H}_{\rm f}^1(G_{K,S}, T, \Delta_{\Sigma}) \otimes_{\bZ_p} \bQ_p \longrightarrow \bigoplus_{v \in E(\Sigma^c, \chi)}H^1_{/{\rm f}}(G_{K_v}, T) \otimes_{\bZ_p} \bQ_p 
    \]
    is an isomorphism. In particular, the $\cO$-module $\widetilde{H}^{1}_{\rm f}(G_{K,S}, T, \Delta_{\Sigma})$ is free of rank $e$. 
    \item[ii)] The canonical map induced from the fundamental exact triangle \eqref{fundamental exact tri} 
    \[
    \bigoplus_{v \in E(\Sigma^c, \chi)}H^0(G_{K_v}, T^*(1)) \longrightarrow \widetilde{H}_{\rm f}^1(G_{K,S}, T^*(1), \Delta_{\Sigma}^*) 
    \]
    is an isomorphism, and the $\cO$-module $\widetilde{H}^{1}_{\rm f}(G_{K,S}, T^*(1), \Delta_{\Sigma}^*)$ is free of rank $e$. 
\end{lemma}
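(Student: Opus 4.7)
\textit{Plan.} The plan is to first express the extended Selmer groups via the fundamental exact triangle \eqref{fundamental exact tri}, translate to global/local units by Kummer theory, and then invoke the $\Sigma$-Leopoldt conjecture to control dimensions. Part (ii) will then follow from part (i) by Poitou--Tate duality for Nekov\'a\v{r}'s Selmer complexes. The principal technical obstacle is the precise dimension accounting and applying $\Sigma$-Leopoldt at the right junctures to simultaneously establish rank identities and rule out unwanted kernels.

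For (i), the vanishing $H^0(G_{K,S},T)=0$ (since $\chi\neq 1$) together with the acyclicity recorded in Remark~\ref{remark_local_cohom_bad_primes_acyclic} collapses the Selmer complex to yield
\[
\widetilde{H}^1_{\rm f}(G_{K,S}, T, \Delta_\Sigma) \;=\; \ker\!\left(H^1(G_{K,S}, T) \longrightarrow \bigoplus_{v \in \Sigma} H^1(G_{K_v}, T)\right).
\]
By inflation--restriction (as $[L:K]$ is coprime to $p$) and Kummer theory, $H^1(G_{K,S}, T)\otimes\Phi$ is identified with the $\chi$-isotypic component $(\cO_{L,S}^\times\otimes\Phi)^\chi$ of $S$-units, and likewise for local terms. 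Dirichlet's unit theorem together with the $\QQ[\Gal(L/K)]$-module structure of $\cO_L^\times$ (using that each of the $r := [F:\QQ]$ complex places of $K$ splits completely into $L$) gives $\dim_\Phi(\cO_L^\times\otimes\Phi)^\chi = r$. The valuation sequence, noting that the $\chi$-isotypic valuation contributions at $v \in S_{\rm ram}(\chi)$ vanish (since $\chi|_{D_v}\neq 1$), yields $\dim_\Phi(\cO_{L,S}^\times\otimes\Phi)^\chi = r + e + e'$ with $e' := \#\{v\in\Sigma : \chi(G_{K_v})=1\}$. On the local side, the ordinarity hypothesis \ref{item_ord} gives $\sum_{v\in\Sigma}[K_v:\QQ_p] = r$, and a local Euler--Poincar\'e computation produces $\dim_\Phi\bigoplus_{v\in\Sigma}H^1(G_{K_v},T)\otimes\Phi = r + e'$.

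The $\Sigma$-Leopoldt conjecture for $L/K$, via Remark~\ref{remark_consequences_of_sigma_leopoldt}(ii), furnishes an injection $(\cO_L^\times\otimes\Phi)^\chi \hookrightarrow \bigoplus_{v\in\Sigma}H^1_{\rm f}(G_{K_v},T)\otimes\Phi$ between spaces of matched dimension $r$, hence an isomorphism. Combined with the valuations at $\Sigma$-primes surjecting onto $\bigoplus_{v\in\Sigma}H^1_{/{\rm f}}(G_{K_v},T)\otimes\Phi$, the localization map is surjective, so $\widetilde{H}^1_{\rm f}\otimes\Phi$ has $\Phi$-dimension $e$. A diagram chase (invoking $\Sigma$-Leopoldt once more to conclude that a global unit trivial at all $\Sigma$-primes must itself be trivial) shows that the map $\widetilde{H}^1_{\rm f}\otimes\Phi \to \bigoplus_{v\in E(\Sigma^c,\chi)}H^1_{/{\rm f}}(G_{K_v},T)\otimes\Phi$ is injective, and hence an isomorphism since both sides have dimension $e$. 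Freeness of $\widetilde{H}^1_{\rm f}$ as an $\cO$-module of rank $e$ follows from the analog of Proposition~\ref{prop:parf[1,2]} for the representation $T$ (using the vanishing of both $\widetilde{H}^0_{\rm f}$ and $\widetilde{H}^3_{\rm f}$), which realizes $\widetilde{H}^1_{\rm f}$ as a submodule of a finitely generated free $\cO$-module over the DVR $\cO$.

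For (ii), the fundamental exact triangle for $(T^*(1),\Delta_\Sigma^*)$ together with $H^0(G_{K,S}, T^*(1)) = 0$ yields
\[
0 \longrightarrow \bigoplus_{v\in E(\Sigma^c,\chi)}H^0(G_{K_v}, T^*(1)) \longrightarrow \widetilde{H}^1_{\rm f}(G_{K,S}, T^*(1), \Delta_\Sigma^*) \longrightarrow X \longrightarrow 0,
\]
where $X = \ker\!\left(H^1(G_{K,S}, T^*(1)) \to \bigoplus_{v\in\Sigma^c}H^1(G_{K_v}, T^*(1))\right)$; it suffices to show $X = 0$. Poitou--Tate duality for Nekov\'a\v{r}'s Selmer complexes (with $\Delta_\Sigma^*$ being the local dual of $\Delta_\Sigma$) gives $\mathrm{rank}_\cO\,\widetilde{H}^2_{\rm f}(T^*(1),\Delta_\Sigma^*) = \mathrm{rank}_\cO\,\widetilde{H}^1_{\rm f}(T,\Delta_\Sigma) = e$ from (i), and $\widetilde{H}^3_{\rm f}(T^*(1),\Delta_\Sigma^*)$ dualizes $\widetilde{H}^0_{\rm f}(T,\Delta_\Sigma) = 0$, so vanishes. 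Combined with the vanishing of the Euler--Poincar\'e characteristic (the analog of Lemma~\ref{lemma:Euler characteristic = 0}) and $\widetilde{H}^0_{\rm f}(T^*(1)) = 0$, we conclude $\mathrm{rank}_\cO\,\widetilde{H}^1_{\rm f}(T^*(1)) = e$, whence $\mathrm{rank}_\cO X = 0$. Since $H^1(G_{K,S}, T^*(1)) = H^1(G_{K,S}, \cO(\chi))$ is $\cO$-torsion-free (via inflation--restriction, it is a $\Hom$-module into the torsion-free DVR $\cO$), so is $X$; hence $X = 0$, which simultaneously establishes the claimed isomorphism and the freeness assertion.
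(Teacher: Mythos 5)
Your proof is correct. For part (i) you follow essentially the paper's route: both arguments reduce, via \eqref{fundamental_exact_tri2} and Kummer theory, to comparing the $\chi$-parts of ($S$-)units of $L$ with the semi-local units at $\Sigma$, and both use the $\Sigma$-Leopoldt Conjecture~\ref{conj_sigma_leopoldt} to make $e_\chi(\cO_L^\times\otimes\Phi)\to\bigoplus_{v\in\Sigma}H^1_{\rm f}(G_{K_v},V)$ an isomorphism; the paper then finishes with a snake-lemma argument on the diagram \eqref{eqn_ord_selmer_diagram}, where you instead do explicit dimension counting plus a separate injectivity chase --- same substance. (Two small points: the identification of $\widetilde{H}^1_{\rm f}(G_{K,S},T,\Delta_\Sigma)$ with the kernel of localization at $\Sigma$ uses the vanishing of the local groups $H^0(G_{K_v},T)$ for $v\in\Sigma$ rather than of the global $H^0(G_{K,S},T)$ that you cite --- both are immediate; and Proposition~\ref{prop:parf[1,2]} applies verbatim with $\mathfrak{K}=K$, so no ``analog'' is needed for the freeness claim.) Part (ii) is where you genuinely diverge: the paper proves $\ker(\alpha)=0$ in \eqref{align_Fund_exact_Tstar} by identifying $\ker(\alpha)$, via class field theory, with the $\cO$-dual of ${\rm coker}\bigl(e_\chi(\cO_L^\times\otimes\cO)\to\bigoplus_{v\in\Sigma}H^1_{\rm f}(G_{K_v},T)\bigr)$, which is finite by Remark~\ref{remark_consequences_of_sigma_leopoldt}(ii); you instead run a rank count through Nekov\'a\v{r} duality and the vanishing Euler characteristic for the dual local conditions, and then kill the resulting rank-zero module $X$ using the torsion-freeness of $H^1(G_{K,S},\cO(\chi))$. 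This is valid, and the $\Sigma$-Leopoldt input enters, as it must, through your part (i). What each buys: the paper's argument is more hands-on but entirely self-contained; yours is more formal and transports easily to other settings, but it silently relies on two routine checks you should record --- that at $v\in S_{\rm ram}(\chi)\setminus S_p(K)$ the local complexes for $T$ and $T^*(1)$ are acyclic (Remark~\ref{remark_local_cohom_bad_primes_acyclic}, together with the paper's observation that the unramified conditions there are the zero conditions), so that $\Delta_\Sigma^*$ is the exact orthogonal complement of $\Delta_\Sigma$ and your $X$ coincides with $\ker(\alpha)$; and that the computation of Lemma~\ref{lemma:Euler characteristic = 0} goes through for $\Delta_\Sigma^*$ (or, alternatively, deduce $\chi(\widetilde{{\bf R}\Gamma}_{\rm f}(G_{K,S},T^*(1),\Delta_\Sigma^*))=0$ directly from duality), because $\sum_{v\in\Sigma}[K_v:\QQ_p]=[F:\QQ]$ as well.
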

\begin{proof}
\item[i)] By the finiteness of ideal class groups, we have the following commutative diagram with exact rows: 
\begin{align}
\begin{split}
\label{eqn_ord_selmer_diagram}
    \xymatrix@C=.2in{
0 \ar[r] & e_\chi\left(\cO_L^{\times} \otimes_{\bZ} \Phi \right) \ar[r] \ar[d]^-{\cong} & e_\chi\left(\cO_L[1/p]^{\times} \otimes_{\bZ} \Phi \right) \ar[r]^-{\rm ord} \ar[d] &  \bigoplus_{v \in S_p(K)}H^1_{/{\rm f}}(G_{K_v}, V) \ar[r] \ar@{->>}[d] & 0
\\
0 \ar[r] & \bigoplus_{v \in \Sigma} H^1_{\rm f}(G_{K_v}, V) \ar[r]  & \bigoplus_{v \in \Sigma} H^1(G_{K_v}, V) \ar[r]  & \bigoplus_{v \in \Sigma} H^1_{/{\rm f}}(G_{K_v}, V) \ar[r] & 0. 
}
\end{split}
\end{align}
Here, we have set 
\[
e_{\chi} := \frac{1}{[L \colon k]}\sum_{g \in \Gal(L/k)}\chi(g^{-1})g. 
\]
We note that the left vertical arrow is an isomorphism thanks to the validity of the $\Sigma$-Leopoldt conjecture for $L/K$ (which we have assumed). We therefore have 
\begin{align*}
    \widetilde{H}_{\rm f}^1(G_{K,S}, T, \Delta_{\Sigma}) \otimes_{\bZ_p} \bQ_p 
    &\stackrel{\sim}{\longrightarrow} \ker\left(e_\chi\left(\cO_L[1/p]^{\times} \otimes_{\bZ} \Phi \right) \longrightarrow \bigoplus_{v \in \Sigma} H^1(G_{K_v}, V) \right)
    \\
    &\xrightarrow[{\rm ord}]{\sim} \bigoplus_{v \in \Sigma^c}H^1_{/{\rm f}}(G_{K_v}, T) \otimes_{\bZ_p} \bQ_p 
    \\
    &= \bigoplus_{v \in E(\Sigma^c, \chi)}H^1_{/{\rm f}}(G_{K_v}, T) \otimes_{\bZ_p} \bQ_p,  
\end{align*}
where the first isomorphism follows from \eqref{fundamental_exact_tri2} and Kummer theory, the second from the diagram \eqref{eqn_ord_selmer_diagram} and the final equality is valid by the definition of the set $E(\Sigma^c, \chi)$. This completes the proof of (i).
\item[ii)] By the exact triangle~\eqref{fundamental exact tri}, we have the following exact sequence: 
\begin{align}
\begin{split}\label{align_Fund_exact_Tstar} 
        0 \longrightarrow \bigoplus_{v \in E(\Sigma^c, \chi)}H^0(G_{K_v}, T^*(1)) \longrightarrow &\widetilde{H}_{\rm f}^1(G_{K,S}, T^*(1), \Delta_{\Sigma}^*)
    \\
   &\longrightarrow H^1(G_{K,S}, T^*(1)) 
    \stackrel{\alpha}{\longrightarrow} \bigoplus_{v \in S \setminus \Sigma} H^1(G_{K_v}, T^*(1)). 
    \end{split}
\end{align}
Moreover, it follows from global class field theory and the finiteness of the ideal class group that we have an isomorphism 
\begin{align*}
    \ker(\alpha) \cong 
    \Hom_\cO\left({\rm coker}\left(e_\chi\left(\cO_L^{\times} \otimes_{\bZ} \cO \right) \longrightarrow \bigoplus_{v \in \Sigma}  H^1_{\rm f}(G_{K_v}, T)   \right), \cO \right). 
\end{align*}
As we have explained in Remark~\ref{remark_consequences_of_sigma_leopoldt}(ii), $\Sigma$-Leopoldt conjecture for $L/K$ shows that
\[
{\rm coker}\left(e_\chi\left(\cO_L^{\times} \otimes_{\bZ} \cO \right) \longrightarrow \bigoplus_{v \in \Sigma}  H^1_{\rm f}(G_{K_v}, T)\right)
\]
has finite cardinality. This in turn shows that $\ker(\alpha)=0$. Combining the vanishing of $\ker(\alpha)$ with \eqref{align_Fund_exact_Tstar}, we conclude the proof of (ii).
\end{proof}

\begin{definition}
\label{def_kGamma_AGamma}
For any $\bZ_p$-extension $K_\Gamma$ of $K$, let us set $\Gamma := \Gal(K_\Gamma/K)$. We also put $\bT_\Gamma := \bT_{K_\Gamma}$ and define the augmentation ideal
\[
\sA_\Gamma := \ker\left(\Lambda_{\cO}(\Gamma) \longrightarrow \cO\right) \subset \Lambda_{\cO}(\Gamma)\,.
\]
\end{definition}

\begin{lemma}
\label{lemma:torsion_ciriteria}
If the $\Lambda_{\cO}(\Gamma)$-moudule $\widetilde{H}_{\rm f}^2(G_{K,S}, \bT_{\Gamma},\Delta_{\Sigma})$ is torsion, then no prime in the set $E(\Sigma^c, \chi)$ splits completely in $K_\Gamma$. 
Furthermore, if the $\Sigma$-Leopoldt conjecture for any finite subextension of $K_\Gamma/K$ holds true, then the converse is also true. 
\end{lemma}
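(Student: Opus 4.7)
For the forward direction, I would adapt the argument in the proof of Proposition~\ref{proposition_vanishing_order_e}: the exact triangle ${\bf R}\Gamma_c(G_{K,S},\bT_\Gamma)\to\widetilde{{\bf R}\Gamma}_{\rm f}(G_{K,S},\bT_\Gamma,\Delta_\Sigma)\to\bigoplus_{v\in\Sigma^c}{\bf R}\Gamma(G_{K_v},\bT_\Gamma)$ together with the global-duality vanishing $H^2_c(G_{K,S},\bT_\Gamma)=0$ produces a surjection $\widetilde{H}^2_{\rm f}(\bT_\Gamma)\twoheadrightarrow\bigoplus_{v\in\Sigma^c}H^2(G_{K_v},\bT_\Gamma)$. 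For $v\in E(\Sigma^c,\chi)$, local Tate duality identifies $H^2(G_{K_v},\bT_\Gamma)\cong\cO[[\Gamma/\Gamma_v]]$, which is $\LL_\cO(\Gamma)$-torsion precisely when $\Gamma_v\neq 0$, i.e.,\ when $v$ does not split completely in $K_\Gamma$. Hence torsion-ness of $\widetilde{H}^2_{\rm f}(\bT_\Gamma)$ forces the non-complete-splitting conclusion.

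\textbf{For the converse,} by Corollary~\ref{corollary:H_f^1 vanishes iff H_f^2 is torsion and fitt=char}(i) it suffices to show $\widetilde{H}^1_{\rm f}(\bT_\Gamma)=0$. Set $\LL:=\LL_\cO(\Gamma)\cong\cO[[T]]$. The four-term free resolution~\eqref{eqn_H2_free_resolution} presents $\widetilde{H}^1_{\rm f}(\bT_\Gamma)$ as $\ker f$ for an endomorphism $f$ of a finite free $\LL$-module $P$; the depth lemma applied to $0\to\ker f\to P\to\mathrm{im}(f)\to 0$ (with $\mathrm{depth}_\LL P=2$ and $\mathrm{depth}_\LL\mathrm{im}(f)\geq 1$ since $\mathrm{im}(f)$ is torsion-free) yields $\mathrm{depth}_\LL\widetilde{H}^1_{\rm f}(\bT_\Gamma)\geq 2$, whence the Auslander--Buchsbaum formula over the regular local ring $\LL$ of Krull dimension $2$ gives that $\widetilde{H}^1_{\rm f}(\bT_\Gamma)$ is free. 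Writing $\widetilde{H}^1_{\rm f}(\bT_\Gamma)\cong\LL^r$, my task reduces to showing $r=0$.

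\textbf{I then descend along the tower $K_\Gamma/K$.} Let $K_\Gamma^{(n)}$ denote the $n$-th layer and set $\omega_n:=\gamma^{p^n}-1$ for a topological generator $\gamma$ of $\Gamma$. The base-change isomorphism~\eqref{eq:base-change} combined with Shapiro's lemma gives $\widetilde{{\bf R}\Gamma}_{\rm f}(\bT_\Gamma)\otimes^{\bf L}_\LL\LL/\omega_n\simeq\widetilde{{\bf R}\Gamma}_{\rm f}(G_{K_\Gamma^{(n)},S},T,\Delta_\Sigma)$, whose associated hyper-cohomology spectral sequence yields an injection $\widetilde{H}^1_{\rm f}(\bT_\Gamma)/\omega_n\hookrightarrow\widetilde{H}^1_{\rm f}(G_{K_\Gamma^{(n)},S},T,\Delta_\Sigma)$. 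Invoking $\Sigma$-Leopoldt for the compositum $LK_\Gamma^{(n)}$ (which I would extract from the hypothesis on finite subextensions of $K_\Gamma/K$ together with the prime-to-$p$ nature of $L/K$), Lemma~\ref{lemma:rank of selmer}(i) applied over $K_\Gamma^{(n)}$ with the character $\chi$ computes the $\cO$-rank of the right-hand side as $e_n:=\#E(\Sigma_{K_\Gamma^{(n)}}^c,\chi)$. The non-complete-splitting hypothesis implies $[\Gamma:\Gamma_v]<\infty$ for every $v\in E(\Sigma^c,\chi)$, so the number of primes of $K_\Gamma^{(n)}$ above such $v$ stabilizes at $[\Gamma:\Gamma_v]$, and $e_n$ stays bounded as $n\to\infty$. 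Since $\widetilde{H}^1_{\rm f}(\bT_\Gamma)/\omega_n\cong(\LL/\omega_n)^r$ has $\cO$-rank $rp^n$, the injection forces $rp^n\leq e_n$ for all $n$, whence $r=0$.

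\textbf{The main difficulties} are the freeness argument for $\widetilde{H}^1_{\rm f}(\bT_\Gamma)$ over $\LL$ (handled by the depth/Auslander--Buchsbaum argument) and the robust deployment of $\Sigma$-Leopoldt at each finite layer $LK_\Gamma^{(n)}$; the concluding rank count is then an elementary dimension comparison.
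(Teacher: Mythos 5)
Your argument is correct in substance and, for the converse, follows the same skeleton as the paper: reduce to $\widetilde{H}^1_{\rm f}(G_{K,S},\bT_\Gamma,\Delta_\Sigma)=0$ via Corollary~\ref{corollary:H_f^1 vanishes iff H_f^2 is torsion and fitt=char}(i), control down to the finite layers $K_n$, and use the $\Sigma$-Leopoldt input to bound the layer-wise Selmer rank by the number of primes above $E(\Sigma^c,\chi)$, which stays bounded precisely because no such prime splits completely. The differences are in the bookends. For the forward direction the paper simply cites \cite[Corollary C.11]{BS19}, while you give a self-contained argument mirroring Proposition~\ref{proposition_vanishing_order_e} (surjection onto $\bigoplus_{v\in\Sigma^c}H^2(G_{K_v},\bT_\Gamma)\cong\bigoplus\cO[[\Gamma/\Gamma_v]]$); this is a perfectly good substitute. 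For the concluding step of the converse, the paper identifies $\widetilde{H}^1_{\rm f}(\bT_\Gamma)=\varprojlim_n\widetilde{H}^1_{\rm f}(T_n)$, deduces that it is finitely generated over $\cO$, and kills it using torsion-freeness (from Proposition~\ref{prop:parf[1,2]}); you instead prove $\Lambda$-freeness via depth and Auslander--Buchsbaum and compare ranks $rp^n\le e_n$. Both work; yours avoids the inverse-limit identification at the cost of the (correct, but not strictly necessary) freeness argument --- torsion-freeness alone, as in the paper, already suffices.

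Two small points of caution. First, you invoke Lemma~\ref{lemma:rank of selmer}(i) ``over $K_\Gamma^{(n)}$'', but an intermediate layer of an arbitrary $\bZ_p$-extension of $K$ need not be a CM field, so the lemma does not literally apply with base $K_n$; the paper sidesteps this by staying over $K$ with coefficients $T_n$ and rerunning only the injectivity part of the Lemma~\ref{lemma:rank of selmer}(i) argument with $L$ replaced by $LK_n$. For your rank count only the upper bound $\mathrm{rank}_\cO\,\widetilde{H}^1_{\rm f}(T_n)\le e_n$ is needed, and that upper bound is exactly what the injection into $\bigoplus_{v\in E(\Sigma^c,\chi)}H^1_{/\rm f}(G_{K_v},T_n)$ gives, so the fix is cosmetic. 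Second, the $\Sigma$-Leopoldt hypothesis actually used at level $n$ is the one for the compositum $LK_n$ (its $\chi$-part), and your parenthetical claim that this can be ``extracted'' from $\Sigma$-Leopoldt for $K_n$ together with $p\nmid[L:K]$ is not justified --- injectivity for the units of a subfield does not imply it for a larger field. This, however, mirrors the paper's own phrasing of the hypothesis and proof, so it is a shared imprecision rather than a gap specific to your argument.
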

\begin{proof}
The first assertion is proved in \cite[Corollary C.11]{BS19}.
Let us prove the second one. 
By Corollary~\ref{corollary:H_f^1 vanishes iff H_f^2 is torsion and fitt=char}~(i), it suffices to show that $\widetilde{H}_{\rm f}^1(G_{K,S}, \bT_{\Gamma},\Delta_{\Sigma}) = 0$. 
For any integer $n \geq 0$, we write $K_n \subset K_\Gamma/K$ for the unique subextension with $[K_n \colon K] = p^n$. 
Since the $\Sigma$-Leopoldt conjecture for $K_n/K$ holds true, our argument in the proof of Lemma~\ref{lemma:rank of selmer}(i) (replacing the number field $L$ with $LK_n$) applies to obtain an injection 
\begin{equation}
    \label{eqn_sigmaleoplodt_alognGamma}
    \widetilde{H}_{\rm f}^1(G_{K,S}, T_n, \Delta_{\Sigma}) \longrightarrow  \bigoplus_{v \in E(\Sigma^c, \chi)}H^1_{/{\rm f}}(G_{K_v}, T_n). 
\end{equation}
Here $T_n := T \otimes_{\bZ_p} \bZ_p[\Gal(K_n/K)]^\iota$. 
Note that, for each prime $v \in E(\Sigma^c, \chi)$, the $\cO$-rank of $H^1_{/{\rm f}}(G_{K_v}, T_n)$ is equal to the number of primes of $K_n$ above $v$. 
Since we assume that no prime in the set $E(\Sigma^c, \chi)$ splits completely in $K_\Gamma$, the number of primes of $K_\Gamma$ above $E(\Sigma^c, \chi)$ is finite. This fact together with the injection~\eqref{eqn_sigmaleoplodt_alognGamma} shows that the $\cO$-module
\[
\widetilde{H}_{\rm f}^1(G_{K,S}, \bT_\Gamma, \Delta_{\Sigma}) = \varprojlim_{n > 0} \widetilde{H}_{\rm f}^1(G_{K,S}, T_n, \Delta_{\Sigma})
\]
is finitely generated. 
As the $\Lambda_{\cO}(\Gamma)$-module $\widetilde{H}_{\rm f}^1(G_{K,S}, \bT_{\Gamma},\Delta_{\Sigma})$ is torsion-free by Proposition \ref{prop:parf[1,2]}, we conclude that $\widetilde{H}_{\rm f}^1(G_{K,S}, \bT_{\Gamma},\Delta_{\Sigma})=0$. As we have noted at the start of our proof, this vanishing is equivalent to the requirement that the $\Lambda_{\cO}(\Gamma)$-module $\widetilde{H}_{\rm f}^2(G_{K,S}, \bT_{\Gamma},\Delta_{\Sigma})$ be torsion. 
\end{proof}

\begin{definition}
Let $A$ be a commutative ring and $M$ a finitely generated $A$-module. 
For a prime ideal $I$ of $A$, we say that $M$ is semi-simple at $I$ if the localization $M_I := M \otimes_A A_I$ of $M$ at $I$ is a semi-simple $A_I$-module. 
\end{definition}

\begin{corollary}\label{corollary:gen by e elements}
Suppose that the $\Sigma$-Leopoldt conjecture for $L/K$ holds true. 
Then the $\Lambda_{\cO}(\Gamma)_{\sA_\Gamma}$-module $\widetilde{H}_{\rm f}^2(G_{K,S}, \bT_{\Gamma}, \Delta_{\Sigma})_{\sA_\Gamma}$ is generated by exactly $e$ elements. 
\end{corollary}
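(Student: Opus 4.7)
The plan is to compute the minimal number of generators of the localization via Nakayama's lemma applied to the Noetherian local ring $R := \Lambda_{\cO}(\Gamma)_{\sA_\Gamma}$, and then express the relevant $\Phi$-dimension in terms of quantities controlled by Lemma~\ref{lemma:rank of selmer}(i). Here $\Phi := \mathrm{Frac}(\cO)$ is the residue field of $R$, since $\Lambda_{\cO}(\Gamma)/\sA_\Gamma = \cO$ and localizing $\cO$ at the zero ideal produces $\Phi$.

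Writing $M := \widetilde{H}_{\rm f}^2(G_{K,S}, \bT_{\Gamma}, \Delta_{\Sigma})$, Nakayama's lemma tells us that the minimal number of generators of the finitely generated $R$-module $M_{\sA_\Gamma}$ equals
\[
\dim_\Phi\left( M_{\sA_\Gamma} \otimes_R \Phi\right) = \dim_\Phi\left(M/\sA_\Gamma M \otimes_\cO \Phi\right)\,.
\]
The base-change isomorphism \eqref{isom_base_change_H2}, applied with $I = \sA_\Gamma$, identifies $M/\sA_\Gamma M$ with $\widetilde{H}_{\rm f}^2(G_{K,S}, T, \Delta_{\Sigma})$. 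Thus it suffices to prove that
\[
\dim_\Phi \widetilde{H}_{\rm f}^2(G_{K,S}, T, \Delta_{\Sigma}) \otimes_\cO \Phi \,=\, e\,.
\]

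For this, I would combine Proposition~\ref{prop:parf[1,2]} with Lemma~\ref{lemma:Euler characteristic = 0}: the Selmer complex $\widetilde{\mathbf{R}\Gamma}_{\rm f}(G_{K,S}, \bT_{\Gamma}, \Delta_{\Sigma})$ is a perfect complex of amplitude $[1,2]$ with vanishing Euler--Poincar\'e characteristic, hence it is quasi-isomorphic to a two-term complex $[P^1 \xrightarrow{f} P^2]$ of finitely generated free $\Lambda_{\cO}(\Gamma)$-modules of the \emph{same} rank. Base-changing along $\Lambda_{\cO}(\Gamma) \to \cO \to \Phi$ and invoking the base-change isomorphism \eqref{eq:base-change} for $\widetilde{\mathbf{R}\Gamma}_{\rm f}$ (which ensures no higher Tor terms obstruct this specialization), the cohomology of $[P^1 \otimes_\Lambda \Phi \xrightarrow{f \otimes \mathrm{id}} P^2 \otimes_\Lambda \Phi]$ computes $\widetilde{H}^i_{\rm f}(G_{K,S}, T, \Delta_{\Sigma}) \otimes_\cO \Phi$ for $i = 1,2$. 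Since $P^1 \otimes_\Lambda \Phi$ and $P^2 \otimes_\Lambda \Phi$ have the same $\Phi$-dimension, the rank-nullity theorem applied to $f \otimes \mathrm{id}$ yields
\[
\dim_\Phi \widetilde{H}_{\rm f}^2(G_{K,S}, T, \Delta_{\Sigma}) \otimes_\cO \Phi \,=\, \dim_\Phi \widetilde{H}_{\rm f}^1(G_{K,S}, T, \Delta_{\Sigma}) \otimes_\cO \Phi\,.
\]

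Finally, Lemma~\ref{lemma:rank of selmer}(i), whose hypothesis is precisely the $\Sigma$-Leopoldt conjecture for $L/K$ that we are assuming, tells us that $\widetilde{H}^{1}_{\rm f}(G_{K,S}, T, \Delta_{\Sigma})$ is a free $\cO$-module of rank exactly $e$. Substituting this into the chain of equalities above gives the desired conclusion that $M_{\sA_\Gamma}$ is generated by exactly $e$ elements. No single step here is the main obstacle: all the substantive work has already been carried out in the preceding sections, and this corollary essentially packages Lemma~\ref{lemma:rank of selmer}(i), the perfectness of the Selmer complex, and the Euler characteristic formula into a statement about generators of a localized module.
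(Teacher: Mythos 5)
Your proof is correct and follows essentially the same route as the paper: Nakayama's lemma over the local ring $\Lambda_{\cO}(\Gamma)_{\sA_\Gamma}$, the control isomorphism \eqref{isom_base_change_H2}, the vanishing Euler--Poincar\'e characteristic to equate the $\Phi$-dimensions of $\widetilde{H}^1_{\rm f}$ and $\widetilde{H}^2_{\rm f}$ for $T$, and Lemma~\ref{lemma:rank of selmer}(i) to identify that dimension with $e$. The only cosmetic difference is that you run the Euler-characteristic argument on the $\Lambda_{\cO}(\Gamma)$-adic perfect complex and then specialize, whereas the paper applies Lemma~\ref{lemma:Euler characteristic = 0} directly to the complex for $T$; the two are interchangeable.
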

\begin{proof}
Note that the residue filed of $\Lambda_{\cO}(\Gamma)_{\sA_\Gamma}$ is $\Phi$. By the perfect control theorem for Selmer complexes (c.f. \eqref{isom_base_change_H2} above), we have an isomorphism 
\[
\widetilde{H}_{\rm f}^2(G_{K,S}, \bT_{\Gamma},\Delta_{\Sigma})_{\sA_\Gamma} \otimes_{\Lambda_{\cO}(\Gamma)_{\sA_\Gamma}} \Phi  \stackrel{\sim}{\longrightarrow} \widetilde{H}_{\rm f}^2(G_{K,S}, T,\Delta_{\Sigma})\otimes \Phi. 
\]
By Nakayama's Lemma (applied with the local ring $\Lambda_{\cO}(\Gamma)_{\sA_\Gamma}$), it suffices to show that 
$$
\dim_\Phi\left(\widetilde{H}_{\rm f}^2(G_{K,S}, T,\Delta_{\Sigma})\otimes\Phi\right) = e.
$$ 
Recall that we have $\chi(\widetilde{{\bf R}\Gamma}_{\rm f}(G_{K,S}, T,\Delta_{\Sigma})) = 0$ by Lemma~\ref{lemma:Euler characteristic = 0}.  This fact combined with Lemma~\ref{lemma:rank of selmer}(i) shows that 
\[
\dim_\Phi\left(\widetilde{H}_{\rm f}^2(G_{K,S}, T,\Delta_{\Sigma})\otimes\Phi\right) = \dim_\Phi\left(\widetilde{H}_{\rm f}^1(G_{K,S}, T,\Delta_{\Sigma})\otimes\Phi\right) = e\,,
\]
as required.
\end{proof}

\begin{definition}
\label{defn_algebraic_Katz_padic_L_Gamma}
We define the restriction 
$$L_{p,\chi}^{\rm alg}{\vert_\Gamma}\in {\rm Frac}(\LL_{\cO}(\Gamma))^\times\big{/}\LL_{\cO}(\Gamma)^\times \cup \{0\}$$
of the algebraic Katz $p$-adic $L$-function to $\Gamma$ (via the surjection $\Gamma_\infty\twoheadrightarrow \Gamma$) as a generator of 
the cyclic $\LL_{\cO}(\Gamma)$-module
${\rm Fitt}_{\LL_{\cO}(\Gamma)}\left(\widetilde{H}^{2}_{\rm f}(G_{K,S}, \bT_\Gamma, \Delta_{\Sigma})\right)\,.$
\end{definition}

\begin{remark}
\label{remark_algebraic_Katz_Gamma_justified}
To justify the notation $L_{p,\chi}^{\rm alg}{\vert_\Gamma}$ in Definition~\ref{defn_algebraic_Katz_padic_L_Gamma}, we note that
it follows from Proposition~\ref{prop:parf[1,2]} that $L_{p,\chi}^{\rm alg}{\vert_\Gamma}$ is indeed the image of the algebraic Katz $p$-adic $L$-function 
$$L_{p,\chi}^{\rm alg}\in {\rm Frac}(\LL_{\cO}(\Gamma_\infty))^\times\big{/}\LL_{\cO}(\Gamma_\infty)^\times$$ 
(given as in Definition~\ref{defn_algebraic_Katz_padic_L}) under the map
$$ {\rm Frac}(\LL_{\cO}(\Gamma_\infty))^\times\big{/}\LL_{\cO}(\Gamma_\infty)^\times \lra  {\rm Frac}(\LL_{\cO}(\Gamma))^\times\big{/}\LL_{\cO}(\Gamma)^\times \cup \{0\}$$
induced from the natural surjection $\Gamma_\infty\twoheadrightarrow \Gamma$. 
\end{remark}

\begin{corollary}\label{corollary:equiv_semi-simple_fitt}
Suppose that the $\Sigma$-Leopoldt conjecture for $L/K$ holds true. 
The $\Lambda_{\cO}(\Gamma)$-moudule $\widetilde{H}_{\rm f}^2(G_{K,S}, \bT_{\Gamma},\Delta_{\Sigma})$ is semi-simple at $\sA_\Gamma$ if and only if  $L_{p,\chi}^{\rm alg}{\vert_\Gamma} \in \sA_\Gamma^{e} \setminus \sA_\Gamma^{e+1}$. 
\end{corollary}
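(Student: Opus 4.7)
The plan is to reduce the equivalence to the structure theorem over a discrete valuation ring, by exploiting the perfect-complex presentation of the extended Selmer group and the minimal-generator count from Corollary~\ref{corollary:gen by e elements}. Set $M:=\widetilde{H}^{2}_{\rm f}(G_{K,S}, \bT_{\Gamma},\Delta_{\Sigma})_{\sA_\Gamma}$ and $R':=\Lambda_{\cO}(\Gamma)_{\sA_\Gamma}$. Since $\Gamma\cong\ZZ_p$, the ring $\Lambda_{\cO}(\Gamma)\cong\cO[[T]]$ (with $T=\gamma-1$ for $\gamma$ a topological generator of $\Gamma$) is a two-dimensional regular local ring, and $R'$ is its localization at the height-one prime $\sA_\Gamma=(T)$; hence $R'$ is a DVR with uniformizer $T$ and residue field $\Phi={\rm Frac}(\cO)$.

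I would first dispose of the easy case where $\widetilde{H}^{2}_{\rm f}(G_{K,S}, \bT_{\Gamma},\Delta_{\Sigma})$ fails to be $\Lambda_{\cO}(\Gamma)$-torsion: then its zeroth Fitting ideal vanishes, so $L_{p,\chi}^{\rm alg}{\vert_\Gamma}=0$ lies in $\sA_\Gamma^{e+1}$ and the right-hand side fails. On the other hand $M$ has strictly positive $R'$-rank, whereas any semi-simple module over a local ring is a sum of copies of the residue field and therefore has $R'$-rank $0$, so the left-hand side fails as well.

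In the complementary torsion case, Corollary~\ref{corollary:H_f^1 vanishes iff H_f^2 is torsion and fitt=char}(i) yields $\widetilde{H}^{1}_{\rm f}(G_{K,S}, \bT_{\Gamma},\Delta_{\Sigma})=0$, so Proposition~\ref{prop:parf[1,2]} together with Lemma~\ref{lemma:Euler characteristic = 0} supplies a square free resolution $0\to \Lambda_{\cO}(\Gamma)^r\to\Lambda_{\cO}(\Gamma)^r\to\widetilde{H}^{2}_{\rm f}(G_{K,S}, \bT_{\Gamma},\Delta_{\Sigma})\to 0$. Localizing at $\sA_\Gamma$ gives a free resolution
\[
0\longrightarrow (R')^r\xrightarrow{\;A\;}(R')^r\longrightarrow M\longrightarrow 0.
\]
As $M$ is finitely generated and torsion over the DVR $R'$, the structure theorem yields $M\cong\bigoplus_{i=1}^{n}R'/(T^{n_i})$ with each $n_i\ge 1$ and $n=\dim_\Phi M/TM$. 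By Corollary~\ref{corollary:gen by e elements} and Nakayama's lemma, this dimension equals $e$, whence $n=e$. The Fitting ideal is
\[
{\rm Fitt}_{R'}(M)=(\det A)=(T^{n_1+\cdots+n_e}),
\]
and by Definition~\ref{defn_algebraic_Katz_padic_L_Gamma} this ideal is generated by the image of $L_{p,\chi}^{\rm alg}{\vert_\Gamma}$ in $R'$.

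The equivalence is now immediate: one always has $\sum_i n_i\ge e$, with equality precisely when every $n_i=1$, equivalently when $TM=0$, equivalently when $M$ is semi-simple as an $R'$-module. Since the $T$-adic valuation of an element of $\Lambda_{\cO}(\Gamma)$ is preserved under the flat localization $\Lambda_{\cO}(\Gamma)\hookrightarrow R'$, the condition $\sum_i n_i=e$ on the Fitting generator translates verbatim into $L_{p,\chi}^{\rm alg}{\vert_\Gamma}\in\sA_\Gamma^e\setminus\sA_\Gamma^{e+1}$. The only genuinely substantive input is Corollary~\ref{corollary:gen by e elements}, pinning down the minimal number of generators as exactly $e$; beyond that the argument is elementary linear algebra over a DVR.
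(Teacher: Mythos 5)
Your proposal is correct and follows essentially the same route as the paper: the paper's one-line proof just observes that $\Lambda_{\cO}(\Gamma)_{\sA_\Gamma}$ is a discrete valuation ring and invokes Corollary~\ref{corollary:H_f^1 vanishes iff H_f^2 is torsion and fitt=char} together with Corollary~\ref{corollary:gen by e elements}, which is precisely the localize-at-$\sA_\Gamma$, structure-theorem-over-a-DVR computation (including the torsion/non-torsion dichotomy and the generator count $e$) that you spell out in detail.
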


\begin{proof}
Since $\Lambda_{\cO}(\Gamma)_{\sA_\Gamma}$ is a discrete valuation ring, this is an immediate consequence of Corollary~\ref{corollary:H_f^1 vanishes iff H_f^2 is torsion and fitt=char}(ii) and Corollary~\ref{corollary:gen by e elements}.  
\end{proof}


\subsection{Nekov\'a\v{r}'s $p$-adic height pairing}

In this subsection, we review Nekov\'a\v{r}'s description in \cite[\S11]{Nek} of $p$-adic height pairings and establish a relation (in Lemma~\ref{lemma:equiv_non-degen_fitt} below) between the order of exceptional zeros of an algebraic $p$-adic $L$-function and the non-degeneracy of Nekov\'a\v{r}'s $p$-adic height pairings. 

As in Definition~\ref{def_kGamma_AGamma}, we fix a $\bZ_p$-extension $K_\Gamma$ of $K$. We then have an exact sequence
\begin{equation}
\label{eqn_bockstein_exact_seq}
    0 \longrightarrow \bT_\Gamma \otimes_{\Lambda_{\cO}(\Gamma)} \sA_\Gamma/\sA_\Gamma^2 \longrightarrow \bT_\Gamma/\sA_\Gamma^2  \longrightarrow T \longrightarrow 0. 
\end{equation}
Observe that we have 
\[
\bT_\Gamma \otimes_{\Lambda_{\cO}(\Gamma)} \sA_\Gamma/\sA_\Gamma^2 = T \otimes_{\cO} \sA_\Gamma/\sA_\Gamma^2
\]
as Galois modules and since the action of $G_{K}$ on $\sA_\Gamma/\sA_\Gamma^2$ induced by the tautological character $G_{K} \twoheadrightarrow \Gamma \hookrightarrow \Lambda_{\cO}(\Gamma)^\times$ is trivial, 
\[
C^\bullet(G, T \otimes_{\cO} \sA_\Gamma/\sA_\Gamma^2) = C^\bullet(G, T)  \otimes_{\cO} \sA_\Gamma/\sA_\Gamma^2
\]
for any closed subgroup $G$ of $G_{K,S}$. 
This combined with the exact sequence \eqref{eqn_bockstein_exact_seq} gives rise to a Bockstein morphism 
\[
\beta \colon \widetilde{{\bf R}\Gamma}_{\rm f}(G_{K,S}, T, \Delta_\Sigma) \longrightarrow \widetilde{{\bf R}\Gamma}_{\rm f}(G_{K,S}, T, \Delta_\Sigma)[1] \otimes_\cO \sA_\Gamma/\sA_\Gamma^2. 
\]
Furthermore, it follows from Nekov\'a\v{r}'s global duality (see \cite{Nek}, \S6.3), the cup product pairing induces a morphism 
\[
\cup \colon \widetilde{{\bf R}\Gamma}_{\rm f}(G_{K,S}, T, \Delta_\Sigma) \otimes \widetilde{{\bf R}\Gamma}_{\rm f}(G_{K,S}, T^*(1), \Delta_\Sigma^*) \longrightarrow \cO[-3]. 
\]
\begin{definition}
\label{defn_nekovars_padic_height}
Nekov\'a\v{r}'s $p$-adic height pairing $\langle\,\,, \,\,\rangle_{\Gamma}$ is given as the compositum of the arrows
\begin{align*}
\widetilde{H}^1_{\rm f}(G_{K,S}, T, \Delta_\Sigma) \otimes  \widetilde{H}^1_{\rm f}(G_{K,S}, T^*(1), \Delta_\Sigma^*) 
\xrightarrow{\beta \otimes {\rm id}} 
(&\widetilde{H}^2_{\rm f}(G_{K,S}, T, \Delta_\Sigma)  \otimes_\cO \sA_\Gamma/\sA_\Gamma^2)  \otimes  \widetilde{H}^1_{\rm f}(G_{K,S}, T^*(1), \Delta_\Sigma^*)  
\\ 
&\stackrel{\cup}{\longrightarrow} \sA_\Gamma/\sA_\Gamma^2\,\,. 
\end{align*}
\end{definition}

For each prime $v$ of $K$, we similarly have a Bockstein morphism 
\[
\beta_v \colon {\bf R}\Gamma(G_{K_v}, T) \longrightarrow {\bf R}\Gamma(G_{K_v}, T)[1] \otimes_\cO \sA_\Gamma/\sA_\Gamma^2. 
\]
The local Tate pairing at $v$ induces the pairing $\langle\,\,,\,\,\rangle_{\Gamma, v}$ given as the compositum
\begin{align*}
    H^1(G_{K_v}, T) \otimes H^0(G_{K_v}, T^*(1)) 
    \xrightarrow{\beta_v \times {\rm id}} (H^2(G_{K_v}, T) \otimes_\cO \sA_\Gamma/\sA_\Gamma^2) \otimes H^0(G_{K_v}, T^*(1)) 
     \xrightarrow{{\rm inv}_v} 
    \sA_\Gamma/\sA_\Gamma^2\,. 
\end{align*}

\begin{lemma}
\label{lemma:commutative}
For  each prime  $v \in \Sigma^c$, the following diagram commutes: 
$$\xymatrix@C=5pt{
     \widetilde{H}^1_{\rm f}(G_{K,S}, T, \Delta_\Sigma) \ar[d]^{{\rm res}_v} & \otimes &  \widetilde{H}^1_{\rm f}(G_{K,S}, T^*(1), \Delta_\Sigma^*) \ar[rrrr]^(.65){\langle\,\,, \,\,\rangle_{\Gamma}} &&&& \sA_\Gamma/\sA_\Gamma^2 \ar@{=}[d]
     \\
    H^1(G_{K_v}, T) &\otimes& H^0(G_{K_v}, T^*(1)) \ar[u] \ar[rrrr]^(.6){\langle\,\,, \,\,\rangle_{\Gamma, v}} &&&& \sA_\Gamma/\sA_\Gamma^2\,. 
}$$
Here, the left vertical map is the compositum of the arrows
\[
\widetilde{H}^1_{\rm f}(G_{K,S}, T, \Delta_\Sigma) \longrightarrow H^1(G_{k, S}, T)  
 \xrightarrow{\res_v} 
H^1(G_{K_v}, T). 
\]
\end{lemma}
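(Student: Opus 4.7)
The plan is to unwind both the global and local $p$-adic height pairings into their defining two-step Bockstein-then-cup-product shape, then match these shapes by combining (a) the naturality of the Bockstein morphism with respect to the restriction map at $v$ and (b) the compatibility of Nekov\'a\v{r}'s global duality pairing on Selmer complexes with the local Tate duality pairing at $v$, mediated by the connecting morphism of the fundamental exact triangle.

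First, I would record that the short exact sequence \eqref{eqn_bockstein_exact_seq}, which produces the global Bockstein
\[
\beta\colon\widetilde{{\bf R}\Gamma}_{\rm f}(G_{K,S},T,\Delta_\Sigma)\longrightarrow \widetilde{{\bf R}\Gamma}_{\rm f}(G_{K,S},T,\Delta_\Sigma)[1]\otimes_\cO\sA_\Gamma/\sA_\Gamma^2,
\]
is a sequence of $G_K$-modules (hence of $G_{K_v}$-modules); applying ${\bf R}\Gamma(G_{K_v},-)$ to it yields precisely the local Bockstein $\beta_v$. Consequently the restriction map $\mathrm{res}_v\colon \widetilde{{\bf R}\Gamma}_{\rm f}(G_{K,S},T,\Delta_\Sigma)\to C^\bullet(G_{K,S},T)\to C^\bullet(G_{K_v},T)$ intertwines $\beta$ with $\beta_v$ up to the twist by $\sA_\Gamma/\sA_\Gamma^2$. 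This gives commutativity of the Bockstein square after restriction.

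Next, for the cup-product step, I would exploit the specific shape of the local conditions at $v\in\Sigma^c$. Since $U_v^+=0$ in the definition of $\Delta_\Sigma^*$, the cone $U_v^-(T^*(1))$ is canonically identified with $C^\bullet(G_{K_v},T^*(1))$, and the map $H^0(G_{K_v},T^*(1))\to \widetilde{H}^1_{\rm f}(G_{K,S},T^*(1),\Delta_\Sigma^*)$ appearing in the middle row of the diagram is precisely the degree-$1$ connecting homomorphism $U_S^-(T^*(1))[-1]\to \widetilde{{\bf R}\Gamma}_{\rm f}(G_{K,S},T^*(1),\Delta_\Sigma^*)$ on cohomology. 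Nekov\'a\v{r}'s global duality (\cite{Nek}, \S6.3) provides a morphism $\cup\colon \widetilde{{\bf R}\Gamma}_{\rm f}(G_{K,S},T,\Delta_\Sigma)\otimes^\bL \widetilde{{\bf R}\Gamma}_{\rm f}(G_{K,S},T^*(1),\Delta_\Sigma^*)\to\cO[-3]$ whose defining compatibility, via the fundamental triangle \eqref{fundamental exact tri}, asserts that precomposing with this connecting map at a place $v$ yields the local Tate pairing $\mathrm{res}_v\otimes\mathrm{id}$ followed by the local cup product and $\mathrm{inv}_v$. This is the input I would import, essentially from \cite[\S6.3, \S11.2]{Nek}.

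Combining these two compatibilities gives the lemma: starting from $(x,y^{\mathrm{loc}})\in \widetilde{H}^1_{\rm f}(T,\Delta_\Sigma)\otimes H^0(G_{K_v},T^*(1))$, apply $\beta\otimes \mathrm{id}$, then pair with the image $y\in \widetilde{H}^1_{\rm f}(T^*(1),\Delta_\Sigma^*)$ of $y^{\mathrm{loc}}$. By step (a) we may replace $\beta(x)$ by $\beta_v(\mathrm{res}_v(x))$ inside the local cohomology at $v$; by step (b) the global cup product against the class coming from $y^{\mathrm{loc}}$ is then the local Tate pairing followed by $\mathrm{inv}_v$, which by definition is $\langle\,\cdot\,,\,\cdot\,\rangle_{\Gamma,v}$. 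The main technical hurdle is bookkeeping the signs, shifts, and choices of cone representatives so that the Bockstein and the cup product genuinely commute on the nose (not merely up to quasi-isomorphism) with the restriction and connecting maps; I expect that invoking Nekov\'a\v{r}'s explicit cochain-level formulas in \cite[\S5, \S11]{Nek} makes this bookkeeping routine, and that (b) is the step requiring the most care since it encodes the local–global compatibility of the duality pairing at the level of Selmer complexes.
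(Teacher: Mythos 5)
Your proposal is correct and follows essentially the same route as the paper: the naturality of the Bockstein morphism under restriction at $v$ (your step (a)) is the paper's first commutative square, and your local--global compatibility of Nekov\'a\v{r}'s duality pairing via the connecting map of the fundamental triangle (your step (b)) is exactly what the paper imports from \cite[Proposition~6.3.3]{Nek}, after which both arguments combine the two squares in the same way.
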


\begin{proof}
By the definition of Bockstein maps $\beta$ and $\beta_v$, the diagram 
\[
\xymatrix{
\widetilde{H}^1_{\rm f}(G_{K,S}, T, \Delta_\Sigma) \ar[d]  \ar[r]^-{\beta} & \widetilde{H}^2_{\rm f}(G_{K,S}, T, \Delta_\Sigma) \otimes_\cO \sA_\Gamma/\sA_\Gamma^2 \ar[d] 
\\
H^1(G_{K_v}, T) \ar[r]^-{\beta_v} & H^2(G_{K_v}, T) \otimes_\cO \sA_\Gamma/\sA_\Gamma^2
}
\]
commutes. Moreover, applying \cite[Proposition~6.3.3]{Nek} with $X_{1} = T$, $\Delta(X_{1}) = \Delta_{\Sigma}$, $X_{2} = T^{*}(1)$, and $\Delta(X_{2}) = \Delta_{\Sigma}^{*}$, we obtain the following  commutative diagram:
\[
\xymatrix{
\widetilde{{\bf R}\Gamma}_{\rm f}(G_{K,S}, T, \Delta_\Sigma) \ar[d]  \ar[r] & {\bf R}{\rm Hom}_\cO(\widetilde{{\bf R}\Gamma}_{\rm f}(G_{K,S}, T^*(1), \Delta_\Sigma^*), \cO[-3]) \ar[d] 
\\
\bigoplus_{v \in \Sigma^{c}}{\bf R}\Gamma(G_{K_v}, T) \ar[r] &\bigoplus_{v \in \Sigma^{c}} {\bf R}{\rm Hom}_\cO({\bf R}\Gamma(G_{K_v}, T^*(1))[-1], \cO[-3]). 
}
\]
Considering the second cohomology groups of the above diagram, we get a commutative diagram 
\[
\xymatrix{
\widetilde{H}^2_{\rm f}(G_{K,S}, T, \Delta_\Sigma) \ar[d]  \ar[r] & \Hom_\cO(\widetilde{H}^1_{\rm f}(G_{K,S}, T^*(1), \Delta_\Sigma^*), \cO) \ar[d] 
\\
H^2(G_{K_v}, T) \ar[r] & \Hom_\cO({H}^0(G_{K_v}, T^*(1)), \cO). 
}
\]
Since both horizontal arrows are induced from the cup product pairings, the proof of our lemma follows combining these two diagrams.
\end{proof}

\begin{lemma}
\label{lemma:equiv_non-degen_fitt}
Suppose that the $\Sigma$-Leopoldt Conjecture~\ref{conj_sigma_leopoldt} for $L/K$ holds true. 
Then Nekov\'a\v{r}'s $p$-adic height pairing $\langle\,\,, \,\,\rangle_{\Gamma}$ is non-degenerate if and only if  $L_{p,\chi}^{\rm alg}{\vert_\Gamma} \in \sA_\Gamma^e \setminus \sA_\Gamma^{e+1}$. 
\end{lemma}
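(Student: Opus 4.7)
The plan is to prove the equivalence by combining Corollary~\ref{corollary:equiv_semi-simple_fitt} with an explicit analysis of Nekov\'a\v{r}'s Bockstein through a free resolution of the Selmer complex. Throughout, put $R := \Lambda_\cO(\Gamma) \cong \cO[[t]]$ with $t = \gamma - 1$ for a topological generator $\gamma$ of $\Gamma \cong \ZZ_p$, so that $\sA_\Gamma = (t)$ is principal, $\sA_\Gamma/\sA_\Gamma^2 \cong \cO$, and the localization $R_{\sA_\Gamma}$ is a DVR with uniformizer $t$ and residue field $\Phi$. By Corollary~\ref{corollary:equiv_semi-simple_fitt}, it suffices to show that $\langle\,\,,\,\,\rangle_\Gamma$ is non-degenerate if and only if $\widetilde H^2_{\rm f}(G_{K,S}, \bT_\Gamma, \Delta_\Sigma)$ is semi-simple at $\sA_\Gamma$.

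First, by Nekov\'a\v{r}'s global duality \cite[\S 6.3]{Nek} applied to the pairs $(T, \Delta_\Sigma)$ and $(T^*(1), \Delta_\Sigma^*)$, the cup product
\[
\widetilde H^2_{\rm f}(G_{K,S}, T, \Delta_\Sigma) \otimes_\cO \widetilde H^1_{\rm f}(G_{K,S}, T^*(1), \Delta_\Sigma^*) \longrightarrow \cO
\]
becomes a perfect pairing after $\otimes_\cO \Phi$: both sides have $\Phi$-dimension $e$ by Lemma~\ref{lemma:rank of selmer} combined with Lemma~\ref{lemma:Euler characteristic = 0}, and Nekov\'a\v{r}'s duality identifies these two $\Phi$-vector spaces as $\Phi$-linear duals. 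Thus, in view of the factorization in Definition~\ref{defn_nekovars_padic_height}, the non-degeneracy of $\langle\,\,,\,\,\rangle_\Gamma$ is equivalent to the Bockstein
\[
\beta \colon \widetilde H^1_{\rm f}(G_{K,S}, T, \Delta_\Sigma) \longrightarrow \widetilde H^2_{\rm f}(G_{K,S}, T, \Delta_\Sigma) \otimes_\cO \sA_\Gamma/\sA_\Gamma^2
\]
being an isomorphism after $\otimes_\cO \Phi$ (again both $\Phi$-vector spaces have dimension $e$).

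Next, I will compute $\beta \otimes \Phi$ explicitly. By Proposition~\ref{prop:parf[1,2]} together with Lemma~\ref{lemma:Euler characteristic = 0}, represent $\widetilde{\bf R}\Gamma_{\rm f}(G_{K,S}, \bT_\Gamma, \Delta_\Sigma)$ by a complex $P^1 \xrightarrow{d} P^2$ of free $R$-modules of equal finite rank $N$. The base change isomorphism \eqref{eq:base-change} together with the short exact sequence $0 \to \sA_\Gamma/\sA_\Gamma^2 \to R/\sA_\Gamma^2 \to \cO \to 0$ realizes $\beta$ as the connecting map in the long exact cohomology sequence of
\[
0 \longrightarrow P^\bullet \otimes_R \sA_\Gamma/\sA_\Gamma^2 \longrightarrow P^\bullet \otimes_R R/\sA_\Gamma^2 \longrightarrow P^\bullet \otimes_R \cO \longrightarrow 0.
\]
After localizing at $\sA_\Gamma$, Smith normal form over the DVR $R_{\sA_\Gamma}$ yields bases $(e_i)$ of $P^1_{\sA_\Gamma}$ and $(f_i)$ of $P^2_{\sA_\Gamma}$ in which $d(e_i) = t^{a_i} f_i$ with $0 \leq a_1 \leq \cdots \leq a_N$. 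Under the identifications $\widetilde H^1_{\rm f}(T) \otimes \Phi = \ker(\bar d) \otimes \Phi$ and $\widetilde H^2_{\rm f}(T) \otimes \Phi = \coker(\bar d) \otimes \Phi$, the classes $\{\bar e_i : a_i \geq 1\}$ and $\{\bar f_i : a_i \geq 1\}$ form $\Phi$-bases; by Corollary~\ref{corollary:gen by e elements} (which is where the $\Sigma$-Leopoldt hypothesis enters), there are exactly $e$ such indices. A direct unraveling of the connecting map gives $\beta(\bar e_i) = \bar f_i \otimes \bar t$ when $a_i = 1$, and $\beta(\bar e_i) = 0$ when $a_i \geq 2$, since in the latter case $d(e_i) = t^{a_i} f_i \in \sA_\Gamma^2 P^2$.

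Consequently, $\beta \otimes \Phi$ is an isomorphism precisely when every positive $a_i$ equals $1$, which is precisely the condition that $\widetilde H^2_{\rm f}(G_{K,S}, \bT_\Gamma, \Delta_\Sigma)_{\sA_\Gamma} \cong \bigoplus_{a_i > 0} R_{\sA_\Gamma}/(t^{a_i})$ is semi-simple as an $R_{\sA_\Gamma}$-module. Invoking Corollary~\ref{corollary:equiv_semi-simple_fitt} once more completes the equivalence. The main technical obstacle I anticipate is verifying that Nekov\'a\v{r}'s Bockstein from Definition~\ref{defn_nekovars_padic_height} coincides with the connecting map produced from the resolution $P^\bullet$ under the base-change isomorphism \eqref{eq:base-change}; this is essentially a functoriality check, but the bookkeeping with the local conditions $\Delta_\Sigma$ deserves care. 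A secondary issue is to confirm that Nekov\'a\v{r}'s duality pairing truly becomes perfect after $\otimes \Phi$, which amounts to checking that $\Delta_\Sigma$ and $\Delta_\Sigma^*$ are mutually orthogonal local conditions in Nekov\'a\v{r}'s sense --- this is built into their definition but should be made explicit.
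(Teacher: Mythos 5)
Your argument is correct in substance, but it takes a genuinely different route from the paper. The paper's proof is a short reduction: it quotes Nekov\'a\v{r}'s Proposition 11.7.6(vii) of \cite{Nek} (applied with $\fq=(0)$ and $\overline{\fq}=\sA_\Gamma$), which asserts that $\langle\,\,,\,\,\rangle_\Gamma$ is non-degenerate if and only if ${\rm length}_{\Lambda_\cO(\Gamma)_{\sA_\Gamma}}\bigl(\widetilde{H}^2_{\rm f}(G_{K,S},\bT_\Gamma,\Delta_\Sigma)_{\sA_\Gamma}\bigr)$ equals $\dim_\Phi\bigl(\widetilde{H}^1_{\rm f}(G_{K,S},T,\Delta_\Sigma)\otimes_\cO\Phi\bigr)$, and then converts this into the Fitting-ideal statement via Lemma~\ref{lemma:rank of selmer}(i) and Corollary~\ref{corollary:gen by e elements}. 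You bypass 11.7.6(vii) entirely: you reduce via Corollary~\ref{corollary:equiv_semi-simple_fitt} to the semisimplicity criterion and then reprove the needed equivalence by hand, using Nekov\'a\v{r}'s duality (perfectness of the cup product after $\otimes\,\Phi$, which does hold here because $\Delta_\Sigma$ and $\Delta_\Sigma^*$ are complementary Greenberg-type conditions and the local cohomology at the primes in $S_{\rm ram}(\chi)\setminus S_p(K)$ is acyclic, cf. Remark~\ref{remark_local_cohom_bad_primes_acyclic}) together with an explicit Smith-normal-form computation of the Bockstein on a two-term free resolution furnished by Proposition~\ref{prop:parf[1,2]} and Lemma~\ref{lemma:Euler characteristic = 0}. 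In effect you give a self-contained proof of the special case of Nekov\'a\v{r}'s proposition that the paper cites; what this buys is transparency (the link between elementary divisors, the Bockstein, and semisimplicity is made explicit), at the cost of the two compatibility checks you rightly flag, both of which are routine in this setting. One point to patch: writing $d(e_i)=t^{a_i}f_i$ with finite exponents tacitly assumes $d_{\sA_\Gamma}$ is injective, i.e.\ that $\widetilde{H}^2_{\rm f}(G_{K,S},\bT_\Gamma,\Delta_\Sigma)$ is torsion; under the $\Sigma$-Leopoldt hypothesis alone this can fail (for instance when some $v\in E(\Sigma^c,\chi)$ splits completely in $K_\Gamma$, cf. Lemma~\ref{lemma:torsion_ciriteria}). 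Allowing zero columns in the Smith form handles this: then $\beta\otimes\Phi$ kills the corresponding basis vectors and is not an isomorphism, the localized module acquires a free summand so is not semisimple, and $L_{p,\chi}^{\rm alg}\vert_\Gamma=0$, so both sides of the asserted equivalence fail simultaneously and the lemma still holds --- but this boundary case should be stated rather than left implicit.
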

\begin{proof}
Let us set $R := \Lambda_{\cO}(\Gamma)_{\sA_{\Gamma}}$. 
Proposition~11.7.6(vii) of \cite{Nek} (applied with $\fq = (0)$ and $\overline{\fq} = \sA_\Gamma$) tells us that the $p$-adic height pairing $\langle\,\,, \,\,\rangle_{\Gamma}$ is non-degenerate if and only if 
$${\rm length}_{R}(\widetilde{H}_{\rm f}^2(G_{K,S}, \bT_{\Gamma},\Delta_{\Sigma}) \otimes_{\Lambda_{\cO}(\Gamma)} R) = \dim_{\Phi}(\widetilde{H}^{1}_{\rm f}(G_{K,S}, T, \Delta_{\Sigma})\otimes_{\cO}\Phi)\,.$$ 
By Lemma~\ref{lemma:rank of selmer}(i), this equivalent to the requirement that
$${\rm length}_{R}(\widetilde{H}_{\rm f}^2(G_{K,S}, \bT_{\Gamma},\Delta_{\Sigma}) \otimes_{\Lambda_{\cO}(\Gamma)} R) = e.$$
By Corollary~\ref{corollary:gen by e elements}, this is equivalent to the condition that
\[
L_{p,\chi}^{\rm alg}{\vert_\Gamma} \in  \sA_\Gamma^e \setminus \sA_\Gamma^{e+1}
\]
as required. 
\end{proof}


\subsection{Group-ring-valued $\cL$-invariants}

In this subsection, we shall recall from \cite{BS19} the definition of the group-ring-valued $\cL$-invariant  $\cL_{\Sigma, \Gamma}^{\rm Gal}$ along each $\bZ_p$-extension $K_\Gamma/K$ given as in Definition~\ref{def_kGamma_AGamma}. We remind the reader that the leading term formula in \cite[Theorem 1.1]{BS19} for the restriction $L_{p,\Sigma}^\chi\big{\vert}_{\Gamma}$ of the Katz $p$-adic $L$-function to $\Gamma$  in the present set up involves, among other things, this group-ring-valued $\cL$-invariant. Our main goal in this section is provide an alternative description of $\cL_{\Sigma, \Gamma}^{\rm Gal}$ in terms of the $p$-adic height pairing $\langle\,\,,\,\,\rangle_\Gamma$.  In Section~\ref{subsec_universal_L_invariant}, we shall rely on this new description of  $\cL_{\Sigma, \Gamma}^{\rm Gal}$  to interpolate them (as $\Gamma$ varies) to the universal group-ring-valued $\cL$-invariant ${ \cL_{\Sigma}^{\rm Gal}}$.

\begin{definition}
Suppose $v \in S_p(K)$. 
\item[i)] Fix a prime $w$ of the extension $L = \overline{K}^{\ker(\chi)}$ that lies above $v$. We define the map ${\rm ord}_v$ as the composite 
\[
H^1(G_{K,S}, T) \longrightarrow H^1(G_{K_v}, T) \longrightarrow H^1(G_{L_w}, T) = \widehat{L_w^{\times}} \otimes_{\bZ_p} \cO \xrightarrow{{\rm ord}_w} \cO. 
\]
We recall that $\widehat{L_w^{\times}}$ stands for the $p$-adic completion of $L_w^{\times}$. 
\item[ii)] Using the reciprocity map, we define ${\rm rec}_{\Gamma, w}$ as the compositum of the arrows
\[
 \widehat{L_w^{\times}} \otimes_{\bZ_p} \cO \longrightarrow 
\Gal(L_w^{\rm ab}/L_w) \widehat{\otimes}_{\bZ_p} \cO \longrightarrow  \Gamma \otimes_{\bZ_p} \cO \stackrel{\sim}{\longrightarrow} \sA_\Gamma/\sA_{\Gamma}^2, 
\]
where the isomorphism $\Gamma \otimes_{\bZ_p} \cO \to \sA_\Gamma/\sA_{\Gamma}^2$ is given by $\gamma \mapsto \gamma -1$. We define the homomorphism ${\rm rec}_{\Gamma, v}$ as the composite 
\[
H^1(G_{K,S}, T) \longrightarrow H^1(G_{K_v}, T) \longrightarrow H^1(G_{L_w}, T)  \xrightarrow{{\rm rec}_{\Gamma, w}}  \sA_\Gamma/\sA_\Gamma^2\,. 
\]
\end{definition}

\begin{remark}\label{remark_order_rec}
\item[i)] {The maps ${\rm ord}_v$ and ${\rm rec}_{\Gamma, v}$ do depend the choice of the prime $w$ of $L$ above the prime $v$.}
\item[ii)] If $K_\Gamma/K$ is unramified at $v \in S_p(K)$, then 
\[
{\rm rec}_{\Gamma,v}(x) = {\rm ord}_{v}(x) \cdot ({\rm Frob}_v - 1) \in \sA_\Gamma/\sA_\Gamma^2\,.  
\]
\end{remark}

The following definition of the group-ring-valued $\cL$-invariant  dwells on \cite[Lemma~5.4]{BS19}. 

\begin{definition}
Suppose that the $\Sigma$-Leopoldt Conjecture~\ref{conj_sigma_leopoldt} for $L/K$ holds true and $e \geq 1$. By Lemma~\ref{lemma:rank of selmer}(i), 
\[
0 \neq \bigwedge_{v \in E(\Sigma^c, \chi)}{\rm ord}_v \in \det\left( \Hom_\cO(\widetilde{H}_{\rm f}^1(G_{K,S}, T,\Delta_{\Sigma}), \cO)\right). 
\]
\item[i)] We define the group-ring-valued $\cL$-invariant  $\cL_{\Sigma, \Gamma}^{\rm Gal} \in \bQ_p \otimes_{\bZ_p} \sA_\Gamma^e/\sA_\Gamma^{e+1}$ along $\Gamma$ as the unique element that verifies 
\[
\bigwedge_{v \in E(\Sigma^c, \chi)}{\rm rec}_{\Gamma, v} = \cL_{\Sigma, \Gamma}^{\rm Gal} \otimes \bigwedge_{v \in E(\Sigma^c, \chi)}{\rm ord}_v\,.
\]
Here, the equality takes place in 
\[
\det\left( \Hom_\cO(\widetilde{H}_{\rm f}^1(G_{K,S}, T,\Delta_{\Sigma}), \sA_\Gamma^e/\sA_\Gamma^{e+1})\right) \otimes_{\bZ_p} \bQ_p. 
\]
\item[ii)] When $e = 0$, we put $\cL_{\Sigma,\Gamma}^{\rm Gal}  = 1$. 
\end{definition}

\begin{lemma}\label{lemma:non-vanishing}
There is a $\bZ_p$-extension $K_\Gamma/K$ with Galois group $\Gamma$ such that  $\cL_{\Sigma, \Gamma}^{\rm Gal} \neq 0$. 
\end{lemma}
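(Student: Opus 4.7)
The plan is to compute $\cL_{\Sigma,\Gamma}^{\rm Gal}$ directly by picking a $\bZ_p$-extension $K_\Gamma/K$ that is unramified outside $\Sigma$. The existence of such an extension is a standard consequence of class field theory: the maximal abelian pro-$p$ extension $M_\Sigma/K$ unramified outside $\Sigma$ has Galois group of positive $\bZ_p$-rank, with rank exactly one modulo torsion under the $\Sigma$-Leopoldt hypothesis on $K/K$ (implicit in the definition of $\cL_{\Sigma,\Gamma}^{\rm Gal}$, as $\Sigma$-Leopoldt for $L/K$ passes to the trivial character component). For any such $K_\Gamma$, every prime $v \in E(\Sigma^c,\chi) \subseteq \Sigma^c$ is unramified, and Remark~\ref{remark_order_rec}(ii) gives
\[
{\rm rec}_{\Gamma, v} \;=\; ({\rm Frob}_v - 1) \cdot {\rm ord}_v
\]
as $\cO$-linear maps on $\widetilde{H}^1_{\rm f}(G_{K,S},T,\Delta_{\Sigma})$. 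Substituting this into the defining relation of the group-ring-valued $\cL$-invariant yields the closed-form expression
\[
\cL_{\Sigma,\Gamma}^{\rm Gal} \;=\; \prod_{v \in E(\Sigma^c,\chi)}({\rm Frob}_v - 1) \quad\in\quad \sA_\Gamma^e/\sA_\Gamma^{e+1}.
\]

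Since $\Gamma \cong \bZ_p$, the isomorphism $\Gamma\otimes_{\bZ_p}\cO \stackrel{\sim}{\longrightarrow} \sA_\Gamma/\sA_\Gamma^2$ sending $\gamma \mapsto \gamma-1$ makes $\sA_\Gamma^e/\sA_\Gamma^{e+1}$ a free $\cO$-module of rank one, and the displayed product is non-zero if and only if each ${\rm Frob}_v$ (for $v \in E(\Sigma^c,\chi)$) has non-trivial image in $\Gamma$, i.e., no $v$ splits completely in $K_\Gamma$. The lemma therefore reduces to producing a $\bZ_p$-extension of $K$ that is unramified outside $\Sigma$ and in which no prime of $E(\Sigma^c,\chi)$ splits completely. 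The parameter space of continuous surjections $\Gal(M_\Sigma/K) \twoheadrightarrow \bZ_p$ is a projective space of positive dimension, and for each $v$ the locus where $v$ splits completely is the closed subset defined by vanishing on ${\rm Frob}_v$; a generic choice of quotient then works so long as each such locus is proper, i.e., so long as each ${\rm Frob}_v$ is non-torsion in $\Gal(M_\Sigma/K)$.

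The main obstacle is thus establishing the non-torsion of ${\rm Frob}_v$ in $\Gal(M_\Sigma/K)$ for every $v \in E(\Sigma^c,\chi)$. Class field theory identifies $\Gal(M_\Sigma/K)\otimes_{\bZ_p}\QQ_p$ (under $\Sigma$-Leopoldt for $K/K$) with the cokernel of $\log_p\colon \cO_K^\times\otimes\QQ_p \hookrightarrow \prod_{w\in\Sigma} K_w$, and ${\rm Frob}_v$ corresponds (modulo torsion) to the class of $-\log_p(\beta_v)$, where $\beta_v \in \cO_K$ is any generator of the ideal $v^{h_v}$ (with $h_v$ the order of $v$ in $\Cl(K)$). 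Non-torsion of ${\rm Frob}_v$ thus amounts to the $\QQ_p$-linear independence of $\log_p(\beta_v)$ from $\log_p(\cO_K^\times)$; in the imaginary quadratic case this follows directly from Brumer's $p$-adic analogue of Baker's theorem (applied to $\beta_v$, which cannot be a root of unity since $(\beta_v) = v^{h_v}$ is non-trivial), and in the general CM setting the same transcendence-theoretic circle of ideas that underlies the $\Sigma$-Leopoldt conjecture itself is the relevant input.
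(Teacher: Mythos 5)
Your reduction is the same as the paper's: for a $\bZ_p$-extension $K_\Gamma/K$ unramified outside $\Sigma$, Remark~\ref{remark_order_rec}(ii) gives $\cL_{\Sigma,\Gamma}^{\rm Gal}=\prod_{v\in E(\Sigma^c,\chi)}(\mathrm{Frob}_v-1)$, so the lemma comes down to producing such an extension in which no prime of $E(\Sigma^c,\chi)$ splits completely; and your parametrization by surjections $\Gal(M_\Sigma/K)\twoheadrightarrow\bZ_p$ is just the dual of the paper's parametrization by codimension-one subspaces $X\subset\prod_{v\in\Sigma}\widehat{\cO_{K_v}^\times}\otimes_{\bZ_p}\QQ_p$ containing the image of $\cO_K^\times\otimes\QQ_p$. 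Up to that point your argument matches the paper's.

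The gap is that you never carry out the final step for a general CM field. You correctly isolate it as the crux (non-torsion of $\mathrm{Frob}_{v}$ in $\Gal(M_\Sigma/K)$, equivalently that the image of a generator $\pi_{v^c}$ of $\fp_{v^c}^{h_K}$ in $\prod_{v\in\Sigma}\widehat{\cO_{K_v}^\times}\otimes\QQ_p$ does not lie in the span of the image of the global units), but you then prove it only when $K$ is imaginary quadratic — where, incidentally, Brumer's theorem is unnecessary, since $\cO_K^\times$ has rank zero and one only needs that $\pi_{v^c}$ is not a root of unity — and for general CM $K$ you defer to ``the transcendence-theoretic circle of ideas underlying the $\Sigma$-Leopoldt conjecture,'' i.e.\ you leave the step unproven. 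Since the lemma is stated and used unconditionally, this does not establish it. The paper concludes precisely at this point: having noted that $v^c$ splits completely in the extension cut out by $X$ if and only if $\tilde\pi_{v^c}\in X$, it asserts that $\pi_{v^c}$ is not contained in the image of $\cO_K^\times\otimes_{\bZ}\QQ_p$, so that a subspace $X$ containing the image of the units and avoiding the finitely many vectors $\tilde\pi_{v^c}$ exists. If you believe (as your discussion of $\log_p(\beta_v)$ suggests) that this membership statement itself requires a transcendence input beyond what the paper invokes, you should engage with that assertion directly and either justify it or explain why it fails to be elementary; as written, your proposal simply stops where the paper's proof performs its concluding step, and is therefore incomplete.
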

\begin{proof}
If $K_\Gamma/K$ is unramified outside $\Sigma$, then we have 
\[
\cL_{\Sigma, \Gamma}^{\rm Gal} = \prod_{v \in E(\Sigma^c, \chi)} ({\rm Frob}_v - 1) \in \sA_\Gamma^e. 
\]
It therefore suffices to prove the existence of a $\bZ_p$-extension $K_\Gamma/K$ which is unramified outside $\Sigma$ and such that no prime in $E(\Sigma^c,\chi)$ splits completely in $K_\Gamma$. 

By the ad\`elic description of class field theory, any $\bZ_p$-extension $K_\Gamma/K$ which is  unramified outside $\Sigma$ corresponds to a subspace $X \subset \prod_{v \in \Sigma}{\widehat{\cO_{K_v}^{\times}}} \otimes_{\bZ_p} \bQ_p$ of dimension $[K \colon \bQ] -1$ containing the image of $\cO_K^\times \otimes_{\bZ} \bQ_p$. Let $h_K$ denote the class number of $K$. 
For a prime $v^c \in \Sigma^c$, let $\pi_{v^c}$ be a generator of the principal ideal $\fp_{v^c}^{h_K}$, where $\fp_{v^c}$ is the prime ideal of $\cO_K$ corresponding $v^c$. Then by class field theory, $v^c$ splits completely in the extension $K_\Gamma/K$ if and only if $\tilde{\pi}_{v^c} \in X$. We therefore need to prove the existence of a subspace $X \subset \prod_{v \in \Sigma}{\widehat{\cO_{K_v}^{\times}}} \otimes_{\bZ_p} \bQ_p$ of dimension $[K \colon \bQ] -1$ such that
\begin{itemize}
    \item $X$ contains the image of $\cO_K^\times \otimes_{\bZ} \bQ_p$;
    \item $\pi_{v^c} \not\in X$ for any $v^c \in \Sigma^c$.
\end{itemize}
Since $\pi_{v^c}$ is not contained in the image of $\cO_K^\times \otimes_{\bZ} \bQ_p$, there always exists a subspace $X$ with the required properties. 
\end{proof}

\begin{definition}
\label{defn_evfv_preheight}
\item[i)] For each prime $v \in E(\Sigma^c,\chi)$, via the fixed prime $w$ of $L$ above $v$, we have an isomorphism 
\[
\cO\stackrel{\sim}{\longrightarrow} H^0(G_{K_v}, T^*(1)). 
\]
Let $q_v\in H^0(G_{K_v}, T^*(1))$ denote the image of $1\in \cO$ under this isomorphism.
\item[ii)] Assuming the truth of the $\Sigma$-Leopoldt conjecture for $L/K$, we let
$$\{e_v\,:\,{v \in E(\Sigma^c, \chi)}\}\subset \widetilde{H}^1_{\rm f}(G_{K,S}, T, \Delta_\Sigma) \otimes_{\bZ_p} \bQ_p$$  
be a basis which verifies 
\[
{\rm ord}_{v'}(e_v) = 
\begin{cases}
1  & \, \textrm{ if } \, v' = v, 
\\
0  & \, \textrm{ if } \, v' \neq v. 
\end{cases}
\]
A basis $\{e_v\,:\,{v \in E(\Sigma^c, \chi)}\}$ with this property exists thanks to Lemma~\ref{lemma:rank of selmer}(i).
\end{definition}

\begin{proposition}\label{proposition:equiv_non-vanish_non-degen}
Suppose that the $\Sigma$-Leopoldt Conjecture~\ref{conj_sigma_leopoldt} for $L/K$ holds true. 
Then we have 
\[
\cL_{\Sigma, \Gamma}^{\rm Gal} = (-1)^e \det\left(\langle e_v, q_{v'} \rangle_{\Gamma}\,\,:\,\,v,v' \in E(\Sigma^c, \chi)\right)\,.
\]
In particular, the group-ring-valued $\cL$-invariant  $\cL_{\Sigma, \Gamma}^{\rm Gal}$ is non-vanishing if and only if  
the Nekov\'a\v{r}'s $p$-adic height pairing $\langle\,\,,\,\,\rangle_{\Gamma}$ is non-degenerate. 
\end{proposition}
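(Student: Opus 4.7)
The plan is to compute $\langle e_v, q_{v'}\rangle_\Gamma$ explicitly in terms of the map ${\rm rec}_{\Gamma, v'}$ and then to compare the resulting matrix with the one appearing in the defining relation of $\cL_{\Sigma, \Gamma}^{\rm Gal}$.

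First, for each $v' \in E(\Sigma^c, \chi) \subset \Sigma^c$, I would apply Lemma~\ref{lemma:commutative} with the class $q_{v'} \in H^0(G_{K_{v'}}, T^*(1))$, viewed via Lemma~\ref{lemma:rank of selmer}(ii) as an element of $\widetilde{H}^1_{\rm f}(G_{K,S}, T^*(1), \Delta_\Sigma^*)$. This identifies
\[
\langle e_v, q_{v'}\rangle_\Gamma \,=\, \langle {\rm res}_{v'}(e_v),\, q_{v'}\rangle_{\Gamma, v'}.
\]
Next, I would compute the local pairing on the right. For $v' \in E(\Sigma^c, \chi)$ one has $T|_{G_{K_{v'}}} \cong \cO(1)$ and $T^*(1)|_{G_{K_{v'}}} \cong \cO$, with $q_{v'}$ corresponding to $1 \in \cO$. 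The Leibniz rule for Bockstein homomorphisms and cup products (c.f. \cite[\S11.2]{Nek}) gives
\[
\beta_{v'}(x) \cup q_{v'} \,+\, x \cup \beta'_{v'}(q_{v'}) \,=\, 0
\]
in $H^2(G_{K_{v'}}, \cO(1)) \otimes_\cO \sA_\Gamma/\sA_\Gamma^2$, where $\beta'_{v'}$ is the Bockstein attached to the analogous exact sequence for $T^*(1)$. The class $\beta'_{v'}(q_{v'}) \in H^1(G_{K_{v'}}, \cO) \otimes_\cO \sA_\Gamma/\sA_\Gamma^2 = \Hom(G_{K_{v'}}, \cO) \otimes_\cO \sA_\Gamma/\sA_\Gamma^2$ is exactly the local character $G_{K_{v'}} \to \Gamma \hookrightarrow \sA_\Gamma/\sA_\Gamma^2$ induced from the tautological $G_K$-action on $\Lambda_\cO(\Gamma)^\iota$ composed with $\gamma \mapsto \gamma - 1$. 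By local class field theory and the explicit shape of ${\rm rec}_{\Gamma, v'}$ recorded just before Remark~\ref{remark_order_rec}, pairing this character against $x$ via ${\rm inv}_{v'}$ recovers ${\rm rec}_{\Gamma, v'}(x)$. Combining the two steps yields
\[
\langle {\rm res}_{v'}(e_v),\, q_{v'}\rangle_{\Gamma, v'} \,=\, -\,{\rm rec}_{\Gamma, v'}(e_v).
\]

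For the final step, Definition~\ref{defn_evfv_preheight}(ii) identifies $\{{\rm ord}_v\}_{v \in E(\Sigma^c, \chi)}$ as the basis of $\Hom_\cO(\widetilde{H}^1_{\rm f}(G_{K,S}, T, \Delta_\Sigma), \cO) \otimes_\cO \Phi$ dual to $\{e_v\}$, and so
\[
{\rm rec}_{\Gamma, v'} \,=\, \sum_{v \in E(\Sigma^c, \chi)} {\rm rec}_{\Gamma, v'}(e_v) \cdot {\rm ord}_v.
\]
Substituting this into the defining relation $\bigwedge_{v'} {\rm rec}_{\Gamma, v'} = \cL_{\Sigma, \Gamma}^{\rm Gal}\cdot \bigwedge_v {\rm ord}_v$ gives $\cL_{\Sigma, \Gamma}^{\rm Gal} = \det\big({\rm rec}_{\Gamma, v'}(e_v)\big)_{v, v'}$, which combined with the previous step yields the asserted formula $\cL_{\Sigma, \Gamma}^{\rm Gal} = (-1)^e \det\big(\langle e_v, q_{v'}\rangle_\Gamma\big)_{v, v'}$. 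The final assertion is then immediate, since $\{q_{v'}\}$ is a basis of $\widetilde{H}^1_{\rm f}(G_{K,S}, T^*(1), \Delta_\Sigma^*) \otimes_\cO \Phi$ by Lemma~\ref{lemma:rank of selmer}(ii), so non-vanishing of this determinant is equivalent to non-degeneracy of $\langle\,,\,\rangle_\Gamma$.

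The most delicate point is to track the sign $(-1)^e$ precisely through the Leibniz rule for Bocksteins combined with the contragredient twist $\Lambda_\cO(\Gamma)^\iota$ built into $\bT_\Gamma$, and to verify that ${\rm inv}_{v'}\big(x \cup \beta'_{v'}(q_{v'})\big) = {\rm rec}_{\Gamma, v'}(x)$ holds with exactly the normalizations fixed in the paper (the isomorphism $\Gamma \otimes \cO \xrightarrow{\sim} \sA_\Gamma/\sA_\Gamma^2$ via $\gamma \mapsto \gamma - 1$, Nekov\'a\v{r}'s sign conventions for Bocksteins, and the choices of primes $w \mid v'$ of $L$ implicit in the definition of ${\rm rec}_{\Gamma, v'}$). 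Once these local compatibilities are pinned down, the rest is linear algebra.
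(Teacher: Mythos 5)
Your proposal is correct and follows essentially the paper's own argument: reduce to the local pairing via Lemma~\ref{lemma:commutative} (viewing $q_{v'}$ inside the extended Selmer group through Lemma~\ref{lemma:rank of selmer}(ii)), identify $\langle \mathrm{res}_v(e_{v'}), q_v\rangle_{\Gamma,v}$ with $-\mathrm{rec}_{\Gamma,v}(e_{v'})$, and conclude by the dual-basis relation between $\{e_v\}$ and $\{\mathrm{ord}_v\}$ from Definition~\ref{defn_evfv_preheight}. The only difference is in the local step: where you invoke a Leibniz rule to transfer the Bockstein onto $q_{v'}$ and defer the sign bookkeeping, the paper computes $\beta_v$ directly as $x\mapsto -z_v\cup x$ via \cite[Lemma 11.2.3]{Nek} and uses \cite[(11.3.5.3)]{Nek} to get $\mathrm{rec}_{\Gamma,v}(x)=\mathrm{inv}_v(z_v\cup x)$, which (together with $q_v$ being the image of $1\in\cO$) settles exactly the normalization and sign issues you flag as the delicate point.
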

\begin{proof}
By the definition of $\cL_{\Sigma, \Gamma}^{\rm Gal}$, we have 
\[
\cL_{\Sigma, \Gamma}^{\rm Gal} =  \det
\left({\rm rec}_{\Gamma, v}(e_{v'})\,\,:\,\,{v, v' \in E(\Sigma^c, \chi)}\right). 
\]
It therefore suffices to check that ${\rm rec}_{\Gamma, v}(e_{v'}) = -\langle e_{v'}, q_v \rangle_{\Gamma}.$ 
By Lemma~\ref{lemma:commutative}, we have 
\[
\langle e_{v'}, q_v \rangle_{\Gamma} = \langle {\rm res}_v(e_{v'}), q_v \rangle_{\Gamma, v}. 
\]
Let $z_v \colon G_{K_v} \to \Gamma$ denote the tautological $1$-cocyle. By \cite[Lemma 11.2.3]{Nek}, we have 
\[
\beta_v({\rm res}_v(e_{v'})) = -z_v \cup {\rm res}_v(e_{v'}). 
\]
On the other hand, the formula \cite[(11.3.5.3)]{Nek} shows that 
\[
{\rm rec}_{\Gamma, v}(e_{v'}) = {\rm inv}_v(z_v \cup {\rm res}_v(e_{v'})). 
\]
Here ${\rm inv}_v \colon H^2(G_{K_v}, T) \stackrel{\sim}{\longrightarrow} H^2(G_{L_w}, T) \stackrel{\sim}{\longrightarrow} \cO$ denotes the invariant map. 
Thence, 
\[
\langle {\rm res}_v(e_{v'}), q_v \rangle_{\Gamma, v} = 
{\rm inv}_v(-z_v \cup {\rm res}_v(e_{v'}) \cup q_v) = 
{\rm inv}_v(-z_v \cup {\rm res}_v(e_{v'})) = - {\rm rec}_{\Gamma, v}(e_{v'}),  
\]
where the second equality follows from the definition of $q_v$ as the image of $1 \in \cO$. This concludes the proof of our proposition. 
\end{proof}

\begin{theorem}
\label{thm_nonvanishingL_semisimple_nondegpadicheight}
Assuming the truth of the $\Sigma$-Leopoldt Conjecture~\ref{conj_sigma_leopoldt} for $L/K$, the following are equivalent. 
    \item[a)] The Nekov\'a\v{r}'s $p$-adic height pairing $\langle\,\,, \,\,\rangle_{\Gamma}$ is non-degenerate. 
    \item[b)] The algebraic Katz $p$-adic $L$-function $L_{p,\chi}^{\rm alg}{\vert_\Gamma}$ belongs to  $\sA_\Gamma^e \setminus \sA_\Gamma^{e+1}$.  
    \item[c)] The $\Lambda_{\cO}(\Gamma)$-moudule $\widetilde{H}_{\rm f}^2(G_{K,S}, \bT_{\Gamma},\Delta_{\Sigma})$ is semi-simple at $\sA_\Gamma$. 
    \item[d)] The group-ring-valued $\cL$-invariant  $\cL_{\Sigma, \Gamma}^{\rm Gal}$ does not vanish. 
\end{theorem}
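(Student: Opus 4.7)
The plan is to observe that Theorem~\ref{thm_nonvanishingL_semisimple_nondegpadicheight} is essentially a formal consequence of three equivalences already established in this section, all proved under the same $\Sigma$-Leopoldt hypothesis for $L/K$. Hence no new input is required: the proof merely amounts to stitching these results together.

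More precisely, I would organize the chain of implications as follows. The equivalence (a) $\Longleftrightarrow$ (b) is exactly Lemma~\ref{lemma:equiv_non-degen_fitt}, which translates the non-degeneracy of $\langle\,\,,\,\,\rangle_\Gamma$ into a length computation of $\widetilde{H}_{\rm f}^2(G_{K,S}, \bT_{\Gamma},\Delta_{\Sigma})$ localized at $\sA_\Gamma$ and, via Corollary~\ref{corollary:gen by e elements}, into the statement $L_{p,\chi}^{\rm alg}\vert_\Gamma\in \sA_\Gamma^e\setminus\sA_\Gamma^{e+1}$. The equivalence (b) $\Longleftrightarrow$ (c) is Corollary~\ref{corollary:equiv_semi-simple_fitt}, since $\Lambda_{\cO}(\Gamma)_{\sA_\Gamma}$ is a discrete valuation ring and the Fitting ideal in question computes the length of the torsion module over this DVR. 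Finally, the equivalence (a) $\Longleftrightarrow$ (d) is Proposition~\ref{proposition:equiv_non-vanish_non-degen}, which expresses $\cL_{\Sigma,\Gamma}^{\rm Gal}$ (up to sign) as the determinant of the matrix of $p$-adic heights $\langle e_v, q_{v'}\rangle_\Gamma$ with respect to the basis $\{e_v\}$ of Definition~\ref{defn_evfv_preheight} dual to $\{\mathrm{ord}_v\}$, so that this determinant vanishes precisely when the height pairing fails to be non-degenerate (both sides being square matrices of size $e$ by Lemma~\ref{lemma:rank of selmer}).

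Since (a)$\Leftrightarrow$(b), (b)$\Leftrightarrow$(c), and (a)$\Leftrightarrow$(d) have all been established, I would conclude the proof in a single line by assembling these equivalences. There is no genuine obstacle; the only subtlety worth flagging is that the $\Sigma$-Leopoldt hypothesis is used uniformly in all three ingredients (to control the rank of $\widetilde{H}^1_{\rm f}(G_{K,S},T,\Delta_\Sigma)$ via Lemma~\ref{lemma:rank of selmer}, to identify $e$ as the minimal number of generators of $\widetilde{H}_{\rm f}^2$ localized at $\sA_\Gamma$ via Corollary~\ref{corollary:gen by e elements}, and to ensure that the dual basis $\{e_v\}$ used to define $\cL_{\Sigma,\Gamma}^{\rm Gal}$ exists), so that each equivalence is available within the scope of our blanket assumption.
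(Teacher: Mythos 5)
Your proposal is correct and coincides with the paper's own proof, which likewise obtains the theorem by combining Lemma~\ref{lemma:equiv_non-degen_fitt} for (a)$\Leftrightarrow$(b), Corollary~\ref{corollary:equiv_semi-simple_fitt} for (b)$\Leftrightarrow$(c), and Proposition~\ref{proposition:equiv_non-vanish_non-degen} for (a)$\Leftrightarrow$(d), all under the same $\Sigma$-Leopoldt hypothesis.
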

\begin{proof}
This follows on combining Corollary~\ref{corollary:equiv_semi-simple_fitt}, Lemma~\ref{lemma:equiv_non-degen_fitt}, and Proposition~\ref{proposition:equiv_non-vanish_non-degen}. 
\end{proof}

Recall that $\sA:=\ker\left(\Lambda_{\cO}(\Gamma_\infty)\to \cO\right)$ is the augmentation ideal of $\Lambda_{\cO}(\Gamma_\infty)$. 

\begin{corollary}
\label{cor_algebraic_multivariate_exceptional_zero}
Suppose that the $\Sigma$-Leopoldt conjecture for $L/K$ holds true. Then we have 
\[
L_{p,\chi}^{\rm alg} \in \sA^e \setminus \sA^{e+1}. 
\]
\end{corollary}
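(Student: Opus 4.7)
The lower bound $L_{p,\chi}^{\rm alg}\in \sA^e$ is already supplied by Proposition~\ref{proposition_vanishing_order_e}, so the plan is to establish the strict non-containment $L_{p,\chi}^{\rm alg}\notin \sA^{e+1}$ by specializing along a carefully chosen $\bZ_p$-extension of $K$ and reducing the question to the one-variable case, where all the machinery of this section is available.

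\textbf{Step 1 (choose a good specialization).} Invoke Lemma~\ref{lemma:non-vanishing} to produce a $\bZ_p$-extension $K_\Gamma/K$ that is unramified outside $\Sigma$ and such that no prime of $E(\Sigma^c,\chi)$ splits completely in $K_\Gamma$; along this direction, $\cL_{\Sigma,\Gamma}^{\rm Gal}\neq 0$. Set $\Gamma:=\Gal(K_\Gamma/K)$ and let $\pi_\Gamma\colon \Lambda_{\cO}(\Gamma_\infty)\twoheadrightarrow \Lambda_{\cO}(\Gamma)$ be the canonical surjection. Since $\pi_\Gamma(\sA)\subseteq \sA_\Gamma$, we have $\pi_\Gamma(\sA^m)\subseteq \sA_\Gamma^m$ for every $m\geq 0$.

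\textbf{Step 2 (transport the statement to $\Gamma$).} By Remark~\ref{remark_algebraic_Katz_Gamma_justified}, the image of $L_{p,\chi}^{\rm alg}$ under the map on fractional ideals induced by $\pi_\Gamma$ is precisely $L_{p,\chi}^{\rm alg}\vert_\Gamma$. In particular, if $L_{p,\chi}^{\rm alg}\in \sA^{e+1}$ then a fortiori $L_{p,\chi}^{\rm alg}\vert_\Gamma\in \sA_\Gamma^{e+1}$.

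\textbf{Step 3 (apply the main equivalence).} The $\Sigma$-Leopoldt conjecture for $L/K$ is in force by hypothesis, so Theorem~\ref{thm_nonvanishingL_semisimple_nondegpadicheight} applies to $\Gamma$. The non-vanishing of $\cL_{\Sigma,\Gamma}^{\rm Gal}$ obtained in Step~1 (condition (d)) is equivalent to condition (b), namely $L_{p,\chi}^{\rm alg}\vert_\Gamma\in \sA_\Gamma^e\setminus \sA_\Gamma^{e+1}$. This directly contradicts the conclusion of Step~2. Combining with $L_{p,\chi}^{\rm alg}\in \sA^e$ from Proposition~\ref{proposition_vanishing_order_e} completes the proof.

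\textbf{Where the work lies.} The entire content of the corollary has already been distilled into the equivalences of Theorem~\ref{thm_nonvanishingL_semisimple_nondegpadicheight}; the only substantive issue in the proof above is confirming that the natural surjection $\pi_\Gamma$ does indeed send the multivariate algebraic $p$-adic $L$-function to its one-variable restriction. This is handled by Remark~\ref{remark_algebraic_Katz_Gamma_justified}, which itself relies on the perfectness of the Selmer complex (Proposition~\ref{prop:parf[1,2]}) and the base-change isomorphism \eqref{eq:base-change}. The only genuinely non-formal input --- the production of a $\bZ_p$-extension along which the $\cL$-invariant is non-zero --- was already done in Lemma~\ref{lemma:non-vanishing} by an explicit class-field-theoretic argument producing an extension unramified outside $\Sigma$ with the required splitting behaviour at primes of $E(\Sigma^c,\chi)$, so the present corollary is little more than a packaging of these ingredients.
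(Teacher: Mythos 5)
Your proposal is correct and follows essentially the same route as the paper's own proof: the containment in $\sA^e$ comes from Proposition~\ref{proposition_vanishing_order_e}, and the strict non-containment is obtained by choosing, via Lemma~\ref{lemma:non-vanishing}, a $\bZ_p$-extension $K_\Gamma/K$ with $\cL_{\Sigma,\Gamma}^{\rm Gal}\neq 0$ and invoking the equivalence (b)$\Leftrightarrow$(d) of Theorem~\ref{thm_nonvanishingL_semisimple_nondegpadicheight}. The only difference is that you spell out the compatibility step (that $\pi_\Gamma(\sA^{e+1})\subseteq\sA_\Gamma^{e+1}$ and that $\pi_\Gamma$ carries $L_{p,\chi}^{\rm alg}$ to $L_{p,\chi}^{\rm alg}\vert_\Gamma$, via Remark~\ref{remark_algebraic_Katz_Gamma_justified}), which the paper leaves implicit.
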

\begin{proof}
The containment $L_{p,\chi}^{\rm alg} \in \sA^e$ follows from Proposition \ref{proposition_vanishing_order_e}. To prove that $L_{p,\chi}^{\rm alg} \not\in \sA^{e+1}$, it suffices to find a $\ZZ_p$-extension $K_{\Gamma}/K$ with Galois group $\Gamma$ such that ${L_{p,\chi}^{\rm alg} }{\vert_\Gamma}\not\in \sA_{\Gamma}^{e+1}$. Such $\Gamma$ exists thanks to Theorem~\ref{thm_nonvanishingL_semisimple_nondegpadicheight} and Lemma~\ref{lemma:non-vanishing}.
\end{proof}

\begin{remark} 
For a given $\bZ_p$-extension $K_\Gamma /K$ with Galois group $\Gamma$, we crucially use  the fact that $\Lambda_{\cO}(\Gamma)_{\sA_\Gamma}$ is a discrete valuation ring in our proof that 
$$ L_{p,\chi}^{\rm alg}{\vert_\Gamma} \in \sA_\Gamma^e \setminus \sA_\Gamma^{e+1} {\,\,}\implies{\,\,} \hbox{ the }\Lambda_{\cO}(\Gamma)\hbox{-module } \widetilde{H}_{\rm f}^2(G_{K,S}, \bT_{\Gamma},\Delta_{\Sigma}) \hbox{ is semi-simple at } \sA_\Gamma\,.$$
Our argument therefore does not generalize to conclude that the module 
$\widetilde{H}^{2}_{\rm f}(G_{K,S}, \bT_\infty, \Delta_{\Sigma})$ is semi-simple at $\sA$. 
\end{remark}


\subsection{Interpolation of $p$-adic heights and the universal group-ring-valued $\cL$-invariant }
\label{subsec_universal_L_invariant}
As an application of our interpretation of the group-ring-valued $\cL$-invariants  in terms of Nekov\'a\v{r}'s $p$-adic height pairings, we may interpolate them to define what we call the universal group-ring-valued $\cL$-invariant.

Nekov\'a\v{r}'s formalism of $p$-adic height pairings give rise to a multivariate $p$-adic height pairing 
\begin{align*}
    \langle\,\,, \,\,\rangle_{\Gamma_\infty} \colon \widetilde{H}^1_{\rm f}(G_{K,S}, T, \Delta_\Sigma) \times  \widetilde{H}^1_{\rm f}(G_{K,S}, T^*(1), \Delta_\Sigma^*) \longrightarrow \sA/\sA^2. 
\end{align*}
when applied with $\Gamma_\infty$ in place of $\Gamma$ in Definition~\ref{defn_nekovars_padic_height}. Let  $K_\Gamma/K$ be a $\bZ_p$-extension with Galois group $\Gamma$. The commutative diagram with exact rows
\[
\xymatrix{
0 \ar[r] & \bT_\infty \otimes_{\Lambda_{\cO}(\Gamma_{\infty})} \sA/\sA^2 \ar[r] \ar[d] &  \bT_\infty/\sA^2 \bT_\infty \ar[d] \ar[r] & T \ar[d] \ar[r] & 0
\\
0 \ar[r] & \bT_\Gamma \otimes_{\Lambda_{\cO}(\Gamma)} \sA_\Gamma/\sA^2_\Gamma \ar[r] &  \bT_\Gamma/\sA^2_\Gamma \bT_\Gamma \ar[r] & T \ar[r] & 0 
}
\]
shows that the diagram 
\[
\xymatrix{
\widetilde{{\bf R}\Gamma}_{\rm f}(G_{K,S}, T, \Delta_\Sigma) \ar[r]^-{\beta} \ar[d] & \ar[d] \widetilde{{\bf R}\Gamma}_{\rm f}(G_{K,S}, T, \Delta_\Sigma)[1] \otimes_\cO \sA/\sA^2
\\
\widetilde{{\bf R}\Gamma}_{\rm f}(G_{K,S}, T, \Delta_\Sigma) \ar[r]^-{\beta} & \widetilde{{\bf R}\Gamma}_{\rm f}(G_{K,S}, T, \Delta_\Sigma)[1] \otimes_\cO \sA_\Gamma/\sA_\Gamma^2
}
\]
commutes. 
By the construction of the $p$-adic height pairings (due to Nekov\'a\v{r}), we conclude that:
\begin{lemma}\label{lem:conpatible}
 For any $\bZ_p$-extension $K_\Gamma/K$ with Galois group $\Gamma$, 
the pairing 
\[
 \widetilde{H}^1_{\rm f}(G_{K,S}, T, \Delta_\Sigma) \times  \widetilde{H}^1_{\rm f}(G_{K,S}, T^*(1), \Delta_\Sigma^*) \stackrel{\langle\,\,, \,\,\rangle_{\Gamma_\infty}}{\longrightarrow} \sA/\sA^2 \longrightarrow \sA_\Gamma/\sA^2_\Gamma
\]
coincides with the $p$-adic height pairing  $\langle\,\,, \,\,\rangle_{\Gamma}$. 
\end{lemma}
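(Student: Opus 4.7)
The plan is to derive the compatibility directly from the commutativity of the diagram of Bockstein morphisms displayed immediately before the lemma, coupled with the naturality of the cup product that appears in Nekov\'a\v{r}'s construction of the $p$-adic height pairing.

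First, I would pass to the first cohomology of the displayed square of Bockstein morphisms to obtain a commutative square
\[
\xymatrix{
\widetilde{H}^1_{\rm f}(G_{K,S}, T, \Delta_\Sigma) \ar[r]^-{\beta_{\Gamma_\infty}} \ar@{=}[d] & \widetilde{H}^2_{\rm f}(G_{K,S}, T, \Delta_\Sigma) \otimes_\cO \sA/\sA^2 \ar[d]^{\mathrm{id}\otimes \pi}
\\
\widetilde{H}^1_{\rm f}(G_{K,S}, T, \Delta_\Sigma) \ar[r]^-{\beta_\Gamma} & \widetilde{H}^2_{\rm f}(G_{K,S}, T, \Delta_\Sigma) \otimes_\cO \sA_\Gamma/\sA_\Gamma^2,
}
\]
where $\pi \colon \sA/\sA^2 \twoheadrightarrow \sA_\Gamma/\sA_\Gamma^2$ is the map induced by the surjection $\Gamma_\infty \twoheadrightarrow \Gamma$. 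This square commutes because the map between the two short exact sequences that define the Bockstein morphisms is precisely the datum recorded in the diagram above the lemma, and the formation of the connecting map in the derived category is functorial.

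Second, I would tensor the square above on the right by $\widetilde{H}^1_{\rm f}(G_{K,S}, T^*(1), \Delta_\Sigma^*)$ and post-compose with the global duality cup product
\[
\cup \colon \widetilde{H}^2_{\rm f}(G_{K,S}, T, \Delta_\Sigma) \otimes_\cO \widetilde{H}^1_{\rm f}(G_{K,S}, T^*(1), \Delta_\Sigma^*) \longrightarrow \cO,
\]
which is intrinsic and independent of any choice of $\ZZ_p$-extension. Using naturality of $\otimes$ and $\cup$, the composition along the top row is, by Definition \ref{defn_nekovars_padic_height} applied with $\Gamma_\infty$ in place of $\Gamma$, exactly $\langle\,\,,\,\,\rangle_{\Gamma_\infty}$; the composition along the bottom row is $\langle\,\,,\,\,\rangle_{\Gamma}$. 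The vertical projection on the right becomes the map $\sA/\sA^2 \twoheadrightarrow \sA_\Gamma/\sA_\Gamma^2$ after applying the cup product, which is precisely the map appearing in the statement.

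There is no genuine obstacle here, only bookkeeping: the only content one needs is that the Bockstein morphism commutes with the morphism of short exact sequences induced by $\Gamma_\infty \twoheadrightarrow \Gamma$ (which holds by construction in the derived category), together with the naturality of the cup product pairing in Nekov\'a\v{r}'s formalism. If a more robust justification is desired, one can invoke \cite[Proposition~11.2.4]{Nek} or the functoriality statement \cite[\S 11.1]{Nek} which records precisely that Nekov\'a\v{r}'s height pairings are compatible under base change along quotient maps of Iwasawa algebras.
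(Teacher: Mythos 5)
Your argument is correct and follows essentially the same route as the paper: the authors likewise display the morphism of short exact sequences induced by $\Gamma_\infty \twoheadrightarrow \Gamma$, note the resulting commutative square of Bockstein morphisms, and conclude by the construction of Nekov\'a\v{r}'s height pairing (the cup product being the same in both cases). Your write-up just spells out the bookkeeping the paper leaves implicit.
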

\begin{definition}
\label{defn_multivariate_L_invariant}
Suppose that the $\Sigma$-Leopoldt conjecture holds true. 
Let $\{e_v\}$ and $\{q_v\}$ be as in Definition~\ref{defn_evfv_preheight}.  We set
\begin{equation}
\label{eqn_universal_L_invariant}
   { \cL_{\Sigma}^{\rm Gal}} := (-1)^e \det\left(\langle e_v, q_{v'} \rangle_{\Gamma_\infty}\,\,:\,\,{v,v' \in E(\Sigma^c, \chi)}\right) \in \sA^e/\sA^{e+1} \otimes_{\bZ_p} \bQ_p\,.
\end{equation}
We call ${ \cL_{\Sigma}^{\rm Gal}}$ the universal group-ring-valued $\cL$-invariant.
\end{definition}
The first part of the following proposition justifies the adjective ``universal'' attached to the group-ring-valued $\cL$-invariant  ${ \cL_{\Sigma}^{\rm Gal}}$.

\begin{proposition}
Suppose that the $\Sigma$-Leopoldt Conjecture~\ref{conj_sigma_leopoldt} for $L/K$ holds true. 
    \item[i)] For any $\bZ_p$-extension $K_\Gamma/K$ with Galois group $\Gamma$, the image of ${ \cL_{\Sigma}^{\rm Gal}}$ under the canonical map 
    \[
    \sA^e/\sA^{e+1} \otimes_{\bZ_p}   \bQ_p \longrightarrow \sA_{\Gamma}^e/\sA_{\Gamma}^{e+1} \otimes_{\bZ_p} \bQ_p
    \]
    equals $\cL_{\Sigma, \Gamma}^{\rm Gal}$. 
    \item[ii)] We have ${ \cL_{\Sigma}^{\rm Gal}} \neq 0$ and the pairing $\langle\,\,, \,\,\rangle_{\Gamma_\infty}$ is non-degenerate.
\end{proposition}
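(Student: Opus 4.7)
The plan is to deduce both parts of the proposition directly from the preceding results — specifically Lemma~\ref{lem:conpatible}, Proposition~\ref{proposition:equiv_non-vanish_non-degen}, and Lemma~\ref{lemma:non-vanishing} — without any further input.

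For part (i), I will exploit the observation that the canonical surjection $\Lambda_{\cO}(\Gamma_\infty) \twoheadrightarrow \Lambda_{\cO}(\Gamma)$ carries $\sA$ onto $\sA_\Gamma$ and thereby induces a graded $\cO$-algebra homomorphism between the associated graded rings of the augmentation filtrations; the map displayed in the statement is the $\bQ_p$-linear extension of its degree-$e$ component. By Lemma~\ref{lem:conpatible}, each entry $\langle e_v, q_{v'}\rangle_{\Gamma_\infty}$ of the relevant Gram matrix is carried to the corresponding entry $\langle e_v, q_{v'}\rangle_{\Gamma}$ in $\sA_\Gamma/\sA_\Gamma^2$. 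Since the determinant is a polynomial expression in its entries and the map on graded pieces respects this multiplicative structure, the image of the determinant $(-1)^e \det(\langle e_v, q_{v'}\rangle_{\Gamma_\infty})$ defining $\cL_\Sigma^{\rm Gal}$ is precisely $(-1)^e \det(\langle e_v, q_{v'}\rangle_{\Gamma})$, which by Proposition~\ref{proposition:equiv_non-vanish_non-degen} equals $\cL_{\Sigma, \Gamma}^{\rm Gal}$.

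For part (ii), Lemma~\ref{lemma:non-vanishing} provides a $\bZ_p$-extension $K_\Gamma/K$ for which $\cL_{\Sigma, \Gamma}^{\rm Gal} \neq 0$. Invoking part (i) for this choice of $\Gamma$ forces $\cL_\Sigma^{\rm Gal}$ itself to be non-zero, since it is carried to a non-zero element under the canonical map. To deduce non-degeneracy of $\langle\,,\,\rangle_{\Gamma_\infty}$, I will argue by contraposition: the $\cO$-modules $\widetilde{H}^1_{\rm f}(G_{K,S}, T, \Delta_\Sigma)$ and $\widetilde{H}^1_{\rm f}(G_{K,S}, T^*(1), \Delta_\Sigma^*)$ are free of rank $e$ over $\cO$ by Lemma~\ref{lemma:rank of selmer}, so any degeneracy of the pairing on either side would, after an appropriate change of basis over $\Phi = {\rm Frac}(\cO)$, yield a row or column of zeros in the Gram matrix and hence force its determinant to vanish in $\sA^e/\sA^{e+1} \otimes \bQ_p$, contradicting the non-vanishing of $\cL_\Sigma^{\rm Gal}$ just established.

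The only point requiring any genuine care is the functoriality of the formation of the determinant of an $e \times e$ matrix whose entries lie in $\sA/\sA^2$, viewed as an element of $\sA^e/\sA^{e+1}$, with respect to the graded ring homomorphism ${\rm gr}^{\bullet}\Lambda_{\cO}(\Gamma_\infty) \to {\rm gr}^{\bullet}\Lambda_{\cO}(\Gamma)$. This is, however, a formal consequence of the fact that ring homomorphisms commute with the universal polynomial expressions defining the determinant, and so the proof should proceed without obstacle — essentially all the serious work has been done in the preceding lemmas.
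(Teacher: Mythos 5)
Your proposal is correct and is essentially the paper's own proof: part (i) is exactly the compatibility Lemma~\ref{lem:conpatible} combined with the determinant descriptions of $\cL_{\Sigma}^{\rm Gal}$ and (via Proposition~\ref{proposition:equiv_non-vanish_non-degen}) of $\cL_{\Sigma,\Gamma}^{\rm Gal}$, and part (ii) combines (i) with Lemma~\ref{lemma:non-vanishing}. The only (harmless) variation is at the very last step: you deduce non-degeneracy of $\langle\,\,,\,\,\rangle_{\Gamma_\infty}$ directly from the non-vanishing of the Gram determinant by a change-of-basis argument, whereas the paper obtains it by specializing, through Lemma~\ref{lem:conpatible}, to a $\ZZ_p$-extension $K_\Gamma$ along which the height pairing is already known to be non-degenerate; both arguments are immediate and rest on the same inputs.
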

\begin{proof}
The first assertion follows from Lemma~\ref{lem:conpatible} and the second part is immediate from the first combined with Lemma \ref{lemma:non-vanishing} and Lemma \ref{lem:conpatible}.
\end{proof}


\subsection{Katz' $p$-adic $L$-function and non-critical exceptional zeros}
\label{subsec_Katz_padic_L_analytic_non_critical_exceptional_zeros}

We retain our notation from \S\ref{subsection:example-cm} and continue assuming the hypothesis~\ref{item_ord}. 
We recall that 
\begin{itemize}
\item $K(p^{\infty})$ denotes the compositum of all $\bZ_{p}$-extensions of $K$,
\item we have fixed a character $\chi \colon G_{K} \to \overline{\bQ}^{\times}$ which has finite prime-to-$p$ order and set $\cO:= \bZ_p[{\rm im}(\chi)]$ (note that $\cO \subset \bZ_p^{\rm ur}$ since the order of $\chi$ is coprime to $p$),
\item we have put $\Gamma_\infty := \Gal(K(p^{\infty})/K) \ \text{ and } \ \bT_\infty := \cO(1) \otimes \chi^{-1} \otimes_{\cO} \cO[[\Gamma_\infty]].$ 
\end{itemize}

Let us denote by $\frak{c}$ the conductor of the ray class character that one associates to $\chi$ by class field theory. We assume that $(\frak{c},p)=1$.
\begin{definition}
Let $I_K$ denote the set of embeddings $K\hookrightarrow \overline{\QQ}$. Via our fixed embedding $\iota_p$, we can identify $I_K$ with $S_p(K)$. Likewise, via our fixed embedding $\iota_\infty$, we can identify $I_K$ with the set of embeddings $K \hookrightarrow \mathbb{C}$. We define the CM type $\Sigma_\infty$ as the set of embeddings of $K$ into $\mathbb{C}$ which correspond to the $p$-adic CM type $\Sigma$. In more precise terms, $\Sigma_\infty=\{\iota_\infty\circ\sigma: \sigma \in I_K \hbox{ with } \iota_p\circ \sigma \in \Sigma_p\}$ and we have $\Sigma_\infty\sqcup \Sigma_\infty^c=S_\infty(K)$.
\end{definition}

\begin{definition}
Let $K(\frak{c}p^\infty)=\cup_r K(\frak{c}p^r)$ be the union of all ray class extensions of $K$ modulo $\frak{c}p^r$, for $r\in \ZZ^+$. We set $\bG_\frak{c}:=\Gal(K(\frak{c}p^\infty)/K)$ and write $\Delta_{\frak c}$ for the torsion-subgroup of $\bG_{\frak{c}}$. Then the free part $W_K:=\bG_{\frak{c}}/\Delta_{\frak c}$ of $\bG_\frak{c}$ is independent of $\frak{c}$ and by class field theory, it is isomorphic to $\ZZ_p^{[F:\QQ]+1+\delta}$, where $\delta$ is the Leopoldt defect. We note that the canonical map $\bG_\frak{c}\to \Gamma_\infty$ gives rise to a natural isomorphism $W_K\xrightarrow{\sim} \Gamma_\infty$; we identify these two groups via this isomorphism. We have introduced and work with $W_K$ solely for consistency with the notation of Hida and Tilouine in \cite{HT93} that we greatly dwell on throughout \S\ref{subsec_Katz_padic_L_analytic_non_critical_exceptional_zeros}.
\end{definition}

Under the running ordinarity hypothesis \ref{item_ord}, there exists a $p$-adic $L$-function 
\[
L_{p,\chi}^{\rm Katz} \in \bZ_{p}^{\rm ur}[[\Gamma_{\infty}]]
\] 
(see \cite{Katz78} and \cite{HT93}, Theorem~II) which is  characterised by the following interpolative property: For any $A_0$-type Hecke character $\epsilon$ whose associated $p$-adic Galois character $\widehat{\epsilon}$ factors through $\Gamma_\infty$ and whose infinity type is of the form $m\Sigma_\infty+d(1-c) \in\ZZ[I_K]$, where  $d=\sum_{\sigma\in \Sigma_\infty}d_\sigma \sigma\in \ZZ[\Sigma_\infty]$ and $m\in \ZZ$ are such that either $m>0$ and $d\geq 0$ or $m\leq 1$ and $d_\sigma\geq 1-m$, we have (with the conventions set following the statement of  \cite[Theorem~II]{HT93})
\begin{align}
\label{Eqn_Katz_padic_interpolation}
\frac{\widehat \epsilon (L_{p,\Sigma}^\chi)}{\Omega_p^{m\Sigma_\infty+2d}} = 
{[\cO_K^\times:\cO_{F}^\times]}\, W_p(\psi) &\times\,\frac{(-1)^{m\Sigma_\infty}\pi^d\prod_{\sigma \in \Sigma_\infty}\Gamma (m+d_\sigma)}{{\rm Im}(\delta)^d \sqrt{|D_{F}|_{\mathbb{R}}}}\\ 
\notag &\times\, \prod_{\frak{l}\mid \frak{c}}(1-\psi(\frak{l})) \prod_{\frak{p}\in \Sigma} \left(1-\frac{\psi^{-1}(\fp)}{\mathbb{N}\fp}\right)\left(1-\psi(\fp^c)\right))\times 
\frac{L(\psi,0)}{\Omega_\infty^{m\Sigma_\infty+2d}}, 
\end{align}
where 
\begin{itemize}
\item $W_p(\psi)$ is the local $\varepsilon$-factor given as in \cite[(0.10)]{HT93},
\item $\delta$ is chosen to verify \cite{HT93}, (0.9a) and (0.9b),
\item $D_{F}$ is the absolute discriminant of $F$,
\item $\psi$ is the Hecke character $\chi^{-1}\epsilon$,
\item $\Omega_p$ and $\Omega_\infty$ are the $p$-adic and archimedean CM periods (which we shall not define here)\,.
\end{itemize}

\subsubsection{Exceptional zeros}
Recall that we have set
\[
e := \# \{v \in \Sigma^{c} \mid \chi(G_{K_{v}}) = \{1\}\}.
\]
In the interpolation formula \eqref{Eqn_Katz_padic_interpolation}, precisely $e$ of the factors $\{1-\psi_v(\varpi_v): {v\in \Sigma^c}\}$ vanish when $\epsilon=\mathds{1}$. Despite the fact that the trivial character $\mathds{1}$ is not in the range of interpolation for Katz' $p$-adic $L$-function, Perrin-Riou's philosophy leads one to the following question (which we think of as a multivariate variant of the Exceptional Zero Conjecture): 

\begin{question}\label{conj:ex zero}
Let $\sA_{\bZ_p^{\rm ur}}$ denote the augmentation ideal of $\Lambda_{\bZ_p^{\rm ur}}(\Gamma_{\infty})$. 
Then is it true that 
\[
  L_{p,\chi}^{\rm Katz}  \in \sA^{e}_{\bZ_p^{\rm ur}} \setminus \sA^{e+1}_{\bZ_p^{\rm ur}}\,\,?
\]
\end{question} 

We have the following results towards an answer to Question~\ref{conj:ex zero}.

\begin{theorem}
\label{thm_exceptional_zero_conj_OK}
If the $\Sigma$-Leopoldt conjecture for $L/K$ and the Iwasawa Main Conjecture~\ref{conj:iwasawa main conj} for $K$ hold true, then $L_{p,\chi}^{\rm Katz}  \in \sA^{e}_{\bZ_p^{\rm ur}} \setminus \sA^{e+1}_{\bZ_p^{\rm ur}}$. 
\end{theorem}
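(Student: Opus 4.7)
The plan is to deduce this theorem for the analytic Katz $p$-adic $L$-function from its algebraic counterpart, with the Iwasawa Main Conjecture serving as the only bridge between the two. First I would invoke Conjecture~\ref{conj:iwasawa main conj} together with Definition~\ref{defn_algebraic_Katz_padic_L} to obtain a unit $u\in \bZ_p^{\rm ur}[[\Gamma_\infty]]^{\times}$ such that $L_{p,\chi}^{\rm Katz}=u\cdot (L_{p,\chi}^{\rm alg})^{\iota}$ as elements of $\bZ_p^{\rm ur}[[\Gamma_\infty]]$ (up to the inherent ambiguity in the choice of a generator of the Fitting ideal).

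Next I would verify two compatibilities concerning the augmentation-ideal filtration. The involution $\iota\colon \gamma\mapsto \gamma^{-1}$ preserves every power of $\sA$: the identity $\gamma^{-1}-1=-\gamma^{-1}(\gamma-1)$ shows $\iota(\sA)=\sA$ and hence $\iota(\sA^{n})=\sA^{n}$ for all $n$ (in fact $\iota$ acts as multiplication by $(-1)^{n}$ on the graded piece $\sA^{n}/\sA^{n+1}$). Separately, since $\bZ_p^{\rm ur}$ is faithfully flat over $\cO$, we have $\sA_{\bZ_p^{\rm ur}}^{n}=\sA^{n}\otimes_{\cO}\bZ_p^{\rm ur}$ and, for any $x\in \cO[[\Gamma_\infty]]$, the condition $x\in \sA^{n}\setminus \sA^{n+1}$ is equivalent to $x\in \sA_{\bZ_p^{\rm ur}}^{n}\setminus \sA_{\bZ_p^{\rm ur}}^{n+1}$.

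Finally, I would invoke Corollary~\ref{cor_algebraic_multivariate_exceptional_zero}, which under the $\Sigma$-Leopoldt conjecture for $L/K$ gives $L_{p,\chi}^{\rm alg}\in \sA^{e}\setminus \sA^{e+1}$. Combining this with the two compatibilities yields $(L_{p,\chi}^{\rm alg})^{\iota}\in \sA_{\bZ_p^{\rm ur}}^{e}\setminus \sA_{\bZ_p^{\rm ur}}^{e+1}$, and multiplication by the unit $u$ from the first step produces exactly $L_{p,\chi}^{\rm Katz}\in \sA_{\bZ_p^{\rm ur}}^{e}\setminus \sA_{\bZ_p^{\rm ur}}^{e+1}$. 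There is no genuine obstacle in this argument: the entire substantive difficulty is absorbed into Corollary~\ref{cor_algebraic_multivariate_exceptional_zero}, whose upper-bound half (the non-vanishing $L_{p,\chi}^{\rm alg}\notin \sA^{e+1}$) rests ultimately on the construction in Lemma~\ref{lemma:non-vanishing} of a suitable $\bZ_p$-extension of $K$ unramified outside $\Sigma$ along which the group-ring-valued $\cL$-invariant is computable and provably nonzero.
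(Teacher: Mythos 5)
Your proposal is correct and follows essentially the same route as the paper, whose proof of Theorem~\ref{thm_exceptional_zero_conj_OK} is precisely to cite Corollary~\ref{cor_algebraic_multivariate_exceptional_zero} and transfer the statement to $L_{p,\chi}^{\rm Katz}$ via the Main Conjecture~\ref{conj:iwasawa main conj}. You merely make explicit the routine compatibilities (the involution $\iota$ and the faithfully flat base change $\cO\to\bZ_p^{\rm ur}$ preserving the augmentation filtration) that the paper leaves implicit.
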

\begin{proof}
This is an immediate consequence of Corollary~\ref{cor_algebraic_multivariate_exceptional_zero}.
\end{proof}

\begin{remark}
Theorem~\ref{thm_exceptional_zero_conj_OK} was deduced in our previous work \cite{BS19} from the leading to formula (Theorem 1.1 in op. cit.) for the Katz' $p$-adic $L$-function (c.f. \cite{BS19}, Corollary~1.4). Theorem 1.1 in \cite{BS19} requires a number of more stringent hypotheses and the argument we present here to prove Theorem~\ref{thm_exceptional_zero_conj_OK} enables us to by-pass some of these. 
\end{remark}



\section{Rankin--Selberg $p$-adic $L$-functions for nearly ordinary families of Hilbert Modular Forms}
\label{sec_Hida_RS}
We give a brief overview of Hida's theory of (nearly) ordinary families of Hilbert Modular forms and recall Hida's $p$-adic $L$-function attached to the Rankin--Selberg product of two such families. We refer to \cite{Hida88Annals, Hida89ANT, Hida91AIF} for details.

The main objective of this section is to explain the main motivation behind our results on the exceptional zeros of \emph{algebraic} Rankin--Selberg $p$-adic $L$-functions (which we establish in Section~\ref{sec:Factorisation of Selmer groups for symmetric Rankin--Selberg products}, relying on our work in Sections~\ref{sec_Selmer_Groups} and \ref{sec_Linvariants_heights_exceptionalzeros}). We will revisit the analytic objects we consider in the present section in the sequel~\cite{BS3}, where we expand on our outline here.


\subsection{Generalities}\label{subsec_4_1_generalities}  Let $I_F$ denote the set of all embeddings $F$ into $\overline{\QQ}$, which may equally regard as the set of infinite places $S_\infty(F)$ of $F$ via the fixed embedding $\iota_\infty$. We call elements of $\ZZ[I_F]$ weight vectors. For weight vectors $v={ \sum_{\sigma \in I_F} }\, v_\sigma\sigma$ and $ v^\prime={ \sum_{\sigma \in I_F} } \,v^\prime_\sigma\sigma$ we write $v \geq v^\prime$ to mean that $v_\sigma\geq v^\prime_\sigma$ for all $\sigma \in I_F$. Let us put $t_F=\sum_{\sigma\in I_F} \sigma$.

We let $M_F/F$ denote the maximal abelian extension of $F$ unramified outside $p$ and we set $\bfG^\prime:=\Gal(M_F/F)\big{/}{\rm (torsion)}$. We note that $\bfG'=\Gamma_\infty^+$ in the notation of \S\ref{subsec_notation_set_up_1_2}; it has been introduced here solely for consistency with Hida's notation in \cite{Hida88Annals, Hida89ANT, Hida91AIF} that we greatly dwell on throughout \S\ref{sec_Hida_RS}. We put $\bfG:=U_{1,F}\times\bfG^\prime$, where $U_{1,F}=(\cO_F\otimes\ZZ_p)^\times\big{/}{\rm (torsion)}$. Note that $\bfG$ is isomorphic to $\ZZ_p^{[F:\QQ]+1+\delta}$, where $\delta$ is the Leopoldt defect. 

Let $\frak{n}_0$ be an integral ideal of $\cO_F$ and let ${\bf h}^{\rm n.o.}(\frak{n}_0)$ denote Hida's nearly ordinary Hecke algebra of tame level $\frak{n}_0$, defined as in \cite{Hida88Annals}. Hida's fundamental theorem \cite[Theorem 2.4]{Hida88Annals} tells us that the algebra ${\bf h}^{\rm n.o.}(\frak{n}_0)$ is a finitely generated projective module over $\LL^{\rm n.o.}:=\LL_{\cO}(\bfG)$, and as such, it is the product of its localizations at its finitely many maximal ideals.
\begin{definition}
\label{defn_arithmetic_specialization}
An arithmetic specialization $\kappa$ of ${\bf h}^{\rm n.o.}(\frak{n}_0)$ is a ring homomorphism
$$\kappa: {\bf h}^{\rm n.o.}(\frak{n}_0)\lra \overline{\QQ}_p$$
which takes the form $(a,z) \mapsto a^{v(\kappa)}\chi_\cyc^{m(\kappa)}(z)$ in a small enough neighborhood of $1\in \mathbf{G}$. Here, $v(\kappa)\geq 0$ is a  weight vector and $m(\kappa)\in \ZZ$ is such that $n(\kappa):=m(\kappa)t_F-2v(\kappa)\geq 0$. As a result, the character $(\Phi_\kappa,\Phi_\kappa^\prime):=\kappa(z,a)\chi_\cyc^{-m(\kappa)}a^{-v(\kappa)}$ of $\bfG$ has finite order.
\end{definition}

\begin{definition}
For a finite extension ${\bf L}$ of ${\rm Frac}(\LL^{\rm n.o.})$, we let $\LL_{\bf L}$ denote the integral closure of $\LL^{\rm n.o.}$. A primitive Hida family is a $\LL^{\rm n.o.}$-algebra homomorphism $\g:  {\bf h}^{\rm n.o.}(\frak{n}_0) \to \LL_{\bf L}$ such that for every specialization $\kappa: {\bf h}^{\rm n.o.}(\frak{n}_0) \to \overline{\QQ}_p$, the composite map $\kappa\circ \g$ corresponds to an overconvergent nearly ordinary cuspidal Hilbert modular form. Moreover, when $\kappa$ is arithmetic, the composite map $\kappa\circ \g$ corresponds to a classical nearly ordinary cuspidal Hilbert modular newform of weight $(k(\kappa),w(\kappa))=(n(\kappa)+2t_F, t_F-v(\kappa))$, in the notation of \cite[\S0]{Hida91AIF}. 
\end{definition}
We remark that forms with parallel weight correspond to those for which $v(\kappa)=0$, whereas forms with parallel weight one correspond to those for which $v(\kappa)=0=n(\kappa)$. 
\begin{example}[Branches] 
We pick a maximal ideal $\fm^{\rm n.o.}$. Given a minimal ideal $\frak{a}\subset \fm^{\rm n.o.}$, we put $\bI_\fa$ for the normalization of integral domain ${\bf h}^{\rm n.o.}(\frak{n}_0)_{\fm^{\rm n.o.}}/\fa$. It follows from Hida's work that $\bI_\fa$ is finite 
over the ring $\LL^{\rm n.o.}$. 
The universal nearly ordinary branch of tame level $\frak{n}_0$ associated to the pair $\frak{a}\subset \fm^{\rm n.o.}$ is the tautological $\LL^{\rm n.o.}$-algebra homomorphism 
$$\g=\g_a: {\bf h}^{\rm n.o.}(\frak{n}_0) \lra \bI_\fa$$
which has the property that for every every arithmetic specialization $\kappa: {\bf h}^{\rm n.o.}(\frak{n}_0) \to \overline{\QQ}_p$ whose associated prime ideal $\ker(\kappa)$ contains $\fa$, the composite map $\kappa\circ \g$ corresponds to a classical nearly ordinary cuspidal Hilbert modular newform of weight $(k(\kappa),w(\kappa))$. When $\frak{a}$ and $\g$ are given as above, we will often write $\mathbb{I}_{\g}$ in place of $\mathbb{I}_{\frak a}$. If $\kappa$ is an arithmetic specialization such that $\ker(\kappa)$ contains $\frak{a}$, we will call $\kappa$ an arithmetic specialization of $\bI_\frak{a}=\bI_{\g_a}$. 
\end{example}


\subsection{Hida's $p$-adic $L$-function}
\label{subsec_Hida_RS_L}
Let $\f$ and $\g$ be two primitive Hida families of respective tame levels $\frak{n}_\f$ and $\frak{n}_\g$.  In \cite{Hida91AIF}, Hida proved the existence of a $p$-adic $L$-function $\mathscr{D}(\f,\g)\in \frac{1}{H_\f}\mathbb{I}_\f\, \widehat{\otimes}\, \mathbb{I}_\g$, where $H_\f \subset \mathbb{I}_\f$ is Hida's congruence ideal given as in \cite[\S5]{Hida91AIF}. This $p$-adic $L$-function is characterized by an interpolation property, which we only record below in a very crude form and refer readers to \cite[Theorem I]{Hida91AIF} for details: For every arithmetic specialization $\kappa_1$ of $\bI_\f$ and $\kappa_2$ of $\bI_\g$ with the properties that
\begin{equation}
\label{eqn_Hida_weight_restrictions}
n(\kappa_1)-n(\kappa_2) \geq t_F\,\,\,\,,\,\,\,\,\, v(\kappa_2)\geq v(\kappa_1)\,\,\,\,,\,\,\,\,\,  (m(\kappa_1)-m(\kappa_2))t_F\geq n(\kappa_2)-n(\kappa_1)+2t_F\,,
\ee
we have
\begin{equation}
\label{eqn_Hida_interpolation_general}
\mathscr{D}(\f,\g)(\kappa_1,\kappa_2)=\mathscr{E}^{\rm Hida}(\kappa_1,\kappa_2)\frac{\mathscr{D}\left(1+\frac{m(\kappa_2)-	m(\kappa_1)}{2}, \f(\kappa_1),\g(\kappa_2)^{c}\right)}{\pi^{\sum_{\sigma\in I_F} 2v_\sigma(\kappa_2)+n_\sigma(\kappa_2)+3-2v_\sigma(\kappa_1)}\langle \f(\kappa_1)^\circ, \f(\kappa_1)^{\circ} \rangle}
\end{equation}
where
\begin{itemize}
\item $\mathscr{E}^{\rm Hida}(\kappa_1,\kappa_2)$ are the interpolation factors which appear in the statement of   \cite[Theorem I]{Hida91AIF}, which we do not copy here;
\item $\g(\kappa_1)^c$ is the complex conjugate of the form $\g(\kappa_1)$;
\item $\f(\kappa_1)^\circ$ is the primitive form of level $\frak{n}_\f$ associated to $\f(\kappa_1)$;
\item $\langle \f(\kappa_1)^\circ, \f(\kappa_1)^{\circ} \rangle$ is the self Petersson inner product of the primitive form $\f(\kappa_1)^\circ$\,.
\end{itemize}


\subsection{CM forms and factorization of Hida's $p$-adic $L$-function}
\label{sec_example_CM_family}
Recall the totally imaginary quadratic extension $K$ of $F$. We shall revisit Hida's explicit description of the CM component of his nearly ordinary universal Hecke algebra. Our main reference for that portion is \cite[\S\S4.5--4.6]{HT94}. We will then use this description to factor Hida's $p$-adic $L$-function associated to a pair of CM families into a product of Katz $p$-adic $L$-functions.

Let us fix once and for all a ray class character $\psi$ of $K$ of conductor $\frak{c}$. In Hida's terminology, this is a branch character, which determines a CM universal nearly ordinary branch. We assume that $(\frak{c},p)=1$; this amounts to the requirement that the parallel weight one specialization which arises from the character $\psi$ be crystalline at $p$. We fix a sufficiently large finite flat extension $\cO$ of $\ZZ_p$ that contains the values of $\psi$.

\begin{remark}\label{rem_no_structure_onWK}
In this paragraph, we explain how to endow $\LL_\cO(W_K)$ with the structure of a finite $\LL^{\rm n.o.}$-module, by constructing a map $\bfG=\bfG^\prime \times U_{1,F} \hookrightarrow W_K$. By class field theory, $W_K$ fits into an exact sequence 
\[
0\lra \overline{U}_K \lra \prod_{\frak{P}\in S_p(K)}U_{1,K_{\frak{P}}}\lra W_K, 
\]
where $U_{1,K_{\frak{P}}}$ are the $1$-units in $\cO_{K_{\frak{P}}}^\times$ and $\overline{U}_K$ is the closure of $\displaystyle{\cO_K^\times \cap  \prod_{\frak{P}\in S_p(K)}U_{1,K_{\frak{P}}}}$. For any prime $\frak{p}\in S_p(F)$, the choice of $\iota_p$ determines a prime $\frak{P}_\circ \in \Sigma_p$ and we obtain a natural map $U_{1,F}\to W_K$ given as the compositum
$$U_{1,F}=\prod_{\frak{p}\in S_p(K)}U_{1,F_{\frak p}}\stackrel{\sim}{\lra} \prod_{\frak{P}\in \Sigma_p^c}U_{1,K_{\frak{P}}} \hookrightarrow  \prod_{\frak{P}\in S_p(K)}U_{1,K_{\frak{P}}} \lra W_K\,.$$
This map is injective, since the intersection ${\overline{U}_K\cap \left( \prod_{\frak{P}\in \Sigma_p^c}U_{1,K_{\frak{P}}}\times \prod_{\frak{P}\in \Sigma_p}\{1\}\right)}$ is trivial. We also have a map $\bfG^\prime \to W_K$ induced by base extension from $F$ to $K$ of the maximal abelian $p$-ramified extension of $F$. This map is injective since $p$ is odd and $K/F$ is quadratic. We therefore obtain a map $\bfG^\prime \times U_{1,F}\to W_K$, which is injective since $\Gal(K/F)$ acts trivially on the image of $\bfG^\prime$ and it clearly doesn't on the image of $U_{1,F}$. This completes the construction of a natural injection 
$$\LL_\cO(\bfG)=\LL^{\rm n.o.} \lra \LL_{\cO}(W_K)\,.$$
Since $\bfG$ and $W_K$ have the same $\ZZ_p$-rank, it follows that $\LL_{\cO}(W_K)$ acquires naturally the structure of a $\LL^{\rm n.o.}$-module of finite rank.
 \end{remark}
\begin{convention}\label{convention_4_5} For any continuous ring homomorphism $\kappa: \LL_{\cO}(W_K)\to\overline{\QQ}_p$, we denote the composite map $\LL^{\rm n.o.} \hookrightarrow \LL_{\cO}(W_K)\xrightarrow{\kappa} \overline{\QQ}_p$ also by $\kappa$. 
\end{convention}

\begin{definition}
Let $\sC(W_K,\cO)$ denote the $\cO$-module of continuous functions $W_K \to \cO$.
\end{definition}
We note that there is a natural perfect pairing $\sC(W_K,\cO) \times \LL_{\cO}(W_K)\to \cO$, given by $(f,k)\mapsto f(k)$.

\subsubsection{Hecke characters and theta series}
We let $\lambda: \mathbb{A}_K^\times/K^\times \to \bC^\times$ be a Hecke character of type $A_0$, with conductor $\frak{c}_\lambda$ dividing $\frak{c}p^\infty$ and infinity type $\eta \in \ZZ[I_K]$, so that $\lambda(x_\infty)=x_\infty^{-\eta}$ for each $x_\infty \in (K\otimes_{\QQ}\mathbb{R})^\times$. Let $m\geq 0$ be an integer such that $\eta_\sigma+\eta_{\sigma^c}=m$ for every $\sigma\in I_K$. We say that $\lambda$ is $\Sigma$-admissible if $\eta_\sigma>\eta_{\sigma^c}$ for every $\sigma \in \Sigma$.

\begin{definition}
The $p$-adic avatar $\widehat{\lambda}: \mathbb{A}_K^\times/K^\times (K\otimes \bR)^\times \to \overline{\bQ}_p^\times$ of $\lambda$ is given as $\widehat{\lambda}(x)=\lambda(x)x_\infty^{\eta}x_p^{-\eta}$, where $x_p\in (K\otimes_{\QQ} \QQ_p)^\times$ is the component of the idele $x$.
\end{definition}

Via the Artin reciprocity law, we view $\widehat{\lambda}$ as a character of $\bG_{\frak c}=\Delta_{\frak c} \times W_K$. This yields a factorization $\widehat{\lambda}=\widehat{\lambda}_{\frak c} \otimes \widehat{\lambda}_W$. From now on, we will assume that  $\widehat{\lambda}_{\frak c}=\psi$ and to indicate that, we say $\lambda$ is in the $\psi$-branch.

\begin{definition} Let $\lambda$ be a Hecke character of $A_0$-type.
\item[i)] We let $\infty(\lambda)$ denote the infinity type of $\lambda$. 
\item[ii)] We define the unitarization of $\lambda$ on setting $\lambda_0=\lambda|\cdot|_{\mathbb{A}}^{m/2}$, for which we have 
$$\infty(\lambda_0)={\sum_{\sigma\in \Sigma_\infty}\,(\eta_{\sigma}-\eta_{\sigma^c})(\sigma-\sigma^c)}\in \ZZ[I_K]\,.$$
We also set $\lambda^{\rm ad}:=\lambda^c/\lambda$. 
\item[iii)]We define the weight of $\lambda$ by setting $k(\lambda):={\sum_{\sigma\in \Sigma_\infty}\,(\eta_{\sigma}-\eta_{\sigma^c}+1)\sigma_{\vert_F} \in \ZZ[I_F]}$. We set $v(\lambda):=\sum_{\sigma\in \Sigma_\infty^c}\,\eta_{\sigma}\sigma_{\vert_F} \in \ZZ[I_F]$ and $w(\lambda):=t-v(\lambda)$. 
\item[iv)]We finally set $\frak{n}_\psi:=D_{K/F}N_{K/F}(\frak{c})$, $N_\psi:=N_{F/\QQ}(\frak{n}_\psi)$ and $\frak{n}_\lambda:=D_{K/F}N_{K/F}(\frak{c}_\lambda)$. Note that $\frak{n}_\lambda$ divides $N_\psi p^r$ for suitably large $r$.
\end{definition}

In \cite[\S4.5]{HT94}, Hida and Tilouine explain the construction of 
\begin{itemize}
\item the Hilbert Modular Form $\theta(\lambda_0)$ (the theta-series of the unitary Hecke character $\lambda_0$) of weight $(k(\lambda),k(\lambda)/2)$ and level $U_1(\frak{n}_\lambda)$ and $\theta^{\rm cl}(\lambda):=|\det(\cdot)|_{\mathbb{A}}^{\frac{1-m}{2}}\theta(\lambda_0)$ of weight $(k(\lambda),w(\lambda))$;
\item the nearly ordinary theta series $\theta(\lambda)$ associated to $\theta^{\rm cl}(\lambda)$, of weight $(k(\lambda),w(\lambda))$ and level $S(p^r)=U_1(N)\cap U_1(p^r)$,
\item a continuous $A$-linear map
$$\Theta_{\Sigma,\psi}: \sC(W_K,\cO)\lra \overline{\mathscr S}^{\rm n.o.}(S(p^\infty),\cO)\,,$$
where $ \overline{\mathscr S}^{\rm n.o.}(S(p^\infty),\cO)$ is the space overconvergent nearly ordinary Hilbert modular cuspforms given as in \cite[Definition~4.4.1]{HT94}, which has the interpolative property that 
$$\Theta_{\Sigma,\psi}(\widehat{\lambda}_W)=\theta(\lambda)$$
for any Hecke character $\lambda$ of $A_0$-type in the $\psi$-branch.
\end{itemize}

Moreover, in \S4.6 op.cit., Hida and Tilouine use the universal nearly ordinary theta series $\Theta_{\Sigma,\psi}$ to determine a $\Lambda^{\rm n.o.}$-algebra homomorphism 
$$\Theta_{\Sigma,\psi}^*: {\bf h}^{\rm n.o.}(\frak{n}_\psi) \lra \LL_{\cO}(W_K),$$
which we call the $\psi$-branch of the universal CM nearly ordinary Hida family of tame level $\frak{n}_\psi$. We note that $\Theta_{\Sigma,\psi}^*$ factors as 
$${\bf h}^{\rm n.o.}(\frak{n}_\psi) \lra \LL_{\cO}(\bG_{\frak c})=\ZZ[\Delta_{\frak{c}}]\otimes \LL_{\cO}(W_K)\xrightarrow{\psi\otimes{\rm id}}\LL_{\cO}(W_K).$$

\begin{definition}
Let $\kappa,\kappa^\prime: \LL_{\cO}(W_K)\lra\overline{\QQ}_p$ be a pair of continuous ring homomorphisms and let $s\in 1+p\ZZ_p$ be an arbitrary element.
\item[i)] We write $\kappa^{(\psi)}:  \LL_{\cO}(\bG_{\frak c}) \to \overline{\QQ}_p$ for the compositum 
$$ \LL_{\cO}(\bG_{\frak c})=\ZZ[\Delta_{\frak{c}}]\otimes \LL_{\cO}(W_K)\xrightarrow{\psi\otimes \kappa}\overline{\QQ}_p\,.$$
We also denote the induced map ${\bf h}^{\rm n.o.}(\frak{n}_\psi)\lra \overline{\QQ}_p$ also by  $\kappa^{(\psi)}$.
\item[ii)] We put $\widehat{\lambda}_{\kappa,\psi}: \bG_{\frak c}=\Delta_\frak{c}\times W_K\xrightarrow{\,\psi\,\kappa\,} \overline{\QQ}_p^\times$. We say that $\kappa$ is anticyclotomic if for every $x\in W_K$, we have
$$\widehat{\lambda}_\kappa(cxc^{-1})=:\widehat{\lambda}_\kappa^c(x)=\widehat{\lambda}_\kappa^{-1}(x)\,.$$
\item[ii)] We define the cyclotomic twist $\Theta_{\Sigma,\psi}\{s\}$ of the family $\Theta_{\Sigma,\psi}$ as the one that corresponds to the ring homomorphism 
$${\bf h}^{\rm n.o.} \xrightarrow{\Theta_{\Sigma,\psi}^*} \LL_{\cO}(W_K) \xrightarrow{\gamma \mapsto \chi_\cyc^s(\gamma)\gamma} \LL_{\cO}(W_K)\,.$$
\item[iii)] We write $\kappa+s$ both for the continuous ring homomorphism given as the composite map
$$\LL_{\cO}(W_K)\xrightarrow{\gamma \mapsto \chi_\cyc^s(\gamma)\gamma} \LL_{\cO}(W_K)\stackrel{\kappa}{\lra}\overline{\QQ}_p$$
and $\kappa^{(\psi)}+s$ for the compositum $ \LL_{\cO}(\bG_{\frak c})=\ZZ[\Delta_{\frak{c}}]\otimes \LL_{\cO}(W_K)\xrightarrow{\psi\otimes{\rm id}}\LL_{\cO}(W_K)\xrightarrow{\kappa+s}\overline{\QQ}_p$. We denote the induced map ${\bf h}^{\rm n.o.}(\frak{n}_\psi)\to  \LL_{\cO}(\bG_{\frak c})\to \overline{\QQ}_p$ also by  $\kappa^{(\psi)}+s$. More generally, we define $\kappa\pm\kappa^\prime$ as the continuous ring homomorphism
$$\LL_{\cO}(W_K)\xrightarrow{\gamma \mapsto (\kappa^{\prime})^{\pm 1}(\gamma)\gamma} \LL_{\cO}(W_K)\stackrel{\kappa}{\lra}\overline{\QQ}_p$$
and similarly define $(\kappa\pm \kappa^\prime)^{(\psi)}$.
\end{definition}
Observe that $\widehat{\lambda}_{\kappa+s,\psi}=\chi_\cyc^s\widehat{\lambda}_{\kappa,\psi}$ and  $\widehat{\lambda}_{\kappa,\psi}= \widehat{\lambda}_{\kappa-\kappa^\prime,\psi}\, \widehat{\lambda}_{\kappa^\prime,\psi}$.

\begin{definition}
\item[i)] Suppose $\kappa$ is an arithmetic specialization. Let $\lambda_{\kappa,\psi}$ be the unique Hecke character of $A_0$-type whose $p$-adic avatar gives rise to the Galois character $\widehat{\lambda}_{\kappa,\psi}$.
\item[ii)] Let $\lambda_{\Sigma}$ be any Hecke character of conductor dividing $p$ and infinity type $\Sigma_\infty$. Let $\Phi$ be a sufficiently large finite extension of $\QQ_p$ that contains $\iota_p({\rm im}(\lambda_{\Sigma}))$ and let $\cO_\Phi$ denote its ring of integers. Let us put $\cO_\Phi^\times=\mu(\Phi)\times W_\Phi$ where $\mu(\Phi)$ is finite and $W_\Phi$ is free of finite rank over $\ZZ_p$. We let $\langle \,\cdot\, \rangle: \cO_\Phi^\times \to W$ denote the natural projection and consider the character $\langle \lambda_{\Sigma}\rangle$. The character $\langle \lambda_{\Sigma}\rangle$ clearly doesn't depend on the choice of $\lambda_{\Sigma}$. We set 
$\langle \lambda_{\Sigma}^{(\psi)}\rangle:=\psi\langle \lambda_{\Sigma}\rangle$, which is a $p$-adic Hecke character of conductor dividing $\frak{c}p^\infty$ and $p$-adic type $\Sigma_p$.
\end{definition}
\begin{lemma}
Suppose $\kappa$ is an arithmetic specialization and $s$ is any integer. The infinity type of $\lambda_{\kappa,\psi}$ is given by
$$\infty(\lambda_{\kappa,\psi})=\sum_{\sigma\in \Sigma_\infty} (n_\sigma(\kappa)+v_\sigma(\kappa)+1)\sigma+\sum_{\sigma\in \Sigma_\infty^c} v_\sigma(\kappa)\sigma\,.$$
\end{lemma}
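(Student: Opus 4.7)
My plan is to identify the Hecke character $\lambda_{\kappa,\psi}$ via Hida--Tilouine's interpolative construction of the universal CM theta lift and to read off the infinity type by comparing two formulae for the classical weight of the resulting theta series.

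First I would observe that, by the very construction $\widehat{\lambda}_{\kappa,\psi}=\psi\otimes\kappa$ on $\bG_{\frak c}=\Delta_{\frak c}\times W_K$, the character $\lambda_{\kappa,\psi}$ lies in the $\psi$-branch and has free-part avatar $\widehat{(\lambda_{\kappa,\psi})}_W=\kappa$. The interpolation property of the universal theta series $\Theta_{\Sigma,\psi}:\sC(W_K,\cO)\to\overline{\mathscr S}^{\rm n.o.}(S(p^\infty),\cO)$ recalled in \S\ref{sec_example_CM_family} then yields $\Theta_{\Sigma,\psi}(\kappa)=\theta(\lambda_{\kappa,\psi})$. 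Equivalently, the arithmetic specialization $\kappa^{(\psi)}$ of Hida's nearly ordinary Hecke algebra ${\bf h}^{\rm n.o.}(\frak{n}_\psi)$ matches the Hecke eigensystem of the classical nearly ordinary eigenform $\theta(\lambda_{\kappa,\psi})$.

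Next I would invoke the compatibility between Hida-theoretic and classical weights: since $\kappa^{(\psi)}$ is arithmetic of weight $(v(\kappa),m(\kappa))$ in the sense of Definition~\ref{defn_arithmetic_specialization}, the classical form $\theta(\lambda_{\kappa,\psi})$ must have weight $(k(\kappa),w(\kappa))=(n(\kappa)+2t_F,\,t_F-v(\kappa))$. Matching this with the explicit formulae $k(\lambda_{\kappa,\psi})=\sum_{\sigma\in\Sigma_\infty}(\eta_\sigma-\eta_{\sigma^c}+1)\sigma_{\vert_F}$ and $v(\lambda_{\kappa,\psi})=\sum_{\sigma\in\Sigma_\infty^c}\eta_\sigma\sigma_{\vert_F}$ from the definition immediately preceding the lemma statement, and equating coefficients at each $\tau\in I_F$ (with $\tilde\tau\in\Sigma_\infty$ the lift of $\tau$ distinguished by $\iota_p$), one reads off $\eta_{\tilde\tau^c}=v_\tau(\kappa)$ directly from the second identity, while the first gives $\eta_{\tilde\tau}-\eta_{\tilde\tau^c}+1=n_\tau(\kappa)+2$, hence $\eta_{\tilde\tau}=n_\tau(\kappa)+v_\tau(\kappa)+1$. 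Summing over $\Sigma_\infty\sqcup\Sigma_\infty^c$ assembles the asserted formula, once one unwinds the convention that writes $n_\sigma(\kappa),v_\sigma(\kappa)$ for $n_{\sigma\vert_F}(\kappa),v_{\sigma\vert_F}(\kappa)$ when $\sigma\in I_K$.

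The only real subtlety is the identification of the classical weight $(k(\kappa),w(\kappa))$ of $\theta(\lambda_{\kappa,\psi})$ with the Hida-theoretic weight $(v(\kappa),m(\kappa))$ read off from $\kappa\circ\Theta_{\Sigma,\psi}^*$; this is not a separate assertion to be verified here but is rather the content of Hida--Tilouine's construction of the $\psi$-branch $\Theta_{\Sigma,\psi}^*$ as a $\Lambda^{\rm n.o.}$-algebra homomorphism via the embedding $\bfG\hookrightarrow W_K$ of Remark~\ref{rem_no_structure_onWK}, so with that compatibility in hand the lemma follows.
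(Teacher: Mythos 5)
Your proof is correct, and the computation at its core (matching $k(\lambda_{\kappa,\psi})=n(\kappa)+2t_F$ and $v(\lambda_{\kappa,\psi})=v(\kappa)$ componentwise to solve $\eta_{\tilde\tau^c}=v_\tau(\kappa)$ and $\eta_{\tilde\tau}=n_\tau(\kappa)+v_\tau(\kappa)+1$) is exactly the right bookkeeping; note these two identities determine all $2[F:\QQ]$ coefficients of the infinity type, so nothing is left undetermined. The paper's own proof is, however, a one-line appeal to the definition of an $A_0$-type Hecke character: one reads the algebraic part of the avatar $\widehat{\lambda}_{\kappa,\psi}=\psi\,\kappa$ directly from the shape $(a,z)\mapsto a^{v(\kappa)}\chi_\cyc^{m(\kappa)}(z)$ of an arithmetic specialization on a neighbourhood of $1\in\bfG$, transported through the embedding of $U_{1,F}$ into the $\Sigma^c$-components of $\prod_{\frak P\in S_p(K)}U_{1,K_{\frak P}}$ and of $\bfG'$ into $W_K$; since $\bfG$ is open in $W_K$, this pins down the infinity type of $\lambda_{\kappa,\psi}$ without ever mentioning modular forms. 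Your route instead triangulates through the theta lift: you use the interpolation property $\Theta_{\Sigma,\psi}(\kappa)=\theta(\lambda_{\kappa,\psi})$, the Hida--Tilouine assignment of weight $(k(\lambda),w(\lambda))$ to $\theta(\lambda)$, and the fact that the weight of the classical form attached to the arithmetic specialization $\kappa^{(\psi)}$ is $(k(\kappa),w(\kappa))$ because the weight is encoded in the restriction of the eigensystem to $\Lambda^{\rm n.o.}\subset{\bf h}^{\rm n.o.}(\frak{n}_\psi)$. This is not circular (the unknown $\eta$ enters only through the formulae for $k(\lambda),v(\lambda)$), and it has the virtue of making the normalization checks transparent, but it is logically heavier: it uses the arithmeticity of $\kappa$ to guarantee classicality of the theta series and invokes the $\Lambda^{\rm n.o.}$-algebra compatibility of $\Theta^*_{\Sigma,\psi}$, whereas the paper's direct unwinding of the avatar needs none of this and applies verbatim at the level of characters.
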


\begin{proof}
This follows from the definition of an $A_0$-type Hecke character.
\end{proof}


\subsubsection{Factorization} Our objective in this subsection is to recall the factorization result due to Hida and Tilouine~\cite[Theorem 8.1]{HT93}. To that end, we consider Hida's Rankin--Selberg $p$-adic $L$-function
$$\mathscr{D}(\Theta_{\Sigma,\psi},\Theta_{\Sigma,\psi}) \in \LL_{\cO}(W_K)\,\widehat{\otimes} \, \LL_{\cO}(W_K)\,.$$
associated to the self-product of the $\psi$-branch $\Theta_{\Sigma,\psi}$. We identify $W_K$ with $\Gamma_\infty$ via the tautological isomorphism $W_K\to \Gamma_\infty$. The following is a restatement of \cite[Theorem 8.1]{HT93}.

\begin{theorem}[Hida--Tilouine]
\label{thm_HT_factorization}
There exists an Iwasawa function $\frak{F}\in \LL_{\cO}(W_K)\,\widehat{\otimes} \, \LL_{\cO}(W_K)$ with the following properties.
For any pair of ring homomorphisms $\kappa,\kappa^\prime: \LL_{\cO}(W_K) \to \overline{\QQ}_p$ we have
$$\mathscr{D}(\Theta_{\Sigma,\psi},\Theta_{\Sigma,\psi})(\kappa,\kappa^\prime)\cdot   L_{p, \psi^{\rm ad}}^{\rm Katz} \big{|}_{\Gamma_{\rm ac}}(\widehat{\lambda}_{\kappa}^{\rm ad})= \frak{F}(\kappa,\kappa^\prime) \,  L_{p,\mathds{1}}^{\rm Katz} (\widehat{\lambda}_{\kappa^\prime-\kappa})\, L_{p, \psi^{\rm ad}}^{\rm Katz}(\widehat{\lambda}_{\kappa}^{\rm ad}\widehat{\lambda}_{\kappa^\prime-\kappa}^c)$$
and $\frak{F}$ is non-zero when $s=0$ and $\kappa=\kappa^\prime$.
\end{theorem}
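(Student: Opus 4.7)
The plan is to establish the identity by comparing both sides at a Zariski-dense collection of arithmetic pairs $(\kappa,\kappa')$ and defining $\frak{F}$ as the ratio of the explicit archimedean and $p$-adic transcendental factors that arise on each side. One first selects pairs $(\kappa,\kappa')$ whose weights satisfy the inequalities in \eqref{eqn_Hida_weight_restrictions}, so that Hida's interpolation formula \eqref{eqn_Hida_interpolation_general} applies on the left, and such that the characters $\widehat{\lambda}_{\kappa'-\kappa}$ and $\widehat{\lambda}_\kappa^{\rm ad}\widehat{\lambda}_{\kappa'-\kappa}^{c}$ simultaneously lie within the Katz interpolation range of \eqref{Eqn_Katz_padic_interpolation}, so that all three Katz $p$-adic $L$-values on the right admit their explicit interpolation formulae. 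Such pairs form a Zariski-dense subset of $\Spec(\LL_{\cO}(W_K)\widehat{\otimes}\LL_{\cO}(W_K))$, so it is enough to verify the claimed identity at these specializations.

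At such a point, the theta series $\f_\kappa:=\theta(\lambda_{\kappa,\psi})$ and $\g_{\kappa'}:=\theta(\lambda_{\kappa',\psi})$ are CM Hilbert modular newforms induced from Hecke characters of $K$, and the classical Rankin--Selberg $L$-function factors as
\begin{equation*}
L(s,\f_\kappa \times \g_{\kappa'}^{c}) \;=\; L(s,\mu_1\mu_2)\cdot L(s,\mu_1\mu_2^{c}),
\end{equation*}
with $\mu_1=\lambda_{\kappa,\psi}$ and $\mu_2$ the Hecke character whose theta series is $\g_{\kappa'}^{c}$; this is the standard consequence of $\mathrm{Ind}_K^F(\mu_1)\otimes\mathrm{Ind}_K^F(\mu_2)\cong \mathrm{Ind}_K^F(\mu_1\mu_2)\oplus \mathrm{Ind}_K^F(\mu_1\mu_2^c)$ as $G_F$-representations. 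Both Hecke $L$-values on the right are attached to Hecke characters of $K$, and tracking the branch characters (the first has trivial branch because both $\lambda_{\kappa,\psi}$ and $\lambda_{\kappa',\psi}$ carry $\psi$, while the second acquires a branch equivalent to $\psi^{\rm ad}$ after the $c$-symmetry that Katz's $p$-adic $L$-function enjoys) identifies them as classical specializations of $L_{p,\mathds{1}}^{\rm Katz}(\widehat{\lambda}_{\kappa'-\kappa})$ and $L_{p,\psi^{\rm ad}}^{\rm Katz}(\widehat{\lambda}_\kappa^{\rm ad}\widehat{\lambda}_{\kappa'-\kappa}^{c})$ respectively, up to explicit Euler and period factors.

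The crucial step is the period comparison. Hida's interpolation formula \eqref{eqn_Hida_interpolation_general} divides the classical Rankin--Selberg value by the self Petersson inner product $\langle \f_\kappa^{\circ},\f_\kappa^{\circ}\rangle$, whereas Katz's $p$-adic $L$-functions are normalized by powers of the CM periods $\Omega_p$ and $\Omega_\infty$. For a CM newform $\f_\kappa=\theta(\lambda_{\kappa,\psi})$, the adjoint $L$-value identity, realized in a $p$-adic $\kappa$-family via Hida--Tilouine's analysis of the congruence module (cf.\ \cite[\S\S7--8]{HT93}), yields
\begin{equation*}
\langle \f_\kappa^{\circ},\f_\kappa^{\circ}\rangle \;\sim\; L(1,\lambda_{\kappa,\psi}^{\rm ad})\cdot (\textrm{explicit local factors and period ratios}),
\end{equation*}
and the special value $L(1,\lambda_{\kappa,\psi}^{\rm ad})$ is in turn $p$-adically interpolated by the anticyclotomic Katz $L$-function $L_{p,\psi^{\rm ad}}^{\rm Katz}|_{\Gamma_{\rm ac}}$ at the character $\widehat{\lambda}_\kappa^{\rm ad}$. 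This is the origin of the extra Katz factor multiplying $\mathscr{D}(\Theta_{\Sigma,\psi},\Theta_{\Sigma,\psi})(\kappa,\kappa')$ on the left-hand side of the claimed equality. Absorbing the remaining gamma factors, local $\varepsilon$-factors, Euler factors at primes in $\Sigma\cup\Sigma^{c}\cup S_{\rm ram}$, and the comparisons of CM and archimedean periods into a single two-variable Iwasawa function $\frak{F}(\kappa,\kappa')$, the identity holds on the Zariski-dense subset of arithmetic specializations, and so extends uniquely to the desired equality in $\LL_{\cO}(W_K)\widehat{\otimes}\LL_{\cO}(W_K)$.

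For the non-vanishing assertion, the specialization $\frak{F}(\kappa,\kappa)\vert_{s=0}$ reduces, after the cancellations induced by the classical factorization above, to a finite product of non-vanishing Euler factors away from $p$, explicit gamma factors, and unit ratios of CM and archimedean periods, each of which can be inspected directly to be nonzero. The principal obstacle in carrying the program out is the period comparison in the third paragraph: realizing $\langle \f_\kappa^{\circ},\f_\kappa^{\circ}\rangle$ as a $p$-adically interpolating family in $\kappa$ and matching this interpolation precisely with the anticyclotomic Katz $p$-adic $L$-function at $\widehat{\lambda}_\kappa^{\rm ad}$, on the nose rather than up to unspecified error factors, is the technical heart of the Hida--Tilouine integral congruence-module computation.
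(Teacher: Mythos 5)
Your route is genuinely different from the paper's, and the difference matters. The paper does not reprove anything: its proof of Theorem~\ref{thm_HT_factorization} is a pure citation of \cite[Theorem 8.1]{HT93} together with a notational dictionary --- one takes $\lambda=\nu=\Theta_{\Sigma,\psi}$, $P=\kappa$, $Q=\kappa'$, computes $\varepsilon_{P,Q}=\widehat{\lambda}_{\kappa'-\kappa}$, $\varepsilon'_{P,Q}=\psi^{\rm ad}\widehat{\lambda}_\kappa^{\rm ad}\widehat{\lambda}_{\kappa'-\kappa}^c$, $\varepsilon_P=\psi^{\rm ad}\widehat{\lambda}_\kappa^{\rm ad}$, and identifies $\frak{F}$ with the factor $U\Psi_1\Psi_2$ appearing in Hida--Tilouine's statement (in particular the non-vanishing of $\frak{F}$ is part of what is being quoted). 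What you sketch instead is essentially the proof of the Hida--Tilouine theorem itself: the Artin-formalism factorization of the Rankin--Selberg $L$-function of two CM theta series at a dense set of critical specializations, plus the comparison of Hida's Petersson-norm normalization with Katz's CM-period normalization via the adjoint value $L(1,\lambda_{\kappa,\psi}^{\rm ad})$ and the anticyclotomic Katz $p$-adic $L$-function. That is the correct architecture of the argument in \cite{HT93}, so your outline is faithful to where the content really lies; what it buys is an explanation of \emph{why} the extra factor $L_{p,\psi^{\rm ad}}^{\rm Katz}\vert_{\Gamma_{\rm ac}}(\widehat{\lambda}_\kappa^{\rm ad})$ appears on the left, which the paper's translation deliberately leaves as a black box.

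Two caveats if your sketch were to stand on its own. First, you still import the technical heart (the congruence-module/period comparison of \cite[\S\S7--8]{HT93}), so your route does not actually dispense with the citation; you have, in effect, re-derived the quoted theorem from its own main ingredient. Second, two steps are asserted rather than verified: (a) the Zariski-density of pairs $(\kappa,\kappa')$ lying simultaneously in Hida's range \eqref{eqn_Hida_weight_restrictions} and with both twists $\widehat{\lambda}_{\kappa'-\kappa}$ and $\widehat{\lambda}_\kappa^{\rm ad}\widehat{\lambda}_{\kappa'-\kappa}^c$ inside the $\Sigma$-interpolation range \eqref{Eqn_Katz_padic_interpolation} requires an explicit infinity-type computation (this is exactly where the weight inequalities must be matched with $\Sigma$-admissibility, and where one could a priori land in the wrong CM type); and (b) the non-vanishing of $\frak{F}(\kappa,\kappa)$ at $s=0$ cannot be ``inspected directly'' without the explicit formula for $U\Psi_1\Psi_2$, which again comes from \cite{HT93}.
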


\begin{proof}
We explain how to translate \cite[Theorem 8.1]{HT93} to the present statement. In op. cit., we take 
\begin{itemize}
\item $\lambda=\Theta_{\Sigma,\psi}=\nu$,
\item $P=\kappa$ and $Q=\kappa^\prime$
\end{itemize}
so that $\lambda_P=\widehat{\lambda}_{\kappa,\psi}$ and $\nu_Q=\widehat{\lambda}_{\kappa^\prime,\psi}$. As a result, $\varepsilon_{P,Q}=\lambda_P^{-1}\nu_Q=\widehat{\lambda}_{\kappa-\kappa^\prime}$. Similarly, 
$$\varepsilon_{P,Q}^\prime=\lambda_P^{-1}\nu_Q^c=\psi^{\rm ad}\widehat{\lambda}_{\kappa^\prime}^{c}/\widehat{\lambda}_{\kappa}=\psi^{\rm ad}\widehat{\lambda}_\kappa^{\rm ad}\widehat{\lambda}_{\kappa^\prime-\kappa}^{c}$$ 
and $\varepsilon_{P}=\lambda_P^{-1}\lambda_P^c=\psi^{\rm ad}\widehat{\lambda}_\kappa^{\rm ad}$. The term $\frak{F}$ corresponds to the expression $U\Psi_1\Psi_2$ in \cite[Theorem 8.1]{HT93} and the proof follows.
\end{proof}

\begin{remark}
We will be interested in the following particular choices of the pairs $(\kappa,\kappa^\prime)$:
\item[i)] $\kappa^\prime=\kappa+s$ and $\kappa$ is anticyclotomic and $s\in \ZZ$: In this scenario, the factorization of Theorem~\ref{thm_HT_factorization} reduces to 
\begin{equation}
\label{eqn_thm_HT_factorization_1}
\mathscr{D}(\Theta_{\Sigma,\psi},\Theta_{\Sigma,\psi})(\kappa,\kappa+s)\cdot L_{p, \psi^{\rm ad}}^{\rm Katz}\big{|}_{\Gamma_{\rm ac}}(\widehat{\lambda}_{\kappa}^{\rm ad})= \frak{F}(\kappa,\kappa+s) \, L_{p,\mathds{1}}^{\rm Katz}(\chi_\cyc^s)\, L_{p, \psi^{\rm ad}}^{\rm Katz}(\widehat{\lambda}_{\kappa}^{\rm ad}\chi_\cyc^s)\,.
\end{equation}
\item[ii)] $\widehat{\lambda}_{\kappa^\prime-\kappa}=\langle \lambda_\Sigma\rangle^s$, where $s\in \ZZ$. We write in this case $\kappa^\prime=\kappa+s\Sigma$. In this set up, Theorem~\ref{thm_HT_factorization} reads
\begin{equation}
\label{eqn_thm_HT_factorization_2}
\mathscr{D}(\Theta_{\Sigma,\psi},\Theta_{\Sigma,\psi})(\kappa,\kappa+s\Sigma)\cdot L_{p, \psi^{\rm ad}}^{\rm Katz}\big{|}_{\Gamma_{\rm ac}}(\widehat{\lambda}_{\kappa}^{\rm ad})= \frak{F}(\kappa,\kappa+s\Sigma) \, L_{p,\mathds{1}}^{\rm Katz}(\langle \lambda_\Sigma\rangle^s)\, L_{p, \psi^{\rm ad}}^{\rm Katz}(\widehat{\lambda}_{\kappa}^{\rm ad}\langle \lambda_\Sigma^c\rangle^s)\,.
\end{equation}
\end{remark}

Let us set $\Gamma_{\rm ac} := W_K/W_{K}^{+}$\,, where $W_{K}^{+}\subset W_K$ is the $\ZZ_p$-submodule on which $\Gal(K/F)$ acts trivially. We denote by $K_{\rm ac}\subset K(p^{\infty})$ the fixed field of $W_{K}^{+}$; note that $K_{\rm ac}$ is the maximal $\ZZ_p$-power anticyclotomic extension of $K$. We recall that 
\[
\Gamma_\infty^{\circ} := \Gal(K_{\rm cyc}K_{\rm ac}/K). 
\]
Note that 
the natural map $\Gamma_\infty^{\circ} \longrightarrow \Gamma_{\rm cyc} \times \Gamma_{\rm ac}$ is an isomorphism and 
we have a natural surjection $\Gamma_\infty \twoheadrightarrow \Gamma_\infty^{\circ}$, which is an isomorphism if the Leopoldt conjecture for $F$ is valid. 

\begin{definition}
\label{defn_regularized_Hida_RS}
We let ${ \widetilde{\mathscr{D}}^{\rm Hida}} \in \LL_{\cO}(\Gamma_\infty^{\circ})$ denote the unique element that verifies 
$${ \widetilde{\mathscr{D}}^{\rm Hida}}(\kappa,s)=\mathscr{D}(\Theta_{\Sigma,\psi},\Theta_{\Sigma,\psi})(\kappa,\kappa+s)\cdot L_{p, \psi^{\rm ad}}^{\rm Katz}\big{|}_{\Gamma_{\rm ac}}(\widehat{\lambda}_{\kappa}^{\rm ad})$$
for all anticyclotomic $\kappa$ and $s\in 1+p\ZZ_p$. We also put 
\[
\widetilde{\mathscr D}_{{\rm ad}^0\Theta_{\Sigma,\psi}}(\kappa,s):={ \widetilde{\mathscr{D}}^{\rm Hida}}(\kappa,s)/\zeta_p(1-s), 
\]
where $\zeta_p$ is the $p$-adic Dedekind zeta function.
\end{definition}

\begin{remark}
\label{remark_gross_dasgupta}
\item[i)] We let $\omega_F:G_F\to \ZZ_p^\times$ denote the character giving the action of $G_F$ on $p$th roots of unity and let $\epsilon_{K/F}$ denote the non-trivial quadratic character of $\Gal(K/F)$. We let $L_p^{\rm DR}(s,\epsilon_{K/F}\omega)$ denote the Deligne--Ribet $p$-adic $L$-function associated to the totally even character $\epsilon_{K/F}\omega$. One may raise the question whether or not we have 
\begin{equation}
\label{eqn_Gross_CM}
   L_{p,\mathds{1}}^{\rm Katz} (\chi_\cyc^s)\stackrel{?}{=}\zeta_p(1-s)L_p^{\rm DR}(s,\epsilon_{K/F}\omega) \,,
\end{equation}
as a generalization of Gross' factorization result (main theorem of \cite{Gross1980Factorization}). Since the set of critical values for $\zeta_p(1-s)$ and that for $L_p^{\rm DR}(s,\epsilon_{K/F}\omega)$ are disjoint, the expected factorization \eqref{eqn_Gross_CM} cannot be deduced from the Artin formalism for complex $L$-functions.
\item[ii)] The $p$-adic Dedekind zeta function $\zeta_p(1-s)$ has a simple pole at $s=0$, whereas we have ${\rm ord}_{s=0}\,L_p(s,\chi\omega)\geq {\#S_{p}(F)}$ (Gross conjectures an equality). In view of \eqref{eqn_Gross_CM}, one is lead to expect that
\begin{equation}
\label{eqn_conj_trivial_branch_exceptional_zeros}
{\rm ord}_{s=0}\,L_{p,\mathds{1}}^{\rm Katz}(\chi_\cyc^s) \stackrel{?}{=} {\#S_{p}(F)}-1\,.
\end{equation}
\item[iii)] Based on the Artin formalism for complex $L$-functions, one expects that $\widetilde{\mathscr D}_{{\rm ad}^0\Theta_{\Sigma,\psi}}(\kappa,s)$ interpolates the critical values of the relevant adjoint $L$-values. However, since $n(\kappa+s)=n(\kappa)$, there are no critical specializations for ${ \widetilde{\mathscr{D}}^{\rm Hida}}$.  As a result, the expected interpolation properties for $\widetilde{\mathscr D}_{{\rm ad}^0\Theta_{\Sigma,\psi}}(\kappa,s)$ cannot be deduced from the interpolation property \eqref{eqn_Hida_interpolation_general}.
The required interpolation formula can be proved if the factorization \eqref{eqn_Gross_CM} is valid, since then we would have
\begin{equation}
\label{eqn_Dasgupta_CM}
\widetilde{\mathscr D}_{{\rm ad}^0\Theta_{\Sigma,\psi}}(\kappa,s)= \frak{F}(\kappa,\kappa+s) \, L_p^{\rm DR}(s,\epsilon_{K/F}\omega)
\, L_{p, \psi^{\rm ad}}^{\rm Katz}(\widehat{\lambda}_{\kappa}^{\rm ad}\chi_\cyc^s)
\end{equation}
if \eqref{eqn_Gross_CM} holds true. The conjectural identity \eqref{eqn_Dasgupta_CM} is an extension of Dasgupta's factorization result for Rankin--Selberg products of elliptic modular forms to the present context.
\item[iv)] In Theorem~\ref{thm_algebraic_gross_factorization} and Theorem~\ref{theorem_adjoint} below, we shall prove that algebraic $p$-adic $L$-functions (given as the Fitting ideals of appropriately defined extended Selmer groups) verify the expected factorization results \eqref{eqn_Gross_CM} and \eqref{eqn_Dasgupta_CM}.
\end{remark}


\subsubsection{Exceptional zeros of Hida's $p$-adic $L$-function} 

Let us define 
$$e := \# \{\frak{P}\in \Sigma^c: \psi(\frak{P})=\psi(\frak{P}^c)\}\,.$$
In view of \cite[(1)]{BS19} and the identities \eqref{eqn_conj_trivial_branch_exceptional_zeros} and \eqref{eqn_Dasgupta_CM} which one expects due to Artin formalism, one is lead to the following question.

\begin{question}
\label{conj_exceptional_zeros_RankinSelberg_Hida}
Let ${\mathscr{A}^\circ} \subset \LL_{\cO}({\Gamma_\infty^\circ})$ denote the augmentation ideal. Then is it true that
\item[a)]${ \widetilde{\mathscr{D}}^{\rm Hida}}\in {(\mathscr{A}^\circ)}^{e+ \#S_p(F)-1} \setminus {(\mathscr{A}^\circ)}^{e+ \#S_p(F)}$\,,
\item[b)] $\widetilde{\mathscr D}_{{\rm ad}^0\Theta_{\Sigma,\psi}} \in  {(\mathscr{A}^\circ)}^{e+ \#S_p(F)} \setminus {(\mathscr{A}^\circ)}^{e+1+ \#S_p(F)}$\,\,?
\end{question} 

\begin{remark} 
Let $\mathscr{A}^{\rm ac}\subset \LL_{\cO}(\Gamma_{\rm ac})$ denote the augmentation ideal. It is worthwhile to note that the anticyclotomic restriction of Katz' $p$-adic $L$-function $L_{p, \psi^{\rm ad}}^{\rm Katz}\big{|}_{\Gamma_{\rm ac}}$ also has an exceptional zero at $\mathscr{A}^{\rm ac}$. In fact, it follows from \cite[Theorem 1.1]{BS19} that 
$${\rm ord}_{\mathscr{A}^{\rm ac}}L_{p, \psi^{\rm ad}}^{\rm Katz}\big{|}_{\Gamma_{\rm ac}}\geq e$$
with equality if and only if the group-ring-valued $\cL$-invariant  $\cL_{\Sigma,\Gamma}^{\rm Gal}$ (given as in Definition 5.2 in op. cit.) is non-zero form some $\ZZ_p$-extension $K_\Gamma/K$ contained in the anticyclotomic $\ZZ_p^{[F:\QQ]}$-extension $K^{\rm ac}$ of $K$. One is then lead to expect that Hida's Rankin--Selberg $p$-adic $L$-function $\mathscr{D}(\Theta_{\Sigma,\psi},\Theta_{\Sigma,\psi})$ has a {zero} of order {$\#S_{p}(F) -1$ at its weight one specialization corresponding to $\psi^{\rm ad}$}.
\end{remark}

One of our objectives in the present work is to answer Question~\ref{conj_exceptional_zeros_RankinSelberg_Hida} for the algebraic $p$-adic $L$-function (see Corollary~\ref{cor_exceptional_zeros_RankinSelberg_Hida} for our result in this direction). Whenever the Iwasawa Main Conjecture for the CM field $K$ is available, one may in turn obtain an affirmative answer to Question~\ref{conj_exceptional_zeros_RankinSelberg_Hida}; c.f. Remark~\ref{rem_Hsieh_2_Intro}. The remainder of our discussion is dedicated to formulate the algebraic counterpart of Question~\ref{conj_exceptional_zeros_RankinSelberg_Hida} and prove its validity. We will revisit Question~\ref{conj_exceptional_zeros_RankinSelberg_Hida} in the context of elliptic modular forms in the sequel~\cite{BS3}, and study this conjecture with the aid of Beilinson--Flach elements.


\subsection{Galois representations attached to a CM family}\label{sec:galois repn}
The description of Galois representations attached to the branches of CM families of Hilbert Modular Forms is rather simple. In the present article, we will focus on the exceptional zeros of algebraic $p$-adic $L$-functions, which are given in terms of the Selmer groups attached to these Galois representations.

\begin{remark}
When we work with the Rankin--Selberg product of two CM Hida families, we will be interested in the anticyclotomic character $\psi^{\rm ad}:=\psi^c/\psi$. As explained in \cite[Remark 3.1.1]{HT94}, we may and will assume without loss of generality that the conductor of $\psi^{\rm ad}$ is ${\rm LCM}(\frak{c},\frak{c}^c)$.
\end{remark}

We now introduce the the two-dimensional $G_F$-representation attached to the nearly ordinary CM family $\Theta_{\Sigma,\psi}$.

\begin{definition}
\label{defn_bigPsi}
We put $\Psi: G_K\to \LL_{\cO}(W_K)^{\times}$ for the tautological character, given as the compositum 
$$G_K \lra \bG_{\frak{c}}=\Delta_{\frak c}\times W_K \xrightarrow{(\delta,\gamma)\mapsto \psi(\delta)[\gamma]} \LL_{\cO}(W_K)^\times\,,$$
where $[\gamma] \in  \LL_{\cO}(W_K)^\times $ is the image of $\gamma\in W_K$ under the map $W_K\hookrightarrow \LL_{\cO}(W_K)^\times$. We set $M_{\Psi}:={\rm Ind}_{K/F}\Psi$, which is a free $ \LL_{\cO}(W_K)$-module of rank two, endowed with a continuous action of $G_F$. We also consider $M_{\Psi^{-1}}:={\rm Ind}_{K/F} \Psi^{-1}={\rm Hom}_{\LL_\cO(W_K)}(M_{\Psi},\LL_\cO(W_K))$. \end{definition}


\subsubsection{Symmetric Rankin--Selberg products} 
We start this paragraph recalling that we endow any $\LL_{\cO}(W_K)$-module with the structure of a $\LL_{\cO}({\bf G}')$-module via the embedding ${\bf G}' \hookrightarrow W_K$, which we have reviewed in Remark~\ref{rem_no_structure_onWK}.
We put 
\[
M({\rm ad}\Theta_{\Sigma,\psi}):= (M_\Psi \otimes_{\LL_{\cO}(W_K)} M_{\Psi^{-1}}) \otimes_{\LL_{\cO}({\bf G}')} \LL_{\cO}({\bf G}')^{\sharp}, 
\] 
where 
$\LL_{\cO}({\bf G}')^\sharp$ is the free $\LL_{\cO}({\bf G}')$-module of rank one on which $G_F$ acts via the tautological character $G_F\twoheadrightarrow {\bf G}' \hookrightarrow \LL_{\cO}({\bf G}')^\times$. 
 We note that we have a natural isomorphism 
$$\Hom_{\LL_{\cO(W_K)}}(M_\Psi \otimes_{\LL_{\cO}(W_K)} M_{\Psi^{-1}},\LL_{\cO(W_K)})\stackrel{\sim}{\lra} M_\Psi \otimes_{\LL_{\cO}(W_K)} M_{\Psi^{-1}}\,.$$
of Galois modules. This induces a self-duality isomorphism
\begin{equation}
\label{eqn_self_duality_end_full}
\Hom_{\LL_{\cO(W_K)}}(M({\rm ad}\Theta_{\Sigma,\psi}),\LL_{\cO(W_K)})\stackrel{\sim}{\lra}M({\rm ad}\Theta_{\Sigma,\psi})^\iota, 
\end{equation}
where 
\[
M({\rm ad}\Theta_{\Sigma,\psi})^\iota=(M_\Psi \otimes_{\LL_{\cO}(W_K)} M_{\Psi^{-1}}) \otimes_{\LL_{\cO}({\bf G}')} \LL_{\cO}({\bf G}')^{\iota}
\]
and $ \LL_{\cO}({\bf G}')^{\iota}=\Hom_{\LL_{\cO}({\bf G}')}(\LL_{\cO}({\bf G}')^{\sharp},\LL_{\cO}({\bf G}'))$  is the free $\LL_{\cO}({\bf G}')$-module of rank one on which $G_F$ acts via the character $G_F\twoheadrightarrow {\bf G}' \xrightarrow{\gamma\mapsto\gamma^{-1}} {\bf G}' \hookrightarrow \LL_{\cO}({\bf G}')^\times$.

The extended Selmer groups which are expected have a bearing on the regularized non-critical specialization of the Rankin--Selberg $p$-adic $L$-function $\widetilde{\mathscr{D}}^{\rm Hida}(\kappa, s)$ will come attached to the Galois representations $M({\rm ad}\Theta_{\Sigma,\psi})^\iota(1)\,{\otimes_{\LL_{\cO}(W_K)}\,{\LL_{\cO}(\Gamma_\infty^{\circ})}}$ and its Tate-dual $M({\rm ad}\Theta_{\Sigma,\psi})\,{\otimes_{\LL_{\cO}(W_K)}\,{\LL_{\cO}(\Gamma_\infty^{\circ})}}$. 


\subsubsection{Traceless adjoint} We observe the natural identifications 
\begin{align*}
M({\rm ad}\Theta_{\Sigma,\psi}) 
&= (M_\Psi \otimes_{\LL_{\cO}(W_K)} M_{\Psi^{-1}}) \otimes_{\LL_{\cO}({\bf G}')}\LL_{\cO}({\bf G}')^{\sharp} 
\\
&= \left(M_\Psi \otimes_{\LL_{\cO}(W_K)} \Hom_{\LL_{\cO}(W_K)} (M_\Psi,\LL_{\cO}(W_K))\right)\otimes_{\LL_{\cO}({\bf G}')} \LL_{\cO}({\bf G}')^{\sharp}
\\
&=\Hom_{\LL_{\cO}(W_K)} (M_\Psi, M_\Psi)\otimes_{\LL_{\cO}({\bf G}')} \LL_{\cO}({\bf G}')^{\sharp}
\\
&={\rm End}_{\LL_{\cO}(W_K)}(M_\Psi)\otimes_{\LL_{\cO}({\bf G}')} \LL_{\cO}({\bf G}')^{\sharp}\,.
\end{align*}
With these identifications at hand, we define 
\begin{align*}
M({\rm ad}^{0}\Theta_{\Sigma,\psi}) 
&:={\rm End}^0_{\LL_{\cO}(W_K)}(M_\Psi)\otimes_{\LL_{\cO}({\bf G}')} \LL_{\cO}({\bf G}')^{\sharp}
\\
&\subset {\rm End}_{\LL_{\cO}(W_K)}(M_\Psi)\otimes_{\LL_{\cO}({\bf G}')} \LL_{\cO}({\bf G}')^{\sharp}
\end{align*}
as traceless endomorphisms.

The self-duality isomorphism \eqref{eqn_self_duality_end_full} restricts to an isomorphism
\[
\Hom_{\LL_{\cO(W_K)}}(M({\rm ad}^{0}\Theta_{\Sigma,\psi}),\LL_{\cO(W_K)})\stackrel{\sim}{\lra}M({\rm ad}^{0}\Theta_{\Sigma,\psi})^\iota, 
\]
where $M({\rm ad}^{0}\Theta_{\Sigma,\psi})^\iota={\rm End}^0_{\LL_{\cO}(W_K)}(M_\Psi)\otimes_{\LL_{\cO}({\bf G}')} \LL_{\cO}({\bf G}')^{\iota}$. 
The extended Selmer groups which are expected have a bearing on the $p$-adic $L$-function $\widetilde{\mathscr{D}}({\rm ad}^{0}\Theta_{\Sigma,\psi})$ will come attached to the Galois representations $M({\rm ad}^{0}\Theta_{\Sigma,\psi})^\iota(1)$ and its Tate-dual $M({\rm ad}^{0}\Theta_{\Sigma,\psi})$. 

\begin{lemma}
We have the following decompositions of $G_F$-representations.
\item[i)] $M({\rm ad}\Theta_{\Sigma,\psi})= \mathds{1} \otimes
\LL_{\cO}({\bf G}')^{\sharp}\oplus M({\rm ad}^{0}\Theta_{\Sigma,\psi})$. 
\item[ii)] $M({\rm ad}^{0}\Theta_{\Sigma,\psi})=(\ZZ_p(\epsilon_{K/F})\otimes \LL_{\cO}({\bf G}')^{\sharp})\oplus ({\rm Ind}_{K/F} \Psi^{\rm ad}\otimes  \LL_{\cO}({\bf G}')^{\sharp})$.
\end{lemma}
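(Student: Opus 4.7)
For (i), we use the standard splitting of endomorphism rings into scalar and traceless parts. Since $p > 2$, we have $1/2 \in \LL_{\cO}(W_K)$, so the trace map and the inclusion of scalars yield a $G_F$-equivariant direct sum decomposition
\[
{\rm End}_{\LL_{\cO}(W_K)}(M_\Psi) = \LL_{\cO}(W_K)\cdot{\rm id}_{M_\Psi} \,\oplus\, {\rm End}^0_{\LL_{\cO}(W_K)}(M_\Psi),
\]
where the first summand is a copy of the trivial $G_F$-representation $\mathds{1}$ (the identity endomorphism is manifestly Galois-fixed), and the splitting is $A \mapsto ({\rm tr}(A)/2,\,A - ({\rm tr}(A)/2)\cdot {\rm id})$. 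Twisting this decomposition on the $\LL_{\cO}({\bf G}')$-side by $\LL_{\cO}({\bf G}')^{\sharp}$ yields (i).

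For (ii), I would carry out a Mackey-type computation. Working with the tensor-induction realization $M({\rm ad}^0\Theta_{\Sigma,\psi}) \subset M_\Psi \otimes_{\LL_{\cO}(W_K)} M_{\Psi^{-1}} \otimes \LL_{\cO}({\bf G}')^{\sharp} = {\rm Ind}_{K/F}\Psi \otimes {\rm Ind}_{K/F}\Psi^{-1} \otimes \LL_{\cO}({\bf G}')^{\sharp}$, the projection formula gives
\[
{\rm Ind}_{K/F}\Psi \otimes {\rm Ind}_{K/F}\Psi^{-1} \;\cong\; {\rm Ind}_{K/F}\bigl(\Psi \otimes {\rm Res}_{G_K}{\rm Ind}_{K/F}\Psi^{-1}\bigr).
\]
Since ${\rm Res}_{G_K}{\rm Ind}_{K/F}\Psi^{-1} \cong \Psi^{-1} \oplus \Psi^{-c}$ and $\Psi\cdot \Psi^{-c} = (\Psi^{\rm ad})^{-1}$, this simplifies to
\[
{\rm Ind}_{K/F}(\mathds{1}) \,\oplus\, {\rm Ind}_{K/F}(\Psi^{\rm ad})^{-1} \;\cong\; \mathds{1} \,\oplus\, \ZZ_p(\epsilon_{K/F}) \,\oplus\, {\rm Ind}_{K/F}\Psi^{\rm ad},
\]
where for the last identification I use the standard decomposition ${\rm Ind}_{K/F}\mathds{1} \cong \mathds{1}\oplus \ZZ_p(\epsilon_{K/F})$ and the isomorphism ${\rm Ind}_{K/F}\eta \cong {\rm Ind}_{K/F}\eta^c$ that holds for any character $\eta$ of $G_K$ (with $K/F$ quadratic). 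After twisting by $\LL_{\cO}({\bf G}')^{\sharp}$ on the ${\bf G}'$-side, this recovers the three-term decomposition of $M({\rm ad}\Theta_{\Sigma,\psi})$; matching with (i), the summand $\mathds{1}\otimes \LL_{\cO}({\bf G}')^{\sharp}$ corresponds to scalars, and the complementary two summands assemble into $M({\rm ad}^0\Theta_{\Sigma,\psi})$, proving (ii).

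The verification is essentially formal once one checks that the trace projection in (i) matches the Mackey decomposition in (ii), i.e., that the copy of $\mathds{1}\otimes\LL_{\cO}({\bf G}')^{\sharp}$ produced by the Mackey/projection-formula computation is indeed the image of the scalar embedding ${\rm id}_{M_\Psi}$; this follows by tracing through the isomorphism $M_\Psi\otimes M_{\Psi^{-1}} \cong {\rm End}_{\LL_{\cO}(W_K)}(M_\Psi)$ and noting that ${\rm id}_{M_\Psi}$ corresponds to the diagonal element arising from the $\mathds{1}$-summand in the Mackey decomposition. I do not expect any genuine obstacle here: the only mild subtlety is bookkeeping the various twists ($\iota$ vs.\ $\sharp$, and the sign of $\Psi^{\rm ad}$), and ensuring that the induction-plus-twist construction of $M({\rm ad}\Theta_{\Sigma,\psi})$ commutes with the $\Gal(K/F)$-involution underlying the isomorphism ${\rm Ind}_{K/F}(\Psi^{\rm ad})^{-1}\cong {\rm Ind}_{K/F}\Psi^{\rm ad}$.
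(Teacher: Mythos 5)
Your proposal is correct; the paper itself offers no argument (its proof is simply ``Clear''), and your verification --- the trace splitting of ${\rm End}_{\LL_{\cO}(W_K)}(M_\Psi)$ into scalars plus traceless endomorphisms (valid since $p>2$), together with the projection-formula computation giving ${\rm Ind}_{K/F}\mathds{1}\oplus{\rm Ind}_{K/F}\Psi^{\rm ad}$ and the splitting ${\rm Ind}_{K/F}\mathds{1}\cong\mathds{1}\oplus\ZZ_p(\epsilon_{K/F})$ --- is exactly the computation the authors treat as immediate. Indeed, it matches the explicit basis description the paper records later (the spans of $e_1\otimes e_1^*, e_2\otimes e_2^*$ and of $e_1\otimes e_2^*, e_2\otimes e_1^*$), where the identity endomorphism $e_1\otimes e_1^*+e_2\otimes e_2^*$ is visibly the $\mathds{1}$-line and the complementary summands are traceless, confirming your matching of (i) with (ii).
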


\begin{proof}
Clear.
\end{proof}

\begin{remark}\label{remark:character}
We let ${\bbchi}: G_K\to \LL_{\cO}(W_K)^\times$, given as the compositum $ G_K\twoheadrightarrow W_K \hookrightarrow \LL_{\cO}(W_K)^\times$ denote the tautological (universal) character, so that $\Psi=\psi\bbchi$ and $\Psi^{\rm ad}=\psi^{\rm ad}\bbchi^{\rm ad}$. Similarly, we let ${\bbchi}_{\rm ac}: G_K\to \LL_{\cO}(\Gamma_{\rm ac})^\times$ denote the tautological (universal) anticyclotomic character. Let us consider the transfer map 
$${\rm ver}_{\rm ac}: \LL_{\cO}(\Gamma_{\rm ac}) \to \LL_{\cO}(W_K)$$ 
given by $\sigma \mapsto \widetilde{\sigma}^{c-1}$, where $\widetilde{\sigma}\in W_K$ is any element that lifts $\sigma$.

Observe that $\bbchi^{\rm ad}=\bbchi^c/\bbchi$ factors through $\Gamma_{\rm ac}$. It is related to the universal anticyclotomic character ${\bbchi}_{\rm ac}$ via the identity ${\rm ver}_{\rm ac}\circ {\bbchi}_{\rm ac}=\bbchi^{\rm ad}$. Through this relation, we may and will identify the $G_F$-representation ${\rm Ind}_{K/F} \Psi^{\rm ad}{\,\widehat\otimes\,  \LL_{\cO}(\bfG')^{\sharp}}$ with 
\[
{\rm Ind}_{K/F} \psi^{\rm ad}\otimes  \LL_{\cO}(\Gamma_{\rm ac})^{\sharp}\, \widehat{\otimes}\, \LL_{\cO}({\bf G}')^{\sharp}={\rm Ind}_{K/F} \psi^{\rm ad}\otimes \LL_{\cO}(W_K)^{\sharp}\,.
\]
Here (as before), $ \LL_{\cO}(\Gamma_{\rm ac})^{\sharp}$ is the free $ \LL_{\cO}(\Gamma_{\rm ac})$-module of rank one on which $G_K$ acts via ${\bbchi}_{\rm ac}$.
\end{remark}


\section{Factorization of algebraic Rankin--Selberg $p$-adic $L$-functions}\label{sec:Factorisation of Selmer groups for symmetric Rankin--Selberg products}
Our objective in this section is to prove the algebraic counterparts of Hida and Tilouine's factorization result \eqref{eqn_thm_HT_factorization_1} for the Rankin--Selberg $p$-adic $L$-function for nearly ordinary families of CM Hilbert modular forms, as well as the algebraic variants of the (conjectural) factorizations \eqref{eqn_Gross_CM} and \eqref{eqn_Dasgupta_CM}. This is achieved in Theorem~\ref{thm:iwasama main conj rankin-selberg}, Theorem~\ref{thm_algebraic_gross_factorization} and Theorem~\ref{theorem_adjoint} below, respectively.

This will enable us to study the exceptional zero problem for Katz $p$-adic $L$-functions via the exceptional zero problem for Rankin--Selberg products of nearly ordinary CM families; see Corollary~\ref{cor_exceptional_zeros_RankinSelberg_Hida} for a result with this flavour. Note that one may study the exceptional zeros of Rankin--Selberg $p$-adic $L$-functions for ${{\rm GL}_2}_{/\QQ}\times {{\rm GL}_2}_{/\QQ}$ with the aid of Beilinson--Flach elements. This is the subject of the sequel \cite{BS3} (and the main motivation for the results we present in Section~\ref{sec:Factorisation of Selmer groups for symmetric Rankin--Selberg products}); see also \cite{RiveroRotgerJEMS} for a related work.

Throughout Section~\ref{sec:Factorisation of Selmer groups for symmetric Rankin--Selberg products}, the notation we have set in \S\ref{sec:galois repn} are in effect.  In particular, we recall that we have fixed a branch character $\psi$ and its universal deformation $\Psi$ given as in Definition~\ref{defn_bigPsi}. We have set $M_{\Psi} := {\rm Ind}_{K/F}\Psi$, $M_{\Psi^{-1}} := {\rm Ind}_{K/F}\Psi^{-1}$, and 
\[
M({\rm ad}\Theta_{\Sigma,\psi})^{\iota} := (M_{\Psi} \otimes_{\Lambda_{\cO}(W_{K})}M_{\Psi^{-1}} )\otimes_{\Lambda_{\cO}({\bf G}')} \Lambda_{\cO}({\bf G}')^{\iota}. 
\]
Note that both $M_{\Psi}$ and $M_{\Psi^{-1}}$ are free $\LL_{\cO}(W_K)$-modules of rank $2$, whereas the $\LL_{\cO}(W_K)$-module $M({\rm ad}\Theta_{\Sigma,\psi})^{\iota}$ has rank $4$. We also recall that we have identified $W_K$ with $\Gamma_\infty$ via the tautological isomorphism $W_K \to \Gamma_\infty$.

Recall that $c\in \Gal(K/F)$ denotes the (unique) generator of this Galois group. By the definition of $M_{\Psi}$ and $M_{\Psi^{-1}}$, we can pick a basis $\{e_1, e_2\}$ (respectively, a basis $\{e_1^*, e_2^*\}$) of $M_{\Psi}$ (respectively, $M_{\Psi^{-1}}$) which verifies 
\begin{itemize}
    \item $c \cdot e_1 = e_2$ (respectively, $c \cdot e_1^* = e_2^*$), 
    \item $\sigma \cdot e_1 = \Psi(\sigma) e_1$ (respectively, $\sigma \cdot e_1^* = \Psi^{-1}(\sigma) e_1^*$) for any $\sigma \in G_{K}$, 
    \item $\sigma \cdot e_2 = \Psi^c(\sigma) e_2$ (respectively, $\sigma \cdot e_2^* = (\Psi^{-1})^c(\sigma) e_2^*$) for any $\sigma \in G_{K}$. 
\end{itemize}
Note that we have ${\rm span}_{\LL_{\cO}(W_K)}\{ e_1 \otimes e_1^*, e_2 \otimes e_2^* \} = {\rm Ind}_{K/F}\mathds{1}$ and ${\rm span}_{\LL_{\cO}(W_K)}\{  e_1 \otimes e_2^*, e_2 \otimes e_1^* \} = {\rm Ind}_{K/F}\Psi^{\rm ad}$.  
For notational simplicity, we shall put 
\begin{align}
\label{eqn_RS_repn_1} \TT &:= M({\rm ad}\Theta_{\Sigma,\psi})^{\iota}(1), 
\\
\label{eqn_RS_repn_2} \TT_1 &:= ({\rm Ind}_{K/F}\mathds{1} \otimes_{\Lambda_{\cO}({\bf G}')} \Lambda_{\cO}({\bf G}')^{\iota})(1), 
\\
\label{eqn_RS_repn_3} \TT_2 &:= ({\rm Ind}_{K/F}\Psi^{\rm ad} \otimes_{\Lambda_{\cO}({\bf G}')} \Lambda_{\cO}({\bf G}')^{\iota})(1)
\end{align}
so that we have 
\[
\TT = \TT_{1} \oplus \TT_{2}. 
\]

For each $v \in S_{p}(F)$, we denote by $\tilde{v}\in \Sigma$ the prime of $K$ lying above $v$. 
Via the identification $G_{F_{v}} = G_{K_{\tilde{v}}}$ and the choices of the bases $\{e_1,e_2\}$ and $\{e_1^*,e_2^*\}$, one has the decompositions
\begin{align*}
M_{\Psi}|_{G_{F_{v}}} &= \Psi|_{G_{K_{\tilde{v}}}} \oplus \Psi^c|_{G_{K_{\tilde{v}}}},  
\\ 
M_{\Psi^{-1}}|_{G_{F_{v}}} &= \Psi^{-1}|_{G_{K_{\tilde{v}}}} \oplus (\Psi^{-1})^c|_{G_{K_{\tilde{v}}}}.   
\end{align*}
\subsection{Selmer complexes for Rankin--Selberg convolutions}
\label{subsec_5_1_2021_09_10}
Our objective in this section is to define the algebraic Rankin--Selberg $p$-adic $L$-function associated to the self-product $\Theta_{\Sigma,\psi}^{\otimes 2}$ (as the Fitting ideals of appropriately chosen extended Selmer groups, in the sense of Nekov\'a\v{r}) and prove that it factors into a product of algebraic Katz' $p$-adic $L$-functions (see Theorem~\ref{thm:iwasama main conj rankin-selberg}(i) below). We shall use this result later in Section~\ref{subsec_selmer_traceless_adj} to prove the algebraic variant of the conjectural factorization \eqref{eqn_Dasgupta_CM}. We note in addition that the main conjectures for the family of $\Theta_{\Sigma,\psi}^{\otimes 2}$ follows from this factorization result combined with the Hida--Tilouine factorization \eqref{eqn_thm_HT_factorization_1} and the Main Conjectures~\ref{conj:iwasawa main conj} for the CM field $K$. We record this observation as Theorem~\ref{thm:iwasama main conj rankin-selberg}(ii) below.

\begin{definition}
\label{defn_Greenberg_Data}
\item[i)] For each prime $v \in S_p(F)$, 
define the $G_{F_{v}}$-submodule $F^+\TT$ of $\TT$ on setting
\begin{align*}
F^+\TT &:= ({\Psi} \otimes_{\Lambda_{\cO}(W_{K})}M_{\Psi^{-1}} )\otimes_{\Lambda_{\cO}({\bf G}')} \Lambda_{\cO}({\bf G}')^{\iota}(1). 
\end{align*}

\item[ii)] We set $S := S_{\rm ram}(\psi) \cup S_{\rm ram}(K/F) \cup S_p(F)$.

\item[iii)]
We define local conditions $\Delta$ for $C^{\bullet}(G_{F,S}, \TT)$ given by the choices 
\[
U_{v}^{+} := 
\begin{cases}
C^{\bullet}(G_{F_{v}},F^+\TT) & \text{if} \ v \in S_p(F), 
\\
C^{\bullet}_{\rm ur}(G_{F_{v}}, \TT) & \text{if} \ v \not\in S \setminus S_p(F). 
\end{cases}
\]
For any ideal $I$ of $\Lambda_{\cO}(W_{K})$, we  also define $F^+(\TT/I\TT)$ and 
local conditions $\Delta$ for $C^{\bullet}(G_{F,S}, \TT/I\TT)$ in an identical fashion. 
\end{definition}

We put 
\begin{align*}
\TT_{K,1} &:= (\mathds{1} \otimes_{\Lambda_{\cO}({\bf G}')} \Lambda_{\cO}({\bf G}')^{\iota})(1), 
\\
\TT_{K,2} &:=  (\Psi^{\rm ad} \otimes_{\Lambda_{\cO}({\bf G}')} \Lambda_{\cO}({\bf G}')^{\iota})(1).  
\end{align*}
Here, we denote by $\mathds{1}$ the free $\Lambda_{\cO}(W_{K})$-module of rank $1$ with  trivial Galois action. We note by Shapiro's Lemma that for any ideal $I$ of $\Lambda_{\cO}(W_{K})$, the decomposition 
$\TT/I\TT= \TT_{1}/I\TT_{1} \oplus \TT_{2}/I\TT_{2}$ induces a canonical identification 
\begin{align}\label{decomposition galois cohomology global}
{\bf R}\Gamma(G_{F,S}, \TT/I\TT) 
= {\bf R}\Gamma(G_{K, S_K}, \TT_{K,1}/I\TT_{K,1}) \oplus {\bf R}\Gamma(G_{K, S_K}, \TT_{K,2}/I\TT_{K,2}), 
\end{align}
where $S_K$ denotes the set of primes of $K$ above $S$.

\begin{lemma}\label{lem:decomposition local condition at p}
Let $v$ be a prime of $F$ above $p$ and fix an identification $G_{K_{\tilde{v}}} = G_{F_{v}}$. 
For any ideal $I$ of $\Lambda_{\cO}(W_{K})$, the decomposition $\TT/I\TT= \TT_{1}/I\TT_{1} \oplus \TT_{2}/I\TT_{2}$ induces an identification 
\begin{align*}
{\bf R}\Gamma(G_{F_{v}}, F^+(\TT/I\TT)) = {\bf R}\Gamma(G_{K_{\tilde{v}}}, \TT_{K, 1}/I\TT_{K, 1}) \oplus {\bf R}\Gamma(G_{K_{\tilde{v}^{c}}}, \TT_{K, 2}/I\TT_{K, 2}). 
\end{align*}
\end{lemma}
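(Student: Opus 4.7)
The plan is to carry out a direct Mackey/Shapiro decomposition of $F^+\TT$ at each prime $v\mid p$ of $F$, keeping careful track of which summand of $\TT=\TT_{1}\oplus \TT_{2}$ each character-line falls into. Using the bases $\{e_1,e_2\}$ and $\{e_1^{*},e_2^{*}\}$ fixed just after Definition~\ref{defn_Greenberg_Data}, I would first write down the four natural basis vectors $e_i\otimes e_j^{*}$ of $M_\Psi\otimes_{\Lambda_\cO(W_K)} M_{\Psi^{-1}}$. Unwinding the definition of $F^+\TT = (\Psi\otimes_{\Lambda_\cO(W_K)} M_{\Psi^{-1}})\otimes_{\Lambda_\cO({\bf G}')}\Lambda_\cO({\bf G}')^{\iota}(1)$, one sees that $F^+\TT$ is the $\Lambda_\cO({\bf G}')^{\iota}(1)$-twist of the $\Lambda_\cO(W_K)$-span of $e_1\otimes e_1^{*}$ and $e_1\otimes e_2^{*}$. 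The first of these lies in $\TT_{1}$ (the ${\rm Ind}\mathds{1}$-part), the second in $\TT_{2}$ (the ${\rm Ind}\Psi^{\rm ad}$-part), so that
\[
F^+\TT \;=\; (F^+\TT\cap\TT_{1})\;\oplus\;(F^+\TT\cap\TT_{2}),
\]
and the same decomposition persists after base change modulo any ideal $I$ of $\Lambda_\cO(W_K)$. It suffices, therefore, to identify each summand as a $G_{F_v}$-module.

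Next I would invoke the ordinarity hypothesis \ref{item_ord} to deduce that every prime $v\mid p$ of $F$ splits as $\tilde v\cdot \tilde v^c$ in $K$, so that Mackey's formula yields a natural identification
\[
({\rm Ind}_{K/F}\chi)\big|_{G_{F_v}} \;\cong\; \chi\big|_{G_{K_{\tilde v}}}\oplus \chi\big|_{G_{K_{\tilde v^c}}}
\]
for any character $\chi\colon G_K\to \Lambda_\cO(W_K)^{\times}$. Applied to $\chi=\mathds{1}$, this realizes $F^+\TT\cap\TT_{1}$ (which is generated by $e_1\otimes e_1^{*}$, and on which $G_{K_{\tilde v}}=G_{F_v}$ acts via $\Psi\cdot\Psi^{-1}=\mathds{1}$) as the summand $\TT_{K,1}|_{G_{K_{\tilde v}}}$ corresponding to $\tilde v\in\Sigma$, because $e_1$ is precisely the $\Psi|_{G_{K_{\tilde v}}}$-eigenvector inside $M_\Psi$.

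The subtle point is to check that $F^+\TT\cap\TT_{2}$ matches the $\tilde v^c$-summand $\TT_{K,2}|_{G_{K_{\tilde v^c}}}$ rather than the $\tilde v$-summand. A direct computation in the chosen basis shows that $G_{K_{\tilde v}}=G_{F_v}$ acts on $e_1\otimes e_2^{*}$ via $\Psi\cdot (\Psi^{-1})^c=\Psi/\Psi^{c}=(\Psi^{\rm ad})^{-1}$. Using that $G_{K_{\tilde v^c}}=c\,G_{K_{\tilde v}}\,c^{-1}$ and that under the two identifications $G_{K_{\tilde v}}=G_{F_v}=G_{K_{\tilde v^c}}$ an element $\sigma_1\in G_{K_{\tilde v}}$ corresponds to $\sigma_2=c\sigma_1 c^{-1}\in G_{K_{\tilde v^c}}$, one verifies the character identity
\[
\Psi^{\rm ad}\big|_{G_{K_{\tilde v^c}}} \;=\; (\Psi^{\rm ad})^c\big|_{G_{K_{\tilde v}}} \;=\; (\Psi^{c}/\Psi)^{c}\big|_{G_{K_{\tilde v}}} \;=\; (\Psi^{\rm ad})^{-1}\big|_{G_{K_{\tilde v}}},
\]
so that $F^+\TT\cap\TT_{2}\cong \TT_{K,2}|_{G_{K_{\tilde v^c}}}$ as $G_{F_v}$-modules. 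Combined with the ${\rm Ind}\mathds{1}$-analysis above, this identifies $F^+\TT$ summand-by-summand with the right-hand side of the claim.

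Finally, all of the isomorphisms exhibited above are functorial in $\TT$ and commute with the formation of continuous cochains and with the base change $\TT\rightsquigarrow \TT/I\TT$; passing to ${\bf R}\Gamma(G_{F_v},-)$ therefore yields the required decomposition in $D({}_{\Lambda_\cO(W_K)/I}{\rm Mod})$. The only real obstacle is the Mackey/character bookkeeping in the preceding paragraph, i.e.\ making sure the $(-)^{c}$-twist that distinguishes $\tilde v$ from $\tilde v^c$ is absorbed correctly; once the bases are pinned down this is mechanical.
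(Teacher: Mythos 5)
Your proof is correct and follows essentially the same route as the paper: restrict $F^+\TT=(\Psi\otimes M_{\Psi^{-1}})\otimes_{\Lambda_{\cO}({\bf G}')}\Lambda_{\cO}({\bf G}')^{\iota}(1)$ to $G_{F_v}$, use the splitting of $v$ in $K$ to break it into the line on which $G_{K_{\tilde v}}$ acts trivially and the line on which it acts by $\Psi\cdot(\Psi^{-1})^c=(\Psi^{\rm ad})^{c}=(\Psi^{\rm ad})^{-1}$, and reinterpret the latter as $\Psi^{\rm ad}\vert_{G_{K_{\tilde v^{c}}}}$ via conjugation by $c$ before passing to cochains. Your explicit basis bookkeeping is just a spelled-out version of the paper's one-line character computation, and the reduction to a general ideal $I$ is handled identically.
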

\begin{proof}
The argument in the general case being identical, we give a proof only when $I = 0$. Since 
\begin{align*}
(\Psi \otimes M_{\Psi^{-1}})|_{G_{F_{v}}} 
&= \Psi|_{G_{K_{\tilde{v}}}} \otimes (\Psi^{-1}|_{G_{K_{\tilde{v}}}} \oplus (\Psi^{-1})^c|_{G_{K_{\tilde{v}}}}) 
\\
&= \mathds{1}|_{G_{K_{\tilde{v}}}} \oplus (\Psi^{\rm ad})^{c}|_{G_{K_{\tilde{v}}}} 
\\
&=  \mathds{1}|_{G_{K_{\tilde{v}}}} \oplus \Psi^{\rm ad}|_{G_{K_{\tilde{v}^{c}}}}\,,
\end{align*}
the identification  $G_{K_{\tilde{v}}} = G_{F_{v}}$ induces a decomposition $F^+\TT|_{G_{F_{v}}} = \TT_{K,1}|_{G_{K_{\tilde{v}}}} \oplus \TT_{K, 2}|_{G_{K_{\tilde{v}^{c}}}}$. 
The proof of our lemma follows. 
\end{proof}

\begin{definition}
For any ideal $I$ of $\Lambda_{\cO}(W_{K})$, we define the local conditions $\Delta_{1}$ on the complex $C^{\bullet}(G_{K, S_{K}}, \TT_{K,1}/I\TT_{K, 1})$ by the choices
\[
U_{v}^{+} := 
\begin{cases}
C^{\bullet}(G_{K_{v}}, \TT_{K,1}/I\TT_{K, 1}) & \text{if} \ v \in \Sigma,
\\
0 & \text{if} \ v \in \Sigma^{c},
\\
C^{\bullet}_{\rm ur}(G_{F_{v}}, \TT_{K,1}/I\TT_{K, 1}) & \text{if} \ v \not\in S_{K} \setminus S_p(K). 
\end{cases}
\]
\end{definition}

Recall from Remark~\ref{remark:character} that the splitting of $W_K$
$$W_K=W_K^{c=1} \oplus W_K^{c=-1}$$
into eigenspaces for the $\langle c\rangle=\Gal(K/F)$-action induces an injective homomorphism of profinite groups ${\rm ver}_{\rm ac}: \Gamma_{\rm ac}\to W_K$, identifying $\Gamma_{\rm ac}$ with $W_K^{c=-1}$. Similarly, we have an injection 
\begin{equation}
\label{eqn_cyc_transfer_map}
{\rm ver}_{\rm +} \colon  {\bf G}' \to W_K
\end{equation} 
that identifies ${\bf G}'$ with $W_K^{c=1}$ and is given by $\gamma\mapsto \widetilde{\gamma}^{c+1}$ for any lift $\widetilde{\gamma}$ of $\gamma$.

\begin{lemma}\label{lemma:scalar-extension}
Suppose either that $G = G_{K, S_{K}}$ or else $G_{K_{v}}$ for some prime $v$ of $K$.  
We then have a canonical isomorphism 
\[
{\bf R}\Gamma(G, \Lambda_{\cO}({\bf G}')^{\iota}(1)) \otimes^{\bL}_{{\rm ver}_+} \Lambda_{\cO}(W_{K}) \stackrel{\sim}{\longrightarrow} {\bf R}\Gamma(G, \bT_{K, 1}). 
\]
\end{lemma}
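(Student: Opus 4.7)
The strategy is to deduce this isomorphism from the flat base-change formula for continuous cochain cohomology, in the spirit of the base change \eqref{eq:scalar-extension} already exploited in \S\ref{subsection:example-cm}, now applied along ${\rm ver}_+$.

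First, I would identify $\bT_{K,1}$ with the scalar extension of $\Lambda_{\cO}({\bf G}')^\iota(1)$ along ${\rm ver}_+$. Since $\mathds{1}$ is by definition the free $\Lambda_{\cO}(W_K)$-module of rank one with trivial $G$-action, the universal property of the tensor product supplies a canonical isomorphism of $\Lambda_{\cO}(W_K)[G]$-modules
$$\bT_{K,1}=\mathds{1}\otimes_{\Lambda_{\cO}({\bf G}')}\Lambda_{\cO}({\bf G}')^\iota(1)\stackrel{\sim}{\longrightarrow}\Lambda_{\cO}({\bf G}')^\iota(1)\otimes_{\Lambda_{\cO}({\bf G}'),{\rm ver}_+}\Lambda_{\cO}(W_K),$$
with the $G$-action on either side factoring through the common composite
$$G\hookrightarrow G_F\twoheadrightarrow{\bf G}'\xrightarrow{\gamma\mapsto\gamma^{-1}}\Lambda_{\cO}({\bf G}')^\times\xrightarrow{{\rm ver}_+}\Lambda_{\cO}(W_K)^\times,$$
twisted by $\chi_\cyc$.

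Second, I would verify that ${\rm ver}_+\colon\Lambda_{\cO}({\bf G}')\to\Lambda_{\cO}(W_K)$ is flat. Since $W_K$ is $\ZZ_p$-free of finite rank, the saturation $H\subseteq W_K$ of the closed submodule ${\rm ver}_+({\bf G}')$ is $\ZZ_p$-free of the same rank as ${\bf G}'$, and the quotient $W_K/H$ is $\ZZ_p$-free; the short exact sequence $0\to H\to W_K\to W_K/H\to 0$ therefore splits and yields an identification
$$\Lambda_{\cO}(W_K)\cong\Lambda_{\cO}(H)\,\widehat{\otimes}_\cO\,\Lambda_{\cO}(W_K/H)$$
as topological $\Lambda_{\cO}({\bf G}')$-algebras. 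The inclusion $\Lambda_{\cO}({\bf G}')\hookrightarrow\Lambda_{\cO}(H)$ is a finite extension of regular local rings of the same Krull dimension, hence flat by Miracle Flatness; since $\Lambda_{\cO}(W_K/H)\cong\cO[[U_1,\ldots,U_d]]$ is a power series ring over $\cO$, the composite map $\Lambda_{\cO}({\bf G}')\to\Lambda_{\cO}(W_K)$ is flat.

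Granted flatness, the base-change theorem for continuous cochain complexes (\cite[Proposition~9.7.3(i)]{Nek}, \cite[Theorem~1.12]{Jonathan13}) applied to the free $\Lambda_{\cO}({\bf G}')[G]$-module $\Lambda_{\cO}({\bf G}')^\iota(1)$ gives, for each $G\in\{G_{K,S_K},G_{K_v}\}$, a natural isomorphism
$${\bf R}\Gamma(G,\Lambda_{\cO}({\bf G}')^\iota(1))\otimes^{\bL}_{\Lambda_{\cO}({\bf G}'),{\rm ver}_+}\Lambda_{\cO}(W_K)\stackrel{\sim}{\longrightarrow}{\bf R}\Gamma\bigl(G,\Lambda_{\cO}({\bf G}')^\iota(1)\otimes_{\Lambda_{\cO}({\bf G}'),{\rm ver}_+}\Lambda_{\cO}(W_K)\bigr),$$
and the identification of the first paragraph rewrites the right-hand side as ${\bf R}\Gamma(G,\bT_{K,1})$, delivering the asserted isomorphism. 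The main piece of real work is the flatness verification, which rests on the $\ZZ_p$-module structure of $W_K$ and the functoriality of $\Lambda_{\cO}(-)$; both the Galois-equivariance bookkeeping in the first paragraph and the cohomological base-change in the final paragraph are formal given the respective inputs.
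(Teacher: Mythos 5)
Your overall strategy is sound and your two supporting verifications are correct: the identification $\bT_{K,1}\cong \Lambda_{\cO}({\bf G}')^{\iota}(1)\otimes_{\Lambda_{\cO}({\bf G}'),\,{\rm ver}_+}\Lambda_{\cO}(W_K)$ is exactly the content of the definitions, and your flatness argument for ${\rm ver}_+$ works (indeed $\Lambda_{\cO}(H)$ is even finite free over $\Lambda_{\cO}({\bf G}')$, and $\Lambda_{\cO}(W_K)$ is a power series ring over $\Lambda_{\cO}(H)$). But the route differs from the paper's at the decisive step, and the difference matters. The paper does not use flatness at all: it proves the base-change isomorphism directly, by choosing a bounded complex $F^{\bullet}$ of finite free $\Lambda_{\cO}({\bf G}')$-modules quasi-isomorphic to $C^{\bullet}(G,\Lambda_{\cO}({\bf G}')^{\iota}(1))$, checking via reduction modulo the maximal ideal that $F^{\bullet}\otimes_{\Lambda_{\cO}({\bf G}')}\Lambda_{\cO}(W_K)/\fm^{n}$ still computes cohomology with the (discrete) coefficients $\Lambda_{\cO}({\bf G}')^{\iota}(1)\otimes\Lambda_{\cO}(W_K)/\fm^{n}$, and then passing to the inverse limit over $n$. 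You instead delegate this step to \cite[Theorem~1.12]{Jonathan13} and \cite[Proposition~9.7.3(i)]{Nek}; that is where all the real work sits, because continuous cochains do not commute with an infinite base change at the level of complexes, so some limit argument is genuinely required. Those references are invoked in the paper for base change along quotients $R\to R/I$ and along the coefficient extension in \eqref{eq:scalar-extension}, and it is not clear they literally cover an arbitrary non-finite morphism of Iwasawa algebras such as ${\rm ver}_+$ --- which is presumably why the authors wrote out the limit argument for this lemma rather than citing. So your proof is acceptable provided you either replace the citation by a general base-change theorem for morphisms of adic coefficient rings (e.g.\ Fukaya--Kato style) or reproduce the reduction-mod-$\fm^{n}$-and-limit argument; once that is done, your flatness verification becomes a pleasant but unnecessary bonus (it only upgrades $\otimes^{\bL}$ to $\otimes$), since the paper's argument shows the isomorphism holds with no flatness hypothesis.
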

\begin{proof}
For  notational simplicity, let us put (only in this proof) $A :=  \Lambda_{\cO}({\bf G}')$ and $B :=  \Lambda_{\cO}(W_{K})$. We let $\fm_{X}$ denote the maximal ideal of $X \in \{A, B\}$. Recall that $A^\iota$ is the free $A$-module of rank one on which $G_{K,S_K}$ acts via the contragredient of the tautological character $G_{K,S_K}\to \LL_{\cO}({\bf G}')^\times$.

Since $C^{\bullet}(G, A^{\iota}(1))$ is a perfect complex, there is a bounded complex $F^{\bullet}$ of finitely generated free $A$-modules with a quasi-isomorphism $F^{\bullet}  \longrightarrow C^{\bullet}(G, A^{\iota}(1))$. 
Let us put 
\[
D^{\bullet} := {\rm Cone}\left(F^{\bullet}  \longrightarrow C^{\bullet}(G, A^{\iota}(1))\right). 
\]
Note that $D^{\bullet}$ is acyclic. 
Since the canonical morphism 
\[
{\bf R}\Gamma(G, A^{\iota}(1)) \otimes^{\bL}_{A} A/\fm_{A} \longrightarrow {\bf R}\Gamma(G, A^{\iota}(1) \otimes_{A} A/\fm_{A})
\] 
is an isomorphism, the composite map 
\[
F^{\bullet} \otimes_{A} A/\fm_{A} \longrightarrow C^{\bullet}(G, A^{\iota}(1)) \otimes_{A} A/\fm_{A}  
\cong C^{\bullet}(G, A^{\iota}(1) \otimes_{A} A/\fm_{A}) 
\]
is a quasi-isomorphism. We therefore conclude that the complex $D^{\bullet} \otimes_{A} A/\fm_{A}$ is acyclic. 
This fact shows for any integer $n>0$ that the canonical map 
\[
F^{\bullet} \otimes_{A} B/\fm^{n}_{B} \longrightarrow C^{\bullet}(G, A^{\iota}(1)) \otimes_{A} B/\fm^{n}_{B}
\]
is a quasi-isomorphism as well.  
Furthermore, since $B/\fm^{n}_{B}$ is discrete, we have a natural isomorphism 
\[
C^{\bullet}(G, A^{\iota}(1)) \otimes_{A} B/\fm_{B}^{n} \stackrel{\sim}{\longrightarrow} C^{\bullet}(G, A^{\iota}(1) \otimes_{A} B/\fm_{B}^{n}). 
\]
We conclude that the morphism $F^{\bullet} \otimes_{A} B/\fm_{B}^{n} \longrightarrow C^{\bullet}(G, A^{\iota}(1) \otimes_{A} B/\fm_{B}^{n})$ is a quasi-isomorphism, and on passing to limit, we obtain a quasi-isomorphism 
\[
F^{\bullet} \otimes_{A} B \cong \varprojlim_{n>0}F^{\bullet} \otimes_{A} B/\fm_{B}^{n}  
\longrightarrow  \varprojlim_{n>0}C^{\bullet}(G, A^{\iota}(1) \otimes_{A} B/\fm_{B}^{n})   \cong C^{\bullet}(G, A^{\iota}(1) \otimes_{A} B). 
\]
This quasi-isomorphism descends to the required isomorphism in the derived category.
\end{proof}

Using Lemma~\ref{lemma:scalar-extension}, we obtain a canonical identification 
\begin{align}
\begin{split}\label{eq:selmer}
\widetilde{{\bf R}\Gamma}_{\rm f}(G_{K, S_K}, \TT_{K, 1}, \Delta_{1}) 
&=  \widetilde{{\bf R}\Gamma}_{\rm f}(G_{K, S_K},  \Lambda_{\cO}({\bf G}')^{\iota}(1), \Delta_{\Sigma^{c}}) \otimes^{\bL}_{\Lambda_{\cO}({\bf G}')} \Lambda_{\cO}(W_{K}) 
\\
&\xrightarrow{\sim} \widetilde{{\bf R}\Gamma}_{\rm f}(G_{K, S_K},  \Lambda_{\cO}({\bf G}')^{\iota}(1), \Delta_{\Sigma}) \otimes^{\bL}_{\Lambda_{\cO}({\bf G}')} \Lambda_{\cO}(W_{K}), 
\end{split}
\end{align}
where the  isomorphism on the second row is induced by the generator $c \in \Gal({K}/F)$.

The generator $c$ acts on $\Gamma_{\rm ac}$ via $\gamma \mapsto \gamma^{-1}$. Therefore, it induces an isomorphism between the $G_{K}$-representations $\Lambda_{\cO}(\Gamma_{\rm ac})^{\#}$ and  $\Lambda_{\cO}(\Gamma_{\rm ac})^{\iota}$, where $\Lambda_{\cO}(\Gamma_{\rm ac})^{\#}$ is the free $\Lambda_{\cO}(\Gamma_{\rm ac})$-module of rank one on which $G_K$ acts via the tautological character $G_K \to \Lambda_{\cO}(\Gamma_{\rm ac})^\times$, whereas $\Lambda_{\cO}(\Gamma_{\rm ac})^{\iota}$ is its contragredient. In particular, as explained in Remark~\ref{remark:character}, we have an identification  
\[
\TT_{K,2} = (\cO(\psi^{\rm ad}) \otimes_{\cO} \Lambda_{\cO}(W_{K})^{\iota})(1), 
\]
and we can define via this identification the local conditions $\Delta_{\Sigma}$ for $C^{\bullet}(G_{K, S_{K}}, \TT_{K, 2})$ to be the one that corresponds to the choice of local conditions given in Definition~\ref{def:extended selmer} (with $\chi^{-1}=\psi^{\rm ad}$).

\begin{theorem}\label{thm:decomposition selmer complexes}
For any ideal $I$ of $\Lambda_{\cO}(W_{K})$, 
one has a natural isomorphism 
\[
\widetilde{{\bf R}\Gamma}_{\rm f}(G_{F,S}, \TT/I\TT, \Delta) \xrightarrow{\sim} 
\widetilde{{\bf R}\Gamma}_{\rm f}(G_{K, S_K}, \TT_{K,1}/I\TT, \Delta_{1}) \oplus \widetilde{{\bf R}\Gamma}_{\rm f}(G_{K, S_K}, \TT_{K, 2}/I\TT, \Delta_{\Sigma}). 
\]
\end{theorem}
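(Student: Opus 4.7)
The plan is to verify that each ingredient of the Selmer complex on the $F$-side---global cochains, local conditions at $v \in S_p(F)$, and unramified local conditions at $v \in S \setminus S_p(F)$---decomposes compatibly under the Galois splitting $\TT = \TT_1 \oplus \TT_2$, via Shapiro's lemma, into the analogous pieces for $\TT_{K,1}$ with $\Delta_1$ and $\TT_{K,2}$ with $\Delta_{\Sigma}$. Once this compatibility is established, the theorem follows because the Selmer complex is defined as a shifted mapping cone of the morphism ${\rm res}_S - i_S^+$, and a cone of a direct sum of morphisms is the direct sum of the individual cones.

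First I will invoke the global identification \eqref{decomposition galois cohomology global}, which realizes $C^\bullet(G_{F,S}, \TT/I\TT)$ as $C^\bullet(G_{K,S_K}, \TT_{K,1}/I\TT_{K,1}) \oplus C^\bullet(G_{K,S_K}, \TT_{K,2}/I\TT_{K,2})$ through Shapiro's lemma. The same Shapiro identification applies locally at each prime $v \in S$, decomposing $C^\bullet(G_{F_v}, \TT/I\TT)$ as a direct sum over primes $w \mid v$ of $K$ of the corresponding local complexes for $\TT_{K,1}$ and $\TT_{K,2}$. Under these identifications, the restriction map ${\rm res}_S$ on the $F$-side becomes the sum of the restriction maps on the $K$-side to the primes $w \in S_K$.

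Next I will handle the local conditions. At each $v \in S_p(F)$, which splits in $K$ under hypothesis~\ref{item_ord}, Lemma~\ref{lem:decomposition local condition at p} identifies $C^\bullet(G_{F_v}, F^+(\TT/I\TT))$ with $C^\bullet(G_{K_{\tilde v}}, \TT_{K,1}/I\TT_{K,1}) \oplus C^\bullet(G_{K_{\tilde v^c}}, \TT_{K,2}/I\TT_{K,2})$. This is precisely what $\Delta_1$ prescribes for $\TT_{K,1}$ (the full local complex at $\tilde v \in \Sigma$ and zero at $\tilde v^c \in \Sigma^c$) together with what $\Delta_\Sigma$ prescribes for $\TT_{K,2}$ (zero at $\tilde v \in \Sigma$ and the full local complex at $\tilde v^c \in \Sigma^c$). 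At each $v \in S \setminus S_p(F)$, the unramified subcomplex $C^\bullet_{\rm ur}(G_{F_v}, -)$ is additive in its coefficient module and commutes with Shapiro's lemma (by compatibility with inertia-invariants and Frobenius), which matches the unramified conditions imposed by $\Delta_1$ and $\Delta_\Sigma$ at the primes $w \mid v$ in $S_K \setminus S_p(K)$.

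Finally I will assemble: combining the global and local identifications, the map ${\rm res}_S - i_S^+$ that defines $\widetilde{C}_{\rm f}^\bullet(G_{F,S}, \TT/I\TT, \Delta)$ decomposes as the direct sum of the analogous maps defining $\widetilde{C}_{\rm f}^\bullet(G_{K,S_K}, \TT_{K,1}/I\TT_{K,1}, \Delta_1)$ and $\widetilde{C}_{\rm f}^\bullet(G_{K,S_K}, \TT_{K,2}/I\TT_{K,2}, \Delta_\Sigma)$; the shifted cone of the former is then the direct sum of the shifted cones of the latter two, giving the claimed isomorphism in $D({}_R{\rm Mod})$. The only non-trivial bookkeeping sits in the second step---checking that the splitting of $F^+\TT$ at each $v \in S_p(F)$ exchanges the roles of $\Sigma$ and $\Sigma^c$ between $\TT_{K,1}$ and $\TT_{K,2}$---but this is essentially the content of Lemma~\ref{lem:decomposition local condition at p}, so the present theorem is its mapping-cone upgrade combined with global Shapiro.
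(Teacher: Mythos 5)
Your proposal is correct and follows essentially the same route as the paper: the paper's proof likewise combines the global Shapiro decomposition \eqref{decomposition galois cohomology global}, Lemma~\ref{lem:decomposition local condition at p} at the $p$-adic places, and the decomposition of the unramified local complexes at $v \in S \setminus S_p(F)$ into sums over $w \mid v$, then assembles everything through the mapping-cone definition of the Selmer complex. The only difference is expository: you spell out the compatibility of the restriction maps and the cone-of-a-direct-sum step, which the paper leaves implicit.
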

\begin{proof}
The argument in the general case being identical, we give a proof only when $I = 0$. 
We note for each prime $v \nmid p$ of $F$ that we have a natural decomposition 
\begin{align*}
{\bf R}{\Gamma}_{\rm ur}(G_{F_{v}}, \TT) 
&= 
{\bf R}{\Gamma}_{\rm ur}(G_{F_{v}}, \TT_{1})  \oplus 
{\bf R}{\Gamma}_{\rm ur}(G_{F_{v}}, \TT_{2}) 
\\
&= \bigoplus_{w \mid v} {\bf R}{\Gamma}_{\rm ur}(G_{K_{w}}, \TT_{K, 1}) \oplus  \bigoplus_{w \mid v} {\bf R}{\Gamma}_{\rm ur}(G_{K_{w}}, \TT_{K, 2}). 
\end{align*}
This fact together with \eqref{decomposition galois cohomology global} and Lemma~\ref{lem:decomposition local condition at p} concludes the proof of the theorem. 
\end{proof}

\begin{corollary}\label{parf[1,2] and Euler--Poincare = 0}
    \item[i)] For any ideal $I$ of $\Lambda_{\cO}(W_{K})$, there is a natural isomorphism 
\begin{align*}
    \widetilde{{\bf R}\Gamma}_{\rm f}(G_{F,S}, \bT, \Delta) \otimes^{\bL}_{\Lambda_{\cO}(W_{K})} \Lambda_{\cO}(W_{K})/I \stackrel{\sim}{\longrightarrow} \widetilde{{\bf R}\Gamma}_{\rm f}(G_{F,S}, \bT/I\bT, \Delta)\,,
\end{align*}
    as well as a natural isomorphism 
\begin{align*}
\widetilde{{\bf R}\Gamma}_{\rm f}(G_{F,S}, \bT, \Delta) \otimes^{\bL}_{\Lambda_{\cO}(W_{K})} \Lambda_{\bZ_{p}^{\rm ur}}(W_{K}) \stackrel{\sim}{\longrightarrow} \widetilde{{\bf R}\Gamma}_{\rm f}(G_{F,S}, \bT \otimes_{\Lambda_{\cO}(W_{K})} \Lambda_{\bZ_{p}^{\rm ur}}(W_{K}), \Delta).
\end{align*}
\item[ii)] $\widetilde{{\bf R}\Gamma}_{\rm f}(G_{F,S}, \TT, \Delta) \in D_{\rm parf}^{[1,2]}(_{\Lambda_{\cO}(W_{K})}{\rm Mod})$. 
\item[iii)] $\chi(\widetilde{{\bf R}\Gamma}_{\rm f}(G_{F,S}, \TT, \Delta)) = 0$.
 
\end{corollary}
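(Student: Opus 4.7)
My strategy is to deduce all three assertions from Theorem~\ref{thm:decomposition selmer complexes}, which reduces the analysis to the two summands on the right-hand side. The summand $\widetilde{{\bf R}\Gamma}_{\rm f}(G_{K, S_K}, \TT_{K, 2}, \Delta_{\Sigma})$ is directly of the form studied in \S\ref{subsection:example-cm}, since the identification $\TT_{K,2} = (\cO(\psi^{\rm ad}) \otimes_{\cO} \LL_{\cO}(W_K)^{\iota})(1)$ presents it as the module $\bT_{\frak K}$ of that section with $\chi = (\psi^{\rm ad})^{-1}$ and $\frak K = K(p^{\infty})$. The other summand $\widetilde{{\bf R}\Gamma}_{\rm f}(G_{K, S_K}, \TT_{K, 1}, \Delta_{1})$ is more subtle, because the Galois action on $\TT_{K,1}$ factors through ${\bf G}'$ rather than through all of $W_K$; I would address this by invoking \eqref{eq:selmer}, which presents this complex as the derived scalar extension along ${\rm ver}_{+}\colon \LL_{\cO}({\bf G}') \hookrightarrow \LL_{\cO}(W_{K})$ of $\widetilde{{\bf R}\Gamma}_{\rm f}(G_{K, S_K}, \LL_{\cO}({\bf G}')^{\iota}(1), \Delta_{\Sigma})$. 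This latter complex \emph{is} of the form studied in \S\ref{subsection:example-cm}, with $\chi = \mathds{1}$ and base ring $\LL_{\cO}({\bf G}')$.

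With this reduction in place, parts (i) and (ii) follow formally. For (i), the base change by $I$ and the scalar extension to $\LL_{\bZ_p^{\rm ur}}(W_K)$ for the $\TT_{K,2}$-summand are provided directly by \eqref{eq:base-change} and \eqref{eq:scalar-extension}. For the $\TT_{K,1}$-summand, they follow by combining \eqref{eq:selmer} with the analogous properties over $\LL_{\cO}({\bf G}')$, using the transitivity of derived tensor products. For (ii), Proposition~\ref{prop:parf[1,2]} places each of the two Selmer complexes (in its natural base ring) in $D_{\rm parf}^{[1,2]}$; since scalar extension along ${\rm ver}_{+}$ sends a bounded complex of finitely generated free $\LL_{\cO}({\bf G}')$-modules concentrated in degrees $[1,2]$ to another such complex over $\LL_{\cO}(W_K)$, the $\TT_{K,1}$-summand also belongs to $D_{\rm parf}^{[1,2]}({}_{\LL_{\cO}(W_K)}{\rm Mod})$, and therefore so does the direct sum.

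For (iii), the Euler--Poincar\'e characteristic is additive on direct sums, so it suffices to verify that each summand has characteristic zero. For the $\TT_{K,2}$-summand this is Lemma~\ref{lemma:Euler characteristic = 0} applied directly. For the $\TT_{K,1}$-summand, Lemma~\ref{lemma:Euler characteristic = 0} gives the vanishing of $\chi\bigl(\widetilde{{\bf R}\Gamma}_{\rm f}(G_{K, S_K}, \LL_{\cO}({\bf G}')^{\iota}(1), \Delta_{\Sigma})\bigr)$ over $\LL_{\cO}({\bf G}')$; scalar extension along ${\rm ver}_{+}$ preserves the ranks of the terms in any representing complex of finitely generated free modules, so the Euler characteristic of the scalar-extended complex vanishes as well. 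There is no serious obstacle beyond the initial bookkeeping: once the decomposition afforded by Theorem~\ref{thm:decomposition selmer complexes} and the scalar-extension identification \eqref{eq:selmer} are put in place, the corollary follows by direct appeal to the machinery of \S\ref{subsection:example-cm}, with the ordinarity hypothesis \ref{item_ord} entering only through Lemma~\ref{lemma:Euler characteristic = 0}.
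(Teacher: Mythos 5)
Your proposal is correct and follows essentially the same route as the paper's proof: decompose via Theorem~\ref{thm:decomposition selmer complexes} together with \eqref{eq:selmer}, then invoke \eqref{eq:base-change}, \eqref{eq:scalar-extension}, Proposition~\ref{prop:parf[1,2]} and Lemma~\ref{lemma:Euler characteristic = 0}. The only difference is that you spell out the routine compatibilities (transitivity of derived tensor products and preservation of ranks under scalar extension along ${\rm ver}_{+}$) that the paper leaves implicit, which is a harmless elaboration rather than a different argument.
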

\begin{proof}
By Theorem~\ref{thm:decomposition selmer complexes} and \eqref{eq:selmer}, we have the following identifications: 
\begin{align*}
\widetilde{{\bf R}\Gamma}_{\rm f}(G_{F,S}, \TT/I\TT, \Delta) & \xrightarrow{\sim}   
\widetilde{{\bf R}\Gamma}_{\rm f}(G_{K, S_K}, \TT_{K,1}/I\TT_{K, 1}, \Delta_{1}) \oplus \widetilde{{\bf R}\Gamma}_{\rm f}(G_{K, S_K}, \TT_{K, 2}/I\TT_{K, 2}, \Delta_{\Sigma})
\\
\widetilde{{\bf R}\Gamma}_{\rm f}(G_{K, S_K}, \TT_{K,1}/I\TT_{K, 1}, \Delta_{1}) &= \widetilde{{\bf R}\Gamma}_{\rm f}(G_{K, S_K},  \Lambda_{\cO}({\bf G}')^{\iota}(1), \Delta_{\Sigma^{c}}) \otimes^{\bL}_{\Lambda_{\cO}({\bf G}')} \Lambda_{\cO}(W_{K})/I. 
\end{align*}
The first assertion in (i) now follows from the fundamental base-change property \eqref{eq:base-change} for the Selmer complex $\widetilde{{\bf R}\Gamma}_{\rm f}(G_{K, S_K}, \TT_{K, 2}, \Delta_{\Sigma})$.  The second assertion in (i) follows from the isomorphism~\eqref{eq:scalar-extension}. Proposition~\ref{prop:parf[1,2]} shows that $\widetilde{{\bf R}\Gamma}_{\rm f}(G_{F,S}, \TT, \Delta) \in D_{\rm parf}^{[1,2]}(_{\Lambda_{\cO}(W_{K})}{\rm Mod})$, which is (ii), whereas Lemma~\ref{lemma:Euler characteristic = 0} shows that $\chi(\widetilde{{\bf R}\Gamma}_{\rm f}(G_{F,S}, \TT, \Delta)) = 0$, which is (iii). 
\end{proof}

\begin{corollary}\label{cor:delta-torison}
\item[i)] $\widetilde{H}^{1}_{\rm f}(G_{F,S}, \TT, \Delta)=0$. 
\item[ii)] The $\Lambda_{\cO}(W_K)$-module $\widetilde{H}^{2}_{\rm f}(G_{F,S}, \TT, \Delta)$ is torsion. 
\end{corollary}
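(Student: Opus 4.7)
The plan is to reduce both statements to the instance of the Selmer complexes studied in \S\ref{subsection:example-cm} by exploiting the direct sum decomposition
\[
\widetilde{{\bf R}\Gamma}_{\rm f}(G_{F,S}, \TT, \Delta) \xrightarrow{\sim}
\widetilde{{\bf R}\Gamma}_{\rm f}(G_{K, S_K}, \TT_{K,1}, \Delta_{1}) \oplus \widetilde{{\bf R}\Gamma}_{\rm f}(G_{K, S_K}, \TT_{K, 2}, \Delta_{\Sigma})
\]
furnished by Theorem~\ref{thm:decomposition selmer complexes}. This reduces the proof of (i) to the verification that $\widetilde{H}^{1}_{\rm f}$ of each of the two summands vanishes, and we will handle them separately. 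Statement (ii) will then drop out from (i) via the general principle underlying Corollary~\ref{corollary:H_f^1 vanishes iff H_f^2 is torsion and fitt=char}(i), applied to $\widetilde{{\bf R}\Gamma}_{\rm f}(G_{F,S}, \TT, \Delta)$ itself, which is permissible thanks to Corollary~\ref{parf[1,2] and Euler--Poincare = 0}(ii)--(iii).

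For the $\TT_{K,2}$-summand, we observe that $\TT_{K,2} = (\cO(\psi^{\rm ad}) \otimes_{\cO} \Lambda_{\cO}(W_K)^{\iota})(1)$ is precisely of the form $\bT_{\frak K}$ appearing in \S\ref{subsection:example-cm}, taking $\chi = (\psi^{\rm ad})^{-1}$ and $\frak{K}$ to be the maximal $\bZ_p$-power extension of $K$ corresponding to $W_K$. Since $\frak{K}$ visibly contains the cyclotomic $\bZ_p$-extension $K_{\rm cyc}$, Corollary~\ref{cor:H^1_f=0 and H^2_f is torsion}(i) applies verbatim to yield $\widetilde{H}^{1}_{\rm f}(G_{K, S_K}, \TT_{K,2}, \Delta_{\Sigma}) = 0$.

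For the $\TT_{K,1}$-summand, the identification \eqref{eq:selmer} together with the action of the generator $c \in \Gal(K/F)$ gives an isomorphism
\[
\widetilde{{\bf R}\Gamma}_{\rm f}(G_{K, S_K}, \TT_{K, 1}, \Delta_{1}) \simeq \widetilde{{\bf R}\Gamma}_{\rm f}(G_{K, S_K}, \Lambda_{\cO}({\bf G}')^{\iota}(1), \Delta_{\Sigma}) \otimes^{\bL}_{\Lambda_{\cO}({\bf G}')} \Lambda_{\cO}(W_{K}).
\]
The cohomology of the first factor matches the Selmer complex of \S\ref{subsection:example-cm} with trivial $\chi$ and $\frak{K} = KF_\infty$, which contains $K_{\rm cyc}$. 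Corollary~\ref{cor:H^1_f=0 and H^2_f is torsion}(i) therefore gives the vanishing of $\widetilde{H}^{1}_{\rm f}(G_{K, S_K}, \Lambda_{\cO}({\bf G}')^{\iota}(1), \Delta_{\Sigma})$. The main technical point I will need to spell out is the preservation of this vanishing under the base change to $\Lambda_{\cO}(W_K)$: since $p$ is odd and $c$ has order $2$, the $c$-action splits $W_K = W_K^{c=1} \oplus W_K^{c=-1}$, and the transfer map ${\rm ver}_{+}$ identifies ${\bf G}'$ with $W_K^{c=1}$, so $\Lambda_{\cO}(W_{K}) \cong \Lambda_{\cO}({\bf G}') \widehat{\otimes}_{\cO} \Lambda_{\cO}(\Gamma_{\rm ac})$ is faithfully flat over $\Lambda_{\cO}({\bf G}')$. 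Hence the derived tensor product coincides with the underived one in the displayed formula, and the vanishing passes to $\widetilde{H}^{1}_{\rm f}(G_{K, S_K}, \TT_{K, 1}, \Delta_{1})$.

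Combining the two vanishing statements yields (i). For (ii), we repeat the argument of Corollary~\ref{corollary:H_f^1 vanishes iff H_f^2 is torsion and fitt=char}(i): Corollary~\ref{parf[1,2] and Euler--Poincare = 0}(ii)--(iii) tells us that $\widetilde{{\bf R}\Gamma}_{\rm f}(G_{F,S}, \TT, \Delta)$ is perfect in degrees $[1,2]$ with vanishing Euler characteristic, so it is represented by a complex $0 \to P \xrightarrow{f} P \to 0$ of finitely generated free $\Lambda_{\cO}(W_K)$-modules; the vanishing of $\widetilde{H}^{1}_{\rm f}$ forces $f$ to be injective, hence (since $\Lambda_{\cO}(W_K)$ is a regular local domain with $\mathrm{Frac}$ a field) also surjective after inverting non-zero divisors, which is precisely the torsion property of $\widetilde{H}^{2}_{\rm f}(G_{F,S}, \TT, \Delta)$. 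The only delicate step is the flatness verification sketched above; everything else reduces to already-proven lemmas.
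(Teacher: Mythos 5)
Your argument is correct and follows essentially the same route as the paper: decompose via Theorem~\ref{thm:decomposition selmer complexes} and invoke Corollary~\ref{cor:H^1_f=0 and H^2_f is torsion}(i) for each summand, with your flat base-change justification (via ${\rm ver}_+$ identifying ${\bf G}'$ with a direct summand of $W_K$) supplying the detail the paper leaves implicit in passing from $\Lambda_{\cO}({\bf G}')^{\iota}(1)$ to $\TT_{K,1}$ through \eqref{eq:selmer}. The only divergence is in (ii): the paper simply applies Corollary~\ref{cor:H^1_f=0 and H^2_f is torsion}(ii) to each summand to obtain torsionness directly, whereas you rederive it from (i) together with Corollary~\ref{parf[1,2] and Euler--Poincare = 0}(ii)--(iii) by the Euler-characteristic argument of Corollary~\ref{corollary:H_f^1 vanishes iff H_f^2 is torsion and fitt=char}(i); both are valid.
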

\begin{proof}
As in the proof of Corollary~\ref{parf[1,2] and Euler--Poincare = 0}, we start with the observation that we have
\begin{align}
\label{eqn_cor_57_1}
\widetilde{{\bf R}\Gamma}_{\rm f}(G_{F,S}, \TT, \Delta) &\xrightarrow{\sim} 
\widetilde{{\bf R}\Gamma}_{\rm f}(G_{K, S_K}, \TT_{K,1}, \Delta_{1}) \oplus \widetilde{{\bf R}\Gamma}_{\rm f}(G_{K, S_K}, \TT_{K, 2}, \Delta_{\Sigma})
\end{align}
by Theorem~\ref{thm:decomposition selmer complexes}. By Corollary~\ref{cor:H^1_f=0 and H^2_f is torsion}(i), we have 
$$\widetilde{H}^{1}_{\rm f}(G_{K, S_{K}}, \TT_{K, 1}, \Delta_{1}) = \widetilde{H}^{1}_{\rm f}(G_{K, S_{K}}, \TT_{K, 2}, \Delta_{\Sigma}) = 0,$$ 
which shows combined with \eqref{eqn_cor_57_1} that 
\[
\widetilde{H}^{1}_{\rm f}(G_{F,S}, \TT, \Delta) \cong 
\widetilde{H}^{1}_{\rm f}(G_{K, S_{K}}, \TT_{K, 1}, \Delta_{1}) \oplus \widetilde{H}^{1}_{\rm f}(G_{K, S_{K}}, \TT_{K, 2}, \Delta_{\Sigma}) = 0. 
\]
This concludes the proof of (i).

To prove (ii), we note that both $\Lambda_{\cO}(W_{K})$-modules $\widetilde{H}^{2}_{\rm f}(G_{K, S_{K}}, \TT_{K, 1}, \Delta_{1})$ and $\widetilde{H}^{2}_{\rm f}(G_{K, S_{K}}, \TT_{K, 2}, \Delta_{\Sigma})$ are torsion thanks to Corollary~\ref{cor:H^1_f=0 and H^2_f is torsion}(ii). 
We therefore conclude using \eqref{eqn_cor_57_1} that the $\Lambda_{\cO}(W_{K})$-module 
\[
\widetilde{H}^{2}_{\rm f}(G_{F,S}, \TT, \Delta) \xrightarrow{\sim} 
\widetilde{H}^{2}_{\rm f}(G_{K, S_{K}}, \TT_{K, 1}, \Delta_{1}) \oplus \widetilde{H}^{2}_{\rm f}(G_{K, S_{K}}, \TT_{K, 2}, \Delta_{\Sigma})
\] 
is torsion as well. 
\end{proof}

\begin{definition}
\label{defn_algebraic_padic_Rankin_Selberg}
We define the non-critical specialization of the algebraic Rankin--Selberg $p$-adic $L$-function $$\widetilde{\mathscr{D}}^{\rm alg}\in {\rm Frac}(\LL_{\cO}(W_K))^\times \big{/}\LL_{\cO}(W_K)^\times$$
as a generator of the cyclic $\LL_{\cO}(W_K)$-module  ${\rm Fitt}_{\Lambda_{\cO}(W_{K})}\left(\widetilde{H}^{2}_{\rm f}(G_{F, S}, \TT, \Delta)\right)$.
\end{definition}

\begin{remark}
\item[i)] In view of Definition~\ref{defn_algebraic_Katz_padic_L}, \eqref{eqn_RS_repn_3} and the choice of the local conditions $\Delta_\Sigma$ on $C^\bullet(G_{F,S},\bT_2)$, recall that we have the algebraic Katz $p$-adic $L$-function 
$$ L_{p, \psi^{\rm ad}}^{\rm alg}\in {\rm Frac}(\LL_{\cO}(W_K))^\times \big{/}\LL_{\cO}(W_K)^\times$$ 
as a  generator of the cyclic $\LL_{\cO}(W_K)$-module  ${\rm Fitt}_{\Lambda_{\cO}(W_{K})}\left(\widetilde{H}^{2}_{\rm f}(G_{F, S}, \TT_{K, 2}, \Delta_{\Sigma})\right)$.
\item[ii)] Likewise, ${\rm Fitt}_{\Lambda_{\cO}({\bf G}')}(\widetilde{H}^{2}_{\rm f}(G_{K, S_K}, \Lambda_{\cO}({\bf G}')^{\iota}(1), \Delta_{\Sigma}))$ is generated by the restriction 
\[
L_{p, \mathds{1}^{\rm alg}}\vert_{{\bf G}'} \in {\rm Frac}(\LL_{\cO}({\bf G}'))^\times \big{/}\LL_{\cO}({\bf G}')^\times
\]
of the algebraic Katz $p$-adic $L$-function $L_{p, \mathds{1}^{\rm alg}}$ to ${\bf G}'$. 
\end{remark}

Recall from \eqref{eqn_cyc_transfer_map} the injection ${{\rm ver}_{+}} \colon \LL_{\cO}({\bf G}')\to \LL_{\cO}(W_K)$ 
and note that we have ${\bf G}' = \Gamma_{\rm cyc}$ and $\Gamma_\infty^\circ = \Gamma_\infty$ if the Leopoldt conjecture for $F$ is valid.

\begin{theorem}
\label{thm:iwasama main conj rankin-selberg}
\item[i)] The algebraic Rankin--Selberg $p$-adic $L$-function factors as
\[
\widetilde{\mathscr{D}}^{\rm alg}
= {\rm ver}_{+}(L_{p, \mathds{1}}^{\rm alg}\vert_{{\bf G}'})\cdot 
  L_{p, \psi^{\rm ad}}^{\rm alg}, 
\]
where the equality takes place in ${\rm Frac}(\LL_{\cO}(W_K))^\times \big{/}\LL_{\cO}(W_K)^\times$. 

\item[ii)] If the Main Conjecture~\ref{conj:iwasawa main conj} for the CM field $K$ holds true, then  $$\widetilde{\mathscr{D}}^{\rm alg}= \widetilde{\mathscr{D}}^{\rm Hida}\quad \in\, {\rm Frac}(\LL_{\cO}(\Gamma_\infty^\circ))^\times \big{/}\LL_{\cO}(\Gamma_\infty^\circ)^\times,$$ 
 where we regard $\widetilde{\mathscr{D}}^{\rm alg}$, which is a priori defined as an element of ${\rm Frac}(\LL_{\cO}(W_K))^\times \big{/}\LL_{\cO}(W_K)^\times$, also as an element of ${\rm Frac}(\LL_{\cO}(\Gamma_\infty^\circ))^\times \big{/}\LL_{\cO}(\Gamma_\infty^\circ)^\times$ via the natural map induced from the canonical surjection $W_K\twoheadrightarrow \Gamma_\infty^\circ$.
\end{theorem}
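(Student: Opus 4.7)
The strategy is to prove part~(i) by leveraging the direct-sum decomposition of Selmer complexes already established in Theorem~\ref{thm:decomposition selmer complexes}, and then to deduce part~(ii) by combining~(i) with the Iwasawa Main Conjecture and the Hida--Tilouine factorization recorded in Theorem~\ref{thm_HT_factorization}.

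For part~(i), I would first invoke Theorem~\ref{thm:decomposition selmer complexes} together with the vanishing $\widetilde{H}^1_{\rm f}=0$ from Corollary~\ref{cor:delta-torison}(i) (applied to each summand by the argument of Corollary~\ref{cor:H^1_f=0 and H^2_f is torsion}(i)) to descend the quasi-isomorphism of Selmer complexes to a splitting on $H^2$:
\[
\widetilde{H}^2_{\rm f}(G_{F,S},\TT,\Delta) \xrightarrow{\sim} \widetilde{H}^2_{\rm f}(G_{K,S_K},\TT_{K,1},\Delta_1) \oplus \widetilde{H}^2_{\rm f}(G_{K,S_K},\TT_{K,2},\Delta_\Sigma).
\]
Next I would use the standard multiplicativity of the zeroth Fitting ideal under direct sums (which follows by concatenating free presentations into block-diagonal matrices and factoring the top-degree minors) to obtain
\[
\text{Fitt}_{\Lambda_\cO(W_K)}\!\left(\widetilde{H}^2_{\rm f}(G_{F,S},\TT,\Delta)\right) = \text{Fitt}_{\Lambda_\cO(W_K)}\!\left(\widetilde{H}^2_{\rm f}(G_{K,S_K},\TT_{K,1},\Delta_1)\right)\cdot\text{Fitt}_{\Lambda_\cO(W_K)}\!\left(\widetilde{H}^2_{\rm f}(G_{K,S_K},\TT_{K,2},\Delta_\Sigma)\right).
\]
The second factor equals $L_{p,\psi^{\rm ad}}^{\rm alg}\cdot\Lambda_\cO(W_K)$ by Definition~\ref{defn_algebraic_Katz_padic_L}. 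For the first factor, the identification \eqref{eq:selmer} together with the flat base-change property of Fitting ideals along the inclusion ${\rm ver}_+\colon \Lambda_\cO(\bfG')\hookrightarrow\Lambda_\cO(W_K)$ will identify it with ${\rm ver}_+(L_{p,\mathds{1}}^{\rm alg}\vert_{\bfG'})\cdot\Lambda_\cO(W_K)$. Assembling these two computations yields the factorization claimed in~(i).

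For part~(ii), one first projects the equality in~(i) along the canonical surjection $W_K=\Gamma_\infty\twoheadrightarrow\Gamma_\infty^\circ=\Gamma_\cyc\times\Gamma_{\rm ac}$ to land in $\mathrm{Frac}(\Lambda_\cO(\Gamma_\infty^\circ))^\times/\Lambda_\cO(\Gamma_\infty^\circ)^\times$. Granting the Iwasawa Main Conjecture~\ref{conj:iwasawa main conj}, one has $L_{p,\mathds{1}}^{\rm alg}\equiv L_{p,\mathds{1}}^{\rm Katz}$ and $L_{p,\psi^{\rm ad}}^{\rm alg}\equiv L_{p,\psi^{\rm ad}}^{\rm Katz}$ modulo units in $\Lambda_{\ZZ_p^{\rm ur}}(\Gamma_\infty)^\times$, so (i) specializes to
\[
\widetilde{\mathscr{D}}^{\rm alg}\ \equiv\ {\rm ver}_+(L_{p,\mathds{1}}^{\rm Katz}\vert_{\bfG'})\cdot L_{p,\psi^{\rm ad}}^{\rm Katz} \quad\pmod{\Lambda_{\ZZ_p^{\rm ur}}(\Gamma_\infty^\circ)^\times}.
\]
On the other hand, specializing Theorem~\ref{thm_HT_factorization} at anticyclotomic $\kappa$ and $\kappa'=\kappa+s$ yields the analytic identity~\eqref{eqn_thm_HT_factorization_1}, which one recasts in terms of Iwasawa functions on $\Gamma_\infty^\circ$ as
\[
\widetilde{\mathscr{D}}^{\rm Hida}\ =\ \mathfrak{F}\cdot L_{p,\mathds{1}}^{\rm Katz}\vert_{\Gamma_\cyc}\cdot L_{p,\psi^{\rm ad}}^{\rm Katz}\big|_{\Gamma_\infty^\circ}.
\]
Comparing these two identities, the proof of~(ii) reduces to showing that $\mathfrak{F}$ descends to a unit in $\Lambda_\cO(\Gamma_\infty^\circ)^\times$. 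This is the main obstacle: one must inspect the explicit shape of the product $U\,\Psi_1\,\Psi_2$ of Euler-like factors appearing in \cite[Theorem~8.1]{HT93} and verify that, after projection along $W_K\twoheadrightarrow\Gamma_\infty^\circ$, each such factor is non-vanishing at every height-one prime of $\Lambda_\cO(\Gamma_\infty^\circ)$. I would expect this to be a direct but delicate computation with the interpolation data, relying on the non-splitting assumption~\ref{item_ord} to ensure that no Euler factor vanishes generically on $\Gamma_\infty^\circ$.
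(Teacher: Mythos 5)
Your part (i) is essentially the paper's own argument: Theorem~\ref{thm:decomposition selmer complexes} together with the identification \eqref{eq:selmer}, followed by multiplicativity of zeroth Fitting ideals over direct sums and base change along ${\rm ver}_+$ (here one uses that $\LL_{\cO}(W_K)$ is flat over $\LL_{\cO}(\bfG')$, or alternatively the $[1,2]$-perfectness from Corollary~\ref{parf[1,2] and Euler--Poincare = 0}, to pass from the derived identity \eqref{eq:selmer} to the statement on $\widetilde{H}^2$). The appeal to the vanishing of $\widetilde{H}^1_{\rm f}$ is not needed for the splitting of $\widetilde{H}^2$ itself, only to know that the Fitting ideals involved are nonzero and cyclic; this part is correct and matches the paper.

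Part (ii), however, is not complete as written, and the incompleteness sits exactly at the step you yourself flag. Your outline (combine (i) with Conjecture~\ref{conj:iwasawa main conj}, transported through \eqref{eq:scalar-extension}, and with the Hida--Tilouine identity \eqref{eqn_thm_HT_factorization_1} that underlies Definition~\ref{defn_regularized_Hida_RS}) is the same route the paper indicates, but you then reduce the assertion to the claim that $\mathfrak{F}$ (the factor $U\Psi_1\Psi_2$ of Hida--Tilouine) becomes a unit of $\LL_{\cO}(\Gamma_\infty^\circ)$, and you leave that claim as an expectation rather than a proof. Saying that each Euler-like factor should be ``non-vanishing at every height-one prime'' is merely a restatement of unit-ness in the regular local ring $\LL_{\cO}(\Gamma_\infty^\circ)$, so no progress is made; moreover, Theorem~\ref{thm_HT_factorization} as recorded in the paper only provides non-vanishing of $\mathfrak{F}$ at $s=0$, $\kappa=\kappa'$ (i.e.\ at the trivial character), which is weaker than what your reduction needs, and your appeal to hypothesis \ref{item_ord} as a ``non-splitting assumption'' misreads it --- \ref{item_ord} asserts that every $p$-adic prime of $F$ \emph{splits} in $K$. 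Note also that the paper's own proof of (ii) is a formal deduction from (i), the Main Conjecture and the base-change/scalar-extension isomorphisms, and does not perform the Euler-factor analysis you propose; in the later applications (Corollary~\ref{cor_exceptional_zeros_RankinSelberg_Hida}) only $\mathfrak{F}(\mathds{1})\neq 0$ is used. To make your write-up a proof of (ii) as stated, you must either establish the required unit property of $\mathfrak{F}$ from the explicit shape of $U\Psi_1\Psi_2$ in \cite[Theorem 8.1]{HT93}, or explain how the comparison with $\widetilde{\mathscr{D}}^{\rm Hida}$ can be carried out without it; as it stands, this step is a genuine gap in your proposal.
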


\begin{proof}
The first assertion is an immediate consequence of \eqref{eq:selmer} and Theorem~\ref{thm:decomposition selmer complexes}. The second assertion follows on combining (i), \eqref{eq:scalar-extension}, and Corollary \ref{parf[1,2] and Euler--Poincare = 0}(ii). 
\end{proof}

\begin{remark}
\item[i)] The factorization in Theorem~\ref{thm:iwasama main conj rankin-selberg}(i) is the algebraic counterpart of the factorization result \eqref{eqn_thm_HT_factorization_1} of Hida and Tilouine, which factors Hida's Rankin--Selberg $p$-adic $L$-function attached to a pair of nearly ordinary CM families of Hilbert Modular forms to a product of two Katz $p$-adic $L$-function. 
\item[ii)] Note that the identity in Theorem~\ref{thm:iwasama main conj rankin-selberg}(ii) relates the non-critical specialization $\widetilde{\mathscr D}^{\rm Hida}$ of Hida's Rankin--Selberg $p$-adic $L$-function to the non-critical specialization $\widetilde{\mathscr D}^{\rm alg}$ of the algebraic Rankin--Selberg $p$-adic $L$-function (c.f. the first paragraph of \S\ref{subsubsec_intro_3} and Remark~\ref{rem_Hsieh_2_Intro}). We refrain from referring it as a ``main conjecture'' since the objects involved are both non-critical specializations.  Specializations of this identity are particular instances of Perrin-Riou's $p$-adic Beilinson conjectures in the presence of exceptional zeros.
\end{remark}

\subsection{Interlude: Algebraic version of Gross' factorization for CM fields}
\label{sec_agfCM}
Our objective in Section~\ref{sec_agfCM} is to prove the algebraic counterpart of the expected factorization \eqref{eqn_Gross_CM} for the cyclotomic restriction of Katz' $p$-adic $L$-function for CM fields. This result is recorded in Theorem~\ref{thm_algebraic_gross_factorization} below. We recall that \eqref{eqn_Gross_CM} predicts an extension of Gross' factorization~\cite{Gross1980Factorization} of Katz' $p$-adic $L$-function for imaginary quadratic fields to general CM fields. As we have indicated in Remark~\ref{remark_gross_dasgupta}, this enables us to prove the algebraic counterpart of the expected factorization \eqref{eqn_Dasgupta_CM}; see Theorem~\ref{theorem_adjoint} below. 

Throughout Section~\ref{sec_agfCM}, we will work with the $G_F$-representation
\[
\bT_F^\cyc := {\rm Ind}_{K/F}(\bZ_{p}(1)) \otimes_{\bZ_{p}} \Lambda_{\bZ_{p}}(\Gamma_{\rm cyc})^{\iota}.  
\]
We write $\epsilon_{K/F} \colon G_{F} \longrightarrow \{\pm 1 \}$ for the quadratic character attached to the extension $K/F$. 
 We then have a canonical identification 
\[
{\rm Ind}_{K/F}(\bZ_{p}(1)) = \bZ_{p}(1) \oplus \left(\bZ_{p}(1) \otimes \epsilon_{K/F}\right). 
\]
For notational simplicity, we shall set 
\begin{align*}
\Lambda &:= \Lambda_{\bZ_{p}}(\Gamma_{\rm cyc}), 
\\
S &:= S_{\rm ram}(K/F) \cup S_{p}(F), 
\\
\bT_{\mathds 1}^\cyc &:= \bZ_{p}(1) \otimes_{\bZ_{p}} \Lambda^{\iota}, 
\\
\bT_{\epsilon}^\cyc &:= (\bZ_{p}(1) \otimes \epsilon_{K/F}) \otimes_{\bZ_{p}} \Lambda^{\iota}. 
\end{align*}

We fix a $p$-ordinary CM type $\Sigma$ as before. 
For each prime $v \in S_{p}(F)$, we denote by $\tilde{v}\in \Sigma$ the prime that lies above $v$. 
By Shapiro's lemma, the identification $G_{K_{\tilde{v}}} = G_{F_{v}}$ induces a decomposition 
\begin{align*}
C^{\bullet}(G_{F_{v}}, \bT_F^\cyc) &= C^{\bullet}(G_{F_{v}}, {\rm Ind}_{K/F}\bT_{\mathds 1}^\cyc) \\
&= C^{\bullet}(G_{K_{\tilde{v}}}, \bT_{\mathds 1}^\cyc) \oplus C^{\bullet}(G_{K_{{\tilde{v}^{c}}}}, \bT_ {\mathds 1}^\cyc)
\end{align*}
We note that this decomposition is \emph{not} stable under the action of $\Gal(K/F)$ and that 
$C^{\bullet}(G_{F_{v}}, \bT_{\mathds 1}^\cyc) = C^{\bullet}(G_{F_{v}}, \bT)^{c=1}$ and $C^{\bullet}(G_{F_{v}}, \bT_{\epsilon}^\cyc) = C^{\bullet}(G_{F_{v}}, \bT_F^\cyc)^{c=-1}$. 
In particular, this decomposition is \emph{not} consistent with that induced from the decomposition $\bT_F^\cyc = \bT_{\mathds 1}^\cyc \oplus \bT_{\epsilon}^\cyc$. 

\begin{definition}
We define the local condition $\Delta_{\Sigma}$ on $C^{\bullet}(G_{F,S}, \bT_F^\cyc)$ with the choices
\[
U_{v}^{+} := 
\begin{cases}
C^{\bullet}(G_{K_{\tilde{v}}}, \bT_{\mathds 1}^\cyc) & \text{if} \ v \in S_{p}(F), 
\\
C^{\bullet}_{\rm ur}(G_{F_{v}}, \bT_F^\cyc) & \text{if} \ v \in S \setminus S_{p}(F). 
\end{cases}
\]
We note that the natural isomorphism ${\bf R}\Gamma(G_{F,S}, \bT_F^\cyc) \cong {\bf R}\Gamma(G_{K,S_{K}}, \bT_{\mathds 1}^\cyc)$ induced by Shapiro's Lemma gives rise to an isomorphism 
\[
\widetilde{{\bf R}\Gamma}_{\rm f}(G_{F,S}, \bT_F^\cyc, \Delta_{\Sigma}) \cong \widetilde{{\bf R}\Gamma}_{\rm f}(G_{K,S_{K}}, \bT_{\mathds 1}^\cyc, \Delta_{\Sigma}),  
\]
where we recall that $S_{K}$ denotes the set of primes of $K$ above $S$. 
\end{definition}

\begin{definition}\ 
We define
\[
\widetilde{C}^{\bullet}_{\rm c}(G_{F,S}, \bT_{\mathds 1}^\cyc) := {\rm Cone}\left( C^{\bullet}(G_{F,S}, \bT_{\mathds 1}^\cyc) \oplus  \bigoplus_{v \in S \setminus S_{p}(F)} C^{\bullet}_{\rm ur}(G_{F_{v}}, \bT_{\mathds 1}^\cyc) \longrightarrow  \bigoplus_{v \in S  } C^{\bullet}(G_{F_{v}}, \bT_{\mathds 1}^\cyc)  \right)[-1]. 
\]
and set 
\begin{align*}
\widetilde{C}^{\bullet}(G_{F,S}, \bT_{\epsilon}^\cyc) := 
{\rm Cone}\left( C^{\bullet}(G_{F,S}, \bT_{\epsilon}^\cyc) \oplus \bigoplus_{v \in S_{p}(F)} C^{\bullet}(G_{F_{v}}, \bT_{\epsilon}^\cyc) 
 \longrightarrow  \bigoplus_{v \in S} C^{\bullet}(G_{F_{v}}, \bT_{\epsilon}^\cyc)  \right)[-1]. 
\end{align*}
\end{definition}

On identifying $\bT_{\epsilon}^\cyc$ with the quotient $\bT_F^\cyc/\bT_{\mathds 1}^\cyc$, we have an exact sequence 
\begin{equation}
\label{eqn_TFcyc_extact_sequence}
0 \longrightarrow \bT_{\mathds 1}^\cyc \longrightarrow \bT_F^\cyc \longrightarrow \bT_{\epsilon}^\cyc \longrightarrow 0\,. 
\end{equation}

\begin{proposition}\label{prop:decomp-gross}
The exact sequence \eqref{eqn_TFcyc_extact_sequence} induces an exact sequence of complexes 
\[
0 \longrightarrow \widetilde{C}^{\bullet}_{\rm c}(G_{F,S}, \bT_{\mathds 1}^\cyc) \longrightarrow \widetilde{C}^{\bullet}_{\rm f}(G_{F,S}, \bT_F^\cyc, \Delta_{\Sigma}) \longrightarrow \widetilde{C}^{\bullet}(G_{F,S}, \bT_{\epsilon}^\cyc) \longrightarrow 0\,.
\]
\end{proposition}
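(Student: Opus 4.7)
The plan is to prove term-wise exactness of the proposed short exact sequence of cones. Each of the three complexes is assembled from three kinds of cochain data: (i) a global piece $C^\bullet(G_{F,S}, -)$; (ii) the local conditions $U_v^+$ at primes $v \in S$; and (iii) the local target pieces $C^{\bullet}(G_{F_v}, -)$ that the local conditions map into. I will verify piece by piece that the short exact sequence $0 \to \bT_{\mathds 1}^\cyc \to \bT_F^\cyc \to \bT_\epsilon^\cyc \to 0$ induces a term-wise short exact sequence compatible with the prescribed local conditions, and then combine these into the cone definitions.

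For (i) and (iii) the decisive input is that $p > 2$ and $\#\Gal(K/F) = 2$, so the idempotents $(1 \pm c)/2 \in \bZ_p[\Gal(K/F)]$ split the coefficient sequence as $G_F$-modules. Consequently, the induced sequence of cochain modules
\[
0 \longrightarrow C^n(H, \bT_{\mathds 1}^\cyc) \longrightarrow C^n(H, \bT_F^\cyc) \longrightarrow C^n(H, \bT_\epsilon^\cyc) \longrightarrow 0
\]
is short exact for every degree $n$ and for any closed subgroup $H \subset G_F$; I will apply this with $H = G_{F, S}$ and $H = G_{F_v}$.

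For (ii) I have to handle two families of primes. At $v \in S \setminus S_p(F) = S_{\rm ram}(K/F) \setminus S_p(F)$, $v$ does not lie above $p$, so $I_v$ acts trivially on $\bT_{\mathds 1}^\cyc$ while $\epsilon_{K/F}|_{I_v}$ is non-trivial. Hence $(\bT_\epsilon^\cyc)^{I_v} = 0$; this makes $C^\bullet_{\rm ur}(G_{F_v}, \bT_\epsilon^\cyc)$ the zero complex and identifies $C^\bullet_{\rm ur}(G_{F_v}, \bT_F^\cyc)$ with $C^\bullet_{\rm ur}(G_{F_v}, \bT_{\mathds 1}^\cyc)$, which exactly matches the fact that $\widetilde{C}^\bullet(G_{F,S}, \bT_\epsilon^\cyc)$ carries no local condition at $v$ while $\widetilde{C}^\bullet_{\rm c}(G_{F,S}, \bT_{\mathds 1}^\cyc)$ carries the unramified one.

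The main obstacle is bookkeeping at $v \in S_p(F)$. By the hypothesis \ref{item_ord}, $v$ splits in $K/F$ as $\tilde v \tilde v^c$, so $\epsilon_{K/F}|_{G_{F_v}}$ is trivial and $\bT_\epsilon^\cyc|_{G_{F_v}}$ becomes canonically isomorphic to $\bT_{\mathds 1}^\cyc|_{G_{F_v}}$. Shapiro's lemma (with the identification $G_{K_{\tilde v}} = G_{F_v}$) gives
\[
C^\bullet(G_{F_v}, \bT_F^\cyc) \cong C^\bullet(G_{K_{\tilde v}}, \bT_{\mathds 1}^\cyc) \oplus C^\bullet(G_{K_{\tilde v^c}}, \bT_{\mathds 1}^\cyc),
\]
and $U_v^+(\bT_F^\cyc) = C^\bullet(G_{K_{\tilde v}}, \bT_{\mathds 1}^\cyc)$ is by definition the first summand. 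Writing an element of $\bT_F^\cyc$ in Shapiro coordinates as $m_1 e_1 + m_2 e_c$ and re-expressing it in the $G_F$-equivariant splitting basis $\{e_1 + e_c,\ e_1 - e_c\}$ as $\tfrac{m_1+m_2}{2}(e_1+e_c) + \tfrac{m_1-m_2}{2}(e_1-e_c)$, I will verify that the composite
\[
C^\bullet(G_{K_{\tilde v}}, \bT_{\mathds 1}^\cyc) \hookrightarrow C^\bullet(G_{F_v}, \bT_F^\cyc) \twoheadrightarrow C^\bullet(G_{F_v}, \bT_\epsilon^\cyc)
\]
is multiplication by $\tfrac12$ under the identification of $\bT_\epsilon^\cyc|_{G_{F_v}}$ with $\bT_{\mathds 1}^\cyc|_{G_{F_v}}$. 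This is an isomorphism of complexes because $p$ is odd. Combined with $U_v^+(\bT_{\mathds 1}^\cyc) = 0$ at such $v$, the sequence of local conditions at $v$ is term-wise exact. Assembling these three observations into the cone definitions and using naturality of $\mathrm{Cone}$ with respect to morphisms yields the required short exact sequence of complexes.
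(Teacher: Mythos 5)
Your proposal is correct and follows essentially the same route as the paper's proof: the two decisive inputs are identical, namely that for $v\in S_p(F)$ the composite $C^{\bullet}(G_{K_{\tilde v}},\bT_{\mathds 1}^{\cyc})\hookrightarrow C^{\bullet}(G_{F_v},\bT_F^{\cyc})\twoheadrightarrow C^{\bullet}(G_{F_v},\bT_{\epsilon}^{\cyc})$ is an isomorphism, and that $(\bT_{\epsilon}^{\cyc})^{I_v}=0$ at primes of $S\setminus S_p(F)$, so that the unramified conditions for $\bT_{\mathds 1}^{\cyc}$ and $\bT_F^{\cyc}$ agree. Your explicit Shapiro-coordinate computation (the harmless factor $\tfrac12$, invertible since $p>2$) and the term-wise cone bookkeeping simply spell out details the paper leaves implicit.
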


\begin{proof}
The projection $\bT_F^\cyc \longrightarrow \bT_{\epsilon}^\cyc$ given by the exact sequence \eqref{eqn_TFcyc_extact_sequence} induces an isomorphism 
$$C^{\bullet}(G_{K_{\tilde{v}}}, \bT_{\mathds 1}^\cyc) \stackrel{\sim}{\longrightarrow} C^{\bullet}(G_{F_{v}}, \bT_{\epsilon}^\cyc) $$ 
for each prime $v \in S_{p}(F)$. Moreover, for each prime $v \in S_{\rm ram}(K/F) \setminus S_{p}(F)$ we have $(\bT_{\epsilon}^\cyc)^{I_{v}}=0$,  and therefore infer that $C^{\bullet}_{\rm ur}(G_{F_{v}}, \bT_{\mathds 1}^\cyc) = C^{\bullet}_{\rm ur}(G_{F_{v}}, \bT_F^\cyc)$. Our assertion from these facts.
\end{proof}

\begin{lemma}
\item[i)] The inflation map $C^{\bullet}(G_{F,S_{p}(F)}, \bT_{\mathds 1}^\cyc) \longrightarrow  C^{\bullet}(G_{F,S}, \bT_{\mathds 1}^\cyc)$ induces a quasi-isomorphism
\[
{C}_{\rm c}^{\bullet}(G_{F, S_{p}(F)}, \bT_{\mathds 1}^\cyc) \xrightarrow{{\rm Qis}}  \widetilde{C}^{\bullet}_{\rm c}(G_{F,S}, \bT_{\mathds 1}^\cyc),
\]
where ${C}_{\rm c}^{\bullet}(G_{F, S_{p}(F)}, \bT_{\mathds 1}^\cyc)$ denotes the cochains with compact support, defined as in \cite[Definition 5.3.1.1]{Nek}. 
\item[ii)] The complex $\widetilde{C}^{\bullet}(G_{F,S}, \bT_{\epsilon}^\cyc)$ is a quasi-isomorphic to $C^{\bullet}(G_{F,S}, \bT_{\epsilon}^\cyc)$.  
\end{lemma}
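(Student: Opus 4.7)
The plan is to prove (i) and (ii) separately, each as a combination of the Selmer-complex formalism developed in Section~\ref{sec_Selmer_Groups} with an explicit local cohomology vanishing. The main obstacle is essentially administrative: matching conventions between Nekov\'a\v{r}'s compactly supported cochains and his Selmer complexes, and then identifying the contributions at the bad primes $v\in S_{\rm ram}(K/F)\setminus S_p(F)$ that vanish automatically.

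For (i), I would identify both sides as instances of the Selmer complex. Setting the local condition $U_v^+ = 0$ at every $v \in S_p(F)$, and using that for $p>2$ the archimedean local cohomology of $p$-adic modules is acyclic, one obtains a natural quasi-isomorphism
\[
C^\bullet_c(G_{F, S_p(F)}, \bT_{\mathds 1}^{\cyc}) \simeq \widetilde{C}^\bullet_f(G_{F, S_p(F)}, \bT_{\mathds 1}^{\cyc}, \Delta^{0}).
\]
On the other hand, by construction $\widetilde{C}^\bullet_c(G_{F,S}, \bT_{\mathds 1}^{\cyc}) = \widetilde{C}^\bullet_f(G_{F, S}, \bT_{\mathds 1}^{\cyc}, \Delta)$, where $\Delta$ consists of the choices $U_v^+ = 0$ at $v\in S_p(F)$ and the unramified local condition at $v \in S \setminus S_p(F)$. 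The inflation map in the statement is then precisely the natural comparison map between these two Selmer complexes associated with the enlargement of the finite set $S_p(F)$ to $S$, which is a quasi-isomorphism by \cite[Proposition~7.8.8(ii)]{Nek}; this is the same input that underlies Lemma~\ref{lemma:indep} above.

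For (ii), observe that $\widetilde{C}^\bullet(G_{F,S}, \bT_\epsilon^{\cyc})$ is the Selmer complex with local condition $i_v^+ = {\rm id}\colon U_v^+ := C^\bullet(G_{F_v}, \bT_\epsilon^{\cyc}) \to C^\bullet(G_{F_v}, \bT_\epsilon^{\cyc})$ at $v \in S_p(F)$, and with $U_v^+ = 0$ at $v \in S \setminus S_p(F) = S_{\rm ram}(K/F) \setminus S_p(F)$. A routine change of variables in the defining cone cancels the contributions at $v\in S_p(F)$ and yields a quasi-isomorphism
\[
\widetilde{C}^\bullet(G_{F,S}, \bT_\epsilon^{\cyc}) \simeq {\rm Cone}\left(C^\bullet(G_{F,S}, \bT_\epsilon^{\cyc}) \xrightarrow{\res} \bigoplus_{v \in S_{\rm ram}(K/F) \setminus S_p(F)} C^\bullet(G_{F_v}, \bT_\epsilon^{\cyc})\right)[-1].
\]
The lemma therefore reduces to the acyclicity of $C^\bullet(G_{F_v}, \bT_\epsilon^{\cyc})$ for every $v \in S_{\rm ram}(K/F) \setminus S_p(F)$. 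This is established exactly as in Remark~\ref{remark_local_cohom_bad_primes_acyclic}: the perfect control isomorphism
\[
{\bf R}\Gamma(G_{F_v}, \bT_\epsilon^{\cyc}) \otimes^{\bL}_{\Lambda} \Lambda/\fm \;\cong\; {\bf R}\Gamma(G_{F_v}, \mathbb{F}_p(1)\otimes \epsilon_{K/F})
\]
combined with Nakayama's lemma reduces matters to the residual representation, and at such $v$ (for which $v \nmid p$ and $\epsilon_{K/F}|_{I_v}$ is non-trivial of order $2$, while $p>2$) one checks $H^0 = 0$ by inspection, deduces $H^2 = 0$ via local Tate duality, and concludes $H^1 = 0$ from the local Euler--Poincar\'e characteristic formula at residue characteristic $\neq p$.
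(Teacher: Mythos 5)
Your proposal is correct and follows essentially the same route as the paper's own (much terser) proof: part (i) is the independence-of-$S$ statement for Selmer complexes with unramified conditions at the added primes (Nekov\'a\v{r}, Proposition~7.8.8(ii), as in Lemma~\ref{lemma:indep}), valid because $\bT_{\mathds 1}^{\cyc}$ is ramified only at $S_p(F)$, and part (ii) is the cancellation of the full local conditions at $p$ together with the acyclicity of $C^{\bullet}(G_{F_v},\bT_{\epsilon}^{\cyc})$ at $v\in S_{\rm ram}(K/F)\setminus S_p(F)$, proved exactly as in Remark~\ref{remark_local_cohom_bad_primes_acyclic}. Your additional remarks (the archimedean contributions being acyclic for $p>2$, and the explicit residual computation) merely fill in details the paper leaves implicit.
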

\begin{proof}
Since $S_{\rm ram}(\bT_{\mathds 1}^\cyc) = S_{p}(F)$, the first assertion is clear. The second part follows from the acyclicity of  the complex $C^{\bullet}(G_{F_{v}}, \bT_{\epsilon}^\cyc)$ for any prime $v \in S_{\rm ram}(K/F)$ (see Remark~\ref{remark_local_cohom_bad_primes_acyclic}). 
\end{proof}

\begin{lemma}\label{lemma:compact-supp-coh} The following statements are valid.
\item[i)] $\chi({\bf R}\Gamma_{\rm c}(G_{F,S}, \bT_{\mathds 1}^\cyc)) = 0$. 
\item[ii)] $H^{0}_{\rm c}(G_{F,S}, \bT_{\mathds 1}^\cyc)=0=H^{1}_{\rm c}(G_{F,S}, \bT_{\mathds 1}^\cyc)$. 
\item[iii)] We have an isomorphism $H^{3}_{\rm c}(G_{F,S}, \bT_{\mathds 1}^\cyc) \cong \bZ_{p}$. 
\item[iv)] The  ${\Lambda}$-module $H^{2}_{\rm c}(G_{F,S}, \bT_{\mathds 1}^\cyc)$ is  torsion. 
\end{lemma}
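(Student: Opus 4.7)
The strategy combines three tools: the long exact sequence arising from the short exact sequence of perfect complexes in Proposition~\ref{prop:decomp-gross}; Shapiro's lemma, which identifies $\widetilde{{\bf R}\Gamma}_{\rm f}(G_{F,S},\bT_F^\cyc,\Delta_\Sigma)$ with a $K$-side Selmer complex of the form in Definition~\ref{def:extended selmer} (applied to the trivial character on $G_K$ and $\frak K = KF_\cyc$, with the roles of $\Sigma$ and $\Sigma^c$ interchanged); and Poitou--Tate duality along the cyclotomic tower, using the quasi-isomorphism $\widetilde{{\bf R}\Gamma}_c(G_{F,S},\bT_{\mathds 1}^\cyc)\simeq{\bf R}\Gamma_c(G_{F,S_p(F)},\bT_{\mathds 1}^\cyc)$ noted in the preceding lemma. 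We shall refer to this set-up throughout.

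For (i), additivity of the Euler--Poincar\'e characteristic on Proposition~\ref{prop:decomp-gross} yields
\[
\chi({\bf R}\Gamma_{\rm c}(G_{F,S},\bT_{\mathds 1}^\cyc)) = \chi(\widetilde{{\bf R}\Gamma}_{\rm f}(G_{F,S},\bT_F^\cyc,\Delta_\Sigma)) - \chi(\widetilde{{\bf R}\Gamma}(G_{F,S},\bT_\epsilon^\cyc)).
\]
The first term on the right vanishes by (the proof of) Lemma~\ref{lemma:Euler characteristic = 0} transported through Shapiro. The second reduces, after invoking the acyclicity noted in Remark~\ref{remark_local_cohom_bad_primes_acyclic} at primes in $S_{\rm ram}(K/F)\setminus S_p(F)$ and the vanishing of the local Euler characteristic away from $p$, to $\chi({\bf R}\Gamma(G_{F,S},\bT_\epsilon^\cyc))$. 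This last quantity is zero because complex conjugation at each real place acts trivially on $\bT_\epsilon^\cyc$ (the signs contributed by $\chi_\cyc$ and $\epsilon_{K/F}$ cancel, while the cyclotomic factor acts through a pro-$p$ character), so in Tate's global Euler--Poincar\'e formula the archimedean invariant contributions balance $-[F:\QQ]\cdot\mathrm{rank}_\Lambda(\bT_\epsilon^\cyc)$.

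For (ii) we feed three vanishings into the long exact sequence of cohomology attached to Proposition~\ref{prop:decomp-gross}. Shapiro together with $H^0(G_K,\bT_{\mathds 1}^\cyc)=\varprojlim_n\mu_{p^\infty}(KF_n)=0$ gives $\widetilde{H}^0_{\rm f}(\bT_F^\cyc)=0$; an analogous $G_F$-invariant computation for the $\epsilon_{K/F}$-twist, combined with the acyclicity at ramified primes, yields $\widetilde{H}^0(\bT_\epsilon^\cyc)=H^0(G_{F,S},\bT_\epsilon^\cyc)=0$; and Corollary~\ref{cor:H^1_f=0 and H^2_f is torsion}(i), applied through Shapiro (noting that the weak $\Sigma$-Leopoldt conjecture of Hida--Tilouine is symmetric under swapping $\Sigma$ and $\Sigma^c$), gives $\widetilde{H}^1_{\rm f}(\bT_F^\cyc)=0$. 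Propagation through the long exact sequence then forces $\widetilde{H}^0_c=\widetilde{H}^1_c=0$.

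For (iii) we work at each finite layer $F_n$ of $F_\cyc/F$: classical Poitou--Tate duality supplies $H^3_c(G_{F_n,S_p(F_n)},\ZZ_p(1))\cong H^0(G_{F_n,S_p(F_n)},\QQ_p/\ZZ_p)^\vee=\ZZ_p$, and the corestriction transitions are dual to the inflations $H^0(F_n,\QQ_p/\ZZ_p)\hookrightarrow H^0(F_{n+1},\QQ_p/\ZZ_p)$, which are the identity on $\QQ_p/\ZZ_p$; taking the inverse limit therefore produces $H^3_c(G_{F,S_p(F)},\bT_{\mathds 1}^\cyc)\cong\ZZ_p$. Assertion (iv) is then formal: with $H^0_c=H^1_c=0$ by (ii) and $\mathrm{rank}_\Lambda(\ZZ_p)=0$, the vanishing of the Euler characteristic in (i) forces $\mathrm{rank}_\Lambda(H^2_c)=0$, i.e., $H^2_c$ is $\Lambda$-torsion. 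The most delicate point in the plan is the bookkeeping in (iii): tracking the Poitou--Tate duality isomorphisms through corestriction at each layer to confirm that the inverse limit really yields $\ZZ_p$ (rather than $0$) requires a careful sign and compatibility check.
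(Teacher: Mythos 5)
Your argument is essentially correct, but for parts (i) and (ii) it follows a genuinely different route from the paper. The paper works directly with ${\bf R}\Gamma_{\rm c}(G_{F,S},\bT_{\mathds 1}^\cyc)$: part (i) is a one-line application of Nekov\'a\v{r}'s global Euler--Poincar\'e formula, $\chi=\#S_\infty(F)-\sum_{v\mid p}[F_v:\bQ_p]=0$; part (ii) follows from $H^0(G,\bT_{\mathds 1}^\cyc)=0$ (globally and locally, via \cite[Lemma B.3.2]{rubin00}) together with the weak Leopoldt conjecture for $F_{\rm cyc}/F$, i.e.\ Iwasawa's theorem; part (iii) is the same duality computation you perform, carried out in one step over $\Lambda$ rather than layer by layer; part (iv) is identical. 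You instead deduce (i) and (ii) from the exact sequence of Proposition~\ref{prop:decomp-gross}, transporting Lemma~\ref{lemma:Euler characteristic = 0} and Corollary~\ref{cor:H^1_f=0 and H^2_f is torsion} through Shapiro's lemma (so your input for (ii) is Hida--Tilouine's weak $\Sigma$-Leopoldt theorem over $K$ rather than Iwasawa's theorem over $F$), plus a direct archimedean computation giving $\chi({\bf R}\Gamma(G_{F,S},\bT_{\epsilon}^\cyc))=0$; your sign bookkeeping at the real places is right, and invoking the Section~\ref{sec_Selmer_Groups} results for the trivial character is consistent with the paper's own later usage. The paper's proof is shorter and self-contained; yours buys a structural derivation aligned with how the lemma is subsequently combined with Proposition~\ref{prop:decomp-gross} in Theorem~\ref{thm_algebraic_gross_factorization}. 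One loose end you should close: as routed, your argument establishes the assertions for $\widetilde{{\bf R}\Gamma}_{\rm c}(G_{F,S},\bT_{\mathds 1}^\cyc)\simeq {\bf R}\Gamma_{\rm c}(G_{F,S_p(F)},\bT_{\mathds 1}^\cyc)$, while the lemma is stated for ${\bf R}\Gamma_{\rm c}(G_{F,S},\bT_{\mathds 1}^\cyc)$; the transfer is immediate from the standard localization triangle whose third term is $\bigoplus_{v\in S\setminus S_p(F)}\bigl[\bT_{\mathds 1}^\cyc\xrightarrow{{\rm Frob}_v-1}\bT_{\mathds 1}^\cyc\bigr]$, which has vanishing $H^0$, torsion $H^1$ and zero Euler characteristic, but this comparison should be stated explicitly.
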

\begin{proof}
\item[i)] The computation of the global Euler-Poincar\'e characteristic in \cite[Theorem~7.8.6]{Nek} shows that
\[
\chi({\bf R}\Gamma_{\rm c}(G_{F,S},\bT_{\mathds 1}^\cyc)) = 
\sum_{v \in S_\infty(F)}1 - \sum_{v \in S_{p}(F)}[F_v \colon \bQ_p] 
= 0,
\]
as required.
\item[ii)] Note that when $G = G_{F,S}$ or $G_{F_{v}}$ for some $v \in S_{p}(F)$,  we have $H^{0}(G, \bT_{\mathds 1}^\cyc)=0$ by \cite[Lemma~B.3.2]{rubin00}. 
In particular, $H^{0}_{\rm c}(G_{F,S}, \bT_{\mathds 1}^\cyc)=0$ as well and the vanishing of $H^{1}_{\rm c}(G_{F,S}, \bT_{\mathds 1}^\cyc)$ follows from this fact and the weak Leopoldt conjecture (which is a theorem of Iwasawa in this set up). 

\item[iii)] This assertion follows on observing that
\[
H^{3}_{\rm c}(G_{F,S}, \bT_{\mathds 1}^\cyc) \cong 
\Hom_{\rm cont}(H^{0}(G_{F,S}, \Hom_{\rm cont}(\bT_{\mathds 1}^\cyc, \bQ_{p}/\bZ_{p})(1)), \bQ_{p}/\bZ_{p}) = \bZ_{p}. 
\]
where the first isomorphism is a consequence of global duality.
\item[iv)] Observe that ${\bf R}\Gamma_{\rm c}(G_{F,S}, \bT_{\mathds 1}^\cyc) \in D_{\rm parf}^{[0,3]}(_{\Lambda}{\rm Mod})$ by its very definition. Since 
$\chi({\bf R}\Gamma_{\rm c}(G_{F,S}, \bT_{\mathds 1}^\cyc)) = 0$ by (i), this statement follows on combining (ii) and (iii).  
\end{proof}

We are next set to prove that the $\Lambda$-modules $H^{1}(G_{F,S}, \bT_{\epsilon}^\cyc)$ and $H^{2}(G_{F,S}, \bT_{\epsilon}^\cyc)$ are both torsion. For each prime $v \in S_{p}(F)$, Shapiro's lemma yields a canonical identification  
\[
H^{1}(G_{F_{v}}, \bT_{\epsilon}^\cyc) = \varprojlim_{n> 0} \bigoplus_{w \mid v} H^{1}(G_{F_{n, w}}, \bZ_{p}(1) \otimes \epsilon_{K/F}), 
\]
where $F_{n}$ is the $n$-th layer of  the cyclotomic $\ZZ_p$-tower $F_{\rm cyc}/F$ and $w$ runs over the set of primes of $F_{n}$ that lie above $v$. 
We put 
\begin{align*}
    H^{1}_{\rm f}(G_{F_{v}}, \bT_{\epsilon}^\cyc) &:= \varprojlim_{n> 0} \bigoplus_{w \mid v} H^{1}_{\rm f}(G_{F_{n, w}}, \bZ_{p}(1) \otimes \epsilon_{K/F}), 
    \\
H^{1}_{/{\rm f}}(G_{F_{v}}, \bT_{\epsilon}^\cyc) &:= H^{1}(G_{F_{v}}, \bT_{\epsilon}^\cyc)/H^{1}_{\rm f}(G_{F_{v}}, \bT_{\epsilon}^\cyc). 
\end{align*}

\begin{lemma}\label{lemma:local-isom-torsion}
For each prime $v \in S_{p}(F)$, 
the $\Lambda$-module $H^{1}_{/{\rm f}}(G_{F_{v}}, \bT_{\epsilon}^\cyc)$ is torsion and 
we have an isomorphism $H^{1}_{/{\rm f}}(G_{F_{v}}, \bT_{\epsilon}^\cyc) \cong H^{2}(G_{F_{v}}, \bT_{\epsilon}^\cyc)$.  
\end{lemma}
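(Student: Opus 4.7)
The plan is to invoke our $p$-ordinarity hypothesis \ref{item_ord} in order to reduce the statement to an explicit local computation in the cyclotomic tower, which is then carried out via Shapiro's lemma, local Kummer theory, and local Tate duality applied layer-by-layer.

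Since every prime $v \in S_p(F)$ splits in $K$ by \ref{item_ord}, the restriction $\epsilon_{K/F}|_{G_{F_v}}$ is trivial, and there is a canonical $\Lambda$-linear isomorphism of $G_{F_v}$-modules $\bT_\epsilon^\cyc|_{G_{F_v}} \cong \bT_\mathds{1}^\cyc|_{G_{F_v}} = \bZ_p(1) \otimes_{\bZ_p} \Lambda^\iota$. This reduces the lemma to the analogous assertions for $\bT_\mathds{1}^\cyc$. Writing $F_n$ for the $n$-th layer of $F_\cyc/F$ and $w_n$ for the unique prime of $F_n$ above $v$ (recall that $v$ is totally ramified in $F_\cyc/F$), Shapiro's lemma together with the commutation of cohomology with the relevant inverse limits furnishes the identifications
\[
H^1(G_{F_v}, \bT_\mathds{1}^\cyc) = \varprojlim_n \widehat{F_{n,w_n}^\times}, \quad H^1_{{\rm f}}(G_{F_v}, \bT_\mathds{1}^\cyc) = \varprojlim_n \widehat{\cO_{F_{n,w_n}}^\times}, \quad H^2(G_{F_v}, \bT_\mathds{1}^\cyc) = \varprojlim_n H^2(G_{F_{n,w_n}}, \bZ_p(1)).
\]

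The short exact sequence $0 \to \widehat{\cO_{F_{n,w_n}}^\times} \to \widehat{F_{n,w_n}^\times} \xrightarrow{\ord_{w_n}} \bZ_p \to 0$ at each level identifies the singular quotient with $\bZ_p$, while local Tate duality provides an isomorphism $H^2(G_{F_{n,w_n}}, \bZ_p(1)) \cong \bZ_p$ via the Hasse invariant. Since $v$ is totally ramified in $F_\cyc/F$ (residue degree $1$), the norm maps between successive layers are the identity on the valuation quotients; similarly, corestriction is compatible with invariants, inducing the identity on $H^2 \cong \bZ_p$. Both transition systems therefore produce $\varprojlim_n \bZ_p \cong \bZ_p$, with $\Gamma_\cyc$ acting trivially (since the decomposition group fixes both the valuation and the Hasse invariant). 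Consequently, both $H^1_{/{\rm f}}(G_{F_v}, \bT_\mathds{1}^\cyc)$ and $H^2(G_{F_v}, \bT_\mathds{1}^\cyc)$ are canonically isomorphic to $\bZ_p$, viewed as the augmentation quotient of $\Lambda$. In particular, each is a $\Lambda$-torsion module, and comparing both descriptions yields the required isomorphism.

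The main technical point to verify is that the inverse limit systems carry the claimed $\Lambda$-module structure, namely that the transition maps reduce to the identity after the identifications above. This relies crucially on the totally ramified nature of $v$ in the cyclotomic tower, together with the naturality of Kummer theory and local Tate duality with respect to norm and corestriction.
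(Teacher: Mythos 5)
Your overall route is the one the paper itself takes: identify the local cohomology of $\bT_{\epsilon}^\cyc$ layer by layer in the cyclotomic tower via Shapiro's lemma, use Kummer theory to identify the singular quotient at each layer with a sum of copies of $\bZ_p$ via the valuation maps, use local Tate duality (the invariant map) to do the same for $H^2$, and then pass to the inverse limit using stabilization of the tower. (Your preliminary reduction to $\bT_{\mathds 1}^\cyc$ via \ref{item_ord} is harmless and amounts to the same observation the paper uses, namely that $\epsilon_{K/F}$ is trivial on $G_{F_v}$ for $v\in S_p(F)$.)

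However, there is a concrete false step: you assert that $v$ is totally ramified in $F_\cyc/F$ with a \emph{unique} prime $w_n$ of $F_n$ above $v$ at every layer, and you use this to conclude that the transition maps are the identity and that both $H^{1}_{/{\rm f}}(G_{F_{v}}, \bT_{\epsilon}^\cyc)$ and $H^{2}(G_{F_{v}}, \bT_{\epsilon}^\cyc)$ are isomorphic to $\bZ_p$ with trivial $\Gamma_{\rm cyc}$-action (the augmentation quotient of $\Lambda$). In the cyclotomic $\bZ_p$-extension of a general totally real field $F$, a prime above $p$ is only \emph{finitely decomposed} and \emph{eventually} totally ramified: the decomposition subgroup $D_v\subset\Gamma_{\rm cyc}$ is open but may be a proper subgroup, so $v$ can split at finitely many initial layers (and ramification can also be delayed), and the hypothesis \ref{item_ord} says nothing about this. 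The correct outcome of your computation is $H^{2}(G_{F_{v}}, \bT_{\epsilon}^\cyc)\cong \bZ_p[[\Gamma_{\rm cyc}/D_v]]$, a free $\bZ_p$-module of rank $[\Gamma_{\rm cyc}:D_v]$ on which $\Gamma_{\rm cyc}$ acts through the finite quotient $\Gamma_{\rm cyc}/D_v$ — this is exactly what the paper records later in Proposition~\ref{proposition:order-H2} — and not $\bZ_p$ with trivial action. Also, at an unramified (inert) step the norm map multiplies valuations by the residue degree, so "identity on the valuation quotients" needs the eventual total ramification, not total ramification from the bottom. The conclusions of the lemma do survive the correction: for $n\geq N$ (once the set of primes above $v$ has stabilized and the tower is totally ramified at them) the transition maps on $\bigoplus_{w\mid v}\bZ_p$ become isomorphisms, so both limits are finitely generated over $\bZ_p$, hence $\Lambda$-torsion, and the $\mathrm{ord}$/$\mathrm{inv}$ comparison still identifies them. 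This is precisely how the paper argues, working with the full (finite) set of primes at level $N$ rather than a single one; as written, though, your proof rests on an assertion that fails in general and yields an incorrect description of the module structure.
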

\begin{proof}
For an integer $n > 0$, let $K_{n}$ denote the $n$-th layer of the cyclotomic $\ZZ_p$-tower $K_{\rm cyc}/K$ and 
$S_{v}(K_{n})$  the set of primes of $K_{n}$ above $v$. 
We have a pair of isomorphisms 
\[
\bigoplus_{w \in S_{v}(K_{n})} H^{1}_{/{\rm f}}(G_{F_{n, w}}, \bZ_{p}(1) \otimes \epsilon_{K/F}) \xrightarrow[\ord]{\sim}
\left(\bigoplus_{w \in S_{v}(K_{n})}\bZ_{p}\right)^{\epsilon_{F/K}} \xleftarrow[{\rm inv}]{\sim} \bigoplus_{w \in S_{v}(K_{n})} H^{2}(G_{F_{n, w}}, \bZ_{p}(1) \otimes \epsilon_{K/F}). 
\]
where the isomorphism on the right is induced from the local invariant map.

Note that the extension $K_{n+1}/K_{n}$ is totally ramified at any prime above $p$ for all sufficiently large values of $n$ (say for all $n\geq N$). Therefore, the transition maps 
$$\bigoplus H^{1}_{/{\rm f}}(G_{F_{n+1, w}}, \bZ_{p}(1) \otimes \epsilon_{K/F}) \longrightarrow \bigoplus H^{1}_{/{\rm f}}(G_{F_{n, w}}, \bZ_{p}(1) \otimes \epsilon_{K/F})$$ 
are surjective and $|S_{v}(K_{n})|=|S_{v}(K_{N})|$ for $n\geq N$. 
This shows  that 
$$H^{1}_{/{\rm f}}(G_{F_{v}}, \bT_{\epsilon}^\cyc)\xrightarrow[\ord]{\sim} \left(\oplus_{w \in S_{v}(K_{N})}\bZ_{p}\right)^{\epsilon_{F/K}}$$ 
and in particular, we conclude that the $\LL$-module $H^{1}_{/{\rm f}}(G_{F_{v}}, \bT_{\epsilon}^\cyc)$ is torsion. 

Note also that the cohomological dimension of the absolute Galois group of a $p$-adic local field is $2$ and hence, the transition maps $\bigoplus H^{2}(G_{F_{n+1, w}}, \bZ_{p}(1) \otimes \epsilon_{K/F}) \longrightarrow \bigoplus H^{2}(G_{F_{n, w}}, \bZ_{p}(1) \otimes \epsilon_{K/F})$ are surjective for all $n\geq N$. We therefore conclude that $H^{2}(G_{F_{v}}, \bT_{\epsilon}^\cyc)\xrightarrow[{\rm inv}]{\sim} \left(\oplus_{w \in S_{v}(K_{N})}\bZ_{p}\right)^{\epsilon_{F/K}}$. Combined with the discussion in the previous paragraph, this completes our proof. 
\end{proof}

\begin{lemma}\label{lemma:coh-beta}
\item[i)] $\chi({\bf R}\Gamma(G_{F,S}, \bT_{\epsilon}^\cyc)) = 0$. 
\item[ii)] $H^{0}(G_{F,S}, \bT_{\epsilon}^\cyc)=0$.
\item[iii)] Both $\Lambda$-modules $H^{1}(G_{F,S}, \bT_{\epsilon}^\cyc)$ and $H^{2}(G_{F,S}, \bT_{\epsilon}^\cyc)$ are torsion. 
\end{lemma}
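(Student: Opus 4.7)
The plan is to deduce all three parts (i)--(iii) from the long exact cohomology sequence attached to the exact triangle provided by Proposition~\ref{prop:decomp-gross}, which after applying the two quasi-isomorphisms immediately following it takes the form
$$ {\bf R}\Gamma_{\rm c}(G_{F, S_p(F)}, \bT_{\mathds 1}^\cyc) \longrightarrow \widetilde{{\bf R}\Gamma}_{\rm f}(G_{F,S}, \bT_F^\cyc, \Delta_\Sigma) \longrightarrow {\bf R}\Gamma(G_{F,S}, \bT_{\epsilon}^\cyc). $$

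The outer-left term is handled by Lemma~\ref{lemma:compact-supp-coh}: its cohomology vanishes in degrees $0$ and $1$, is $\Lambda$-torsion in degree $2$, is isomorphic to $\bZ_p$ (hence $\Lambda$-torsion) in degree $3$, and has vanishing Euler--Poincar\'e characteristic. For the middle term, Shapiro's lemma (as built into the definition of $\Delta_\Sigma$ for $\bT_F^\cyc$) gives a canonical identification
$$ \widetilde{{\bf R}\Gamma}_{\rm f}(G_{F,S}, \bT_F^\cyc, \Delta_\Sigma) \cong \widetilde{{\bf R}\Gamma}_{\rm f}(G_{K, S_K}, \bT_{\mathds 1}^\cyc, \Delta_\Sigma) $$
with the Selmer complex of \S\ref{subsection:example-cm} attached to the trivial character $\chi = \mathds 1$ and to $\frak K = K_{\rm cyc}$. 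Proposition~\ref{prop:parf[1,2]}, Corollary~\ref{cor:H^1_f=0 and H^2_f is torsion}(i), and Lemma~\ref{lemma:Euler characteristic = 0} then tell us that the cohomology of this Selmer complex is concentrated in degree $2$, is $\Lambda$-torsion there, and has vanishing Euler--Poincar\'e characteristic.

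Part (i) follows at once from the additivity of the Euler--Poincar\'e characteristic on exact triangles, since both of the terms flanking ${\bf R}\Gamma(G_{F,S}, \bT_{\epsilon}^\cyc)$ have vanishing Euler--Poincar\'e characteristic. For part (ii), the degree-zero segment of the long exact sequence reads $0 \to \widetilde{H}^0_{\rm f}(G_{F,S}, \bT_F^\cyc, \Delta_\Sigma) \to H^0(G_{F,S}, \bT_\epsilon^\cyc) \to H^1_{\rm c}(G_{F,S_p(F)}, \bT_{\mathds 1}^\cyc) = 0$, and the left-most term vanishes by Proposition~\ref{prop:parf[1,2]}, whence $H^0(G_{F,S}, \bT_\epsilon^\cyc) = 0$. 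For part (iii), inserting the vanishing of $\widetilde{H}^1_{\rm f}$ and of $\widetilde{H}^3_{\rm f}$ into the long exact sequence yields
$$ 0 \longrightarrow H^1(G_{F,S}, \bT_\epsilon^\cyc) \longrightarrow H^2_{\rm c}(G_{F,S_p(F)}, \bT_{\mathds 1}^\cyc) \longrightarrow \widetilde{H}^2_{\rm f} \longrightarrow H^2(G_{F,S}, \bT_\epsilon^\cyc) \longrightarrow H^3_{\rm c}(G_{F,S_p(F)}, \bT_{\mathds 1}^\cyc) \longrightarrow 0, $$
in which the three outer terms are all $\Lambda$-torsion; this forces both $H^1(G_{F,S}, \bT_\epsilon^\cyc)$ and $H^2(G_{F,S}, \bT_\epsilon^\cyc)$ to be $\Lambda$-torsion as well.

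No serious obstacle arises, since each of the ingredients is available in the preceding material. The only step that deserves a moment of care is verifying that Shapiro's identification of the middle term with the Selmer complex over $K$ is compatible with the local conditions (so that Proposition~\ref{prop:parf[1,2]} and Corollary~\ref{cor:H^1_f=0 and H^2_f is torsion}(i) may be applied); this is precisely the content of the way $\Delta_\Sigma$ for $\bT_F^\cyc$ has been set up in \S\ref{sec_agfCM}, where the local condition at each prime $v \in S_p(F)$ selects the factor indexed by $\tilde{v} \in \Sigma$ in the Shapiro decomposition of the local cohomology.
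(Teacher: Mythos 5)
Your argument is correct, and it takes a genuinely different route from the paper's own proof: you deduce all three assertions from the exact triangle of Proposition~\ref{prop:decomp-gross} (after the two quasi-isomorphisms that follow it), feeding in Lemma~\ref{lemma:compact-supp-coh} for the compactly supported term and, via the Shapiro identification of the middle term with $\widetilde{{\bf R}\Gamma}_{\rm f}(G_{K,S_K},\bT_{\mathds 1}^\cyc,\Delta_{\Sigma})$, the results of \S\ref{subsection:example-cm} (Proposition~\ref{prop:parf[1,2]}, Corollary~\ref{cor:H^1_f=0 and H^2_f is torsion}, Lemma~\ref{lemma:Euler characteristic = 0}, applied with $\chi=\mathds{1}$ and $\frak{K}=K_{\rm cyc}$); none of these inputs depends on the present lemma, so there is no circularity. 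The paper instead argues directly on the $\epsilon$-component: (i) is the global Euler--Poincar\'e formula applied to $\bT_{\epsilon}^\cyc$ itself (the archimedean $H^0$'s vanish), (ii) is quoted from \cite[Lemma~B.3.2]{rubin00}, and (iii) is obtained from the Kummer-theoretic exact sequence $0\to\varprojlim_{n}\cO_{K_n}^{\times,\epsilon_{K/F}}\to H^1(G_{F,S},\bT_{\epsilon}^\cyc)\to\bigoplus_{v\in S_p(F)}H^1_{/{\rm f}}(G_{F_v},\bT_{\epsilon}^\cyc)$ together with Lemma~\ref{lemma:local-isom-torsion} and the CM observation that the $\epsilon_{K/F}$-part of the units is $\mu_{p^\infty}$. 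Your route is more economical given the preceding material and essentially runs in reverse the bookkeeping the paper carries out later in the proof of Theorem~\ref{thm_algebraic_gross_factorization}; what it does not produce is the explicit identification $H^1(G_{F,S},\bT_{\epsilon}^\cyc)\cong T_p(\mu_{p^\infty}(K_{\rm cyc}))$, which the paper's Kummer-theoretic set-up yields for free and uses immediately afterwards (Corollary~\ref{corollary:isom-torsion}, Lemma~\ref{lemma:char-sel}, Definition~\ref{defn_alg_DR_p-adicL}). One small caveat on the step you rightly flag: under Shapiro, the local condition of \S\ref{sec_agfCM} at $v\in S_p(F)$ becomes the full condition at $\tilde{v}\in\Sigma$ and the strict one at $\tilde{v}^c\in\Sigma^c$, i.e.\ the conventions of Definition~\ref{def:extended selmer} with the roles of $\Sigma$ and $\Sigma^c$ interchanged; since $\mathds{1}$ is $c$-invariant and all the cited results of \S\ref{subsection:example-cm} hold for either $p$-adic CM type, this is harmless, but it deserves the explicit remark you give.
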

\begin{proof}
\item[i)] 
Since $H^{0}(G_{F_{v}}, \bT_{\epsilon}^\cyc) = 0$ for any $v \in S_{\infty}(F)$, this portion follows from the global Euler--Poincar\'e characteristic formula \cite[Theorem~7.8.6]{Nek}
\[
\chi({\bf R}\Gamma_{\rm c}(G_{F,S},\bT_{\epsilon}^\cyc)) = \sum_{v \in S_\infty(F)}0\,.
\]

\item[ii)]   This part is an immediate consequence of \cite[Lemma~B.3.2]{rubin00}.

\item[iii)] Thanks to (i) and (ii), it is enough to check that the $\Lambda$-module $H^{1}(G_{F,S}, \bT_{\epsilon}^\cyc)$ is torsion. Since we have $H^{1}(G_{F_{v}}, \bT_{\epsilon}^\cyc) = 0$ for any prime $v \in S_{\rm ram}(K/F)$, there exists an exact sequence 
\[
0 \longrightarrow \varprojlim_{n>0} \cO_{K_{n}}^{\times, \epsilon_{K/F}} \longrightarrow H^{1}(G_{F,S}, \bT_{\epsilon}^\cyc) \longrightarrow \bigoplus_{v \in S_{p}(F)} H^{1}_{/{\rm f}}(G_{F_{v}}, \bT_{\epsilon}^\cyc)\,. 
\]
In view of  Lemma \ref{lemma:local-isom-torsion}, it suffices to prove that $ \varprojlim_{n>0} \cO_{K_{n}}^{\times, \epsilon_{K/F}} $ is a torsion $\LL$-module. Since $K_{n}$ is a CM field, we have $\cO_{K_{n}}^{\times, \epsilon_{K/F}} = \mu_{p^{\infty}}(K_{n})$. Thence, $\varprojlim_{n>0} \cO_{K_{n}}^{\times, \epsilon_{K/F}} = \varprojlim_{n>0}\mu_{p^{\infty}}(K_{n})$, which is manifestly $\LL$-torsion. 
\end{proof}

\begin{definition}
We let $T_{p}(\mu_{p^{\infty}}(K_{\rm cyc}))$ denote the $p$-adic Tate module of $\mu_{p^{\infty}}(K_{\rm cyc})$. 
\end{definition}

The proof of Lemma~\ref{lemma:coh-beta}(iii) shows that we have a natural identification $T_{p}(\mu_{p^{\infty}}(K_{\rm cyc}))=\varprojlim_{n>0} \cO_{K_{n}}^{\times, \epsilon_{K/F}}$.

\begin{corollary}\label{corollary:isom-torsion}
The canonical map 
\[
T_{p}(\mu_{p^{\infty}}(K_{\rm cyc})) = \varprojlim_{n>0} \cO_{K_{n}}^{\times, \epsilon_{K/F}} \longrightarrow H^{1}(G_{F,S}, \bT_{\epsilon}^\cyc)
\]
induced by Kummer theory is an isomorphism. 
\end{corollary}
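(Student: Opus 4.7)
The exact sequence derived in the proof of Lemma~\ref{lemma:coh-beta}(iii) already identifies the Kummer map as an injection
\[
0\longrightarrow T_p(\mu_{p^\infty}(K_\cyc))\longrightarrow H^1(G_{F,S},\bT_\epsilon^\cyc)\xrightarrow{\;\mathrm{loc}\;}\bigoplus_{v\in S_p(F)}H^1_{/\mathrm{f}}(G_{F_v},\bT_\epsilon^\cyc),
\]
so the task is to show that the localization map $\mathrm{loc}$ is identically zero. By Shapiro's lemma applied to the decomposition $\mathrm{Ind}_{K_n/F_n}\bZ_p(1)\cong\bZ_p(1)\oplus(\bZ_p(1)\otimes\epsilon_{K/F})$ and Kummer theory at each finite level of the cyclotomic $\bZ_p$-tower, the global cohomology on the left is identified with $\varprojlim_n\bigl(\widehat{\cO_{K_n,S_{K_n}}^\times}\bigr)^{\epsilon_{K/F}}$ and the local target with $\varprojlim_n\bigl(\bigoplus_{w\in S_p(K_n)}\bZ_p\bigr)^{\epsilon_{K/F}}$, under which $\mathrm{loc}$ becomes the inverse limit of the valuation maps at primes above $p$ on the $\epsilon_{K/F}$-isotypic (``minus'') part.

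The plan is to apply the Poitou--Tate duality for Iwasawa cohomology together with the local identification $H^1_{/\mathrm{f}}(G_{F_v},\bT_\epsilon^\cyc)\cong H^2(G_{F_v},\bT_\epsilon^\cyc)$ from Lemma~\ref{lemma:local-isom-torsion}. The global-to-local map $H^1(G_{F,S},\bT_\epsilon^\cyc)\to\bigoplus_{v|p}H^2(G_{F_v},\bT_\epsilon^\cyc)$ fits into a nine-term Poitou--Tate exact sequence in which its cokernel is governed by $H^0(G_{F,S},(\bT_\epsilon^\cyc)^\vee(1))^\vee$, where the Iwasawa-theoretic Tate dual $(\bT_\epsilon^\cyc)^\vee(1)$ may be identified (using $\epsilon_{K/F}^{-1}=\epsilon_{K/F}$) with $\bZ_p\otimes\epsilon_{K/F}\otimes\Lambda^\sharp$. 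One checks that this representation has no non-trivial $G_{F,S}$-invariants: any such invariant would have to be simultaneously fixed by the tautological character $G_F\twoheadrightarrow\Gamma_\cyc\hookrightarrow\Lambda^\times$ and by the non-trivial quadratic character $\epsilon_{K/F}$, which is impossible.

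To obtain vanishing of $\mathrm{loc}$ itself (and not merely of the cokernel), one extends the analysis by invoking also the ``kernel side'' of the Poitou--Tate sequence and combines it with the Euler characteristic formula $\chi(\widetilde{\mathbf{R}\Gamma}(G_{F,S},\bT_\epsilon^\cyc))=0$ (Lemma~\ref{lemma:coh-beta}(i)) and the torsion property of $H^1$ and $H^2$ (Lemma~\ref{lemma:coh-beta}(iii)). Concretely, the dual Selmer group attached to $(\bT_\epsilon^\cyc)^\vee(1)$ with the dual local conditions (strict at primes above $p$) must also vanish, the key input being the CM-field identity $\cO_{K_n}^{\times,\epsilon_{K/F}}=\mu_{p^\infty}(K_n)$ combined with the universal-norm analysis of minus units in the cyclotomic tower; this forces every class on the left to have trivial valuation at each $v\mid p$ in the limit.

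The main obstacle is the precise bookkeeping of local conditions and their behaviour under duality in the Iwasawa-theoretic setting, especially ensuring that the correct dual Selmer group is the one that vanishes. An alternative route, avoiding duality, would be to analyze directly the inverse limit of the Dirichlet exact sequence $0\to(\widehat{\cO_{K_n}^\times})^{\epsilon}\to(\widehat{\cO_{K_n,S}^\times})^{\epsilon}\to(\oplus_{w\mid p}\bZ_p)^{\epsilon}\to(\mathrm{Cl}(\cO_{K_n})\otimes\bZ_p)^\epsilon$ and use the structure of the minus part of the $p$-class group tower to argue that universal-norm-compatible elements in the middle term reduce modulo roots of unity to zero.
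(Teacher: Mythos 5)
Your reduction is fine: since the exact sequence from the proof of Lemma~\ref{lemma:coh-beta}(iii) identifies $\varprojlim_n \cO_{K_n}^{\times,\epsilon_{K/F}}$ with the kernel of the localization map $\mathrm{loc}\colon H^1(G_{F,S},\bT_\epsilon^\cyc)\to\bigoplus_{v\in S_p(F)}H^1_{/{\rm f}}(G_{F_v},\bT_\epsilon^\cyc)$, the corollary is indeed equivalent to $\mathrm{loc}=0$. But the mechanism you propose to prove this is flawed. First, the Poitou--Tate statement you invoke is misapplied: the cokernel controlled by $H^0(G_{F,S},(\bT_\epsilon^\cyc)^\vee(1))^\vee$ is that of the degree-two map $H^2(G_{F,S},\cdot)\to\bigoplus_{v\in S}H^2(G_{F_v},\cdot)$, not of your degree-one map into the singular quotients at the $p$-adic places only; the identification $H^1_{/{\rm f}}\cong H^2$ of Lemma~\ref{lemma:local-isom-torsion} does not convert one into the other. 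Second, and more seriously, the conclusion you aim at in that paragraph is false: the cokernel of $\mathrm{loc}$ does not vanish. Each $H^1_{/{\rm f}}(G_{F_v},\bT_\epsilon^\cyc)$ is a nonzero free $\bZ_p$-module (under \ref{item_ord} the primes above $p$ split in $K$, so the $\epsilon_{K/F}$-eigenspace of $\bigoplus_{w\mid v}\bZ_p$ is nonzero), and since $\mathrm{loc}=0$ is exactly what the corollary asserts, its cokernel is the whole nonzero target; it surjects into the picture involving $X_{K,\infty}^-$ as in the proof of Lemma~\ref{lemma:char-sel}. So if your cokernel computation were correct it would contradict the statement you are trying to prove. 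Likewise, the ``dual Selmer group with strict conditions at $p$'' that you want to kill in the third paragraph is essentially $\Gal(M_K^{\rm spl}/K_{\rm cyc})^{\epsilon_{K/F}}$ (compare the proof of Proposition~\ref{proposition:order-H2}(ii)), which need not vanish; its non-vanishing at the augmentation ideal is precisely what Gross' regulator condition is about. The ``universal-norm analysis of minus units'' is left as a hope rather than an argument, and the genuinely needed input is missing.

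The missing idea is the structure theorem for Iwasawa cohomology that the paper uses: the $\Lambda$-torsion submodule of $H^1(G_{F,S},\bT_\epsilon^\cyc)=\varprojlim_n H^1(G_{F_n,S},\bZ_p(1)\otimes\epsilon_{K/F})$ is canonically isomorphic to $(\bZ_p(1)\otimes\epsilon_{K/F})^{G_{F_{\rm cyc}}}=T_p(\mu_{p^\infty}(K_{\rm cyc}))$, compatibly with the Kummer map (this is \cite[Corollary 9.1.7]{Nek}). Since Lemma~\ref{lemma:coh-beta}(iii) says the whole module $H^1(G_{F,S},\bT_\epsilon^\cyc)$ is $\Lambda$-torsion, it equals its torsion submodule and the corollary follows in two lines. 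Note that mere torsionness of source and target cannot give $\mathrm{loc}=0$ (the target is itself torsion), so some such structural statement about the torsion submodule — or an equivalent analysis of universal norms of minus $S_p$-units showing their valuations die in the limit — is unavoidable; your sketch does not supply it.
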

\begin{proof}
It is well-known that the homomorphism 
\[
 T_{p}(\mu_{p^{\infty}}(K_{\rm cyc}))   = (\bZ_{p}(1) \otimes \epsilon_{K/F})^{G_{F_{\rm cyc}}}  \longrightarrow H^{1}(G_{F,S}, \bT_{\epsilon}^\cyc)_{\rm tors} 
\]
is an isomorphism (c.f. \cite[Corollary 9.1.7]{Nek}). Our assertion follows from Lemma \ref{lemma:coh-beta}(iii). 
\end{proof}

Let $\cH(K_{\rm cyc})$ denote the maximal unramified pro-$p$ abelian extension of $K_{\rm cyc}$ and put 
\[
X_{K, \infty}^{-} := \Gal(\cH(K_{\rm cyc})/K_{\rm cyc})^{\epsilon_{K/F}}. 
\]

\begin{lemma}\label{lemma:char-sel}
We have 
\[
{\rm char}_{\Lambda}(X_{K, \infty}^{-}) = 
{\rm char}_{\Lambda}(H^{2}(G_{F,S}, \bT_{\epsilon}^\cyc))
\]
as non-zero ideals of $\LL$.
\end{lemma}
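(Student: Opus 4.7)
My plan is to establish the identity by combining Shapiro's lemma, the Iwasawa-theoretic Poitou--Tate exact sequence for $\bZ_p(1)$, and the standard comparison between the Iwasawa $S$-class group and the unramified Iwasawa module of $K_{\rm cyc}$. First, since ${\rm Ind}_{K/F}\bZ_p(1) = \bZ_p(1) \oplus (\bZ_p(1)\otimes\epsilon_{K/F})$ as a $G_F$-module and this decomposition matches the eigenspaces for $c \in \Gal(K/F)$, Shapiro's lemma yields the identification
\[
H^2(G_{F,S}, \bT_\epsilon^\cyc) \;\cong\; H^2(G_{K,S_K}, \bZ_p(1)\otimes\Lambda^\iota)^{c=-1}.
\]
Thus both sides of the claimed equality live naturally as isotypic pieces of Iwasawa cohomology over $K_{\rm cyc}$, and this step is routine.

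Secondly, I would invoke the Iwasawa--Poitou--Tate exact sequence for the $G_K$-representation $\bZ_p(1)\otimes\Lambda^\iota$. The $H^0$ of the Tate dual $\Lambda^\sharp$ vanishes because $\Lambda^\sharp$ has no $G_K$-invariants, so the sequence collapses to
\[
0 \to \cX_{S_K} \longrightarrow H^2(G_{K,S_K}, \bZ_p(1)\otimes\Lambda^\iota) \longrightarrow \bigoplus_{v \in S_K^{\rm f}} H^2(G_{K_v}, \bZ_p(1)\otimes\Lambda^\iota) \to 0,
\]
with $\cX_{S_K} := \varprojlim_n \Cl_{S_K}(K_n)\{p\}$. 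Each local term is computed via Shapiro's lemma applied at the prime $v$: it is a cyclic $\Lambda$-module whose $\Gamma_{\cyc}$-action factors through the quotient by the decomposition subgroup $\Gamma_{\cyc, v}$. Taking $(-)^{c=-1}$ (an exact operation since $2 \in \bZ_p^\times$), the summands at primes ramified in $K/F$ drop out because $c$ fixes each such prime and acts trivially on the resulting $\bZ_p$; only the primes above $p$ (split in $K/F$ by \ref{item_ord}) contribute. One thereby arrives at an exact sequence
\[
0 \to \cX_{S_K}^- \longrightarrow H^2(G_{F,S}, \bT_\epsilon^\cyc) \longrightarrow M \to 0,
\]
where $M$ is an explicit $\Lambda$-module assembled from the local $H^2$'s at $v \in S_p(F)$.

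Thirdly, I would compare $\cX_{S_K}^-$ with $X_{K,\infty}^-$ by $p$-completing and passing to the inverse limit of the tautological sequence $\cO_{K_n, S_K}^\times \to \bigoplus_{w | S_K} \bZ \to \Cl(K_n) \to \Cl_{S_K}(K_n) \to 0$, then extracting $c=-1$ parts. Because $K$ is CM, the minus part of the global unit tower is captured (up to a pseudo-null error) by $T_p(\mu_{p^\infty}(K_{\rm cyc}))$, so the resulting Iwasawa-theoretic exact sequence exhibits $X_{K,\infty}^-/\cX_{S_K}^-$ as having exactly the same characteristic ideal as the module $M$ from the previous paragraph. Multiplicativity of characteristic ideals in short exact sequences of finitely generated torsion $\Lambda$-modules (the torsion of $H^2(G_{F,S}, \bT_\epsilon^\cyc)$ being Lemma~\ref{lemma:coh-beta}(iii), and the torsion of $X_{K,\infty}^-$ being a classical fact about the minus part of the cyclotomic Iwasawa module of a CM field) then yields
\[
{\rm char}_\Lambda(X_{K,\infty}^-) \;=\; {\rm char}_\Lambda(\cX_{S_K}^-)\cdot{\rm char}_\Lambda(M) \;=\; {\rm char}_\Lambda(H^2(G_{F,S}, \bT_\epsilon^\cyc)).
\]
The principal obstacle is the bookkeeping in the last step: the local contributions on the two sides involve a careful matching of decomposition groups $\Gamma_{\cyc, v}$ with the Iwasawa modules of classes of $S_K$-primes in $K_{\rm cyc}$, and one must verify that the pseudo-null discrepancies coming from $T_p(\mu_{p^\infty}(K_{\rm cyc}))$ and from the kernel of $\oplus_{v} \bZ_p[[\Gamma_\cyc/\Gamma_{\cyc, v}]]^- \to X_{K,\infty}^-$ cancel against each other so that the residual characteristic ideals line up precisely.
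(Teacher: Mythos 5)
Your overall strategy (global duality plus multiplicativity of characteristic ideals, with the unit and class group towers identified in the minus part) is in the right spirit, but as written it has two concrete problems. First, the claimed ``collapse'' of the Poitou--Tate sequence over $K$ to a short exact sequence $0 \to \cX_{S_K} \to H^2(G_{K,S_K}, \bZ_p(1)\otimes\Lambda^\iota) \to \bigoplus_v H^2(G_{K_v}, \bZ_p(1)\otimes\Lambda^\iota) \to 0$ is false: the cokernel of the localization map is $H^0\bigl(G_{K,S_K}, (\bZ_p(1)\otimes\Lambda^\iota)^\vee(1)\bigr)^\vee \cong \bZ_p$, i.e.\ the inverse limit of the sum-of-invariants cokernels at each finite layer, and this is nonzero. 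Your justification computes the $G_K$-invariants of the \emph{compact} Tate dual $\Lambda^\sharp$, which is indeed zero, but that is not the group that appears in the duality sequence; the relevant group is the dual of the invariants of the discrete dual, which is $\bQ_p/\bZ_p$. The offending $\bZ_p$ does die after taking $c=-1$ parts (it is $c$-fixed and $p$ is odd), which is precisely why the paper works over $F$ with $\bT_\epsilon^\cyc$ from the start: there the corresponding $H^0$-term vanishes because of the $\epsilon_{K/F}$-twist, and one gets in one stroke a six-term exact sequence $0 \to T_p(\mu_{p^\infty}(K_{\rm cyc})) \to H^1(G_{F,S},\bT_\epsilon^\cyc) \to \bigoplus_{v\in S_p(F)} H^1_{/{\rm f}}(G_{F_v},\bT_\epsilon^\cyc) \to X_{K,\infty}^- \to H^2(G_{F,S},\bT_\epsilon^\cyc) \to \bigoplus_{v\in S_p(F)} H^2(G_{F_v},\bT_\epsilon^\cyc) \to 0$ in which $X_{K,\infty}^-$ itself appears, with no detour through the $S$-class group.

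Second, and more seriously, your third step is only asserted, and it is mis-framed. The discrepancy between $\cX_{S_K}^-$ and $X_{K,\infty}^-$, the contribution of the minus part of the $S$-unit tower, and the local terms at $p$ are all $\Lambda$-torsion modules that are \emph{not} pseudo-null: along the cyclotomic tower the decomposition groups at $p$ are open, so modules such as $\bZ_p[[\Gamma_{\rm cyc}/\Gamma_{{\rm cyc},v}]]$ are finitely generated over $\bZ_p$ of positive rank and have nontrivial characteristic ideal (a power of a height-one prime containing $\sA_{\rm cyc}$). Likewise the minus part of the $S$-unit tower is strictly larger than $T_p(\mu_{p^\infty}(K_{\rm cyc}))$ at finite level, since every $p$-adic prime of $F$ splits in $K$ by \ref{item_ord}. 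So the cancellation you defer is not a matter of ignoring pseudo-null errors: it is exactly the content of the lemma, and in the paper it is carried by Lemma~\ref{lemma:local-isom-torsion} (the ${\rm ord}$/${\rm inv}$ identification $H^1_{/{\rm f}}(G_{F_v},\bT_\epsilon^\cyc) \cong H^2(G_{F_v},\bT_\epsilon^\cyc)$ for each $v \in S_p(F)$) together with Corollary~\ref{corollary:isom-torsion} ($H^1(G_{F,S},\bT_\epsilon^\cyc) = T_p(\mu_{p^\infty}(K_{\rm cyc}))$, which uses the torsionness established in Lemma~\ref{lemma:coh-beta}). Until you perform this matching — identifying precisely which quotient of $\bigoplus_{v}\bZ_p[[\Gamma_{\rm cyc}/\Gamma_{{\rm cyc},v}]]$ maps onto the kernel of $X_{K,\infty}^-\to \cX_{S_K}^-$ and showing its characteristic ideal equals that of your module $M$ — the proof is incomplete.
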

\begin{proof}
Since we have $ H^{1}(G_{F_{v}}, \bT_{\epsilon}^\cyc)=0=H^{2}(G_{F_{v}}, \bT_{\epsilon}^\cyc) $ for any prime $v \in S_{\rm ram}(K/F)$, we have the following exact sequence of torsion $\Lambda$-modules by global duality: 
\begin{align*}
0 \longrightarrow T_{p}(\mu_{p^{\infty}}(K_{\rm cyc})) \longrightarrow & \,H^{1}(G_{F,S}, \bT_{\epsilon}^\cyc) \longrightarrow \bigoplus_{v \in S_{p}(F)} H^{1}_{/{\rm f}}(G_{F_{v}}, \bT_{\epsilon}^\cyc) 
\\
&\longrightarrow X_{K, \infty}^{-} \longrightarrow H^{2}(G_{F,S}, \bT_{\epsilon}^\cyc) \longrightarrow \bigoplus_{v \in S_{p}(F)} H^{2}(G_{F_{v}}, \bT_{\epsilon}^\cyc) \longrightarrow 0. 
\end{align*}
We therefore infer that 
$$\frac{{\rm char}_{\Lambda}( X_{K, \infty}^{-})}{{\rm char}_{\Lambda}(H^{2}(G_{F,S}, \bT_{\epsilon}^\cyc))}=\frac{{\rm char}_{\Lambda}( \oplus_{v \in S_{p}(F)} H^{2}(G_{F_{v}}, \bT_{\epsilon}^\cyc) )}{{\rm char}_{\Lambda}(\oplus_{v \in S_{p}(F)} H^{1}_{/{\rm f}}(G_{F_{v}}, \bT_{\epsilon}^\cyc) )}\times \frac{{\rm char}_{\Lambda}(H^{1}(G_{F,S}, \bT_{\epsilon}^\cyc))}{{\rm char}_{\Lambda}( T_{p}(\mu_{p^{\infty}}(K_{\rm cyc})))}\,.$$
The asserted equality follows from Lemma~\ref{lemma:local-isom-torsion} and Corollary~\ref{corollary:isom-torsion}. 
\end{proof}
\begin{definition}
\label{defn_alg_DR_p-adicL}
We define the algebraic Deligne--Ribet $p$-adic $L$-function 
\[
L_{\epsilon_{K/F}\omega_F}^{\rm alg, DR} \in {\rm Frac}(\LL)^\times/\LL^\times
\]
attached to the totally even character $\epsilon_{K/F}\omega_F$, as the image of a generator of the cyclic $\LL$-module 
\[
\Ann_{\Lambda}(T_{p}(\mu_{p^{\infty}}(K_{\rm cyc})))^{-1} {\rm char}_{\Lambda}(X_{K, \infty}^{-}) 
= 
\det({\bf R}\Gamma(G_{F,S}, \bT_{\epsilon}^\cyc)). 
\]
Similarly, we define the $p$-adic Dedekind zeta-function 
$\zeta_{{\rm alg}} \in {\rm Frac}(\LL)^\times/\LL^\times$ as the image of a generator of the fractional ideal \[
\sA_{\rm cyc}^{-1}\,{\rm char}_{\Lambda}(H^{2}_{\rm c}(G_{F,S}, \bT_{\mathds 1}^\cyc))  = \det({\bf R}\Gamma_{\rm c}(G_{F,S},\bT_{\mathds 1}^\cyc), 
\]
where $\sA_{\rm cyc}\subset \Lambda = \Lambda_{\bZ_{p}}(\Gamma_{\rm cyc})$  is the augmentation ideal. 
\end{definition}

\begin{remark}
Let $M_{F, S}$ denote the maximal pro-$p$ abelian extension of $F_{\rm cyc}$ which is unramified outside $S$. It follows from global duality that $H^{2}_{\rm c}(G_{F,S}, \bT_{\mathds 1}^\cyc) \cong \Gal(M_{F,S}/F_{\rm cyc})$. 
The classical Iwasawa main conjecture for $F$ asserts that $\Gal(M_{F,S}/F_{\rm cyc})$ is generated by 
$\sA_{\rm cyc} \cdot \zeta_p$, i.e.
\[
\zeta_{\mathrm{alg}} = \zeta_p \,\,\, \textrm{ in ${\rm Frac}(\LL)^\times/\LL^\times$}. 
\]
\end{remark}

Recall also the restriction $L_{p, \mathds{1}^{\rm alg}}\vert_{\Gamma_{\rm cyc}}$ of the algebraic Katz $p$-adic $L$-function to the cyclotomic $\ZZ_p$-tower of $K$.

\begin{theorem}[Algebraic Gross' Factorization for CM fields]
\label{thm_algebraic_gross_factorization} We have the following equality taking place in ${\rm Frac}(\LL)^\times/\LL^\times$:
\[
L_{p, \mathds{1}^{\rm alg}}\vert_{\Gamma_{\rm cyc}} = L_{\epsilon_{K/F}\omega_F}^{\rm alg, DR}\cdot
\zeta_{\rm alg}\,.
\]
\end{theorem}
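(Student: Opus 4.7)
The plan is to interpret the left-hand side as the determinant (equivalently, the generator of the Fitting ideal of the $H^{2}$) of the induced Selmer complex on the $F$-side, invoke Shapiro's lemma, and then use the short exact sequence of complexes provided by Proposition~\ref{prop:decomp-gross} to split this determinant into the two desired factors.

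More precisely, since $\cO = \bZ_{p}$ works for $\chi = \mathds{1}$, Definition~\ref{defn_algebraic_Katz_padic_L_Gamma} together with Corollary~\ref{corollary:H_f^1 vanishes iff H_f^2 is torsion and fitt=char}(ii) and Corollary~\ref{cor:H^1_f=0 and H^2_f is torsion} identify the restriction $L_{p,\mathds{1}^{\mathrm{alg}}}\vert_{\Gamma_{\mathrm{cyc}}}$, up to a unit in $\LL^{\times}$, with a generator of $\det\widetilde{{\bf R}\Gamma}_{\rm f}(G_{K,S_{K}},\bT_{\mathds 1}^{\mathrm{cyc}},\Delta_{\Sigma})$. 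Since $\Gal(K/F)$ acts trivially on $\Gamma_{\mathrm{cyc}}$ (and hence on $\LL$), the generator $c$ induces an isomorphism between the Selmer complexes for $\Delta_{\Sigma}$ and $\Delta_{\Sigma^{c}}$, so the determinants agree. Shapiro's lemma applied to $\bT_{F}^{\mathrm{cyc}} = {\rm Ind}_{K/F}(\bT_{\mathds 1}^{\mathrm{cyc}})$ then yields a canonical isomorphism
\[
\widetilde{{\bf R}\Gamma}_{\rm f}(G_{F,S},\bT_{F}^{\mathrm{cyc}},\Delta_{\Sigma}) \;\stackrel{\sim}{\longrightarrow}\; \widetilde{{\bf R}\Gamma}_{\rm f}(G_{K,S_{K}},\bT_{\mathds 1}^{\mathrm{cyc}},\Delta_{\Sigma^{c}}),
\]
under which the local conditions on the $F$-side (given by $C^{\bullet}(G_{K_{\tilde v}},\bT_{\mathds 1}^{\mathrm{cyc}})$ with $\tilde v \in \Sigma$) correspond to $\Delta_{\Sigma^{c}}$ on the $K$-side, matching the convention in Section~5.1 of this article. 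Consequently, $L_{p,\mathds{1}^{\mathrm{alg}}}\vert_{\Gamma_{\mathrm{cyc}}}$ generates $\det\widetilde{{\bf R}\Gamma}_{\rm f}(G_{F,S},\bT_{F}^{\mathrm{cyc}},\Delta_{\Sigma})$ in $\mathrm{Frac}(\LL)^{\times}/\LL^{\times}$.

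Next, the short exact sequence of complexes obtained in Proposition~\ref{prop:decomp-gross} gives rise to a distinguished triangle
\[
\widetilde{{\bf R}\Gamma}_{\rm c}(G_{F,S},\bT_{\mathds 1}^{\mathrm{cyc}}) \longrightarrow \widetilde{{\bf R}\Gamma}_{\rm f}(G_{F,S},\bT_{F}^{\mathrm{cyc}},\Delta_{\Sigma}) \longrightarrow \widetilde{{\bf R}\Gamma}(G_{F,S},\bT_{\epsilon}^{\mathrm{cyc}})
\]
in $D_{\rm parf}(_{\LL}\mathrm{Mod})$, where the outer complexes are quasi-isomorphic to ${\bf R}\Gamma_{\rm c}(G_{F,S_{p}(F)},\bT_{\mathds 1}^{\mathrm{cyc}})$ and ${\bf R}\Gamma(G_{F,S},\bT_{\epsilon}^{\mathrm{cyc}})$ respectively, by the lemma following Proposition~\ref{prop:decomp-gross}. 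Since the first inflation morphism is, by the same lemma, in fact a quasi-isomorphism with the full $S$-variant, its determinant also coincides (up to a unit) with $\det{\bf R}\Gamma_{\rm c}(G_{F,S},\bT_{\mathds 1}^{\mathrm{cyc}})$. By the multiplicativity of the determinant functor along distinguished triangles of perfect complexes, we therefore obtain the identity
\[
\det\widetilde{{\bf R}\Gamma}_{\rm f}(G_{F,S},\bT_{F}^{\mathrm{cyc}},\Delta_{\Sigma}) \;=\; \det{\bf R}\Gamma_{\rm c}(G_{F,S},\bT_{\mathds 1}^{\mathrm{cyc}}) \cdot \det{\bf R}\Gamma(G_{F,S},\bT_{\epsilon}^{\mathrm{cyc}})
\]
in $\mathrm{Frac}(\LL)^{\times}/\LL^{\times}$. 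Combining this with Definition~\ref{defn_alg_DR_p-adicL}, which identifies the two factors on the right with generators of $\zeta_{\mathrm{alg}}\cdot\LL^{\times}$ and $L^{\mathrm{alg,DR}}_{\epsilon_{K/F}\omega_{F}}\cdot\LL^{\times}$ respectively, and with the identification in the previous paragraph for the left-hand side, yields the asserted factorization.

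The main (minor) technical hurdle is the compatibility of local conditions under Shapiro's lemma and the observation that the $c$-action on $\LL$ is trivial, which together justify replacing the ``natural'' $\Delta_{\Sigma^{c}}$-Selmer complex arising from Shapiro by the $\Delta_{\Sigma}$-Selmer complex used to define $L_{p,\mathds{1}^{\mathrm{alg}}}\vert_{\Gamma_{\mathrm{cyc}}}$; once this is in place, Proposition~\ref{prop:decomp-gross} does all of the work.
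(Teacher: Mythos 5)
Your argument is correct and follows essentially the same route as the paper: identify $L_{p,\mathds 1}^{\rm alg}\vert_{\Gamma_{\rm cyc}}$ with the determinant of the Selmer complex (via the vanishing of $\widetilde{H}^1_{\rm f}$ and perfectness in degrees $[1,2]$), split that determinant using the exact sequence of Proposition~\ref{prop:decomp-gross} together with the lemma that follows it, and then plug in Definition~\ref{defn_alg_DR_p-adicL}. The Shapiro/$c$-conjugation compatibility you spell out is already built into the paper's definition of $\Delta_\Sigma$ on the $F$-side, so your extra care there only makes the same proof more explicit.
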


\begin{proof}
Recall that $L_{p, \mathds{1}^{\rm alg}}\vert_{\Gamma_{\rm cyc}}$ is a generator of ${\rm Fitt}_{\Lambda}(\widetilde{H}^{2}_{\rm f}(G_{K, S_{K}}, \bT_{\mathds 1}^\cyc, \Delta_{\Sigma}))$. Note in addition that $\widetilde{H}^{1}_{\rm f}(G_{K, S_{K}}, \bT_{\mathds 1}^\cyc, \Delta_{\Sigma})=0$ thanks to Corollary~\ref{cor:H^1_f=0 and H^2_f is torsion}. Proposition \ref{prop:parf[1,2]} therefore implies that 
\[
\det(\widetilde{{\bf R}\Gamma}_{\mathrm{f}}(G_{K, S_{K}}, \bT_{\mathds 1}^\cyc, \Delta_{\Sigma})) = 
{\rm Fitt}_{\Lambda}(\widetilde{H}^{2}_{\rm f}(G_{K, S_{K}}, \bT_{\mathds 1}^\cyc, \Delta_{\Sigma})). 
\]
Moreover, Proposition~\ref{prop:decomp-gross} shows that 
\[
\det(\widetilde{{\bf R}\Gamma}_{\mathrm{f}}(G_{K, S_{K}}, \bT_{\mathds 1}^\cyc, \Delta_{\Sigma}))
= 
\det({\bf R}\Gamma(G_{F,S}, \bT_{\epsilon}^\cyc))
\det({\bf R}\Gamma_{\rm c}(G_{F,S},\bT_{\mathds 1}^\cyc)). 
\]
Our proof is complete once we plug in the definitions of $L_{p, \mathds{1}^{\rm alg}}$, $L_{\epsilon_{K/F}\omega_F}^{\rm alg, DR}$ and $\zeta_{\rm alg}$.
\end{proof}

\begin{proposition}\label{proposition:order-H2}
\item[i)] The Leopoldt conjecture for $F$ holds true if and only if ${\rm char}_\LL(H^{2}_{\rm c}(G_{F,S}, \bT_{\mathds 1}^\cyc)) \not\subset \sA_\cyc$.  
\item[ii)]We have ${\rm ord}_{\sA_\cyc}({\rm char}_{\Lambda}(H^{2}(G_{F,S}, \bT_{\epsilon}^\cyc))) \geq \# S_{p}(F)$, with equality if Gross' $p$-adic regulator given as in \cite[(3.8)]{FedererGross81} is non-vanishing.
\end{proposition}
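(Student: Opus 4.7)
For part~(i), the plan is to identify $H^{2}_{\rm c}(G_{F,S}, \bT_{\mathds 1}^\cyc)$ via global duality with the classical Iwasawa module $X_F^\cyc := \Gal(M_{F,S}/F_\cyc)$ (as already observed in the remark preceding the proposition), and then to compute the $\ZZ_p$-rank of its $\Gamma_\cyc$-coinvariants. Since $X_F^\cyc$ is a finitely generated torsion $\LL$-module (cf.\ Lemma~\ref{lemma:compact-supp-coh}(iv)), the structure theorem delivers the key equivalence: ${\rm char}_\LL(X_F^\cyc) \not\subset \sA_\cyc$ if and only if the $\ZZ_p$-module $(X_F^\cyc)_{\Gamma_\cyc}$ is finite. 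Letting $\cM_{F,S}$ denote the maximal pro-$p$ abelian extension of $F$ unramified outside $S$ and observing that the closure of the commutator subgroup of $\Gal(M_{F,S}/F)$ coincides with $(\gamma-1) X_F^\cyc$ (where $\gamma$ is a topological generator of $\Gamma_\cyc$), one obtains an exact sequence
\[
0 \longrightarrow (X_F^\cyc)_{\Gamma_\cyc} \longrightarrow \Gal(\cM_{F,S}/F) \longrightarrow \Gamma_\cyc \longrightarrow 0.
\]
Global class field theory (combined with Dirichlet's unit theorem, together with the observation that primes in $S \setminus S_p(F)$ contribute only torsion to the pro-$p$ abelianisation since their residue characteristic differs from $p$) yields ${\rm rank}_{\ZZ_p}\Gal(\cM_{F,S}/F) = 1 + \delta(F)$, where $\delta(F)$ denotes the Leopoldt defect of $F$. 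Hence ${\rm rank}_{\ZZ_p}(X_F^\cyc)_{\Gamma_\cyc} = \delta(F)$, and part~(i) follows since the Leopoldt conjecture for $F$ is precisely the vanishing of $\delta(F)$.

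For part~(ii), I would first invoke Lemma~\ref{lemma:char-sel} to replace ${\rm char}_\LL(H^2(G_{F,S}, \bT_\epsilon^\cyc))$ by ${\rm char}_\LL(X_{K,\infty}^{-})$, and then use Definition~\ref{defn_alg_DR_p-adicL} to write
\[
\bigl(L^{\rm alg, DR}_{\epsilon_{K/F}\omega_F}\bigr) = \Ann_\LL\bigl(T_p(\mu_{p^\infty}(K_\cyc))\bigr)^{-1} \cdot {\rm char}_\LL(X_{K,\infty}^{-})
\]
in ${\rm Frac}(\LL)^\times/\LL^\times$. A direct calculation shows that $\Ann_\LL(T_p(\mu_{p^\infty}(K_\cyc)))$ is either the unit ideal (when $K \not\supset \mu_p$) or is principally generated by the degree-one distinguished polynomial $\gamma - \chi_\cyc(\gamma) \in \LL \setminus \sA_\cyc$ (when $K \supset \mu_p$); in both cases, its order at $\sA_\cyc$ vanishes. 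Consequently,
\[
{\rm ord}_{\sA_\cyc}\bigl({\rm char}_\LL(H^2(G_{F,S}, \bT_\epsilon^\cyc))\bigr) = {\rm ord}_{\sA_\cyc}\bigl(L^{\rm alg, DR}_{\epsilon_{K/F}\omega_F}\bigr),
\]
which reduces part~(ii) to bounding the order of vanishing of the algebraic Deligne--Ribet $p$-adic $L$-function at the augmentation ideal.

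To finish, I would apply the algebraic Federer--Gross bound \cite[Proposition 3.10]{FedererGross81} with the character $\eta = \epsilon_{K/F}$, which gives
\[
{\rm ord}_{\sA_\cyc}\bigl(L^{\rm alg, DR}_{\epsilon_{K/F}\omega_F}\bigr) \geq \#\{v \in S_p(F) : \epsilon_{K/F}(G_{F_v}) = 1\};
\]
under our running hypothesis~\ref{item_ord}, every prime of $F$ above $p$ splits in $K$, so $\epsilon_{K/F}(G_{F_v}) = 1$ for each such $v$ and the right-hand side equals $\# S_p(F)$. The sharper assertion that equality holds when Gross' $p$-adic regulator of \cite[(3.8)]{FedererGross81} is non-vanishing is also established by Federer--Gross as part of their analysis: they control the excess of ${\rm ord}_{\sA_\cyc}L^{\rm alg, DR}_{\epsilon_{K/F}\omega_F}$ over $\# S_p(F)$ by the kernel-dimension of this regulator map, so equality holds precisely when the regulator is non-vanishing. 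The principal (and only modest) obstacle in executing this plan is a careful translation between our normalisation of the algebraic Deligne--Ribet $p$-adic $L$-function and that used in \cite{FedererGross81}, but this is routine once the dictionary is established.
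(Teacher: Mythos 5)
Your part~(i) is essentially the paper's own argument: both identify $H^2_{\rm c}(G_{F,S},\bT_{\mathds 1}^\cyc)$ with $\Gal(M_{F,S}/F_{\rm cyc})$ by global duality and observe that the $\sA_\cyc$-coinvariants compute the Galois group of the maximal subextension abelian over $F$, whose finiteness is exactly Leopoldt; your explicit rank count ${\rm rank}_{\bZ_p}\Gal(\cM_{F,S}/F)=1+\delta(F)$ merely spells out what the paper leaves implicit, and your use of the structure theorem (char ideal prime to $\gamma-1$ iff finite coinvariants) is correct.

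For part~(ii) you take a genuinely different, and legitimate, route. The paper argues directly: global duality (together with the vanishing of $H^2(G_{F_v},\bT_\epsilon^\cyc)$ at $v\in S_{\rm ram}(K/F)$) gives a short exact sequence with kernel $\Gal(M_K^{\rm spl}/K_{\rm cyc})^{\epsilon_{K/F}}$ and cokernel $\bigoplus_{v\in S_p(F)}H^2(G_{F_v},\bT_\epsilon^\cyc)$; local Tate duality and hypothesis \ref{item_ord} identify each local term with $\bZ_p[[\Gamma_{\rm cyc}/D_v]]$, so each contributes exactly one to ${\rm ord}_{\sA_\cyc}$, and Sinnott's theorem (\S6 of Federer--Gross) supplies the regulator criterion for the split-class-group part to contribute zero. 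You instead use Lemma~\ref{lemma:char-sel} and Definition~\ref{defn_alg_DR_p-adicL} to convert ${\rm ord}_{\sA_\cyc}({\rm char}_\LL H^2(G_{F,S},\bT_\epsilon^\cyc))$ into ${\rm ord}_{\sA_\cyc}(L^{\rm alg,DR}_{\epsilon_{K/F}\omega_F})$ (your observation that $\Ann_\LL(T_p(\mu_{p^\infty}(K_{\rm cyc})))$ is either the unit ideal or generated by $\gamma-\chi_\cyc(\gamma)\notin\sA_\cyc$ is correct), and then outsource both the lower bound and the equality criterion to Federer--Gross. This is precisely the alternative the paper itself acknowledges in the remark following the proposition, where (ii) is said to follow from \cite[Proposition 6.1(1)]{FedererGross81}; the in-text proof is included exactly to avoid the normalisation dictionary you defer. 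Two points of care if you execute your plan: the correct reference for the ``equality iff regulator nonzero'' statement is Proposition 6.1(1) of Federer--Gross together with Sinnott's result in \S6 of that paper, not Proposition 3.10 plus an unspecified ``analysis''; and the lower-bound statement you quote (as in Remark~\ref{remark:totally_real_excep}) excludes $\eta=\omega_F^{-1}$, so the corner case $K=F(\mu_p)$ --- where $\epsilon_{K/F}\omega_F$ is trivial and $T_p(\mu_{p^\infty}(K_{\rm cyc}))\neq 0$ --- needs a separate word, whereas the paper's local computation covers it uniformly. What your route buys is brevity and a direct link to the Deligne--Ribet object appearing in Theorem~\ref{thm_algebraic_gross_factorization}; what the paper's route buys is self-containedness and an exact bookkeeping of where the $\#S_p(F)$ local trivial zeros come from.
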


\begin{remark}
Since ${\rm char}_{\Lambda}(X^-_{K, \infty}) = {\rm char}_{\Lambda}(H^{2}(G_{F,S}, \bT_{\epsilon}^\cyc))$, Proposition \ref{proposition:order-H2}(ii) follows from  \cite[Proposition 6.1(1)]{FedererGross81} and we have provided here a proof of Proposition~\ref{proposition:order-H2}(ii) for the reader's convenience.  We also remark that the corresponding analytic result is proved by Charollois and Dasgupta in \cite{CharolloisDasgupta14} without relying on the Iwasawa main conjecture. 
\end{remark}

\begin{proof}[Proof of Proposition~\ref{proposition:order-H2}]

\item[i)] 
Let $M_{F, S}$ denote the maximal pro-$p$ abelian extension of $F_{\rm cyc}$ which is unramified outside $S$. It follows by global duality that $H^{2}_{\rm c}(G_{F,S}, \bT_{\mathds 1}^\cyc) \cong \Gal(M_{F,S}/F_{\rm cyc})$. We therefore have an isomorphism 
\[
H^{2}_{\rm c}(G_{F,S}, \bT_{\mathds 1}^\cyc)/\sA_{\rm cyc}H^{2}_{\rm c}(G_{F,S}, \bT_{\mathds 1}^\cyc) \cong \Gal((M_{F,S} \cap F^{\rm ab})/F_{\rm cyc}). 
\]
The Galois group $\Gal((M_{F,S} \cap F^{\rm ab})/F_{\rm cyc})$ has finite cardinality if and only if Leopoldt's conjecture for $F$ holds true. This completes the proof of the first portion.

\item[ii)]  
Let $M_{K}^{\rm spl}$ denote  the maximal pro-$p$ abelian unramified extension of $K_{\rm cyc}$ such that any prime of $K_{\rm cyc}$ above $p$ splits completely  in $M_{K}^{\rm spl}$. 
Since we have $H^{2}(G_{F_{v}}, \bT_{\epsilon}^\cyc) = 0$ for any prime $v \in S \setminus S_{p}(F)$, global duality gives rise to the exact sequence
\[
0 \longrightarrow \Gal(M_{K}^{\rm spl}/K_{\rm cyc})^{\epsilon_{K/F}} \longrightarrow H^{2}(G_{F,S}, \bT_{\epsilon}^\cyc) \longrightarrow \bigoplus_{v \in S_{p}(F)}H^{2}(G_{F_{v}}, \bT_{\epsilon}^\cyc) \longrightarrow 0.  
\]
For a prime $v \in S_{p}(F)$, since $\epsilon_{K/F}(G_{F_{v}}) = 1$, local Tate duality yields  an identification
\[
H^{2}(G_{F_{v}}, \bT_{\epsilon}^\cyc) \cong \bZ_{p}[[\Gamma_{\rm cyc}/D_{v}]], 
\]
where $D_{v}\subset \Gamma_{\rm cyc}$ is the decomposition subgroup at $v$. 
Since $F_{\rm cyc}/F$ is the cyclotomic $\bZ_{p}$-extension, $D_{v}$ is non-trivial. 
This fact implies that ${\rm ord}_{\sA_\cyc}({\rm char}_{\Lambda}(H^{2}(G_{F_{v}}, \bT_{\epsilon}^\cyc))) =1$ and we obtain 
\[
{\rm ord}_{\sA_\cyc}({\rm char}_{\Lambda}(H^{2}(G_{F,S}, \bT_{\epsilon}^\cyc))) \geq \# S_{p}(F). 
\]
Moreover, Sinnott in \cite[\S6]{FedererGross81} has proved that Gross' $p$-adic regulator does not vanish 
 if and only if  ${\rm ord}_{\sA_\cyc}({\rm char}_{\Lambda}(\Gal(M_{K}^{\rm spl}/K_{\rm cyc})^{\epsilon_{K/F}})) = 0$ (see the 2nd and 3rd line on page 457 of op. cit.). We note that $h \in \Lambda$ defined in the third paragraph of page 454 of op. cit. is a generator of the characteristic ideal of $\Gal(M_{K}^{\rm spl}/K_{\rm cyc})^{\epsilon_{K/F}}$.
This completes the proof of our proposition. 
\end{proof}

\begin{remark}
As Federer and Gross remark within the paragraph following the proof of Proposition 1.7 in \cite{FedererGross81}, Gross' $p$-adic regulator does not vanish in the following scenarios:
\item[a)] $K/\bQ$ is abelian. 
\item[b)] $\# S_{p}(F) = 1$. 
\end{remark}

The following corollary is an immediate consequence of Theorem~\ref{thm_algebraic_gross_factorization} and Proposition~\ref{proposition:order-H2}.

\begin{corollary}\label{corollary:order-trivial-character}
We have 
\[
{\rm ord}_{\sA_{\rm cyc}}\left(L_{p, \mathds{1}^{\rm alg}}\vert_{\Gamma_{\rm cyc}} \right)  \geq \#S_{p}(F) - 1, 
\]
with equality under the validity of Leopoldt's conjecture for $F$ and the non-vanishing of Gross' $p$-adic regulator. 
\end{corollary}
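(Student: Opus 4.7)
The plan is to combine the algebraic Gross factorization of Theorem~\ref{thm_algebraic_gross_factorization} with the order computations of Proposition~\ref{proposition:order-H2}. Since the factorization reads
\[
L_{p, \mathds{1}^{\rm alg}}\vert_{\Gamma_{\rm cyc}} = L_{\epsilon_{K/F}\omega_F}^{\rm alg, DR}\cdot \zeta_{\rm alg}
\]
in ${\rm Frac}(\Lambda)^\times/\Lambda^\times$, the function ${\rm ord}_{\sA_{\rm cyc}}$ (which is well-defined as the $\sA_{\rm cyc}$-adic valuation on the discrete valuation ring $\Lambda_{\sA_{\rm cyc}}$, extended to fractional ideals) is additive on this product. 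I would therefore first translate the desired estimate into an estimate on the orders of the two factors separately.

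Next I would unwind the definitions of the two factors from Definition~\ref{defn_alg_DR_p-adicL}. A generator of the characteristic ideal of $H^{2}_{\rm c}(G_{F,S}, \bT_{\mathds 1}^\cyc)$ equals $\sA_{\rm cyc}\cdot \zeta_{\rm alg}$ in ${\rm Frac}(\Lambda)^\times/\Lambda^\times$, so
\[
{\rm ord}_{\sA_{\rm cyc}}(\zeta_{\rm alg}) = {\rm ord}_{\sA_{\rm cyc}}\bigl({\rm char}_{\Lambda}(H^{2}_{\rm c}(G_{F,S}, \bT_{\mathds 1}^\cyc))\bigr)- 1 \geq -1,
\]
with equality exactly when Leopoldt's conjecture for $F$ holds by Proposition~\ref{proposition:order-H2}(i). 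Similarly, $L_{\epsilon_{K/F}\omega_F}^{\rm alg, DR}$ is a generator of ${\Ann_{\Lambda}(T_{p}(\mu_{p^{\infty}}(K_{\rm cyc})))^{-1}\cdot {\rm char}_{\Lambda}(X_{K, \infty}^{-})}$, and by Lemma~\ref{lemma:char-sel} the characteristic ideal of $X_{K,\infty}^-$ coincides with that of $H^{2}(G_{F,S}, \bT_{\epsilon}^\cyc)$.

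The small technical step, which I expect to be the only thing that is not immediate, is to check that the annihilator $\Ann_{\Lambda}(T_{p}(\mu_{p^{\infty}}(K_{\rm cyc})))$ has $\sA_{\rm cyc}$-order zero. This follows by noting that $T_p(\mu_{p^\infty}(K_{\rm cyc}))$ is a finitely generated $\bZ_p$-module on which $\Gamma_{\rm cyc}$ acts through the restriction of the cyclotomic character $\chi_{\rm cyc}$: for any topological generator $\gamma$ of $\Gamma_{\rm cyc}$, the element $\gamma - \chi_{\rm cyc}(\gamma)\in \Lambda$ lies in the annihilator and its image in $\Lambda/\sA_{\rm cyc}\cong \bZ_p$ equals $1-\chi_{\rm cyc}(\gamma)\neq 0$. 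Hence ${\rm ord}_{\sA_{\rm cyc}}(\Ann_{\Lambda}(T_{p}(\mu_{p^{\infty}}(K_{\rm cyc}))))=0$, and we conclude from Proposition~\ref{proposition:order-H2}(ii) that
\[
{\rm ord}_{\sA_{\rm cyc}}(L_{\epsilon_{K/F}\omega_F}^{\rm alg, DR}) = {\rm ord}_{\sA_{\rm cyc}}\bigl({\rm char}_{\Lambda}(H^{2}(G_{F,S}, \bT_{\epsilon}^\cyc))\bigr) \geq \# S_p(F),
\]
with equality when Gross' $p$-adic regulator is non-zero.

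Adding the two estimates gives the unconditional inequality ${\rm ord}_{\sA_{\rm cyc}}(L_{p,\mathds{1}^{\rm alg}}\vert_{\Gamma_{\rm cyc}})\geq \#S_p(F)-1$, while under the joint hypotheses of Leopoldt for $F$ and non-vanishing of Gross' $p$-adic regulator the inequalities upgrade to equalities, yielding ${\rm ord}_{\sA_{\rm cyc}}(L_{p,\mathds{1}^{\rm alg}}\vert_{\Gamma_{\rm cyc}})=\#S_p(F)-1$, as required.
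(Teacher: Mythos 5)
Your proposal is correct and follows essentially the same route as the paper, which simply deduces the corollary as an immediate consequence of Theorem~\ref{thm_algebraic_gross_factorization} and Proposition~\ref{proposition:order-H2} (using additivity of ${\rm ord}_{\sA_{\rm cyc}}$ on the factorization, Lemma~\ref{lemma:char-sel}, and Definition~\ref{defn_alg_DR_p-adicL}). The only detail you add beyond what the paper leaves implicit is the explicit check that $\Ann_{\Lambda}(T_{p}(\mu_{p^{\infty}}(K_{\rm cyc})))$ has ${\sA_{\rm cyc}}$-order zero and that the $\sA_{\rm cyc}^{-1}$ normalization of $\zeta_{\rm alg}$ contributes the $-1$, both of which are argued correctly.
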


Recall that if Leopoldt's conjecture for $F$ is valid, then ${\bf G}' = \Gamma_{\rm cyc}$ and $\Gamma_\infty^\circ = \Gamma_\infty$. 

\begin{corollary}\label{corollary_}
Assuming the truth of  Leopoldt's conjecture for $F$ and the non-vanishing of Gross' $p$-adic regulator, we have 
\[
{\rm ord}_{\sA}(\widetilde{\mathscr{D}}^{\rm alg} \vert_{\Gamma_\infty}) = 
{\rm ord}_{\sA}(  L_{p, \psi^{\rm ad}}^{\rm alg} \vert_{\Gamma_\infty} ) + \#S_{p}(F) -1. 
\]
Here, ${\rm ord}_{\sA}(f)$ denotes the smallest integer $t \geq 0$ satisfying $f \in \sA^t \setminus \sA^{t+1}$ for any $f \in \Lambda_{\cO}(\Gamma_\infty)$.
\end{corollary}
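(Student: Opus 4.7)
My plan is to deduce the corollary from the factorization of Theorem~\ref{thm:iwasama main conj rankin-selberg}(i), Corollary~\ref{corollary:order-trivial-character}, and a compatibility of the transfer map ${\rm ver}_+$ with the augmentation filtrations.

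First, under Leopoldt's conjecture for $F$ we have $\bfG' = \Gamma_{\rm cyc}$ and $\Gamma_\infty^\circ = \Gamma_\infty$, so Theorem~\ref{thm:iwasama main conj rankin-selberg}(i) yields
\[
\widetilde{\mathscr{D}}^{\rm alg} \;=\; {\rm ver}_+\!\left(L_{p,\mathds{1}}^{\rm alg}\vert_{\Gamma_{\rm cyc}}\right) \cdot L_{p,\psi^{\rm ad}}^{\rm alg}
\]
in ${\rm Frac}(\Lambda_{\cO}(\Gamma_\infty))^\times / \Lambda_{\cO}(\Gamma_\infty)^\times$. Since $\Gamma_\infty \cong \ZZ_p^{[F:\bQ]+1}$ is torsion-free, the associated graded ring of $\Lambda_{\cO}(\Gamma_\infty)$ with respect to the $\sA$-adic filtration is a polynomial algebra over $\cO$ and therefore an integral domain. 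Consequently ${\rm ord}_\sA$ is additive under multiplication and descends to a $\ZZ$-valued function on ${\rm Frac}(\Lambda_{\cO}(\Gamma_\infty))^\times / \Lambda_{\cO}(\Gamma_\infty)^\times$. The desired identity thus reduces to
\[
{\rm ord}_\sA\!\left({\rm ver}_+\!\left(L_{p,\mathds{1}}^{\rm alg}\vert_{\Gamma_{\rm cyc}}\right)\right) \;=\; \#S_p(F) - 1.
\]

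Corollary~\ref{corollary:order-trivial-character}, applied under our running hypotheses, already yields the equality ${\rm ord}_{\sA_{\rm cyc}}(L_{p,\mathds{1}}^{\rm alg}\vert_{\Gamma_{\rm cyc}}) = \#S_p(F) - 1$. The remaining work is therefore to verify that the transfer map ${\rm ver}_+$ preserves augmentation orders, in the sense that ${\rm ord}_\sA({\rm ver}_+(f)) = {\rm ord}_{\sA_{\rm cyc}}(f)$ for every $f \in \Lambda_{\cO}(\Gamma_{\rm cyc})$; this is the step I expect to be the main obstacle.

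To establish this compatibility, I would use that under Leopoldt's conjecture for $F$ one has $\Gamma_\infty = \Gamma_{\rm cyc} \times \Gamma_{\rm ac}$, with $c$ acting trivially on $\Gamma_{\rm cyc}$ and by inversion on $\Gamma_{\rm ac}$; in particular $\Gamma_\infty^{c=1} = \Gamma_{\rm cyc}$. The explicit formula ${\rm ver}_+(\gamma) = \widetilde{\gamma}^{c+1}$, with the natural lift $\widetilde{\gamma} = \gamma \in \Gamma_\infty$ for $\gamma \in \Gamma_{\rm cyc}$, reduces to $\gamma \mapsto \gamma^2$. Since $p$ is odd, the doubling map is an automorphism of $\Gamma_{\rm cyc}$; consequently ${\rm ver}_+$ realises $\Gamma_{\rm cyc}$ as a direct $\ZZ_p$-summand of $\Gamma_\infty$. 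The induced ring homomorphism on the associated graded rings
\[
{\rm gr}_{\sA_{\rm cyc}} \Lambda_{\cO}(\Gamma_{\rm cyc}) \;\longrightarrow\; {\rm gr}_\sA \Lambda_{\cO}(\Gamma_\infty)
\]
is then the symmetric algebra on the split $\cO$-linear injection $\Gamma_{\rm cyc} \otimes_{\ZZ_p} \cO \hookrightarrow \Gamma_\infty \otimes_{\ZZ_p} \cO$, which is injective in each graded degree. This injectivity is precisely the assertion that ${\rm ver}_+$ preserves augmentation orders, and combined with the reduction of the first paragraph it concludes the proof.
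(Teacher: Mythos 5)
Your proof is correct and follows essentially the same route as the paper, which deduces the corollary directly from the factorization in Theorem~\ref{thm:iwasama main conj rankin-selberg}(i) together with Corollary~\ref{corollary:order-trivial-character}. The only difference is that you make explicit two facts the paper treats as immediate — additivity of ${\rm ord}_{\sA}$ via the integrality of ${\rm gr}_{\sA}\Lambda_{\cO}(\Gamma_\infty)$, and the preservation of augmentation order under ${\rm ver}_+$ (which, since $p$ is odd, embeds $\Gamma_{\rm cyc}$ as a direct summand of $\Gamma_\infty$ via $\gamma\mapsto\gamma^2$) — and both verifications are accurate.
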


\begin{proof}
This is an immediate consequence of the factorization Theorem~\ref{thm:iwasama main conj rankin-selberg}(i) and Corollary~\ref{corollary:order-trivial-character}. 
\end{proof}


\subsection{Selmer complexes for the traceless adjoint}
\label{subsec_selmer_traceless_adj}
We resume our study of Rankin--Selberg $p$-adic $L$-functions. Our main objective in this subsection is to prove Theorem~\ref{theorem_adjoint}, which is the algebraic counterpart of the conjectured factorization \eqref{eqn_Dasgupta_CM} for the adjoint $p$-adic $L$-function. Using this result, we also prove Corollary~\ref{cor_exceptional_zeros_RankinSelberg_Hida},  which constitutes the algebraic variant of the containments in Question~\ref{conj_exceptional_zeros_RankinSelberg_Hida} (that are presently unproved).

We retain our notation in \S\ref{sec:Factorisation of Selmer groups for symmetric Rankin--Selberg products}. As in \S\ref{sec_agfCM}, we set
\[
\bT_{\mathds 1}^\cyc := \bZ_{p}(1) \otimes_{\bZ_{p}} \Lambda_{\bZ_{p}}(\Gamma_{\rm cyc})^{\iota} \,\,\,\,\, \textrm{ and } \, \,\,\, \bT_{\epsilon}^\cyc := (\bZ_{p}(1) \otimes \epsilon_{K/F}) \otimes_{\bZ_{p}} \Lambda_{\bZ_{p}}(\Gamma_{\rm cyc})^{\iota}\,.
\] 
Since $(\Gamma_\infty^\circ)^{c=1} = \Gamma_{\rm cyc}$, we have a natural (transfer) map 
\be\label{defn_cyclo_verchiubung}
{\rm ver}_{\rm cyc} \colon \Lambda_\cO(\Gamma_{\rm cyc}) \longrightarrow \Lambda_\cO(\Gamma_{\infty}^\circ)
\ee
that one defines in a manner similar to \eqref{eqn_cyc_transfer_map}.
We then have a decomposition 
\[
\bT_{1} \otimes_{\Lambda_{\cO}(W_K)} \Lambda_\cO(\Gamma_{\infty}^\circ) =\left( \bT_{\mathds 1}^\cyc \otimes_{{\rm ver}_{\rm cyc}} \Lambda_\cO(\Gamma_{\infty}^\circ) \right)\oplus \left(\bT_{\epsilon}^\cyc \otimes_{{\rm ver}_{\rm cyc}} \Lambda_\cO(\Gamma_{\infty}^\circ) \right). 
\]
For notational simplicity, we put 
\[
\bT_{?, {\Gamma_\infty^\circ}} := \bT_{?} \otimes_{{\rm ver}_{\rm cyc}} \Lambda_{\cO}({\Gamma_\infty^\circ})
\]
for $? \in \{\mathds{1}, \epsilon\}$. 
We finally set 
\[
{\bT_{0} := M({\rm ad}^{0}\Theta_{\Sigma,\psi})^\iota \otimes_{\Lambda_{\cO}(W_K)} \Lambda_{\cO}({\Gamma_\infty^\circ}) }
\]
(the traceless adjoint) and observe that $ \bT_{0}=  \bT_{\epsilon, {\Gamma_\infty^\circ}} \oplus (\bT_{2} {\otimes_{\Lambda_{\cO}(W_K)} \Lambda_{\cO}({\Gamma_\infty^\circ})})$.
\begin{definition}\
\item[i)] For each prime $v \in S_p(F)$, we define the Greenberg submodule $F^+\TT_{0}\subset\TT_{0}$ by setting
\begin{align*}
{F^+\TT_{0} := {\rm im}\left(F^{+}\bT \longrightarrow (\TT \otimes_{\Lambda_{\cO}(W_K)} \Lambda_{\cO}( \Gamma_\infty^\circ))/\bT_{\mathds{1}, \Gamma_\infty^\circ} = \bT_{0} \right)}, 
\end{align*}
where $F^{+}\bT \subset \bT $ is given as in Definition~\ref{defn_Greenberg_Data}.
\item[ii)] We put $S := S_{\rm ram}(\psi) \cup S_{\rm ram}(K/F) \cup S_p(F)$ and define the local conditions $\Delta_{0}$ on the complex $C^{\bullet}(G_{F,S}, \TT_{0})$ with the choices
\[
U_{v}^{+} := 
\begin{cases}
C^{\bullet}(G_{F_{v}},F^+\TT_{0}) & \text{if} \ v \in S_p(F), 
\\
C^{\bullet}_{\rm ur}(G_{F_{v}}, \TT_{0}) & \text{if} \ v \not\in S \setminus S_p(F). 
\end{cases}
\]
\item[iii)] For any ideal $I$ of $\Lambda_{\cO}({\Gamma_\infty^\circ})$, 
we similarly define $F^+(\TT_{0}/I\TT_{0})\subset \TT_{0}/I\TT_{0}$ and 
local conditions $\Delta_{0}$ on the complex $C^{\bullet}(G_{F,S}, \TT_{0}/I\TT_{0})$. 
\end{definition}

\begin{proposition}\label{proposition:decomp-adjoint}
The decomposition $\bT_{0} =  \bT_{\epsilon, {\Gamma_\infty^\circ}} \oplus (\bT_{2} {\otimes_{\Lambda_{\cO}(W_K)} \Lambda_{\cO}({\Gamma_\infty^\circ})})$ induces 
\[
\widetilde{{\bf R}\Gamma}_{\rm f}(G_{F,S}, \bT_{0}, \Delta_{0})  =  {\bf R}{\Gamma}(G_{F,S}, \bT_{\epsilon, {\Gamma_\infty^\circ}}) \oplus  \widetilde{{\bf R}\Gamma}_{\rm f}(G_{K, S_K}, \TT_{K, 2} {\otimes_{\Lambda_{\cO}(W_K)} \Lambda_{\cO}({\Gamma_\infty^\circ})} , \Delta_{\Sigma}). 
\]
\end{proposition}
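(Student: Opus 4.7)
The plan is to imitate the proof of Theorem~\ref{thm:decomposition selmer complexes}, the essential content being that the decomposition $\bT_0 = \bT_{\epsilon, \Gamma_\infty^\circ} \oplus \bT_{2, \Gamma_\infty^\circ}$ is respected by all three ingredients entering the Selmer complex: the global cochains $C^\bullet(G_{F,S}, \bT_0)$, the local complexes $C^\bullet(G_{F_v}, \bT_0)$, and the local conditions $\Delta_0$ at each prime $v \in S$. Compatibility of the first two is tautological, and at $v \in S \setminus S_p(F)$ the unramified cochain complexes clearly split along the decomposition.

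The only non-trivial step is the decomposition of $F^+\bT_0$ at $v \in S_p(F)$. Here I will invoke Lemma~\ref{lem:decomposition local condition at p}: with respect to the chosen bases $\{e_1, e_2\}$ and $\{e_1^*, e_2^*\}$, one has $F^+\bT|_{G_{F_v}} = \bT_{K,1}|_{G_{K_{\tilde v}}} \oplus \bT_{K,2}|_{G_{K_{\tilde v^c}}}$. Since the $G_F$-module decomposition ${\rm Ind}_{K/F}\mathds{1} = \mathds{1} \oplus \epsilon_{K/F}$ is realised via the diagonal/anti-diagonal splitting relative to $\{e_1, e_2\}$, after base change to $\Lambda_\cO(\Gamma_\infty^\circ)$ the quotient map $\bT_1 \otimes_{\Lambda_\cO(W_K)} \Lambda_\cO(\Gamma_\infty^\circ) \twoheadrightarrow \bT_{\epsilon,\Gamma_\infty^\circ}$ restricts at $v$ to an isomorphism on the summand $\bT_{K,1}|_{G_{K_{\tilde v}}}$. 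Consequently, $F^+\bT_0|_{G_{F_v}}$ decomposes as the full $\bT_{\epsilon, \Gamma_\infty^\circ}|_{G_{F_v}}$ in the first factor plus the image of $\bT_{K,2}|_{G_{K_{\tilde v^c}}}$ in the second, which is precisely the Greenberg piece inherited by $\bT_{2, \Gamma_\infty^\circ}$.

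With this at hand, the Selmer complex of $\bT_0$ splits into a $\bT_\epsilon$-part and a $\bT_2$-part. The $\bT_{2, \Gamma_\infty^\circ}$-summand, by Shapiro's Lemma exactly as in the proof of Theorem~\ref{thm:decomposition selmer complexes}, identifies with $\widetilde{{\bf R}\Gamma}_{\rm f}(G_{K, S_K}, \bT_{K,2} \otimes_{\Lambda_\cO(W_K)} \Lambda_\cO(\Gamma_\infty^\circ), \Delta_\Sigma)$. For the $\bT_{\epsilon, \Gamma_\infty^\circ}$-summand, the local condition at every $v \in S_p(F)$ is by the preceding paragraph the full cochain complex $C^\bullet(G_{F_v}, \bT_{\epsilon, \Gamma_\infty^\circ})$, whereas at each prime $v \in S_{\rm ram}(K/F) \setminus S_p(F)$ both $C^\bullet_{\rm ur}(G_{F_v}, \bT_{\epsilon, \Gamma_\infty^\circ})$ and $C^\bullet(G_{F_v}, \bT_{\epsilon, \Gamma_\infty^\circ})$ are acyclic; this follows from the local Euler characteristic computation used in Remark~\ref{remark_local_cohom_bad_primes_acyclic}, noting that $\epsilon_{K/F}$ is non-trivial on $I_v$ at such primes. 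Therefore the Selmer complex collapses to ${\bf R}\Gamma(G_{F,S}, \bT_{\epsilon, \Gamma_\infty^\circ})$, yielding the asserted decomposition.

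The hard part is confined to the bookkeeping in the second paragraph: verifying that the Greenberg submodule $F^+\bT$ projects isomorphically onto $\bT_\epsilon$ at each $v \in S_p(F)$ after passing to $\Lambda_\cO(\Gamma_\infty^\circ)$, so that the ``asymmetric'' form of the right-hand side (a bare ${\bf R}\Gamma$ for the $\bT_\epsilon$-piece, versus a Selmer complex with $\Delta_\Sigma$ for the $\bT_2$-piece) emerges naturally from the uniform Greenberg condition $\Delta_0$ on $\bT_0$. Everything else reduces to the formalism developed in \S\ref{sec_Selmer_Groups} and \S\ref{subsec_5_1_2021_09_10}.
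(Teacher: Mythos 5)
Your proposal follows the same route as the paper's own proof: the entire content is the identity $F^+\bT_{0}=\bT_{\epsilon,\Gamma_\infty^\circ}\oplus\mathrm{im}\bigl(F^+\bT\to\bT_{2}\otimes_{\Lambda_{\cO}(W_K)}\Lambda_{\cO}(\Gamma_\infty^\circ)\bigr)$ at each $v\in S_p(F)$, obtained from Lemma~\ref{lem:decomposition local condition at p}, after which the $\bT_{2}$-summand is matched with the $\Delta_\Sigma$-Selmer complex over $K$ by Shapiro's lemma exactly as in Theorem~\ref{thm:decomposition selmer complexes}. Your second paragraph is a correct (and more explicit than the paper's ``by definition'') verification of that identity, including the key point that the projection $\bT_{1}\otimes\Lambda_{\cO}(\Gamma_\infty^\circ)\twoheadrightarrow\bT_{\epsilon,\Gamma_\infty^\circ}$ is an isomorphism on the local summand $\bT_{K,1}\vert_{G_{K_{\tilde v}}}$.

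There is, however, one omission in your third paragraph. You treat the primes of $S\setminus S_p(F)$ only when $v\in S_{\rm ram}(K/F)\setminus S_p(F)$, but $S=S_{\rm ram}(\psi)\cup S_{\rm ram}(K/F)\cup S_p(F)$, so there remain the primes $v\in S_{\rm ram}(\psi)$ with $v\nmid p$ that are unramified in $K/F$. At such a prime $\epsilon_{K/F}$, the cyclotomic character and $\Gamma_\infty^\circ$ are all unramified, so $\bT_{\epsilon,\Gamma_\infty^\circ}$ is unramified at $v$ and neither $C^\bullet_{\rm ur}(G_{F_v},\bT_{\epsilon,\Gamma_\infty^\circ})$ nor $C^\bullet(G_{F_v},\bT_{\epsilon,\Gamma_\infty^\circ})$ is acyclic in general (already $H^1_{\rm ur}$ is a nonzero torsion module whenever the corresponding Euler-type element of $\Lambda_{\cO}(\Gamma_\infty^\circ)$ is a non-unit); the Remark~\ref{remark_local_cohom_bad_primes_acyclic}-style argument you invoke genuinely does not apply there, so your ``therefore the Selmer complex collapses'' is not yet justified. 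What is true, and what is needed, is that at these primes the map $C^\bullet_{\rm ur}(G_{F_v},\bT_{\epsilon,\Gamma_\infty^\circ})\to C^\bullet(G_{F_v},\bT_{\epsilon,\Gamma_\infty^\circ})$ is a quasi-isomorphism: one checks $H^0=0$, $H^2(G_{F_v},\bT_{\epsilon,\Gamma_\infty^\circ})=0$ by local duality, and that the cokernel of $H^1_{\rm ur}\to H^1$ injects into $H^1(I_v,\bT_{\epsilon,\Gamma_\infty^\circ})^{\mathrm{Frob}_v=1}=0$; all of this uses that the image of $\mathrm{Frob}_v$ in $\Gamma_\infty^\circ$ has infinite order (and that $p$ is odd in the inert case), so the relevant elements of the Iwasawa algebra are non-zero-divisors or units. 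With this extra case the $\epsilon$-summand does collapse onto ${\bf R}\Gamma(G_{F,S},\bT_{\epsilon,\Gamma_\infty^\circ})$. To be fair, the published proof is silent on these primes as well, so this is a gap in bookkeeping rather than in method; apart from it, your argument is the paper's.
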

\begin{proof}
We have by definition of the Greenberg submodule
\[
F^{+}\bT_{0} = \bT_{\epsilon, {\Gamma_\infty^\circ}} \oplus {\rm im}\left(F^{+}\bT \longrightarrow \bT/\bT_{1} {\otimes_{\Lambda_{\cO}(W_K)} \Lambda_{\cO}({\Gamma_\infty^\circ})} = \bT_{2} {\otimes_{\Lambda_{\cO}(W_K)} \Lambda_{\cO}({\Gamma_\infty^\circ})} \right). 
\]
Lemma \ref{lem:decomposition local condition at p} shows for each prime $v \in S_{p}(F)$ that we have a natural decomposition 
\[
{\bf R}\Gamma(G_{F_{v}}, F^{+}\TT_{0}) = 
{\bf R}\Gamma(G_{F_{v}}, \TT_{\epsilon, {\Gamma_\infty^\circ}}) \oplus {\bf R}\Gamma(G_{K_{\tilde{v}^{c}}}, \TT_{K, 2} {\otimes_{\Lambda_{\cO}(W_K)} \Lambda_{\cO}({\Gamma_\infty^\circ})})
\]
where $\widetilde{v}\in\Sigma$ is the unique prime above $v$. The proof follows once we recall Definition~\ref{def:extended selmer} where we have introduced the local condition $\Delta_\Sigma$ on the complex $C^\bullet(G_{K,S_K}, \TT_{K, 2} {\otimes_{\Lambda_{\cO}(W_K)} \Lambda_{\cO}({\Gamma_\infty^\circ})})$.
\end{proof}

\begin{corollary}\label{corollary:str:adjoint}\ 
    \item[i)] For any ideal $I$ of $\Lambda_{\cO}({\Gamma_\infty^\circ})$, there is a natural isomorphism 
\begin{align*}
    \widetilde{{\bf R}\Gamma}_{\rm f}(G_{F,S}, \bT_{0}, \Delta_{0}) \otimes^{\bL}_{\Lambda_{\cO}({\Gamma_\infty^\circ})} \Lambda_{\cO}({\Gamma_\infty^\circ})/I \stackrel{\sim}{\longrightarrow} \widetilde{{\bf R}\Gamma}_{\rm f}(G_{F,S}, \bT_{0}/I\bT_{0}, \Delta_{0})
\end{align*}
as well as a natural isomorphism 
\begin{align*}
\widetilde{{\bf R}\Gamma}_{\rm f}(G_{F,S}, \bT_{0}, \Delta_{0}) \otimes^{\bL}_{\Lambda_{\cO}({\Gamma_\infty^\circ})} \Lambda_{\bZ_{p}^{\rm ur}}(\Gamma_\infty^\circ) \stackrel{\sim}{\longrightarrow} \widetilde{{\bf R}\Gamma}_{\rm f}(G_{F,S}, \bT_{0} \otimes_{\Lambda_{\cO}(\Gamma_\infty^\circ)} \Lambda_{\bZ_{p}^{\rm ur}}(W_{K}), \Delta).
\end{align*}

\item[ii)] $\chi(\widetilde{{\bf R}\Gamma}_{\rm f}(G_{F,S}, \TT_{0}, \Delta_{0})) = 0$. 

\item[iii)] We have the following natural identifications:
\begin{align*}
\widetilde{H}_{\rm f}^{1}(G_{F,S}, \bT_{0}, \Delta_{0}) &= H^{1}(G_{F,S}, \bT_{\epsilon}^\cyc) \otimes_{{\rm ver}_{\rm cyc}} \Lambda_{\cO}(\Gamma_\infty^\circ)\\  
&= T_{p}(\mu_{p^{\infty}}(K_{\rm cyc})) \otimes_{{\rm ver}_{\rm cyc}} \Lambda_{\cO}(\Gamma_\infty^\circ)\,. 
\end{align*}

\item[iv)] The $\Lambda_{\cO}(\Gamma_\infty^\circ)$-module $\widetilde{H}^{2}_{\rm f}(G_{F,S}, \TT_{0}, \Delta_{0})$ is torsion. 
\end{corollary}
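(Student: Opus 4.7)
My plan is to deduce all four assertions from the decomposition
\[
\widetilde{{\bf R}\Gamma}_{\rm f}(G_{F,S}, \bT_{0}, \Delta_{0}) \cong {\bf R}\Gamma(G_{F,S}, \bT_{\epsilon, \Gamma_\infty^\circ}) \oplus \widetilde{{\bf R}\Gamma}_{\rm f}(G_{K, S_K}, \bT_{K, 2} \otimes_{\Lambda_\cO(W_K)} \Lambda_\cO(\Gamma_\infty^\circ), \Delta_{\Sigma})
\]
furnished by Proposition~\ref{proposition:decomp-adjoint}, handling each summand separately.

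For the second summand, I observe that $\bT_{K,2} \otimes_{\Lambda_\cO(W_K)} \Lambda_\cO(\Gamma_\infty^\circ)$ is precisely the CM-tower module $\bT_\frak{K}$ introduced in \S\ref{subsection:example-cm} for the choices $\chi = (\psi^{\rm ad})^{-1}$ and $\frak K = K_{\rm cyc}K_{\rm ac}$. Since $\frak K$ contains the cyclotomic $\ZZ_p$-extension of $K$, all the results of \S\ref{subsection:example-cm} apply verbatim: the base-change isomorphisms \eqref{eq:base-change} and \eqref{eq:scalar-extension} yield the contribution of this summand to (i); Lemma~\ref{lemma:Euler characteristic = 0} gives its contribution to (ii); Corollary~\ref{cor:H^1_f=0 and H^2_f is torsion}(i) shows that this summand contributes nothing to $\widetilde{H}^1_{\rm f}$ (supplying the vanishing needed for (iii)); while Corollary~\ref{cor:H^1_f=0 and H^2_f is torsion}(ii) delivers the $\Lambda_\cO(\Gamma_\infty^\circ)$-torsionness in degree $2$ required for (iv).

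Handling the first summand will require one mild technical input: the faithful flatness of the transfer map ${\rm ver}_{\rm cyc}\colon \Lambda_\cO(\Gamma_{\rm cyc}) \to \Lambda_\cO(\Gamma_\infty^\circ)$. Since $p > 2$ and the action of $c$ decomposes $\Gamma_\infty^\circ$ internally as $\Gamma_{\rm cyc} \times \Gamma_{\rm ac}$, a direct computation will show that ${\rm ver}_{\rm cyc}$ coincides, up to the automorphism $\gamma\mapsto \gamma^2$ of $\Gamma_{\rm cyc}$ (which is an automorphism because $p\neq 2$), with the natural inclusion of $\Lambda_\cO(\Gamma_{\rm cyc})$ as the first tensor factor in $\Lambda_\cO(\Gamma_{\rm cyc})\,\widehat{\otimes}_\cO\,\Lambda_\cO(\Gamma_{\rm ac}) = \Lambda_\cO(\Gamma_\infty^\circ)$. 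A direct adaptation of the proof of Lemma~\ref{lemma:scalar-extension} will then furnish the natural isomorphism
\[
{\bf R}\Gamma(G_{F,S}, \bT_\epsilon^\cyc) \otimes^{\bL}_{{\rm ver}_{\rm cyc}} \Lambda_\cO(\Gamma_\infty^\circ) \stackrel{\sim}{\longrightarrow} {\bf R}\Gamma(G_{F,S}, \bT_{\epsilon, \Gamma_\infty^\circ}),
\]
after which flatness renders this derived tensor product an ordinary one. Base change (i) for the first summand is then immediate, and (ii) follows from Lemma~\ref{lemma:coh-beta}(i), since the Euler characteristic of a perfect complex is preserved by flat base change. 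For (iii), flatness gives $H^1(G_{F,S}, \bT_{\epsilon, \Gamma_\infty^\circ}) = H^1(G_{F,S}, \bT_\epsilon^\cyc) \otimes_{{\rm ver}_{\rm cyc}} \Lambda_\cO(\Gamma_\infty^\circ)$, and Corollary~\ref{corollary:isom-torsion} identifies this with $T_p(\mu_{p^\infty}(K_{\rm cyc})) \otimes_{{\rm ver}_{\rm cyc}} \Lambda_\cO(\Gamma_\infty^\circ)$. Finally, for (iv), Lemma~\ref{lemma:coh-beta}(iii) supplies a nonzero $f \in \Lambda_\cO(\Gamma_{\rm cyc})$ annihilating $H^2(G_{F,S}, \bT_\epsilon^\cyc)$; since ${\rm ver}_{\rm cyc}$ is an injection between integral domains, ${\rm ver}_{\rm cyc}(f) \neq 0$ annihilates the scalar extension, rendering it $\Lambda_\cO(\Gamma_\infty^\circ)$-torsion. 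Modulo the flatness of ${\rm ver}_{\rm cyc}$ and the transcription of Lemma~\ref{lemma:scalar-extension}'s proof, the remainder is bookkeeping on the direct-sum decomposition.
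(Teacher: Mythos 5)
Your proposal is correct and follows essentially the same route as the paper's proof: the decomposition of Proposition~\ref{proposition:decomp-adjoint}, the identification of the second summand with the CM Selmer complex of \S\ref{subsection:example-cm} (with $\chi^{-1}=\psi^{\rm ad}$ and $\frak{K}=K_{\rm cyc}K_{\rm ac}$) so that \eqref{eq:base-change}, \eqref{eq:scalar-extension}, Lemma~\ref{lemma:Euler characteristic = 0} and Corollary~\ref{cor:H^1_f=0 and H^2_f is torsion} apply, together with Lemma~\ref{lemma:scalar-extension}, Lemma~\ref{lemma:coh-beta} and Corollary~\ref{corollary:isom-torsion} for the $\epsilon$-summand. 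Your explicit verification that ${\rm ver}_{\rm cyc}$ is faithfully flat (identifying it, up to the automorphism $\gamma\mapsto\gamma^{2}$ of $\Gamma_{\rm cyc}$, with the inclusion of the first factor of $\Lambda_{\cO}(\Gamma_{\rm cyc})\,\widehat{\otimes}_{\cO}\,\Lambda_{\cO}(\Gamma_{\rm ac})$) merely makes explicit a step the paper leaves implicit when transporting Lemma~\ref{lemma:coh-beta} and Corollary~\ref{corollary:isom-torsion} across the base change, so it is a harmless refinement rather than a genuinely different argument.
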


\begin{proof}
By Lemma \ref{lemma:scalar-extension} and Proposition \ref{proposition:decomp-adjoint}, we have a decomposition 
\begin{align}\begin{split}
\label{eqn_cor532_1}\widetilde{{\bf R}\Gamma}_{\rm f}(G_{F,S}, \bT_{0}, \Delta_{0})  &=  {\bf R}{\Gamma}(G_{F,S}, \bT_{\epsilon, \Gamma_\infty^\circ}) \oplus  \widetilde{{\bf R}\Gamma}_{\rm f}(G_{K, S_K}, \TT_{K, 2} {\otimes \Lambda_{\cO}(\Gamma_\infty^\circ)}, \Delta_{\Sigma})
\\
&=  \left({\bf R}{\Gamma}(G_{F,S}, \bT_{\epsilon}^\cyc) \otimes^{\bL}_{{\rm ver}_{\rm cyc}} \Lambda_{\cO}(\Gamma_\infty^\circ) \right) \oplus  \widetilde{{\bf R}\Gamma}_{\rm f}(G_{K, S_K}, \TT_{K, 2} {\otimes \Lambda_{\cO}(\Gamma_\infty^\circ)}, \Delta_{\Sigma}). 
\end{split}
\end{align}

\item[i)] The first assertion in this portion follows from \eqref{eqn_cor532_1} combined with \eqref{eq:base-change}, whereas the second from \eqref{eqn_cor532_1} combined with \eqref{eq:scalar-extension}. 
\item[ii)] This global Euler--Poincar\'e characteristic formula follows from  \eqref{eqn_cor532_1} combined with Lemma~\ref{lemma:Euler characteristic = 0} and Lemma~\ref{lemma:coh-beta}. 

\item[iii)] It follows from Corollary~\ref{cor:H^1_f=0 and H^2_f is torsion}(i) that $\widetilde{H}^{1}_{\rm f}(G_{K,S_{K}}, \TT_{2}, \Delta_{\Sigma}) = 0$. This combined with \eqref{eqn_cor532_1} proves the first asserted equality. The second equality
\[
H^{1}(G_{F,S}, \bT_{\epsilon}^\cyc) \otimes_{{\rm ver}_{\rm cyc}} \Lambda_{\cO}(\Gamma_\infty^\circ)  = T_{p}(\mu_{p^{\infty}}(K_{\rm cyc})) \otimes_{{\rm ver}_{\rm cyc}} \Lambda_{\cO}(\Gamma_\infty^\circ) 
\] 
is Corollary \ref{corollary:isom-torsion}. 

\item[iv)] This assertion follows on combining \eqref{eqn_cor532_1} with Corollary~\ref{cor:H^1_f=0 and H^2_f is torsion}(ii) and Lemma~\ref{lemma:coh-beta}(iii). 
\end{proof}

\begin{definition}
\label{defn_algebraic_adjoint_padic_L_function}
The algebraic adjoint $p$-adic $L$-function 
\[
\widetilde{\mathscr D}^{\rm alg}_{{\rm ad}^0\Theta_{\Sigma,\psi}}  \in {\rm Frac}(\LL_\cO(\Gamma_\infty^\circ))^\times/\LL_\cO(\Gamma_\infty^\circ)^\times
\]
is the image of a generator of the cyclic $\LL_\cO(\Gamma_\infty^\circ)$-module
\[
{\rm ver}_{\cyc}\left(\Ann_{\Lambda_\cyc}(T_{p}(\mu_{p^{\infty}}(K_{\rm cyc})))\right)^{-1}{\rm char}_{\Lambda_{\cO}(\Gamma_\infty^\circ)}(\widetilde{H}^{2}_{\rm f}(G_{F,S}, \bT_{0}, \Delta_{0})).
\]
\end{definition}

\begin{theorem}\label{theorem_adjoint}
\item[i)] We have the following factorization taking place in ${\rm Frac}(\LL_\cO(\Gamma_\infty^\circ))^\times/\LL_\cO(\Gamma_\infty^\circ)^\times$:
\[
{ \widetilde{\mathscr D}^{\rm alg}_{{\rm ad}^0\Theta_{\Sigma,\psi}} } = 
{\rm ver}_{\cyc}\left( L_{\epsilon_{K/F}\omega_F}^{\rm alg, DR}\right)
\cdot L_{p, \psi^{\rm ad}}^{\rm alg} \,.
\]
\item[ii)] Assuming the non-vanishing of Gross' $p$-adic regulator, we have 
\[
{\rm ord}_{\sA^\circ}(\widetilde{\mathscr D}^{\rm alg}_{{\rm ad}^0\Theta_{\Sigma,\psi}}) = 
{\rm ord}_{\sA^\circ}(L_{p, \psi^{\rm ad}}^{\rm alg} ) + \#S_{p}(F), 
\]
where $\mathscr{A}^\circ \subset \LL_{\cO}(\Gamma_\infty^\circ)$ is the augmentation ideal. 
\end{theorem}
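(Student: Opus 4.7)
For Part (i), the strategy is to identify the algebraic adjoint $p$-adic $L$-function with the determinant of the Selmer complex $\widetilde{{\bf R}\Gamma}_{\rm f}(G_{F,S}, \bT_0, \Delta_0)$ and then factor this determinant using Proposition~\ref{proposition:decomp-adjoint}. The decomposition there together with Proposition~\ref{prop:parf[1,2]} and Lemma~\ref{lemma:coh-beta} places this Selmer complex in $D^{[1,2]}_{\rm parf}({}_{\Lambda_\cO(\Gamma_\infty^\circ)}{\rm Mod})$, while Corollary~\ref{corollary:str:adjoint}(ii) shows that its Euler--Poincar\'e characteristic vanishes. Representing it by a complex $[P \xrightarrow{\phi} P]$ of finitely generated free modules of equal rank, one obtains the identity of fractional ideals
\[
{\rm char}_{\Lambda_\cO(\Gamma_\infty^\circ)}(\widetilde H^2_{\rm f}(G_{F,S}, \bT_0, \Delta_0)) = {\rm char}_{\Lambda_\cO(\Gamma_\infty^\circ)}(\widetilde H^1_{\rm f}(G_{F,S}, \bT_0, \Delta_0)) \cdot \det(\phi).
\]
Since $T_p(\mu_{p^\infty}(K_{\rm cyc}))$ is cyclic over $\Lambda$, Corollary~\ref{corollary:str:adjoint}(iii) together with the flatness of base change along ${\rm ver}_\cyc$ yields ${\rm char}(\widetilde H^1_{\rm f}) = {\rm ver}_\cyc\big({\rm Ann}_{\Lambda}T_p(\mu_{p^\infty}(K_{\rm cyc}))\big)\cdot \Lambda_\cO(\Gamma_\infty^\circ)$. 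Substituting into Definition~\ref{defn_algebraic_adjoint_padic_L_function} cancels the annihilator factor, so that $\widetilde{\mathscr D}^{\rm alg}_{{\rm ad}^0\Theta_{\Sigma,\psi}}$ is a generator of $\det \widetilde{{\bf R}\Gamma}_{\rm f}(G_{F,S}, \bT_0, \Delta_0)$.

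The next step is to invoke Proposition~\ref{proposition:decomp-adjoint} to factor the determinant as
\[
\det({\bf R}\Gamma(G_{F,S}, \bT_{\epsilon, \Gamma_\infty^\circ})) \cdot \det(\widetilde{{\bf R}\Gamma}_{\rm f}(G_{K, S_K}, \bT_{K,2} \otimes_{\Lambda_{\cO}(W_K)}\Lambda_\cO(\Gamma_\infty^\circ), \Delta_\Sigma)).
\]
Lemma~\ref{lemma:scalar-extension} identifies the first factor with the base change ${\bf R}\Gamma(G_{F,S}, \bT_\epsilon^\cyc)\otimes^{\bL}_{{\rm ver}_\cyc}\Lambda_\cO(\Gamma_\infty^\circ)$, so by Definition~\ref{defn_alg_DR_p-adicL} the first factor is generated by ${\rm ver}_\cyc(L^{\rm alg, DR}_{\epsilon_{K/F}\omega_F})$. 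The base-change property \eqref{eq:base-change} together with Corollary~\ref{corollary:H_f^1 vanishes iff H_f^2 is torsion and fitt=char}(ii) (which upgrades Fitting ideals to characteristic ideals for $\bT_{K,2}$) and Definition~\ref{defn_algebraic_Katz_padic_L} identify the second factor with $L^{\rm alg}_{p,\psi^{\rm ad}}$, completing the proof of~(i).

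For Part~(ii), the order of vanishing at $\sA^\circ$ is additive in the factorization from~(i), so it suffices to show that ${\rm ord}_{\sA^\circ}({\rm ver}_\cyc(L^{\rm alg, DR}_{\epsilon_{K/F}\omega_F})) = \#S_p(F)$. Using the decomposition $\Gamma_\infty^\circ \cong \Gamma_\cyc\times \Gamma_{\rm ac}$ (on which $c$ acts by $+1$ and $-1$ respectively), one checks that ${\rm ver}_\cyc$ sends $\gamma\in\Gamma_\cyc$ to $\gamma^2 \in \Gamma_\cyc\subset\Gamma_\infty^\circ$, so ${\rm ver}_\cyc(\gamma-1) = 2(\gamma-1)+(\gamma-1)^2$ has $\sA^\circ$-order exactly $1$ since $p$ is odd; hence ${\rm ord}_{\sA^\circ}\circ{\rm ver}_\cyc={\rm ord}_{\sA_\cyc}$ on $\Lambda_\cO(\Gamma_\cyc)$. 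Finally, Lemma~\ref{lemma:char-sel}, Proposition~\ref{proposition:order-H2}(ii), and the observation that the generator $\gamma - \chi_\cyc(\gamma)$ of ${\rm Ann}_\Lambda T_p(\mu_{p^\infty}(K_{\rm cyc}))$ maps to the nonzero element $1-\chi_\cyc(\gamma)\in\cO\setminus\sA_\cyc$ under augmentation together yield
\[
{\rm ord}_{\sA_\cyc}(L^{\rm alg, DR}_{\epsilon_{K/F}\omega_F}) = {\rm ord}_{\sA_\cyc}({\rm char}(X^-_{K, \infty})) - {\rm ord}_{\sA_\cyc}({\rm Ann}_\Lambda T_p(\mu_{p^\infty}(K_{\rm cyc}))) = \#S_p(F)
\]
under the assumed non-vanishing of Gross' $p$-adic regulator. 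The main obstacle is not any single hard step; rather, the delicate point is the bookkeeping across the three Iwasawa algebras $\Lambda_\cO(\Gamma_\cyc)$, $\Lambda_\cO(\Gamma_\infty^\circ)$, and $\Lambda_\cO(W_K)$, ensuring that each use of Shapiro's lemma, of the transfer maps ${\rm ver}_\cyc$ and ${\rm ver}_+$, and of the flat base change of determinants is applied consistently — all of the substantive input has already been prepared in Sections~\ref{sec_Selmer_Groups}--\ref{sec_agfCM}.
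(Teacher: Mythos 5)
Your overall route coincides with the paper's: part (i) is extracted from the direct-sum decomposition of Proposition~\ref{proposition:decomp-adjoint}, combined with Lemma~\ref{lemma:char-sel}, flat base change along ${\rm ver}_{\rm cyc}$ and Definitions~\ref{defn_alg_DR_p-adicL} and \ref{defn_algebraic_adjoint_padic_L_function}, and part (ii) then follows from Proposition~\ref{proposition:order-H2}(ii); your checks that ${\rm ord}_{\sA^\circ}\circ{\rm ver}_{\rm cyc}={\rm ord}_{\sA_{\rm cyc}}$ on $\Lambda_{\cO}(\Gamma_{\rm cyc})$ and that $\Ann_{\Lambda_{\bZ_p}(\Gamma_{\rm cyc})}(T_p(\mu_{p^\infty}(K_{\rm cyc})))$ has augmentation order zero are exactly the bookkeeping the paper leaves implicit, and they are correct.

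There is, however, one intermediate claim that is false as stated: that $\widetilde{{\bf R}\Gamma}_{\rm f}(G_{F,S},\bT_{0},\Delta_{0})$ lies in $D^{[1,2]}_{\rm parf}$ and can be represented by a two-term complex $[P\xrightarrow{\phi}P]$ of free modules, from which you derive ${\rm char}(\widetilde H^{2}_{\rm f})={\rm char}(\widetilde H^{1}_{\rm f})\cdot\det(\phi)$. Proposition~\ref{prop:parf[1,2]} only covers the second summand in Proposition~\ref{proposition:decomp-adjoint}; the first summand ${\bf R}\Gamma(G_{F,S},\bT_{\epsilon,\Gamma_\infty^\circ})$ has $H^{1}\cong T_{p}(\mu_{p^{\infty}}(K_{\rm cyc}))\otimes_{{\rm ver}_{\rm cyc}}\Lambda_{\cO}(\Gamma_\infty^\circ)$ by Corollary~\ref{corollary:str:adjoint}(iii), which is a \emph{non-zero torsion} module whenever $\mu_{p}\subset K$ --- a case allowed by \ref{item_ord} (e.g.\ $p=3$, $F=\bQ(\sqrt{6})$, $K=F(\mu_{3})$, where the ramified prime of $F$ above $3$ splits in $K$ since $\sqrt{-3}\in\bQ_3(\sqrt{6})$). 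A complex of projectives concentrated in degrees $1,2$ has $H^{1}$ embedded in a projective module, hence torsion-free; so no such representative exists in that case (which is precisely why Corollary~\ref{corollary:str:adjoint}, unlike Corollary~\ref{parf[1,2] and Euler--Poincare = 0}(ii), does not assert membership in $D^{[1,2]}_{\rm parf}$), and your displayed identity involving a single map $\phi$ has no content there. The gap is local and repairable: either invoke the general identity $\det(C)={\rm char}(H^{1}(C))^{-1}\,{\rm char}(H^{2}(C))$, valid for any perfect complex with torsion cohomology over the regular ring $\Lambda_{\cO}(\Gamma_\infty^\circ)$ (the same formalism already implicit in Definition~\ref{defn_alg_DR_p-adicL}), or bypass determinants altogether by taking $H^{2}$ of the decomposition in Proposition~\ref{proposition:decomp-adjoint}, which gives
\begin{equation*}
{\rm char}\big(\widetilde{H}^{2}_{\rm f}(G_{F,S},\bT_{0},\Delta_{0})\big)
={\rm ver}_{\rm cyc}\Big({\rm char}_{\Lambda_{\bZ_p}(\Gamma_{\rm cyc})}\big(H^{2}(G_{F,S},\bT_{\epsilon}^{\cyc})\big)\Big)\cdot
{\rm Fitt}\big(\widetilde{H}^{2}_{\rm f}(G_{K,S_K},\bT_{K,2}\otimes_{\Lambda_{\cO}(W_K)}\Lambda_{\cO}(\Gamma_\infty^\circ),\Delta_{\Sigma})\big),
\end{equation*}
after which Lemma~\ref{lemma:char-sel} and the two definitions (whose ${\rm ver}_{\rm cyc}(\Ann)$-factors match by construction) yield (i); part (ii) then goes through exactly as you wrote it.
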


\begin{proof}
The first assertion follows from Proposition~\ref{proposition:decomp-adjoint}, whereas the second from the first part combined with Proposition~\ref{proposition:order-H2}(ii).  
\end{proof}
\begin{remark}
If the Leopoldt conjecture is valid for $F$, we then have $\Gamma_\infty^\circ=\Gamma_\infty$ and the the equality in Theorem~\ref{theorem_adjoint}(ii) in fact takes place in ${\rm Frac}(\LL_\cO( \Gamma_\infty))^\times/\LL_\cO( \Gamma_\infty)^\times$.

Without knowing the truth of Leopoldt's conjecture, the algebraic variant of the expected identity \eqref{eqn_Dasgupta_CM} should indeed take place in ${\rm Frac}(\LL_\cO( \Gamma_\infty^\circ))^\times/\LL_\cO( \Gamma_\infty^\circ)^\times$, rather than ${\rm Frac}(\LL_\cO( \Gamma_\infty))^\times/\LL_\cO( \Gamma_\infty)^\times$. This is because in \eqref{eqn_Dasgupta_CM}, there is a single formal variable (that we denote by $s$) complementing the formal variable $\kappa$ that accounts for anticyclotomic variation, and the variable $s$ parametrizes the cyclotomic variation alone. In other words, the pair $(\kappa,s)$ in \eqref{eqn_Dasgupta_CM} parametrizes variation in the character group $\widehat{\Gamma_\infty^\circ}$ rather than $\widehat{\Gamma_\infty}$.
\end{remark}

Theorem~\ref{thm_HT_factorization}, Theorem~\ref{thm:iwasama main conj rankin-selberg}, Theorem~\ref{theorem_adjoint}, and Theorem~\ref{thm_exceptional_zero_conj_OK} combined together yields the following result.

\begin{corollary}
\label{cor_exceptional_zeros_RankinSelberg_Hida}
If Gross' $p$-adic regulator does not vanish, the $\Sigma$-Leopoldt conjecture for $L/K$  and the Iwasawa Main Conjecture~\ref{conj:iwasawa main conj} for the CM field $K$ hold true, then 
\[
\widetilde{\mathscr D}_{{\rm ad}^0\Theta_{\Sigma,\psi}} \in  (\sA^\circ)^{e+ \#S_p(F)} \setminus (\sA^\circ)^{e + \#S_p(F) + 1}. 
\]
If in addition Leopoldt's conjecture for $F$ holds true, then we have 
\[
{ \widetilde{\mathscr{D}}^{\rm Hida}}\in (\sA^\circ)^{e+ \#S_p(F)-1} \setminus  (\sA^\circ)^{e+ \#S_p(F)}. 
\]
\end{corollary}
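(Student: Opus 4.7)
The proof will analytically factor $\widetilde{\mathscr{D}}_{\mathrm{ad}^0\Theta_{\Sigma,\psi}}$ using the Hida--Tilouine formula and then compute the order of each factor. Starting from the factorization \eqref{eqn_thm_HT_factorization_1} in Theorem~\ref{thm_HT_factorization}, we have $\widetilde{\mathscr{D}}^{\mathrm{Hida}} = \mathfrak{F}\cdot L_{p,\mathds{1}}^{\mathrm{Katz}}(\chi_\cyc^s)\cdot L_{p,\psi^{\mathrm{ad}}}^{\mathrm{Katz}}$, so by Definition~\ref{defn_regularized_Hida_RS},
\[
\widetilde{\mathscr{D}}_{\mathrm{ad}^0\Theta_{\Sigma,\psi}} = \mathfrak{F}\cdot\frac{L_{p,\mathds{1}}^{\mathrm{Katz}}(\chi_\cyc^s)}{\zeta_p(1-s)}\cdot L_{p,\psi^{\mathrm{ad}}}^{\mathrm{Katz}}.
\]

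I will then handle the three factors separately. The Iwasawa function $\mathfrak{F}$ satisfies $\mathfrak{F}(\mathds{1},\mathds{1})\neq 0$ by the final assertion of Theorem~\ref{thm_HT_factorization}, so it contributes order $0$ at $\mathscr{A}^\circ$. Combining the IMC for $K$ with Theorem~\ref{thm_algebraic_gross_factorization} and the classical Iwasawa main conjecture for $F$ (Wiles~\cite{Wiles90}) identifies $L_{p,\mathds{1}}^{\mathrm{Katz}}|_{\Gamma_\cyc}$ with $L_{\epsilon_{K/F}\omega_F}^{\mathrm{alg,DR}}\cdot\zeta_p(1-s)$ up to units; cancelling $\zeta_p(1-s)$ and applying Proposition~\ref{proposition:order-H2}(ii) under the Gross regulator hypothesis contributes $\#S_p(F)$. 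Finally, Theorem~\ref{thm_exceptional_zero_conj_OK} contributes $e$ from the third factor under $\Sigma$-Leopoldt and IMC for $K$. Summing yields the first containment $\mathrm{ord}_{\mathscr{A}^\circ}(\widetilde{\mathscr{D}}_{\mathrm{ad}^0\Theta_{\Sigma,\psi}}) = e + \#S_p(F)$. For the second containment, under Leopoldt for $F$ we have $\Gamma_\infty = \Gamma_\infty^\circ$, and using $\widetilde{\mathscr{D}}^{\mathrm{Hida}} = \widetilde{\mathscr{D}}_{\mathrm{ad}^0\Theta_{\Sigma,\psi}}\cdot\zeta_p(1-s)$ together with the simple pole $\mathrm{ord}_{\mathscr{A}_\cyc}\zeta_p(1-s) = -1$ gives the order drop.

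The main obstacle is the descent of Theorem~\ref{thm_exceptional_zero_conj_OK} from $\Lambda_{\bZ_p^{\mathrm{ur}}}(\Gamma_\infty)$ to $\Lambda_\cO(\Gamma_\infty^\circ)$: the theorem provides order $e$ for $L_{p,\psi^{\mathrm{ad}}}^{\mathrm{Katz}}$ at the full augmentation $\mathscr{A}$, whereas we need the same order at $\mathscr{A}^\circ$. The lower bound passes trivially under the projection; the matching upper bound will require refining Lemma~\ref{lemma:non-vanishing} to produce a $\ZZ_p$-extension $K_\Gamma\subset K_{\rm ac}K_\cyc$ along which the group-ring-valued $\mathcal{L}$-invariant does not vanish, so that Theorem~\ref{thm_nonvanishingL_semisimple_nondegpadicheight} applies after projection. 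The anticyclotomic tower supplies sufficient freedom within $\Sigma$-unramified extensions to make this refinement possible.
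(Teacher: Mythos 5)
Your skeleton is essentially the paper's: the published proof also just combines the Hida--Tilouine factorization (Theorem~\ref{thm_HT_factorization}), the comparison of analytic and algebraic Rankin--Selberg $p$-adic $L$-functions under the Main Conjecture (Theorem~\ref{thm:iwasama main conj rankin-selberg}), the adjoint factorization and order count (Theorem~\ref{theorem_adjoint}, whose input is Theorem~\ref{thm_algebraic_gross_factorization} and Proposition~\ref{proposition:order-H2}(ii)), and the exceptional-zero result for the Katz $p$-adic $L$-function (Theorem~\ref{thm_exceptional_zero_conj_OK}); working the bookkeeping on the analytic side rather than through $\widetilde{\mathscr D}^{\rm alg}_{{\rm ad}^0\Theta_{\Sigma,\psi}}$ does not change the ingredients.

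The genuine issue is the step you flag at the end and then dispose of by assertion. You need ${\rm ord}_{\sA^\circ}$ of the $\psi^{\rm ad}$-factor (pulled back through $\Gamma_\infty^\circ$) to equal $e$, whereas Theorem~\ref{thm_exceptional_zero_conj_OK} controls the order at the full augmentation ideal $\sA\subset\Lambda_{\cO}(\Gamma_\infty)$; only the lower bound transfers for free. Your proposed repair --- refining Lemma~\ref{lemma:non-vanishing} to find a $\ZZ_p$-extension inside $K_{\rm ac}K_{\rm cyc}$, unramified outside $\Sigma$, with non-vanishing $\cL$-invariant, on the grounds that "the anticyclotomic tower supplies sufficient freedom" --- is both unproved and unavailable: under the $\Sigma$-Leopoldt hypothesis the Galois group of the maximal $\ZZ_p$-power extension of $K$ unramified outside $\Sigma$ is the cokernel of $\cO_K^\times\otimes\QQ_p\to\prod_{v\in\Sigma}\widehat{\cO_{K_v}^\times}\otimes\QQ_p$ and hence has rank one, so there is essentially a single such extension and no freedom to steer it into the anticyclotomic direction. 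The correct resolution is much simpler: since $p\nmid[L:K]$, the $\Sigma$-Leopoldt conjecture for $L/K$ restricts to $\Sigma$-Leopoldt for $K$ itself, and for the CM field $K$ this forces Leopoldt's conjecture for $F$ (modulo torsion $\cO_K^\times$ and $\cO_F^\times$ coincide, and the $\Sigma$-components of a totally real unit realize exactly the Leopoldt map of $F$, because each $v\in S_p(F)$ splits in $K$ and $\Sigma$ selects one prime above it). Hence under the corollary's hypotheses $\Gamma_\infty^\circ=\Gamma_\infty$ and $\sA^\circ=\sA$ --- this is precisely the remark the paper records immediately after the corollary --- so Theorem~\ref{thm_exceptional_zero_conj_OK} (equivalently Corollary~\ref{cor_algebraic_multivariate_exceptional_zero} plus the Main Conjecture) applies verbatim, with no descent and no refinement of Lemma~\ref{lemma:non-vanishing} needed. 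The same observation is also what legitimizes two smaller points in your write-up: the twisted pullback $\kappa\mapsto\widehat{\lambda}_\kappa^{\rm ad}\chi_\cyc^s$ is then an isomorphism on augmentation-graded pieces (as $p$ is odd), so orders of the Katz factor are preserved, and the simple pole of $\zeta_p(1-s)$ that you invoke for the second containment is in any case only guaranteed under Leopoldt for $F$.
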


Note that under the running assumptions of Corollary~\ref{cor_exceptional_zeros_RankinSelberg_Hida}, we have ${\bf G}'=\Gamma_\cyc$ and $\Gamma_\infty^\circ=\Gamma_\infty$. In particular, we have 
 $$\widetilde{\mathscr{D}}^{\rm Hida}\,,\,\widetilde{\mathscr D}_{{\rm ad}^0\Theta_{\Sigma,\psi}}\in \LL_{\cO}(\Gamma_\infty)$$
 and $\mathscr{A}^\circ=\mathscr{A}$ is the augmentation ideal of $\LL_{\cO}(\Gamma_\infty)$. We may restate the containments in Corollary~\ref{cor_exceptional_zeros_RankinSelberg_Hida} as
$$\widetilde{\mathscr D}_{{\rm ad}^0\Theta_{\Sigma,\psi}} \in   \sA^{e+ \#S_p(F)} \setminus \sA^{e + \#S_p(F) + 1}$$
$${ \widetilde{\mathscr{D}}^{\rm Hida}}\in \sA^{e+ \#S_p(F)-1} \setminus \sA^{e+ \#S_p(F)}$$
under the hypotheses of this corollary.


\appendix
\section{Genuine algebraic $p$-adic $L$-functions and their non-critical specializations}
\label{sec_appendix}
The purpose of this appendix is to expand the discussion in the opening paragraph of \S\ref{subsubsec_intro_3} to verify that $\widetilde{\mathscr{D}}^{\rm alg}$ can be recovered as a specialization of an algebraic $p$-adic $L$-function over a higher-dimensional weight space (c.f. Corollary~\ref{cor_appendix_main} below). 

Throughout this appendix, we retain the notation of \S\ref{sec:galois repn} and \S\ref{sec:Factorisation of Selmer groups for symmetric Rankin--Selberg products}. 
Recall from Remark~\ref{rem_no_structure_onWK} that we have a canonical embedding 
${\bf G}' \hookrightarrow W_K$, which induces the ring homomorphism 
\begin{equation}
\label{eqn_appendix_1}
    \LL_{\cO}({\bf G}') \longrightarrow \LL_{\cO}(W_K)\,. 
\end{equation}
We remind the reader that ${\bf G}'$ is the Galois group of the maximal $\bZ_p$-power extension of $F$. If the Leopoldt conjecture is valid for the totally real field $F$, then ${\bf G}' = \Gamma_{\rm cyc}$.  

For  notational simplicity, let us put 
\begin{align*}
\cR &:= \Lambda_{\cO}(W_K) \widehat{\otimes}_{\cO} \Lambda_{\cO}(W_K)
\\
    M(\mathrm{Ad}\Theta_{\Sigma,\psi})^\iota &:= (M_{\Psi} \widehat{\otimes}_{\cO}M_{\Psi^{-1}} )\otimes_{\Lambda_{\cO}({\bf G}')} \Lambda_{\cO}({\bf G}')^{\iota}, 
    \\
    \bT_{\rm Ad} &:= M(\mathrm{Ad}\Theta_{\Sigma,\psi})^\iota(1),
\end{align*}
and note that the $\cR$-module $\bT_{\rm Ad}$ is free of rank $4$. Let us denote by
\be\label{eqn_pi_projection_appendix}
\pi \colon\,\, \cR \longrightarrow \Lambda_{\cO}(W_K) \otimes_{\Lambda_{\cO}(W_K)} \Lambda_{\cO}(W_K) = \Lambda_{\cO}(W_K)
\ee
the natural surjection. Recalling that 
\[
\bT = (M_{\Psi} \widehat{\otimes}_{\LL_{\cO}(W_K)}M_{\Psi^{-1}} )\otimes_{\Lambda_{\cO}({\bf G}')} \Lambda_{\cO}({\bf G}')^{\iota}(1)\,, 
\]
we have a natural isomorphism
\begin{equation}
\label{eqn_appendix_2}
    \TT_{\rm Ad}/\ker(\pi)\TT_{\rm Ad} \xrightarrow{\sim} \TT\,.
\end{equation}

We recall that $S := S_{\rm ram}(\psi) \cup S_{\rm ram}(K/F) \cup S_p(F)$. The following definition should be compared to Definition~\ref{defn_Greenberg_Data}. 

\begin{definition}
For each prime $v \in S_p(F)$, we define the $G_{F_{v}}$-submodule $F^+\TT_{\rm Ad}$ of $\TT_{\rm Ad}$ on setting
\begin{align*}
F^+\TT_{\rm Ad} &:= ({\Psi} \widehat{\otimes}_{\cO}M_{\Psi^{-1}} )\otimes_{\Lambda_{\cO}({\bf G}')} \Lambda_{\cO}({\bf G}')^{\iota}(1). 
\end{align*}
We  define local conditions $\Delta_{\rm Ad}$ for $C^{\bullet}(G_{F,S}, \TT_{\rm Ad})$ given by the choices 
\[
U_{v}^{+} := 
\begin{cases}
C^{\bullet}(G_{F_{v}},F^+\TT_{\rm Ad}) & \text{if} \ v \in S_p(F), 
\\
C^{\bullet}_{\rm ur}(G_{F_{v}}, \TT_{\rm Ad}) & \text{if} \ v \not\in S \setminus S_p(F). 
\end{cases}
\]
For any ideal $I$ of $\cR$, 
we  also define $F^+(\TT_{\rm Ad}/I\TT_{\rm Ad})$ and 
local conditions $\Delta_{\rm Ad}$ for $C^{\bullet}(G_{F,S}, \TT_{\rm Ad}/I\TT_{\rm Ad})$ in an identical fashion. 
\end{definition}

\begin{remark}
Unlike $\TT$, the representation $\TT_{\rm Ad}$ admits a dense set of critical specializations. Furthermore, the image of $F^+\TT_{\rm Ad}$, under a critical specialization of $\TT_{\rm Ad}$ that is given by an arithmetic specialization $(\kappa,\kappa')$ of $\LL_{\cO}(W_K)\,\widehat\otimes\,\LL_{\cO}(W_K)$ verifying the conditions
\be\label{eqn_Hida_weight_restrictions_appendix}
n(\kappa)-n(\kappa') \geq t_F\,\,\,\,,\,\,\,\,\, v(\kappa')\geq v(\kappa)\,\,\,\,,\,\,\,\,\,  (m(\kappa)-m(\kappa'))t_F\geq n(\kappa')-n(\kappa)+2t_F,
\ee
coincides with the $p$-ordinary filtration. Here, the quantities $n(\kappa), v(\kappa)$ and $m(\kappa)$ are defined by treating $\kappa$ as a specialization of $\LL^{\rm n.o.}$ in accordance with Convention~\ref{convention_4_5} (similarly, for $\kappa'$), and these quantities together with $t_F$ are given as in \S\ref{subsec_4_1_generalities}.

The submodule $F^+\TT_{\rm Ad} \subset \TT_{\rm Ad}$ is characterized by this property. We note that \eqref{eqn_Hida_weight_restrictions_appendix} is the condition \eqref{eqn_Hida_weight_restrictions} that determines the interpolation range for Hida's $p$-adic Rankin--Selberg $L$-function. 
\end{remark}

\begin{lemma}\label{lem:ad-local}
The image of $F^+\TT_{\rm Ad}$ under the surjection $\bT_{\rm Ad} \xrightarrow{\pi} \bT$ coincides with $F^+\TT$. 
\end{lemma}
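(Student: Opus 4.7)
The plan is to reduce the assertion to a purely algebraic identity between two constructions of tensor products, which generalizes the canonical isomorphism \eqref{eqn_appendix_2}. First, via the basis $\{e_1,e_2\}$ of $M_\Psi$ fixed at the opening of Section~\ref{sec:Factorisation of Selmer groups for symmetric Rankin--Selberg products}, the line $\Psi = \Lambda_{\cO}(W_K)\cdot e_1$ sits inside $M_\Psi$ as a $\Lambda_{\cO}(W_K)$-direct summand. The decomposition $M_\Psi|_{G_{F_v}} = \Psi|_{G_{K_{\tilde v}}} \oplus \Psi^c|_{G_{K_{\tilde v}}}$ noted just before Definition~\ref{defn_Greenberg_Data} shows that this inclusion is moreover $G_{F_v}$-equivariant for every $v \in S_p(F)$. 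Tensoring over $\cO$ with $M_{\Psi^{-1}}$ and then over $\Lambda_{\cO}({\bf G}')$ with $\Lambda_{\cO}({\bf G}')^\iota(1)$ yields an $\cR$-linear, $G_{F_v}$-equivariant inclusion $F^+\TT_{\rm Ad} \hookrightarrow \TT_{\rm Ad}$. The analogous inclusion $F^+\TT \hookrightarrow \TT$ is obtained by the same recipe, replacing $\widehat{\otimes}_{\cO}$ with $\otimes_{\Lambda_{\cO}(W_K)}$ throughout.

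Second, I would invoke the following general algebraic fact: for any pair of $\Lambda_{\cO}(W_K)$-modules $N$ and $N'$, the natural surjection
\[
N \,\widehat{\otimes}_{\cO}\, N' \twoheadrightarrow N \otimes_{\Lambda_{\cO}(W_K)} N'
\]
has kernel equal to $\ker(\pi)\cdot (N \,\widehat{\otimes}_{\cO}\, N')$, where $\cR$ acts on the source via $\Lambda_{\cO}(W_K)$ on each of the two tensor factors. The canonical identification \eqref{eqn_appendix_2} is precisely the specialization of this fact to the pair $(N,N') = (M_\Psi, M_{\Psi^{-1}})$. Applying the same fact to $(N,N') = (\Psi, M_{\Psi^{-1}})$ and then twisting by $\Lambda_{\cO}({\bf G}')^{\iota}(1)$ over $\Lambda_{\cO}({\bf G}')$ produces a surjection
\[
F^+\TT_{\rm Ad} \twoheadrightarrow F^+\TT
\]
with kernel $\ker(\pi)\cdot F^+\TT_{\rm Ad}$.

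Finally, I would check that this latter surjection is precisely the restriction to $F^+\TT_{\rm Ad}$ of the map $\pi \colon \TT_{\rm Ad} \twoheadrightarrow \TT$ from \eqref{eqn_appendix_2}. This compatibility is immediate from the functoriality of the construction $N \mapsto (N \,\widehat{\otimes}_{\cO}\, M_{\Psi^{-1}}) \otimes_{\Lambda_{\cO}({\bf G}')} \Lambda_{\cO}({\bf G}')^{\iota}(1)$ and from the fact that $\Psi \hookrightarrow M_\Psi$ is an $\cR$-direct summand (so that the induced map on $F^+$-parts is injective and identifies with the evident subobject of $\TT_{\rm Ad}$). Consequently $\pi(F^+\TT_{\rm Ad}) = F^+\TT$. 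No step presents a serious obstacle; the main task is simply to keep careful track of the two distinct tensor-product structures ($\widehat{\otimes}_{\cO}$ versus $\otimes_{\Lambda_{\cO}(W_K)}$) and to ensure that the $G_{F_v}$-equivariant inclusion of ordinary lines is compatible with the ring homomorphism $\pi$.
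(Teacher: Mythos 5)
Your argument is correct and is essentially the paper's proof: the paper simply observes that $F^+\TT$ is given by the same formula as $F^+\TT_{\rm Ad}$ with $\widehat{\otimes}_{\cO}$ replaced by $\otimes_{\Lambda_{\cO}(W_K)}$, so the claim follows by unwinding definitions, which is exactly what your (more detailed) reduction does since all modules involved are free and the only content is that $\Psi\,\widehat{\otimes}_{\cO}\,M_{\Psi^{-1}}$ surjects onto $\Psi\otimes_{\Lambda_{\cO}(W_K)}M_{\Psi^{-1}}$ compatibly with $\pi$. (Your extra kernel statement and the aside that $\Psi\hookrightarrow M_\Psi$ is an $\cR$-direct summand—really a $\Lambda_{\cO}(W_K)$-direct summand inducing an $\cR$-direct summand after tensoring—are not needed for the image claim.)
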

\begin{proof}
This lemma follows immediately from the definitions of $F^+\TT_{\rm Ad}$ and $F^+\TT$, since we have
\[
F^+\TT = ({\Psi} \widehat{\otimes}_{\Lambda_{\cO}(W_K)}M_{\Psi^{-1}} )\otimes_{\Lambda_{\cO}({\bf G}')} \Lambda_{\cO}({\bf G}')^{\iota}(1)\,. 
\]
\end{proof}

By the work of Pottharst in \cite[Theorem~1.12]{Jonathan13} (see also \cite{Nek}, Proposition 9.7.3(i)), we have the following natural base-change isomorphism in the derived category:
\begin{align}\label{eq:base-change-ad}
    \widetilde{{\bf R}\Gamma}_{\rm f}(G_{F,S}, \TT_{\rm Ad}, \Delta_{\rm Ad}) \otimes^{\bL}_{\cR} \cR/I \stackrel{\sim}{\longrightarrow} \widetilde{{\bf R}\Gamma}_{\rm f}(G_{F,S}, \TT_{\rm Ad}/I\TT_{\rm Ad}, \Delta_{\rm Ad}). 
\end{align}
By the definition of $\Delta$ (c.f. Definition \ref{defn_Greenberg_Data}(iii)) and Lemma \ref{lem:ad-local}, we have 
\be\label{eqn_appendix_A3plus5_1}
\widetilde{C}_{\rm f}^{\bullet}(G_{F,S},\TT_{\rm Ad}/\ker(\pi)\TT_{\rm Ad},\Delta_{\rm Ad}) = \widetilde{C}_{\rm f}^{\bullet}(G_{F,S},\TT,\Delta). 
\ee
The base-change isomorphism \eqref{eq:base-change-ad} together with \eqref{eqn_appendix_A3plus5_1} yields the following control theorem for Selmer complexes:
\begin{align}\label{eq:base-change-ad2}
    \widetilde{{\bf R}\Gamma}_{\rm f}(G_{F,S}, \TT_{\rm Ad}, \Delta_{\rm Ad}) \otimes^{\bL}_{\cR} \cR/\ker(\pi) \stackrel{\sim}{\longrightarrow} \widetilde{{\bf R}\Gamma}_{\rm f}(G_{F,S}, \TT, \Delta)\,. 
\end{align}

\begin{proposition}\label{prop:ad-str} The Selmer complex $\widetilde{{\bf R}\Gamma}_{\rm f}(G_{F,S}, \TT_{\rm Ad}, \Delta_{\rm Ad})$ has the following properties.
\item[i)] Its global Euler--Poincar\'e characteristic is zero: $\chi(\widetilde{{\bf R}\Gamma}_{\rm f}(G_{F,S}, \TT_{\rm Ad}, \Delta_{\rm Ad})) = 0$. 
\item[ii)] It can be represented by a perfect complex concentrated in degrees $1$ and $2$: $$\widetilde{{\bf R}\Gamma}_{\rm f}(G_{F,S}, \TT_{\rm Ad}, \Delta_{\rm Ad}) \in D_{\rm parf}^{[1,2]}(_{\cR}{\rm Mod}).$$ 
In particular, we have a canonical isomorphism 
\begin{align}\label{isom_base_change_Ad_H2}
\widetilde{H}^2_{\rm f}(G_{F,S}, \TT_{\rm Ad}, \Delta_{\rm Ad}) \otimes_{\cR} \cR/\ker(\pi) \stackrel{\sim}{\longrightarrow} \widetilde{H}^2_{\rm f}(G_{F,S}, \TT, \Delta)\,.  
\end{align}
\item[iii)] The $\cR$-module $\widetilde{H}^2_{\rm f}(G_{F,S}, \TT_{\rm Ad}, \Delta_{\rm Ad})$ is torsion and $\widetilde{H}^1_{\rm f}(G_{F,S}, \TT_{\rm Ad}, \Delta_{\rm Ad}) = 0$.
\end{proposition}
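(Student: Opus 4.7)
The plan is to establish the three assertions in sequence: part (i) is proved independently via Nekov\'a\v{r}'s Euler--Poincar\'e formula, part (ii) adapts the argument of Proposition \ref{prop:parf[1,2]}, and part (iii) drops out of (i), (ii) and Corollary \ref{cor:delta-torison}(i). For part (i), I would apply \cite[Theorem 7.8.6]{Nek} exactly as in the proof of Lemma \ref{lemma:Euler characteristic = 0}. Since $F$ is totally real and $p > 2$, only the real places contribute archimedean terms; by computing the action of the complex conjugation $c_v$ on $M_\Psi|_{G_{F_v}}$ and $M_{\Psi^{-1}}|_{G_{F_v}}$ (which swaps $e_1 \leftrightarrow e_2$ and $e_1^* \leftrightarrow e_2^*$), and noting that the Tate twist $\bZ_p(1)$ exchanges the $\pm 1$ eigenspaces for $c_v$, one finds that $\TT_{\rm Ad}^{c_v = 1}$ is a free $\cR$-module of rank $2$; the archimedean total is thus $2r$ with $r = [F:\bQ]$. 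At primes above $p$, Greenberg's local conditions subtract $\sum_{v \mid p} [F_v:\bQ_p] \cdot \mathrm{rank}_\cR(F^+\TT_{\rm Ad}) = 2r$, the equality following from the ordinarity hypothesis \ref{item_ord} together with the fact that $\mathrm{rank}_\cR(F^+\TT_{\rm Ad}) = 2$. The two contributions cancel, yielding $\chi = 0$.

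For part (ii), I would follow the strategy of Proposition \ref{prop:parf[1,2]}. The vanishing $\widetilde{H}^3_{\rm f}(G_{F,S}, \TT_{\rm Ad}/I\TT_{\rm Ad}, \Delta_{\rm Ad}) = 0$ for every ideal $I$ of $\cR$ follows from a Poitou--Tate argument: it reduces to showing that $H^2(G_{F,S}, \TT_{\rm Ad}/I\TT_{\rm Ad})$ surjects onto $\bigoplus_{v \mid p} H^2(G_{F_v}, (\TT_{\rm Ad}/F^+\TT_{\rm Ad})/I)$, which by local duality reduces in turn to the injectivity of $\bigoplus_{v \mid p} H^0(G_{F_v}, (\TT_{\rm Ad}/F^+\TT_{\rm Ad})^\vee(1)/I)$ into a suitable global $H^0$, holding automatically because $F^+\TT_{\rm Ad}$ is of rank $2$. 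Combined with Lemma \ref{lem:perf[0,3]}, this gives $\widetilde{{\bf R}\Gamma}_{\rm f}(G_{F,S}, \TT_{\rm Ad}, \Delta_{\rm Ad}) \in D^{[0,2]}_{\rm parf}({}_\cR{\rm Mod})$, representable by a complex $[P^0 \to P^1 \to P^2]$ of free $\cR$-modules. Verifying the residual vanishing $\widetilde{H}^0_{\rm f}(G_{F,S}, \TT_{\rm Ad}/\fm_\cR\TT_{\rm Ad}, \Delta_{\rm Ad}) = 0$ and applying the local flatness criterion to $\coker(P^0 \to P^1)$, one shifts the concentration to $[1,2]$. The base change isomorphism \eqref{isom_base_change_Ad_H2} then reads off from \eqref{eq:base-change-ad} upon passing to $H^2$.

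For part (iii), combining \eqref{eq:base-change-ad2} with the hyper-Tor spectral sequence produces a short exact sequence
$$0 \longrightarrow \widetilde{H}^1_{\rm f}(G_{F,S}, \TT_{\rm Ad}, \Delta_{\rm Ad}) \otimes_\cR \cR/\ker(\pi) \longrightarrow \widetilde{H}^1_{\rm f}(G_{F,S}, \TT, \Delta) \longrightarrow \mathrm{Tor}_1^\cR(\widetilde{H}^2_{\rm f}(G_{F,S}, \TT_{\rm Ad}, \Delta_{\rm Ad}), \cR/\ker(\pi)) \longrightarrow 0.$$
The middle term vanishes by Corollary \ref{cor:delta-torison}(i), forcing $\widetilde{H}^1_{\rm f}(G_{F,S}, \TT_{\rm Ad}, \Delta_{\rm Ad}) \otimes_\cR \cR/\ker(\pi) = 0$. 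Since $\cR$ is a regular local ring (a formal power series algebra over $\cO$) and $\ker(\pi) \subseteq \fm_\cR$ lies in its Jacobson radical, Nakayama's lemma yields the vanishing of $\widetilde{H}^1_{\rm f}(G_{F,S}, \TT_{\rm Ad}, \Delta_{\rm Ad})$. From parts (i) and (ii), the complex is representable by $[P^1 \xrightarrow{f} P^2]$ with $P^1, P^2$ free of equal rank; the injectivity of $f$ forces $\det(f) \neq 0$ in the integral domain $\cR$, so $\widetilde{H}^2_{\rm f}(G_{F,S}, \TT_{\rm Ad}, \Delta_{\rm Ad}) = \coker(f)$ is $\cR$-torsion.

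The main obstacle I anticipate is the residual vanishing of $\widetilde{H}^0_{\rm f}$ in part (ii). A Mackey-style calculation decomposes the residual representation as $\TT_{\rm Ad}/\fm_\cR\TT_{\rm Ad} = (\mathds{1} \oplus \epsilon_{K/F} \oplus \mathrm{Ind}_{K/F}\bar\psi^{\rm ad}) \otimes k(1)$, and one must verify that the Selmer kernel condition kills the global $H^0$ of each summand. The delicate case occurs when $\mu_p \subseteq K$ and the CM involution acts on $\mu_p$ by inversion: the $\epsilon_{K/F}(1)$-summand then acquires a non-trivial $G_F$-invariant that must be shown to map non-trivially to $\TT_{\rm Ad}/F^+\TT_{\rm Ad}$ at some prime above $p$, which requires a case-by-case analysis depending on the interplay between the CM-type $\Sigma$ and the branch character $\psi$.
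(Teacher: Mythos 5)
Parts (i) and (iii) of your proposal are sound: the direct Euler--Poincar\'e computation in (i) (both $c_v$-eigenspaces of $\TT_{\rm Ad}$ have rank $2$, matching $\mathrm{rank}\,\TT_{\rm Ad}-\mathrm{rank}\,F^+\TT_{\rm Ad}=2$ at each $v\mid p$) is correct, though the paper avoids it entirely by observing that $\chi$ is insensitive to derived base change and specializing through $\pi$ to the already-known vanishing of Corollary~\ref{parf[1,2] and Euler--Poincare = 0}(iii); and your universal-coefficients/Nakayama argument in (iii), using only $\ker(\pi)\subset\fm_{\cR}$ and Corollary~\ref{cor:delta-torison}(i), is a clean variant of the paper's Fitting-ideal argument (both need (ii) first).

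The genuine gap is in part (ii), at exactly the step you flag yourself. Your Poitou--Tate reduction for $\widetilde{H}^3_{\rm f}(G_{F,S},\TT_{\rm Ad}/I\TT_{\rm Ad},\Delta_{\rm Ad})=0$ is garbled: the correct criterion is injectivity of the \emph{global} $H^0$ of $(\TT_{\rm Ad}/I\TT_{\rm Ad})^\vee(1)$ into the sum over $v\mid p$ of the \emph{local} $H^0$ of the sub cut out by the orthogonal complement of $F^+\TT_{\rm Ad}$ (not the injectivity of local into global that you state), and it does not ``hold automatically because $F^+\TT_{\rm Ad}$ has rank $2$''; one must check how global invariants sit relative to the Greenberg filtration at the $p$-adic places, and your target also omits the places in $S\setminus S_p(F)$, where $H^2(G_{F_v},\TT_{\rm Ad}/I\TT_{\rm Ad})$ need not vanish since $\TT_{\rm Ad}$ contains twists of the trivial character. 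The same issue recurs for the residual $\widetilde{H}^0_{\rm f}$, which you explicitly leave unresolved (the case $K\supseteq F(\mu_p)$ with $\epsilon_{K/F}=\overline{\omega}_F$, where $H^0(G_F,\overline{\TT}_{\rm Ad})\neq 0$). These verifications do go through --- e.g.\ under \ref{item_ord} every $v\mid p$ splits in $K$, and the $\epsilon_{K/F}(1)$-invariant, spanned by $e_1\otimes e_1^*-e_2\otimes e_2^*$ up to twist, has non-zero image in $\overline{\TT}_{\rm Ad}/F^+\overline{\TT}_{\rm Ad}$ at every such $v$ --- but your proposal neither carries them out nor needs them: the paper gets both residual vanishings for free, since $\ker(\pi)\subseteq\fm_{\cR}$ and the control theorem \eqref{eq:base-change-ad2} identify the residual Selmer complex of $\TT_{\rm Ad}$ with that of $\TT$, which lies in $D_{\rm parf}^{[1,2]}$ by Corollary~\ref{parf[1,2] and Euler--Poincare = 0}(ii) (itself a consequence of the CM decomposition of Theorem~\ref{thm:decomposition selmer complexes}), so its reduction modulo $\fm_{\cR}$ has no $H^0$ or $H^3$; after that the argument of Proposition~\ref{prop:parf[1,2]} applies verbatim. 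As written, your proof of (ii) is incomplete, and since (iii) depends on (ii), the proposal does not yet establish the proposition.
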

\begin{proof}
\item[i)] This portion follows from Corollary \ref{parf[1,2] and Euler--Poincare = 0}(iii) and \eqref{eq:base-change-ad2}, which tell us that
\begin{align*}
\chi(\widetilde{{\bf R}\Gamma}_{\rm f}(G_{F,S}, \TT_{\rm Ad}, \Delta_{\rm Ad})) & \stackrel{}{=}  
\chi(\widetilde{{\bf R}\Gamma}_{\rm f}(G_{F,S}, \TT_{\rm Ad}, \Delta_{\rm Ad}) \otimes^{\bL}_{\cR} \cR/\ker(\pi)) \\
&\stackrel{\eqref{eq:base-change-ad2}}{=}   
\chi(\widetilde{{\bf R}\Gamma}_{\rm f}(G_{F,S}, \TT, \Delta)) \stackrel{{\rm Cor.\, \ref{parf[1,2] and Euler--Poincare = 0}}}{=} \, 0
\end{align*}
as required.
\item[ii)] Combining Corollary \ref{parf[1,2] and Euler--Poincare = 0}(ii) and \eqref{eq:base-change-ad2}, we deduce that 
\[
\widetilde{H}_{\rm f}^0(G_{F,S}, \bT_{\rm Ad}/\fm_{\cR}\bT_{\rm Ad}, \Delta_{\rm Ad}) = 0=
\widetilde{H}_{\rm f}^3(G_{F,S}, \bT_{\rm Ad}/\fm_{\cR}\bT_{\rm Ad}, \Delta_{\rm Ad}), 
\]
where $\fm_{\cR}$ denotes the maximal ideal of $\cR$. The proof of Proposition \ref{prop:parf[1,2]} applies verbatim to verify our assertion (ii).  
\item[iii)] 
Notice that the two assertions in (iii) are equivalent to each other thanks to (ii): The $\cR$-module $\widetilde{H}^2_{\rm f}(G_{F,S}, \TT_{\rm Ad}, \Delta_{\rm Ad})$ is torsion if and only if $\widetilde{H}^1_{\rm f}(G_{F,S}, \TT_{\rm Ad}, \Delta_{\rm Ad}) = 0$. It remains to verify that the $\cR$-module $\widetilde{H}^2_{\rm f}(G_{F,S}, \TT_{\rm Ad}, \Delta_{\rm Ad})$ is torsion. We will do so by showing that its annihilator (as an $\cR$-module) is non-zero.

To see that, observe first that 
$$\mathrm{Fitt}_\cR(\widetilde{H}_{\rm f}^2(G_{F,S}, \bT_{\rm Ad}, \Delta_{\rm Ad})) \subset \mathrm{Ann}_\cR(\widetilde{H}_{\rm f}^2(G_{F,S}, \bT_{\rm Ad}, \Delta_{\rm Ad})).$$ 
It therefore suffices to show that $\mathrm{Fitt}_\cR(\widetilde{H}_{\rm f}^2(G_{F,S}, \bT_{\rm Ad}, \Delta_{\rm Ad})) \neq 0$. 
By Corollary~\ref{cor:delta-torison}, we have
$$\mathrm{Fitt}_{\Lambda_{\cO}(W_K)}(\widetilde{H}_{\rm f}^2(G_{F,S}, \bT, \Delta)) \neq 0.$$ 
It follows from (ii) that we have 
\begin{align}\label{eq:fitt-rel-ad}
    \pi(\mathrm{Fitt}_{\cR}(\widetilde{H}_{\rm f}^2(G_{F,S}, \bT_{\rm Ad}, \Delta_{\rm Ad}))) = \mathrm{Fitt}_{\Lambda_{\cO}(W_K)}(\widetilde{H}_{\rm f}^2(G_{F,S}, \bT, \Delta))\,. 
\end{align}
In particular, $\mathrm{Fitt}_{\Lambda_{\cO}(W_K)}(\widetilde{H}_{\rm f}^2(G_{F,S}, \bT, \Delta)) \neq 0$, as required. 
\end{proof}

\begin{corollary}
\label{corollary_to_Prop_A_4_appendix}
The $\cR$-module ${\rm Fitt}_{\cR}\left(\widetilde{H}^{2}_{\rm f}(G_{F, S}, \TT_{\rm Ad}, \Delta_{\rm Ad})\right)$ is non-trivial and cyclic. 
\end{corollary}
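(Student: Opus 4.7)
The plan is to read both claims off directly from Proposition~\ref{prop:ad-str} and its proof. First, Proposition~\ref{prop:ad-str}(ii) allows us to represent the Selmer complex $\widetilde{{\bf R}\Gamma}_{\rm f}(G_{F,S}, \TT_{\rm Ad}, \Delta_{\rm Ad})$ by a two-term complex $[P^{1} \xrightarrow{f} P^{2}]$ of finitely generated free $\cR$-modules concentrated in degrees $1$ and $2$. Parts (i) and (iii) of the same proposition then combine to force ${\rm rank}_{\cR}(P^{1}) = {\rm rank}_{\cR}(P^{2})=: n$ (via the vanishing of $\chi$) and the injectivity of $f$ (via $\widetilde{H}^{1}_{\rm f} = 0$). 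This yields a square free presentation
$$0 \longrightarrow \cR^{n} \xrightarrow{f} \cR^{n} \longrightarrow \widetilde{H}^{2}_{\rm f}(G_{F, S}, \TT_{\rm Ad}, \Delta_{\rm Ad}) \longrightarrow 0,$$
so that ${\rm Fitt}_{\cR}(\widetilde{H}^{2}_{\rm f}(G_{F, S}, \TT_{\rm Ad}, \Delta_{\rm Ad})) = (\det f)$ is a principal, and therefore cyclic, ideal of $\cR$.

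The non-vanishing of this Fitting ideal is exactly the content of the identity~\eqref{eq:fitt-rel-ad} established within the proof of Proposition~\ref{prop:ad-str}(iii), which reads
$$\pi\!\left({\rm Fitt}_{\cR}\!\left(\widetilde{H}_{\rm f}^{2}(G_{F,S}, \bT_{\rm Ad}, \Delta_{\rm Ad})\right)\right) = {\rm Fitt}_{\Lambda_{\cO}(W_K)}\!\left(\widetilde{H}_{\rm f}^{2}(G_{F,S}, \bT, \Delta)\right),$$
where $\pi$ is the projection~\eqref{eqn_pi_projection_appendix}. The right-hand side is non-zero by Corollary~\ref{cor:delta-torison} combined with Corollary~\ref{corollary:H_f^1 vanishes iff H_f^2 is torsion and fitt=char}(ii) (noting that $\Lambda_{\cO}(W_K)$ is regular, as $W_K$ is a finitely generated torsion-free pro-$p$ abelian group), and therefore the left-hand side, and \emph{a fortiori} the Fitting ideal ${\rm Fitt}_{\cR}(\widetilde{H}^{2}_{\rm f}(G_{F, S}, \TT_{\rm Ad}, \Delta_{\rm Ad}))$ itself, must be non-zero. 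Equivalently, one may simply observe that the torsionness of $\widetilde{H}^{2}_{\rm f}(G_{F,S}, \TT_{\rm Ad}, \Delta_{\rm Ad})$ (Proposition~\ref{prop:ad-str}(iii)) forces $\det f \neq 0$, since $f$ must become an isomorphism after passing to the total ring of fractions of $\cR$.

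There is no real obstacle at this stage: all of the substantive content has already been packaged into Proposition~\ref{prop:ad-str}, where the torsionness of $\widetilde{H}^{2}_{\rm f}(G_{F,S}, \TT_{\rm Ad}, \Delta_{\rm Ad})$ was established via the base-change compatibility~\eqref{eq:fitt-rel-ad} along $\pi$. Once that proposition is in hand, the corollary is a purely formal consequence of the availability of a square free presentation of $\widetilde{H}^{2}_{\rm f}$ together with the non-vanishing of its Fitting ideal.
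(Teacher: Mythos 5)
Your proposal is correct and follows essentially the same route as the paper: cyclicity comes from the square free presentation of $\widetilde{H}^{2}_{\rm f}(G_{F,S},\TT_{\rm Ad},\Delta_{\rm Ad})$ furnished by Proposition~\ref{prop:ad-str} (perfectness in degrees $[1,2]$, $\chi=0$, $\widetilde{H}^1_{\rm f}=0$), and non-triviality from the identity \eqref{eq:fitt-rel-ad} (equivalently, from torsionness forcing $\det f\neq 0$ over the domain $\cR$), exactly as in the paper's proof. The only cosmetic difference is your auxiliary citation of Corollary~\ref{corollary:H_f^1 vanishes iff H_f^2 is torsion and fitt=char}(ii), which is stated for a different module; it is unnecessary, since the non-vanishing of ${\rm Fitt}_{\Lambda_{\cO}(W_K)}(\widetilde{H}^2_{\rm f}(G_{F,S},\TT,\Delta))$ already follows from Corollary~\ref{cor:delta-torison} as the paper notes.
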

\begin{proof}
We verified the non-triviality of ${\rm Fitt}_{\cR}\left(\widetilde{H}^{2}_{\rm f}(G_{F, S}, \TT_{\rm Ad}, \Delta_{\rm Ad})\right)$ in the proof of Proposition \ref{prop:ad-str}(iii).  It follows from Proposition \ref{prop:ad-str}(ii) that the projective dimension of $\widetilde{H}^{2}_{\rm f}(G_{F, S}, \TT_{\rm Ad}, \Delta_{\rm Ad})$ as an $\cR$-module equals $1$. This completes the proof that ${\rm Fitt}_{\cR}\left(\widetilde{H}^{2}_{\rm f}(G_{F, S}, \TT_{\rm Ad}, \Delta_{\rm Ad})\right)$ is cyclic.
\end{proof}

\begin{definition}
\label{defn_algebraic_padic_Rankin_Selberg_ad}
We define  
\[
\widetilde{\mathscr{D}}^{\rm alg}_{\rm Ad} \in \mathrm{Frac}(\cR)^\times/\cR^\times
\]
as a generator of the cyclic $\cR$-module  ${\rm Fitt}_{\cR}\left(\widetilde{H}^{2}_{\rm f}(G_{F, S}, \TT_{\rm Ad}, \Delta_{\rm Ad})\right)$.
\end{definition}

\begin{corollary}
\label{cor_appendix_main}
$\pi(\widetilde{\mathscr{D}}^{\rm alg}_{\rm Ad}) = \widetilde{\mathscr{D}}^{\rm alg}$.
\end{corollary}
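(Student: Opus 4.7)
The plan is to observe that the statement is essentially a direct consequence of the preceding material; no genuinely new argument is required. The technical content has already been carried out in the proofs of Proposition~\ref{prop:ad-str} and Corollary~\ref{corollary_to_Prop_A_4_appendix}, and all that remains is to extract the desired equality from the Fitting ideal identity recorded there.

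More precisely, I would recall the identity \eqref{eq:fitt-rel-ad}
$$
\pi\bigl({\rm Fitt}_{\cR}(\widetilde{H}_{\rm f}^2(G_{F,S}, \TT_{\rm Ad}, \Delta_{\rm Ad}))\bigr) \;=\; {\rm Fitt}_{\Lambda_{\cO}(W_K)}(\widetilde{H}_{\rm f}^2(G_{F,S}, \TT, \Delta)),
$$
which was verified in the proof of Proposition~\ref{prop:ad-str}(iii) as a formal consequence of the control isomorphism \eqref{isom_base_change_Ad_H2} together with the standard behaviour of Fitting ideals under base change (namely, ${\rm Fitt}_R(M) \cdot R' = {\rm Fitt}_{R'}(M \otimes_R R')$ for any ring homomorphism $R \to R'$). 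This identity is what ties the algebraic $p$-adic $L$-function on the higher-dimensional weight space to its non-critical specialization.

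Next, I would invoke Corollary~\ref{corollary_to_Prop_A_4_appendix}, which guarantees that the left-hand Fitting ideal is cyclic and non-zero in $\cR$, and which allows us to view $\widetilde{\mathscr{D}}^{\rm alg}_{\rm Ad}$ as a generator of it (well-defined modulo $\cR^\times$). Applying $\pi$, we conclude that $\pi(\widetilde{\mathscr{D}}^{\rm alg}_{\rm Ad})$ is a generator of the right-hand cyclic ideal ${\rm Fitt}_{\Lambda_{\cO}(W_K)}(\widetilde{H}_{\rm f}^2(G_{F,S}, \TT, \Delta))$ of $\Lambda_{\cO}(W_K)$. By Definition~\ref{defn_algebraic_padic_Rankin_Selberg}, the latter ideal is generated by $\widetilde{\mathscr{D}}^{\rm alg}$. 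Therefore $\pi(\widetilde{\mathscr{D}}^{\rm alg}_{\rm Ad})$ and $\widetilde{\mathscr{D}}^{\rm alg}$ differ by an element of $\Lambda_{\cO}(W_K)^\times$, which is exactly the asserted equality in ${\rm Frac}(\Lambda_{\cO}(W_K))^\times/\Lambda_{\cO}(W_K)^\times$.

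Since the proof reduces to assembling two already-established facts, I do not expect any real obstacle; the only point worth verifying is that $\pi(\widetilde{\mathscr{D}}^{\rm alg}_{\rm Ad})$ is actually non-zero (so that it can serve as a generator of the Fitting ideal rather than being sent to zero under specialization). This non-vanishing, however, follows immediately from \eqref{eq:fitt-rel-ad} combined with the fact that ${\rm Fitt}_{\Lambda_{\cO}(W_K)}(\widetilde{H}_{\rm f}^2(G_{F,S}, \TT, \Delta))$ is itself non-zero by Corollary~\ref{cor:delta-torison}.
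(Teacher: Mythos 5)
Your proposal is correct and follows essentially the same route as the paper: the paper's proof is precisely that the corollary is a restatement of \eqref{eq:fitt-rel-ad} together with the definitions of $\widetilde{\mathscr{D}}^{\rm alg}_{\rm Ad}$ and $\widetilde{\mathscr{D}}^{\rm alg}$, and your additional remarks (cyclicity via Corollary~\ref{corollary_to_Prop_A_4_appendix} and non-vanishing of the image via Corollary~\ref{cor:delta-torison}) simply make explicit what the paper leaves implicit.
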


\begin{proof}
This is a restatement of \eqref{eq:fitt-rel-ad}, in view of the definitions of $\widetilde{\mathscr{D}}^{\rm alg}_{\rm Ad}$ and $\widetilde{\mathscr{D}}^{\rm alg}$.
\end{proof}


$\,$
\bibliographystyle{amsalpha}
\bibliography{references}

\end{document}